\title{Fields definable in the free group}
\date{\today}
\author{Ayala Byron \and Rizos Sklinos}
\begin{document}

\maketitle

\begin{abstract} 
We prove that no infinite field is definable in the theory of the free group.% $\F$. 

%As a by-product result we get a Merzlyakov-type theorem for true sentences (in $\F$) of the form $\forall\bar{x}\exists^{<\infty}\bar{y}\phi(\bar{x},\bar{y})$.
\end{abstract}

\section{Introduction} 
After the work of Sela \cite{Sel7} leading to the positive answer to Tarski's question (see also \cite{KharlampovichMyasnikov}), 
there is an increasing model theoretic interest in the first-order theory of non abelian free groups.

Although, Sela proved that any definable set is equivalent to a boolean combination of $\forall\exists$-definable sets 
we are far from understanding these ``basic'' sets. According to Sela these sets admit a natural geometric interpretation but 
admittedly neither geometers nor logicians have absorbed the sophisticated techniques that occur in his voluminous work. 
Thus, in principle, it is hard to determine whether a subset 
of some cartesian power of a non abelian free group is definable or not.

Moreover, starting from Zilber's seminal work towards understanding uncountably categorical theories via some naturally defined pregeometries (see \cite{ZilberGeomStab}), questions 
about what kind of groups are definable or whether an infinite field is definable in a given first order theory have become important within the community of model theorists.

Recently, some positive results in this line of thought appeared. The following theorem has been proved 
independently in \cite{MalcevKM} and \cite{ForeignPPST}. 

\begin{theorem}
The only definable proper subgroups of a torsion-free hyperbolic group are cyclic.
\end{theorem}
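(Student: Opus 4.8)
The plan is to split the argument according to whether the definable subgroup is abelian, the non-abelian case being the substantial one. First a harmless reduction: if $G$ is virtually cyclic then, being torsion-free, $G$ is trivial or infinite cyclic, and then every subgroup of $G$ is cyclic; so we may assume $G$ is a non-elementary torsion-free hyperbolic group. Let $H=\phi(G;\bar a)$ be a proper definable subgroup. Two standard facts are in force throughout. First, $G$ is commutative transitive and CSA, the centralizer of every nontrivial element of $G$ is infinite cyclic, and every abelian subgroup of $G$ is trivial or infinite cyclic (a torsion-free group containing $\Z^2$ is not hyperbolic, and a torsion-free virtually cyclic group is infinite cyclic). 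Second, $\mathrm{Th}(G)$ is stable, so $G$ is a stable group: finite intersections of subgroups uniformly definable by a single formula satisfy the descending chain condition, and over any small model $M$ the group has generic types in the sense of stable group theory; recall that genericity of a type is preserved under the inverse map, under translation by a parameter, and under restriction to a smaller parameter set.

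If $H$ is abelian it is cyclic by the first fact, so assume from now on that $H$ is non-abelian; the goal is to derive $H=G$, contradicting properness, and conclude that a proper definable subgroup must have been abelian, hence cyclic. Since $H$ is non-abelian it contains two elements $u,v$ that do not commute. In a torsion-free hyperbolic group two nontrivial elements sharing an axis generate a cyclic group and hence commute, so $u$ and $v$ have independent axes; consequently, for all large $n$ the elements $u^{n},v^{n}$ play ping-pong on $\partial G$ and $\langle u^{n},v^{n}\rangle$ is a free group of rank two contained in $H$. In particular $H$ acts non-elementarily on $\partial G$ and contains loxodromic elements with pairwise independent axes.

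The heart of the argument, and the step I expect to be the main obstacle, is the following genericity claim: working in a monster model $\mathfrak{C}\succ G$ with a small model $M\ni\bar a$, every generic type over $M$ contains the formula $\phi(x;\bar a)$; equivalently, the complement of the definable subgroup $\phi(\mathfrak{C};\bar a)$ is not generic. This cannot be obtained by soft model theory alone; it is exactly where the geometry of $G$ enters. Concretely, I would analyse the definable set $H$ through Sela's description of definable sets in the free (and torsion-free hyperbolic) group as Boolean combinations of $\forall\exists$-definable sets, together with the real-tree and shortening techniques underlying that description; alternatively, one can attempt a direct ping-pong analysis on $\partial G$ showing that a generic element is pinned down by, and hence reconstructible from, the loxodromic elements of $H$ produced above. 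Either way the intended conclusion is that a non-abelian definable subgroup cannot be ``thin'': it must contain a realization of every generic type over $M$.

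Granting the claim, the proof concludes cleanly. Let $g\in\mathfrak{C}$ be arbitrary, and pick $g_{1}$ realizing a generic type over $Mg$; then $\tp(g_{1}/M)$ is generic, so $g_{1}\in\phi(\mathfrak{C};\bar a)=H(\mathfrak{C})$ by the claim. The element $g_{1}^{-1}$ is again generic over $Mg$, and $g_{1}^{-1}g$ is its translate by the parameter $g$, so $\tp(g_{1}^{-1}g/M)$ is generic as well, whence $g_{1}^{-1}g\in H(\mathfrak{C})$. Since $\phi(\mathfrak{C};\bar a)$ is a subgroup, $g=g_{1}\cdot(g_{1}^{-1}g)\in H(\mathfrak{C})$. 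As $g$ was arbitrary, $\phi(\mathfrak{C};\bar a)=\mathfrak{C}$, so by elementarity $H=\phi(G;\bar a)=G$, contradicting that $H$ is proper. Hence $H$ is abelian, and therefore, by the first fact, cyclic.
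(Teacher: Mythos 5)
Your reduction and bookkeeping are sound, but the proof has a genuine gap, and you yourself flag it: the ``genericity claim'' is not a lemma along the way, it is the entire content of the theorem, and you do not prove it.

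To be precise about what is and is not established. The preliminary reduction (torsion-free virtually cyclic $\Rightarrow$ trivial or $\Z$; abelian subgroups of a torsion-free hyperbolic group are cyclic via CSA and the non-existence of $\Z^2$) is correct, as is the ping-pong observation that a non-abelian definable $H$ contains a rank-$2$ free group. The concluding computation---take $g_1$ generic over $Mg$, note $g_1^{-1}g$ is a translate of a generic hence generic over $M$, and write $g=g_1\cdot(g_1^{-1}g)\in H$---is routine stable group theory and correct, given that inverse, translation by a parameter, and restriction to a smaller base all preserve genericity. None of this requires any geometry.

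The problem is the claim that every generic type over $M$ contains $\phi(x;\bar a)$. In a stable group, a definable subgroup $H$ contains a generic of the ambient group if and only if $H$ has finite index, in which case (once one knows the ambient group is connected) $H$ is everything. So the claim ``generic types land in $H$'' is not a reformulation that makes the problem softer; it is literally equivalent to ``$H$ is not a proper subgroup'' in the non-abelian case. You acknowledge this --- ``this cannot be obtained by soft model theory alone; it is exactly where the geometry of $G$ enters'' --- but what follows is a pointer to where a proof might live (Sela's description of definable sets via $\forall\exists$ formulas and envelopes/towers, or a ping-pong heuristic on $\partial G$ about ``reconstructing'' a generic from the loxodromics of $H$), not an argument. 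In particular the boundary heuristic is not sharpened into anything that connects to genericity in the stability-theoretic sense, and the Sela route would require you to actually run the envelope/test-sequence machinery on the diophantine approximation of $\phi$, which is precisely the kind of work this paper carries out at length for the field case. There is also a subtler circularity worry: to pass from ``$H$ contains a generic'' to ``$H$ has finite index'' to ``$H=G$'' one invokes connectedness of the ambient group, and for a general torsion-free hyperbolic group connectedness is itself not a soft fact --- it is of the same nature as the theorem you are trying to prove (for free groups it was established by Pillay--Sklinos, and in general it follows once one knows proper definable subgroups are cyclic, which is what you want to show). Your own version of the conclusion avoids naming connectedness, but the genericity claim you need is doing the same work.

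Note finally that the paper you are working from does not itself prove this statement; it cites it as established independently in \cite{MalcevKM} and \cite{ForeignPPST}, so there is no in-paper proof to compare against. Those references do the geometric work your claim requires: roughly, they combine the description of definable sets coming from Sela's quantifier elimination with an analysis of when a definable set containing a non-abelian free subgroup must be generic. Filling your gap means carrying out an argument of that sort, not gesturing at it.
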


When it comes to infinite definable fields in some non abelian free group, intuitively speaking, one expects to find none.   
To the best of our knowledge this has been first posed as a conjecture in \cite{PillayForking}. 
This problem proved very hard to tackle and the only positive result towards its solution had been the following theorem proved in the thesis 
\cite{ThesisSklinos} of the second named author: 

\begin{theorem}
Let $\F_n$ be the free group of rank $n$. Let $\phi$ be a formula over $\F_n$. Suppose $\phi(\F_n)\neq\phi(\F_{\omega})$ 
then $\phi$ cannot be given definably an abelian group structure.
\end{theorem}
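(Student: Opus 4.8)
The plan is to argue by contradiction. Suppose $\phi(\bar x)$ together with a further formula over the same parameter tuple $\bar c\in\F_n$ equips $G:=\phi(\F_n)\subseteq\F_n^{k}$ with an abelian group operation $\ast$. If $G$ were finite, then ``$\phi$ has exactly $N$ elements'' would be a sentence over $\bar c$ true in $\F_n$, hence true in $\F_\omega$ since $\F_n\prec\F_\omega$ (a consequence of Sela's work, via Tarski--Vaught applied to $\F_2\prec\F_3\prec\cdots$); so $\phi(\F_\omega)$ would have exactly $N$ elements too, and as $\phi(\F_n)\subseteq\phi(\F_\omega)$ the two sets would coincide, against the hypothesis. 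So $G$ is infinite. The group axioms for $(G,\ast)$ are first-order over $\bar c$ and therefore transfer, so $A:=(\phi(\F_\omega),\ast)$ is an abelian group containing $A_0:=\phi(\F_n)=A\cap\F_n^{k}$ as a subgroup, proper by hypothesis. Fix $g\in A\setminus A_0$.

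The first real step is an automorphism argument. Since $g\notin\F_n^{k}$, some coordinate of $g$, written in reduced form over a fixed basis of $\F_\omega$, involves a basis element outside $\F_n$; and because the common fixed-point set of the automorphisms of $\F_\omega$ that fix $\F_n$ pointwise is exactly $\F_n$, there is such an automorphism $\sigma$ with $\sigma(g)\neq g$ (passing to a suitable Nielsen transvection, one may moreover take the $\langle\sigma\rangle$-orbit of $g$ to be infinite). As $\phi$ and $\ast$ are defined over $\bar c\in\F_n$ and $\sigma$ fixes $\bar c$, the map $\sigma$ acts as a group automorphism of $A$ restricting to the identity on $A_0$; and since in any abelian group $h\mapsto\sigma(h)\ast h^{\ast-1}$ is an endomorphism, we obtain a homomorphism $\delta\colon A\to A$ with $A_0\subseteq\Ker\delta$ and $\delta(g)\neq e$. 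What must therefore be refuted is the existence of an infinite abelian group, definable over $\F_n$, that genuinely grows from $\F_n$ to $\F_\omega$ and carries such an $\F_n$-fixing automorphism.

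The crux, which I expect to carry essentially all of the difficulty, is a structural description of the infinite abelian groups definable in $\F_n$: the goal is to show that any such group has a definable subgroup of finite index which, modulo a finite kernel, definably embeds into a cartesian power $\F_n^{l}$. Granting this, the image is an abelian subgroup of $\F_n^{l}$, hence contained in a product of cyclic subgroups $\langle\gamma_1\rangle\times\dots\times\langle\gamma_l\rangle$ of $\F_n$; such a product does not grow, since each $\langle\gamma_i\rangle$ is definable and its interpretation in $\F_\omega$ is again $\langle\gamma_i\rangle$ (it is the set of $k_i$-th powers inside $C(\gamma_i)$, and $C_{\F_\omega}(\gamma_i)=C_{\F_n}(\gamma_i)$ because the root of an element of $\F_n$ lies in $\F_n$), so a definable subgroup of it does not grow either; transferring the first-order finite-index and finite-kernel data along the embedding then forces $\phi$ itself not to grow, contradicting $\phi(\F_n)\neq\phi(\F_\omega)$. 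The cited classification of the definable proper subgroups of a torsion-free hyperbolic group as cyclic is what enters where large abelian subgroups must be ruled out; the bounded-exponent case is dealt with in passing, the embedding landing there in the torsion subgroup of a free group, which is trivial, forcing $A_0$ to be finite and hence already excluded. The structural description itself, however, is where Sela's technology seems unavoidable: one realizes $A$ as a definable set, takes a generic element and writes it as a word over $\F_n$ together with finitely many new generators, passes to the associated limit group and its JSJ decomposition relative to $\F_n$, and runs the shortening argument to specialize the new generators back into $\F_n$ --- the specialized point remaining in $\phi$ and the group axioms remaining in force precisely because $(G,\ast)$ is a group, so that the relevant formulas survive the retractions (``test sequences'') supplied by the shortening argument. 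I would organize the write-up so that this structural statement is isolated as a single lemma, with the reductions and the automorphism bookkeeping above forming the short model-theoretic wrapper around it.
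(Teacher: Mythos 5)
The paper does not reprove this theorem; it is quoted from the second author's thesis~\cite{ThesisSklinos} and cited as prior work. Nevertheless, the method behind it is visible in Section~\ref{HypCase} of this paper (the ``hyperbolic case''), which is precisely the thesis argument pushed further: growth of $\phi$ supplies a test sequence for a tower whose top is a free product with a new free factor, one applies a Merzlyakov-type theorem to the formula $\oplus(\bar{x},\bar{x}',\bar{z})$ to obtain a formal solution $\bar{w}$, and then the commutativity of $\oplus$ forces the word identity $\bar{w}(\bar{u},\bar{x},\bar{u}',\bar{x}',\bar{a})=\bar{w}(\bar{u}',\bar{x}',\bar{u},\bar{x},\bar{a})$ in the ambient limit group, which is refuted by uniqueness of normal forms in the free product. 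That is the shape of the actual proof: a short argument in which the group axiom itself (commutativity) collides with free-product combinatorics, with no classification of definable abelian groups needed.

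Your proposal takes a genuinely different route and, as you write it, has two gaps. First, the entire weight is placed on the ``crux'' lemma asserting that any infinite definable abelian group in $\F_n$ has a finite-index definable subgroup that, modulo a finite kernel, embeds definably as a group into some $\F_n^{l}$. This is not established; the paragraph sketching it points at the JSJ decomposition and the shortening argument, but that machinery, as deployed in this paper and in \cite{Sel2}, delivers a diophantine envelope (a cover by towers) and eventually internality to centralizers (Theorem~\ref{AbelianCase} via Corollary~\ref{PureCentra} and Wagner's theorem), not a definable group homomorphism into $(\F_n^{l},\cdot)$. Internality to a family of one-based sets is not the same as a definable group embedding into a product of cyclic groups, and you would need a separate argument (at the level of the Hrushovski--Pillay structure theory for one-based groups, plus elimination of imaginaries in the free group) to get from the former to the latter. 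As written, the lemma is a restatement of most of what needs proving, and the sketch does not close it.

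Second, the automorphism paragraph is a dead end. You construct $\sigma\in\Aut(\F_\omega)$ fixing $\F_n$ pointwise, moving $g$, and form $\delta(h)=\sigma(h)\ast h^{\ast-1}$, noting $A_0\subseteq\Ker\delta$ and $\delta(g)\neq e$. This is all correct so far, but $\delta$ is never used: no contradiction is derived from it, and the argument simply pivots to the structural lemma. Note also that $\delta$ is not a definable endomorphism (because $\sigma$ is not), so the usual definable-endomorphism tools don't apply to it directly; and the parenthetical remark about choosing a Nielsen transvection so that the $\langle\sigma\rangle$-orbit of $g$ is infinite is not exploited. Either this paragraph should be cut, or you would need to explain what conclusion it is meant to feed into. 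The initial reductions (finiteness of $\phi(\F_n)$ is impossible; the group axioms transfer along $\F_n\prec\F_\omega$) are fine, but they are the easy part.
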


On the other hand, in many model theoretic questions concerning existing ``configurations'' in a first order theory one does not need to understand the 
exact set of solutions of a formula, but just its rough ``shape''. Indeed in this vein there has been progress. A definable set (or a parametric family) can be endowed with 
an envelope that contains the definable set and moreover it carries a natural geometric structure from which we can read properties that when hold ``generically'' 
for the envelope, they also hold for the definable set. 
The method of envelopes has been proved very useful in proving (weak) elimination of imaginaries (see \cite{SelaIm}) but also in proving that the first 
order theory of the free group does not have the finite cover property (see \cite{FCP}). We will utilise 
envelopes once more in order to confirm the above conjecture.

The main theorem of this paper is:

\begin{thmIntro}
Let $\F$ be a non abelian free group. Then no infinite field is definable in $\F$.
\end{thmIntro}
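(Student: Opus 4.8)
The plan is to argue by contradiction: suppose $(K,+,\cdot)$ is an infinite field definable, over parameters, in a non-abelian free group $\F$, and extract a contradiction from the structure the field axioms impose on its additive group $(K,+)$.

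\emph{Step 1 (reduction to finite rank and to a non-growing formula).} Only finitely many parameters occur in the formulas defining $K$, $+$ and $\cdot$, and they all lie in a standard finite-rank free factor $\F_n\le\F$ with $n\ge 2$; since the standard embeddings of free groups are elementary (see \cite{Sel7}), no generality is lost in assuming $\F=\F_n$ and that $K$ together with its operations is defined over $\F_n$. Using (weak) elimination of imaginaries for the theory of the free group \cite{SelaIm}, one may also assume the universe of $K$ is a genuine definable subset $\phi(\F_n)$ of a cartesian power, rather than a quotient sort. Now $(K,+)$ is an infinite abelian group definable with underlying formula $\phi$, so the theorem of \cite{ThesisSklinos} quoted above, applied in contrapositive form, yields $\phi(\F_n)=\phi(\F_\omega)$. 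Thus $K$ is \emph{non-growing}: it gains no new element in the elementary extension $\F_n\preceq\F_\omega$, and so, morally, is governed entirely by the finitely generated group $\F_n$.

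\emph{Step 2 (what it suffices to prove).} The additive group of a field of characteristic $0$ is torsion-free and divisible, while that of a field of characteristic $p>0$ has exponent $p$; in either case, for $K$ infinite, $(K,+)$ is not finitely generated, and --- since a group with a finitely generated finite-index subgroup is itself finitely generated --- it is not even commensurable with a finitely generated group. It therefore suffices to prove: \emph{a non-growing infinite definable abelian group $A$ in $\F_n$ is, up to finite index, finitely generated abelian.}

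\emph{Step 3 (the envelope argument --- the main obstacle).} Here I would invoke the method of envelopes that underlies the elimination of imaginaries and the failure of the finite cover property for the free group \cite{SelaIm,FCP}. The theory of the free group is stable (see \cite{Sel7}), so $A$ is a stable group with a non-forking generic type; to the imaginary class of a realization of this generic one attaches a geometric envelope --- a limit-group-theoretic object into which the relevant configuration definably embeds --- and it is precisely non-growth of $A$ that forces this envelope to be \emph{finitely generated}. Transporting the definable operation of $A$ onto the envelope and using that $A$ is \emph{abelian}, the envelope carries a (generically) commutative group operation, so it is an abelian limit group, hence a finitely generated free abelian group $\Z^r$ (limit groups being finitely generated and torsion-free). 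The generic type of $A$ is then coordinated by $\Z^r$; controlling the ``bounded'' part of $A$ invisible to this coordinate, one concludes that $A$ has a finitely generated abelian subgroup of finite index. Making all of this rigorous --- attaching a \emph{definable} envelope to a generic imaginary, transferring the group operation faithfully, and above all ruling out that $A$ might resemble $(\Q,+)$ or an infinite group of bounded exponent --- is where the bulk of the work lies, and is the step I expect to be the main obstacle.

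\emph{Conclusion.} Applying Step 3 to $A=(K,+)$, which is infinite because $K$ is, contradicts Step 2. Hence no infinite field is definable in $\F$.
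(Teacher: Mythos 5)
Your Steps 1 and 2 are reasonable, and the reduction in Step 2 is a clean observation: the additive group of an infinite field is never commensurable with a finitely generated abelian group, so it would indeed suffice to prove that a non-growing definable abelian group in $\F_n$ has a finitely generated abelian subgroup of finite index. But this is a \emph{stronger} statement than what the paper establishes, and your sketch of Step 3 does not supply a path to it.

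The central difficulty is that the conceptual picture you draw of the envelope is not accurate. An envelope of a definable set $X$ is a finite collection of towers over $\F$ (in the graded version, towers over solid limit groups); a tower is built from surface flats, abelian flats, and free factors, and these towers are \emph{always} finitely generated limit groups regardless of whether $X$ grows. Non-growth does not force the envelope to be finitely generated --- it already is --- and it does not force the towers to be abelian. A non-growing definable set can perfectly well have towers with surface flats in its envelope. Moreover, the envelope does not ``carry'' the definable group operation: the operation $\oplus$ on $X$ is a separate definable relation, and there is no formal sense in which $\oplus$ transports to a group structure on the tower groups, let alone one that would force them to be free abelian. Consequently the conclusion of Step 3 (``hence a finitely generated free abelian group $\Z^r$'') does not follow from anything in the sketch.

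What the paper actually does is a two-case dichotomy which your proposal collapses. In the \emph{abelian case} --- the tuple $\bar x$ lies, for every tower in the envelope, in the sub-tower consisting of $\F$ and abelian flats glued along centralizers in $\F$ --- the set $X$ is shown to be internal to centralizers; since centralizers carry the pure-group structure of $\Z$ they are one-based, Wagner's internality theorem gives that $X$ is one-based, and Pillay's theorem that no infinite field is interpretable in a one-based theory finishes the argument. In the \emph{general case}, one cannot reduce to centralizers, and the key idea you are missing is the \emph{twin tower}: glue two copies of a tower $\mathcal{T}(G,\F)$ along $\F$ (or along the double of the first abelian floor), take a test sequence for the twin tower, apply the extended Merzlyakov theorem (Theorem \ref{ExtFormalSolutions}) to the parametric formula $\oplus(\bar x,\bar x',\bar z)$ to obtain a formal solution $\bar w(\bar u,\bar x,\bar u',\bar x',\bar a)$, and then exploit commutativity of $\oplus$ to derive a relation $\bar w(\bar u,\bar x,\bar u',\bar x',\bar a)=\bar w(\bar u',\bar x',\bar u,\bar x,\bar a)$ that is impossible by normal forms in the amalgamated free product underlying the twin tower. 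This swap-and-normal-form contradiction is the technical heart of the proof, and nothing in your Step 3 substitutes for it. Note also that a byproduct of the paper's dichotomy is that the definable-abelian-group case is dispatched without ever needing to show commensurability with a finitely generated abelian group --- one-basedness suffices --- so your target in Step 2, while correct, aims at a harder statement than necessary.
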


Our proof is based on the following theorem which is a consequence of the quantifier elimination procedure. 
We record it next in the simplest possible form.

\begin{theorem}[Sela]
Let $\F:=\F(\bar{a})$. Let $\mathcal{T}(G,\F)$ be a hyperbolic tower where $G:=\langle \bar{u},\bar{x},\bar{a}\ | \ $ $\Sigma(\bar{u},\bar{x},\bar{a})\rangle$ 
and $\phi(\bar{x},\bar{a})$ be a first order formula over $\F$. 
Suppose there exists a test sequence, $(h_n)_{n<\omega}:G\rightarrow\F$ for $\mathcal{T}(G,\F)$, such that $\F\models\phi(h_n(\bar{x}),\bar{a})$. 

Then for any test sequence, $(h'_n)_{n<\omega}:G\rightarrow\F$, for $\mathcal{T}(G,\F)$ 
there is $n_0$ such that $\F\models \phi(h'_n(\bar{x}),\bar{a})$ for all $n>n_0$.
\end{theorem}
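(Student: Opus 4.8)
The plan is to run Sela's quantifier-elimination machinery and reduce everything to the existence and transfer of a single algebraic datum attached to the tower, a \emph{formal solution}. Say a formula $\chi(\bar x,\bar a)$ is \emph{decided by} $\mathcal{T}(G,\F)$ if either every test sequence of $\mathcal{T}(G,\F)$ satisfies $\chi$ on a tail, or every test sequence satisfies $\neg\chi$ on a tail. I claim the theorem follows once we know that every $\forall\exists$-formula is decided by $\mathcal{T}(G,\F)$. Indeed, by Sela's quantifier elimination $\phi(\bar x,\bar a)$ defines in $\F$ the same set as a boolean combination of $\forall\exists$-formulas, and the class of formulas decided by $\mathcal{T}(G,\F)$ is a boolean algebra: it is trivially closed under negation, and an elementary case analysis on ``positively decided'' versus ``negatively decided'', intersecting the finitely many tails, gives closure under $\wedge$ and $\vee$. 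Hence $\phi$ is decided by $\mathcal{T}(G,\F)$; since the hypothesis supplies a test sequence $(h_n)$ with $\F\models\phi(h_n(\bar x),\bar a)$ for \emph{all} $n$, the ``every test sequence eventually satisfies $\neg\phi$'' alternative is impossible, so we are in the other one, which is precisely the conclusion.

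It remains to show that a $\forall\exists$-formula $\psi=\forall\bar y\,\exists\bar z\,\theta(\bar x,\bar y,\bar z,\bar a)$, with $\theta$ a finite system of equations and inequations, is decided by $\mathcal{T}(G,\F)$. If no test sequence satisfies $\psi$ for infinitely many indices, then every test sequence satisfies $\neg\psi$ on a tail and we are done. Otherwise some test sequence does, and passing to a subsequence — which is again a test sequence — yields a test sequence $(h_n)$ with $\F\models\psi(h_n(\bar x),\bar a)$ for all $n$; we must then show that every test sequence of $\mathcal{T}(G,\F)$ eventually satisfies $\psi$. Here the formal-solution machinery enters. Adjoin the universally quantified variables $\bar y$ to $G$ as a free factor; $\mathcal{T}(G*F(\bar y),\F)$ is again a hyperbolic tower, and $(h_n)$ together with generic values of $\bar y$ extends to a test sequence of it, which by hypothesis still satisfies the $\exists\bar z$-part of $\theta$. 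The generalized Merzlyakov theorem of Sela then produces a \emph{formal solution}: a closure $\mathcal{C}$ of $\mathcal{T}(G*F(\bar y),\F)$ and words $\bar z=\bar w(\bar y,\bar x,\bar a)$ in the generators of $\mathcal{C}$ such that $\theta(\bar x,\bar y,\bar w,\bar a)$ holds identically in $\mathcal{C}$, the inequations being satisfied at its generic point. This closure together with $\bar w$ is a purely algebraic invariant of the tower, retaining no memory of which test sequence produced it.

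Finally one transfers the formal solution to an arbitrary test sequence $(h'_n)$ of $\mathcal{T}(G,\F)$. The defining relations of $\mathcal{C}$ over $\F$ form a finite system, and by the defining properties of closures together with the fact that a test sequence eventually satisfies every equational condition that holds on the tower it approximates, there is $n_0$ such that for every $n>n_0$ the homomorphism $h'_n\colon G\to\F$ extends to $\hat h'_n\colon\mathcal{C}\to\F$ sending the free factor $F(\bar y)$ to any prescribed tuple $\bar c\in\F^{|\bar y|}$. Then $\bar z:=\hat h'_n(\bar w)$ is an honest tuple of $\F$ satisfying $\theta(h'_n(\bar x),\bar c,\bar z,\bar a)$, and, using once more that $(h'_n)$ is a test sequence so that the generic behaviour of $\mathcal{C}$ is realised, the inequations survive for every choice of $\bar c$; hence $\F\models\psi(h'_n(\bar x),\bar a)$ for all $n>n_0$, as required.

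\textbf{The main obstacle} is everything concentrated in the last two paragraphs: the generalized Merzlyakov theorem over a hyperbolic tower, and — above all — the closure-compatibility of test sequences, namely that a formal solution valid along one test sequence remains valid, up to finitely many exceptions, along every test sequence of the same tower. Isolating and proving these two facts is exactly what forces one through the shortening argument, the fine structure of Makanin--Razborov diagrams, and the bookkeeping of the quantifier-elimination procedure; granting them, the reductions carried out above are routine.
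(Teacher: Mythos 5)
First, a point of reference: the paper does not prove this statement at all. It is recorded as a theorem of Sela, ``a consequence of the quantifier elimination procedure,'' and its precise technical form is Theorem \ref{FactSela}, again cited to \cite{Sel5bis} as a black box. So there is no proof in the paper to compare yours against; what can be said is that your high-level architecture (reduce to $\forall\exists$-formulas via quantifier elimination and the Boolean-algebra observation about ``decided'' formulas, then treat a $\forall\exists$-formula by producing a formal solution over a closure and transferring it to an arbitrary test sequence) does mirror the actual shape of Sela's argument, and the first two reductions — closure of decided formulas under Boolean operations, and passing to a subsequence of a test sequence, which is again a test sequence — are sound.

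The genuine gap is in your last paragraph, at ``the inequations survive for every choice of $\bar c$.'' A Merzlyakov-type theorem over the tower $\mathcal{T}(G*F(\bar y),\F)$ guarantees the equations of $\theta(\bar x,\bar y,\bar w,\bar a)$ identically and the inequations only along \emph{test sequences of that enlarged tower}, i.e.\ only when $\bar y$ is itself sent to a generic (small-cancellation, fast-growing) tuple jointly with $\bar x$. When you specialize $F(\bar y)$ to an \emph{arbitrary} tuple $\bar c$, the resulting homomorphism is no longer a test sequence of $\mathcal{C}$, and the inequations can and do degenerate (e.g.\ for $\bar c$ trivial, or for $\bar c$ correlated with $h'_n(\bar x)$); genericity of $h'_n(\bar x)$ alone buys you nothing about the pair $(\bar c,h'_n(\bar x))$. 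The set of ``bad'' $\bar c$ for which the formal solution fails is a definable family in the parameter $\bar y$, and handling it is precisely what forces the graded machinery of the paper — graded towers over solid limit groups, strictly solid families, Theorem \ref{GradedEnvelope} and the induction of the quantifier-elimination procedure — rather than a single application of the Merzlyakov theorem. So the obstacle is not quite where you locate it (existence and tower-invariance of one formal solution) but in the universal quantifier over non-generic $\bar y$; as written, the step from ``$\theta$ holds for generic $\bar y$'' to ``$\psi=\forall\bar y\exists\bar z\,\theta$ holds'' does not go through.
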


The notions of a {\em hyperbolic tower} and of a {\em test sequence} over it will be defined in section \ref{Towers} and section \ref{Envelopes} respectively.

On our way of proving the main theorem, we prove various Merzlyakov-type theorems. We recall 
that Merzlyakov's original theorem stated:

\begin{theorem}[Merzlyakov]
Let $\F:=\F(\bar{a})$. Let $\Sigma(\bar{x},\bar{y},\bar{a})\subset_{\textrm{finite}}\langle \bar{x},\bar{y}\rangle*\F$ be a finite set of words. Suppose 
$\F\models \forall\bar{x}\exists\bar{y}(\Sigma(\bar{x},\bar{y},\bar{a})=1)$. Then there exists 
a ``formal solution'' $\bar{w}(\bar{x},\bar{a})\subseteq\langle\bar{x}\rangle*\F$ such that $\Sigma(\bar{x},\bar{w}(\bar{x},\bar{a}),\bar{a})$ 
is trivial in $\langle\bar{x}\rangle*\F$.
\end{theorem}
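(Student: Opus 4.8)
The plan is to follow Merzlyakov's combinatorial strategy: exhibit the formal solution as what one reads off from an ordinary solution once $\bar x$ has been specialised to a sufficiently \emph{generic} tuple of $\F$. Set $G_{\Sigma}:=\langle\bar x,\bar y,\bar a\mid\Sigma(\bar x,\bar y,\bar a)\rangle$. Finding $\bar w(\bar x,\bar a)\subseteq\langle\bar x\rangle*\F$ with $\Sigma(\bar x,\bar w,\bar a)=1$ in $\langle\bar x\rangle*\F$ amounts to finding a homomorphism $r\colon G_{\Sigma}\to\langle\bar x\rangle*\F$ with $r(x_i)=x_i$ and $r(a_i)=a_i$ for all $i$ (put $\bar w:=r(\bar y)$); so it suffices to build such an $r$.

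First I would fix, for each $N$, a ``test'' specialisation $\mu_N\colon\langle\bar x\rangle*\F\to\F$ that is the identity on $\F$ and sends $x_i$ to a word $p^{(N)}_i\in\F$, the tuple $\bar p^{(N)}$ being chosen so that $\bar p^{(N)}$ generates a free subgroup of the right rank and the set $\bar p^{(N)}\cup\bar a$ satisfies a small cancellation condition $C'(\lambda_N)$ with $\lambda_N\to 0$ (for instance suitable products of high powers of the $a_i$'s). Applying the hypothesis $\F\models\forall\bar x\exists\bar y\,(\Sigma=1)$ with $\bar x=\bar p^{(N)}$ yields $\bar y^{(N)}\in\F$ such that $\Sigma(\bar p^{(N)},\bar y^{(N)},\bar a)=1$ in $\F$.

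The crux is a rigidity statement: for $N$ large, the way the relators of $\Sigma$ collapse to $1$ under $x_i\mapsto p^{(N)}_i$, $y_j\mapsto y^{(N)}_j$, $a_i\mapsto a_i$ has a combinatorial shape independent of $N$. Writing each $y^{(N)}_j$ reduced over $\bar a$ and analysing the cancellations inside each $\sigma(\bar p^{(N)},\bar y^{(N)},\bar a)$ --- equivalently, a van Kampen / cancellation diagram over $\F$ --- the small cancellation hypothesis on $\bar p^{(N)}$ forces each maximal ``long'' portion of some $y^{(N)}_j$ that is consumed by this cancellation to be a genuine power $(p^{(N)}_i)^{\pm1}$, the remaining connecting subwords lying in $\F=\F(\bar a)$ with length bounded independently of $N$; since $\Sigma$ is finite of bounded length, only finitely many shapes can occur, so after passing to an infinite set of $N$ the shape is constant. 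Replacing in this constant shape each block $(p^{(N)}_i)^{\pm1}$ by $x_i^{\pm1}$ and keeping the bounded $\bar a$-blocks produces a fixed tuple $\bar w=\bar w(\bar x,\bar a)\in\langle\bar x\rangle*\F$ with $\mu_N(\bar w)=\bar y^{(N)}$ for all such $N$.

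Finally, for each such $N$ we get $\mu_N\bigl(\Sigma(\bar x,\bar w,\bar a)\bigr)=\Sigma(\bar p^{(N)},\bar y^{(N)},\bar a)=1$ in $\F$. Each entry of $\Sigma(\bar x,\bar w,\bar a)$ is a fixed reduced word $u(\bar x,\bar a)$ over $\langle\bar x\rangle*\F$; were $u$ nontrivial, then for $N$ large enough (so that $\lambda_N$ is small relative to the length of $u$) the small cancellation property of $\bar p^{(N)}\cup\bar a$ would force $u(\bar p^{(N)},\bar a)\neq 1$ in $\F$, a contradiction. Hence $\Sigma(\bar x,\bar w,\bar a)$ is trivial in $\langle\bar x\rangle*\F$, i.e.\ $\bar w$ is a formal solution, and $r\colon y_j\mapsto w_j$ (together with $x_i\mapsto x_i$, $a_i\mapsto a_i$) is the desired homomorphism. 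The main obstacle is the rigidity step: pinning down the right notion of a generic/test tuple and carrying out the cancellation-diagram analysis that bars any essential part of $\bar y^{(N)}$ from depending on $N$ except through literal copies of the $p^{(N)}_i$, while keeping the connecting $\bar a$-words bounded --- this is the combinatorial heart of Merzlyakov's theorem, later recast by Sela through limit groups, actions on $\R$-trees, and formal solutions over towers.
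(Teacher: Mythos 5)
The paper records this statement as a known theorem of Merzlyakov and does not prove it --- it is invoked as background for Sela's and the authors' own Merzlyakov-type extensions. So there is no in-paper proof to compare against; your sketch must be judged on its own. Your outline is, in essence, Merzlyakov's original combinatorial argument, and it sits well with the paper's own language: the $C'(1/n)$ specialisations $\mu_N$ you propose are exactly what the paper calls a test sequence for the tower $\langle\bar x\rangle*\F$ over $\F$ (Definition \ref{FreeTest}), the reformulation of ``$\bar w$ is a formal solution'' as the existence of a retraction $r\colon G_\Sigma\to\langle\bar x\rangle*\F$ is the right move, and your final verification --- a nontrivial reduced word of $\langle\bar x\rangle*\F$ cannot be killed by a $C'(\lambda)$ specialisation once $\lambda$ is small relative to its length --- is correct.

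The genuine gap is the ``rigidity'' step, which you name but do not carry out, and it is where all the work lives. Three issues need to be resolved there. First, the $y^{(N)}_j$ have no a priori length control: one must pass to \emph{shortest} witnesses $\bar y^{(N)}$ for the given $\bar p^{(N)}$ and then show that the residual $\bar a$-blocks in each $y^{(N)}_j$ have length bounded independently of $N$; this boundedness is precisely what the cancellation count has to deliver and is not automatic. Second, the long cancelled pieces are not in general whole powers $(p^{(N)}_i)^{\pm1}$ but subwords of cyclic conjugates of products of the $p^{(N)}_i$'s and $a_i$'s; upgrading them to literal translates of the $p^{(N)}_i$'s uses both $\lambda_N\to 0$ and the bounded syllable length of $\Sigma$, so that only $O(1)$ cuts occur per relator. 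Third, ``finitely many shapes'' requires an explicit bound on the number of blocks, which again comes from $|\Sigma|$ and $\lambda_N\to 0$, and you should then pass to a subsequence on which the shape is constant \emph{and} the exponents in any power blocks stabilise modulo a finite index subgroup. None of this is fatal --- it is exactly how the classical proof goes --- but as written your text is a plan rather than a proof. The modern alternative, which is the route the paper's machinery is built for, replaces this combinatorics with a Bestvina--Paulin limit argument: a test sequence for $\langle\bar x\rangle*\F$ together with the chosen shortest $\bar y^{(N)}$ converges to a faithful action of a limit quotient of $G_\Sigma$, the shortening argument shows that limit quotient is $\langle\bar x\rangle*\F$ itself, and the induced map $G_\Sigma\twoheadrightarrow\langle\bar x\rangle*\F$ is the desired retraction $r$.
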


We obtain the following ``generalisation'' to an arbitrary formula, 
but only after strengthening the hypothesis and weakening the conclusion of Merzlyakov's theorem as follows: 

\begin{thmIntro}
Let $\F:=\F(\bar{a})$. Let $\F\models \forall\bar{x}\exists^{<\infty}\bar{y}\phi(\bar{x},\bar{y},\bar{a})$ and assume there 
exists a test sequence $(\bar{b}_n)_{n<\omega}$ and a sequence of tuples $(\bar{c}_n)_{n<\omega}$ such that $\F\models\phi(\bar{b}_n,\bar{c}_n,\bar{a})$. 
Then there exists a tuple of words $\bar{w}(\bar{x},\bar{a})$ in $\langle \bar{x}\rangle*\F$ 
such that for any test sequence $(\bar{b}'_n)_{n<\omega}$ in $\F$ we have that 
there exists $n_0$ (that depends on the test sequence) with $\F\models\phi(\bar{b}'_n,\bar{w}(\bar{b}'_n),\bar{a})$ 
for all $n>n_0$.
\end{thmIntro}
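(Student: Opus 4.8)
The plan is to prove the statement in two stages: first to extract, from the given test sequence $(\bar b_n)$ and its witnesses $(\bar c_n)$, an \emph{honest} formal solution $\bar w(\bar x,\bar a)\in\langle\bar x\rangle*\F$ valid along that particular test sequence; and then to upgrade it to all test sequences by feeding the resulting formula into the theorem of Sela quoted above. Throughout, fix the hyperbolic tower $\mathcal{T}(G,\F)$ for which $(\bar b_n)$ is a test sequence, say $G=\langle\bar u,\bar x,\bar a\mid\Sigma\rangle$. Since finiteness of the fibres of a first-order formula is itself a first-order schema and $\F\models\forall\bar x\,\exists^{<\infty}\bar y\,\phi(\bar x,\bar y,\bar a)$, compactness yields an integer $k$ with $\F\models\forall\bar x\,\exists^{\le k}\bar y\,\phi(\bar x,\bar y,\bar a)$; this uniform bound is what drives Stage~1.

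For Stage~1, pass along a subsequence to the limit group $L$ of the sequence of homomorphisms $\langle\bar x,\bar y\rangle*\F\to\F$, $\bar x\mapsto\bar b_n$, $\bar y\mapsto\bar c_n$, $\bar a\mapsto\bar a$ (that is, the quotient of $\langle\bar x,\bar y\rangle*\F$ by the stable kernel), with canonical images $\bar x_L,\bar y_L,\bar a$ and put $B:=\langle\bar x_L,\bar a\rangle$. Because $(\bar b_n)$ is a test sequence for $\mathcal{T}(G,\F)$, the tuple $B$ together with the rest of the tower data realises $\mathcal{T}(G,\F)$ inside $L$, so $L$ is a limit group over $G$. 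The Merzlyakov-type analysis of such limit groups — resting on the quantifier-elimination machinery behind the theorem of Sela quoted above — then produces a closure $\mathrm{Cl}$ of $\mathcal{T}(G,\F)$, with additional (abelian/free/surface) flat generators $\bar z$, and a tuple of words $\bar w(\bar x,\bar z,\bar a)$ in the generators of $\mathrm{Cl}$ such that for every test sequence $(\bar b'_n,\bar d'_n)$ of $\mathrm{Cl}$ one has $\F\models\phi(\bar b'_n,\bar w(\bar b'_n,\bar d'_n,\bar a),\bar a)$ for all large $n$, and moreover $\bar c_n=\bar w(\bar b_n,\bar d_n,\bar a)$ eventually for the original sequence (suitably extended to a test sequence of $\mathrm{Cl}$).

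The heart of the matter, and the step I expect to be the main obstacle, is to show that $\bar w$ does not in fact involve the extra closure generators $\bar z$. This is exactly where the strengthened hypothesis is used. The generators $\bar z$ lie in flats of $\mathrm{Cl}$ and may be specialised, independently of the $\bar x$-part, over arbitrarily many ``large'' values without destroying test-sequence-hood; if $\bar w$ genuinely depended on $\bar z$, then, fixing one large index $n$ and letting these specialisations range over more than $k$ values, we would obtain more than $k$ pairwise distinct tuples $\bar w(\bar b'_n,\bar d'_n,\bar a)$, each satisfying $\phi(\bar b'_n,\cdot\,,\bar a)$, contradicting the bound $k$. Carrying this out carefully (ensuring the specialisations stay genuine test sequences and yield distinct values of $\bar w$, uniformly past the relevant $n_0$'s) is the technical core of the argument. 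Once it is done, $\bar w=\bar w(\bar x,\bar a)\in\langle\bar x\rangle*\F$; equivalently the limit-group computation gives $\bar y_L=\bar w(\bar x_L,\bar a)\in B$, so $\bar y\cdot\bar w(\bar x,\bar a)^{-1}$ lies in the stable kernel and hence $\bar c_n=\bar w(\bar b_n,\bar a)$ for all large $n$. In particular $\F\models\phi(\bar b_n,\bar w(\bar b_n),\bar a)$ for all large $n$.

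For Stage~2, consider the first-order formula $\chi(\bar x,\bar a):=\phi(\bar x,\bar w(\bar x,\bar a),\bar a)$ over $\F$ (legitimate, since $\bar w$ is a tuple of words). By Stage~1 the test sequence $(\bar b_n)$ for $\mathcal{T}(G,\F)$ satisfies $\F\models\chi(\bar b_n,\bar a)$ for all large $n$, and a tail of a test sequence is again a test sequence, so we may assume this holds for all $n$. Applying the theorem of Sela quoted above to the tower $\mathcal{T}(G,\F)$ and the formula $\chi$, we conclude that for every test sequence $(\bar b'_n)_{n<\omega}$ of $\mathcal{T}(G,\F)$ there is $n_0$ (depending on the sequence) such that $\F\models\chi(\bar b'_n,\bar a)$, i.e.\ $\F\models\phi(\bar b'_n,\bar w(\bar b'_n),\bar a)$, for all $n>n_0$. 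This is precisely the conclusion of the theorem.
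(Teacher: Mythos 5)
The overall two-stage shape (Stage 1: extract a formal solution; Stage 2: propagate it to all test sequences via Sela's theorem) does match the paper, and Stage 2 is essentially correct. But Stage~1, which you rightly identify as containing ``the heart of the matter,'' is where the proposal goes wrong, on two counts.

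First, the step ``Merzlyakov-type analysis \dots then produces a closure $\mathrm{Cl}$ \dots and a tuple of words $\bar w(\bar x,\bar z,\bar a)$'' is not something you can cite; for an \emph{arbitrary} first-order formula $\phi$ (as opposed to a system of equations), producing a formal solution over a closure is exactly the content that the paper spends Sections~4--5 establishing: the graded diophantine envelope (Theorem~\ref{GradedEnvelope}), solid limit groups and strictly solid families, the completion and Lemma~\ref{StrictlySolidRel}, Theorem~\ref{PrepareExtFormalSolutions}, and finally Theorem~\ref{ExtFormalSolutions}. Treating this as a black box is not a missing detail --- it is the theorem. Second, and more concretely, the role you assign to the $\exists^{<\infty}$ hypothesis is misplaced. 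The tower you are working over is the \emph{hyperbolic} tower $\mathcal{T}(\langle\bar x\rangle*\F,\F)$; closures of a hyperbolic tower are trivial (closure embeddings are only attached to abelian flats, and also never add ``free/surface flat generators'' as you write), so there are no closure generators $\bar z$ to eliminate, and your elimination step is vacuous. In the paper the $\exists^{<\infty}$ hypothesis enters \emph{inside} the construction of the formal solution: via Lemma~\ref{RigidVertex} and Theorem~\ref{RigidGradedTower} it forces the witness tuple to lie in the rigid vertex group $\hat H$ of the modified relative JSJ of the solid limit group in the graded envelope, which is precisely what makes the retraction $r_i:Sld_i\to G^{cl_i}$ of Theorem~\ref{PrepareExtFormalSolutions} carry the witness to an honest word over $G$. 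Your finiteness-bound argument has the right flavour (it is in spirit Lemma~\ref{RigidVertex}, which also counts values along sequences that fix the parameter part and vary the rest), but to run it you would need \emph{graded} test sequences based at a fixed strictly solid morphism --- an ordinary test sequence cannot hold the $\bar x$-part fixed while varying higher floors --- and you would be applying it to show the witness lies in $\hat H$, not to delete phantom closure generators after the fact. As written, the proposal hides the real work and draws the wrong conclusion from the $\exists^{<\infty}$ hypothesis.
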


We remark that our main theorem implies, together with the elimination of the ``exists infinitely many'' quantifier $\exists^{\infty}$,  
that no infinite field is definable in any model of this theory. 

The proof splits in two parts. Roughly speaking for any definable set $X$ we prove that either $X$ is internal to a finite set of centralizers 
or it cannot be given definably an abelian group operation, i.e. there is no definable set $Y\subset X\times X\times X$ such that 
$Y$ is the graph of an abelian group operation on $X$. To conclude that no infinite field is definable we prove that 
centralizers of non trivial elements are one-based.

The paper is structured as follows: in the next section we take the opportunity to recall some basic geometric stability theory 
and introduce the reader to the results that will allow us to conclude our theorem in the case a definable set is ``coordinated'' 
by a finite set of centralizers. 

The following section contains introductory material that concerns Bass-Serre theory as well as results for a special class of groups called limit groups. The material 
here is by no means original and certainly well known. Since in many of our arguments we will use actions on trees or normal forms for groups that admit 
a {\em graph of groups} splitting, we hope that this section will provide an adequate background for the uninitiated.

Section \ref{Towers} contains many of the core notions which are important in this paper. We start by explaining when a group 
admits the structure of a {\em tower} and then we continue by introducing a construction that leads to the notion of a {\em twin tower}. 
Twin towers would play a fundamental role in our main proof.

In sections \ref{Completions} and \ref{Envelopes} we record and extend some constructions and results of Sela that appear in \cite{Sel2}, \cite{Sel3} and \cite{SelaIm}. 
Here the reader will find all the technical apparatus that makes our main proof possible. Theorems \ref{RigidGradedTower}, 
\ref{PrepareExtFormalSolutions} and \ref{ExtFormalSolutions}  lie in the core of our result.

Finally, in the last section we bring everything together and we prove the main result. We split the proof in two cases: the abelian case and the 
non abelian case. The abelian case is resolved using geometric stability, while the non abelian case is resolved 
using geometric group theory. We have also added an example, which we call the hyperbolic case, 
where our proof is free of certain technical phenomena, so the reader could clearly see the idea behind it.

\section{Some geometric stability}
In this section we provide some quick model theoretic background on stable theories. A gentle introduction to stability and forking independence 
has been given in \cite{ForkingPerinSklinos}, so to avoid repetition we refer the reader there. Our main focus in this paper will be on geometric stability and 
in particular on the notion of one-basedness. For more details the reader can consult \cite{PillayStability}. 
We work in the monster model $\mathbb{M}$ of a stable theory $T$. 

\begin{definition}
A definable set $X$ (in $\mathbb{M}$) is called weakly normal if for every $a\in X$ only 
finitely many translates of $X$ under $Aut(\mathbb{M})$ contain $a$.
\end{definition}

\begin{definition}
The first order theory $T$ is one-based, if every definable set (in $\mathbb{M})$ is a boolean combination 
of weakly normal definable sets.
\end{definition}

The simplest example of an one-based theory is the theory of a vector space $(V,+,0, \{r_k\}_{k\in K})$ over a field $K$, where for each $k\in K$, $r_k$ is a function symbol which 
is interpreted in the structure as scalar multiplication by the element $k\in K$. In the same vein we have:

\begin{fact} 
The theory of any abelian group (in the group language) is one-based.
\end{fact}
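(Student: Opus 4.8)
The plan is to reduce the statement to the Baur--Monk (Szmielew) quantifier-elimination theorem for modules. Let $A$ be an abelian group; we must show that $\mathrm{Th}(A)$ is one-based, working in its monster model $\mathbb{M}$, which we regard as a $\mathbb{Z}$-module. First I would recall that $\mathrm{Th}(A)$ is stable and eliminates quantifiers down to positive primitive (``pp'') formulas in the following sense: every formula $\phi(\bar x)$ is equivalent modulo $\mathrm{Th}(A)$ to a Boolean combination of pp-formulas $\theta_i(\bar x)$ together with a Boolean combination of ``invariant sentences'' of the form $[\psi_1(\mathbb{M}):\psi_2(\mathbb{M})]\ge n$, where $\psi_1,\psi_2$ are pp and $\psi_2(\mathbb{M})\le\psi_1(\mathbb{M})$. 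Since these invariant sentences have no free variables, they are simply true or false in $\mathbb{M}$ and play no role in cutting out a definable subset of $\mathbb{M}^{n}$; hence every definable set in $\mathbb{M}$ is a Boolean combination of sets defined by pp-formulas (with parameters).

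Next I would analyse a single pp-set. If $\theta(\bar x,\bar y)$ is pp, say $\theta(\bar x,\bar y)\equiv\exists\bar z\,\big(\tau(\bar x,\bar y,\bar z)=0\big)$ for a finite system $\tau$ of $\mathbb{Z}$-linear forms, and $\bar b$ is any tuple of parameters, then $\theta(\mathbb{M},\bar b)$ is either empty or a coset $\bar a+H$ of the $\emptyset$-definable subgroup $H:=\theta(\mathbb{M},\bar 0)\le\mathbb{M}^{n}$: if $\theta(\bar a,\bar b)$ and $\theta(\bar a',\bar b)$ hold then subtracting the witnessing tuples $\bar z,\bar z'$ yields $\theta(\bar a-\bar a',\bar 0)$, and conversely adding any element of $H$ to a solution produces a solution. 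So the basic building blocks of all definable sets are cosets of $\emptyset$-definable subgroups.

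It then remains to verify that every coset $\bar a+H$ of an $\emptyset$-definable subgroup $H\le\mathbb{M}^{n}$ is weakly normal. Fix $\bar c\in\bar a+H$ and $\sigma\in\mathrm{Aut}(\mathbb{M})$. Since $H$ is $\emptyset$-definable we have $\sigma(H)=H$, so $\sigma(\bar a+H)=\sigma(\bar a)+H$ is again a coset of $H$; as the cosets of $H$ partition $\mathbb{M}^{n}$, the unique coset of $H$ through $\bar c$ is $\bar c+H$, and therefore $\bar c\in\sigma(\bar a+H)$ forces $\sigma(\bar a+H)=\bar c+H$. Hence at most one $\mathrm{Aut}(\mathbb{M})$-translate of $\bar a+H$ contains $\bar c$, which is even stronger than weak normality. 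Combining the three steps, every definable set in $\mathbb{M}$ is a Boolean combination of cosets of $\emptyset$-definable subgroups, hence a Boolean combination of weakly normal sets; that is, $\mathrm{Th}(A)$ is one-based.

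The only substantial input is the Baur--Monk pp-elimination theorem, which I would quote rather than reprove; everything downstream of it is routine coset bookkeeping, and the mild subtlety that the relevant subgroups come out $\emptyset$-definable (so that weak normality is with respect to the full $\mathrm{Aut}(\mathbb{M})$, exactly as in the paper's definition) is built into the statement of pp-elimination. If one wished to bypass Baur--Monk, one could instead first establish stability of $\mathrm{Th}(A)$ and then run a general stability-theoretic argument---in a stable abelian group every definable set is a Boolean combination of cosets of uniformly definable subgroups---but this ultimately rests on the same module-theoretic facts, so going through Baur--Monk is the cleanest route.
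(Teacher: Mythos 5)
Your proof is correct. The paper simply records this as a well-known Fact without providing an argument, so there is no in-text proof to compare against; your route through Baur--Monk pp-elimination, reducing to cosets of $\emptyset$-definable subgroups and then checking weak normality directly from the partition of $\mathbb{M}^n$ into cosets, is the standard and cleanest way to obtain it, and each step (pp-sets with parameters are cosets of the parameter-free pp-subgroup; automorphic images of such a coset are again cosets of the same $\emptyset$-definable subgroup, so at most one can pass through any given point) is carried out correctly.
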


For the purposes of our paper one-based theories have an important property.

\begin{fact}[Pillay]
Let $T$ be one-based. Then no infinite field is interpretable in $T$. 
\end{fact}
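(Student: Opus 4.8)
The plan is to argue by contradiction. Suppose that some infinite field $K$ is interpretable in $T$. There are two things to establish: first, that $K$, equipped with all of the structure induced on it from $\mathbb{M}^{\mathrm{eq}}$, is itself (the universe of) a one-based structure; second, that no infinite field can be one-based.

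For the first step I would use that one-basedness is preserved under interpretation. Indeed, one-basedness is naturally a statement about $T^{\mathrm{eq}}$ --- equivalently it says $a\forkindep_{\mathrm{acl}(a)\cap\mathrm{acl}(B)}B$ for all imaginary tuples $a$ and all sets $B$ --- and since $K$ with its full induced structure is a definable structure sitting inside $\mathbb{M}^{\mathrm{eq}}$, with algebraic closure and forking computed internally agreeing with those of $\mathbb{M}^{\mathrm{eq}}$ on tuples from $K$, the induced structure on $K$ is again one-based (see \cite{PillayStability}). In particular $(K,+)$ is a group definable in a one-based stable theory, and for each $a\in K$ the unary function ``multiplication by $a$'' is among its definable structure.

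For the second step I would appeal to the analysis of groups definable in one-based theories due to Hrushovski and Pillay (see \cite{PillayStability}): in such a group $G$, any uniformly definable family of definable subgroups of a fixed power $G^{n}$ has only finitely many members up to commensurability. Apply this with $G=(K,+)$ and $n=2$, to the family $\{L_{a}:a\in K\}$, where $L_{a}=\{(x,ax):x\in K\}$ is the graph of multiplication by $a$ --- a uniformly definable family of subgroups of $(K,+)^{2}$. Since $K$ is infinite, each $L_{a}$ is infinite; and for $a\neq b$ one has $L_{a}\cap L_{b}=\{(0,0)\}$ and $L_{a}+L_{b}=(K,+)^{2}$, so distinct $L_{a}$ are pairwise non-commensurable. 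Hence $\{L_{a}:a\in K\}$ has infinitely many members up to commensurability, contradicting the previous sentence. Therefore no infinite field is interpretable in $T$. (Alternatively, one can phrase the contradiction by noting that the lines $L_{a}$ together with their cosets, the affine lines $y=ax+b$, form a definable pseudoplane inside $K^{2}$, which a one-based theory cannot interpret.)

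I expect the genuine content --- and the step a careful reader will want spelled out --- to be the second one: converting the hypothesis of one-basedness into rigid structural information about the definable group $(K,+)$ and the family of its definable endomorphisms $\{x\mapsto ax\}$. This is precisely where the one-based group configuration analysis of Hrushovski and Pillay is used, and it is the mathematical substance hidden behind the word ``Fact'' above; the inheritance of one-basedness under interpretation and the elementary linear algebra showing the $L_{a}$ are pairwise non-commensurable are routine by comparison.
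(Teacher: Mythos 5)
The paper records this as a \emph{Fact} attributed to Pillay, with a citation to \cite{PillayStability}, and gives no proof, so there is no in-paper argument to compare against. Your reconstruction is the standard Hrushovski--Pillay argument and is correct: one-basedness passes to $K$ with its induced structure viewed as a definable (imaginary) structure inside $\mathbb{M}^{\mathrm{eq}}$; in a one-based stable group every definable subgroup of a cartesian power is commensurable with one defined over $\mathrm{acl}(\emptyset)$, so a definable family of subgroups has only boundedly many commensurability classes; and your computation that the lines $L_a$ in $(K,+)^2$ are pairwise non-commensurable (trivial intersection, full sum) then yields the contradiction, since an infinite interpretable set in the monster has cardinality exceeding any such bound. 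The only slight imprecision is saying ``finitely many'' where ``boundedly many'' is the cleaner statement, but in a sufficiently saturated model this does not affect the argument; your alternative phrasing via non-interpretability of a pseudoplane is equally standard and perhaps closer to the way the result is usually presented in the stability-theoretic literature.
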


A set is {\em interpretable} if it definable up to a definable equivalence relation, thus, 
in particular no infinite field is definable in an one-based theory. 
\\

For any definable set $X$ in $\mathbb{M}$ we can define the {\em induced structure} on $X$, 
$X^{ind}$, in the following way: the universe of the structure will be $X$ and 
for every definable set $Y$ in $\mathbb{M}$ we add a predicate $P_Y$ that 
corresponds to the intersection $Y\cap X$. 

\begin{definition}
Let $X$ be a definable set in $\mathbb{M}$. Then $X$ is one-based, if the 
first order theory of $X^{ind}$ is one-based.
\end{definition}

We say that a family of definable sets $\mathcal{P}$ is {\em $\emptyset$-invariant},  
if the image of any definable set in $\mathcal{P}$ by an automorphism in $\Aut(\mathbb{M})$ is still in $\mathcal{P}$.
Moreover, if $B$ is a small subset of $\mathbb{M}$, we say that $\bar{c}\models \mathcal{P}\upharpoonright B$ 
if $tp(\bar{c}/B)$ contains some definable set that belongs to $\mathcal{P}$.

\begin{definition} 
A definable set $X$ (over some small subset $A\subset \mathbb{M}$) is $\mathcal{P}$-internal for some 
$\emptyset$-invariant family of definable sets $\mathcal{P}$, if for any $\bar{c}\in X$ there 
exists $B\supseteq A$ with $\bar{c} \underset{A}{\forkindep} B$ and $\bar{b}_1,\ldots,\bar{b}_k$ 
with $\bar{b}_i\models\mathcal{P}\upharpoonright B$ such that $\bar{c}\in dcl(B,\bar{b}_1,\ldots,\bar{b}_k)$
\end{definition}

The following theorem has been proved by F. Wagner in \cite{WagnerInternal}.

\begin{theorem}[Wagner]
Let $X$ be a definable set and $\mathcal{P}$ be an $\emptyset$-invariant family of one-based sets. If $X$ is $\mathcal{P}$-internal, 
then $X$ is one-based.
\end{theorem}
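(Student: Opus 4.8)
The plan is to verify the canonical-base characterisation of one-basedness for $X^{ind}$. By definition $X$ is one-based exactly when $X^{ind}$ is one-based, and since $X$ is a definable set in the stable theory $T$, the structure $X^{ind}$ is stable and non-forking in it is the restriction of non-forking in $\mathbb{M}$; so, naming $A$ in the language, it suffices to prove that for every finite tuple $\bar{a}$ from $X$ and every parameter set $C\supseteq A$ one has $\mathrm{Cb}(\mathrm{stp}(\bar{a}/C))\subseteq\mathrm{acl}^{eq}(A\bar{a})$. Because $\mathcal{P}$-internality passes to finite Cartesian powers of $X$ by a standard argument, I would carry this out for a single $\bar{a}\in X$, the tuple case being identical.

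Fix $\bar{a}\in X$ and $C\supseteq A$. First I would use $\mathcal{P}$-internality to produce $B_0$ with $\bar{a}\underset{A}{\forkindep}B_0$ together with $\bar{b}_{0,1},\dots,\bar{b}_{0,k}$ lying in sets $Q_1,\dots,Q_k\in\mathcal{P}$ and with $\bar{a}\in\mathrm{dcl}(B_0,\bar{b}_{0,1},\dots,\bar{b}_{0,k})$. The decisive move is then to replace $(B_0,\bar{b}_0)$, via the extension axiom, by an $A\bar{a}$-conjugate $(B,\bar{b})$ having moreover $B\bar{b}\underset{A\bar{a}}{\forkindep}C$; passing to a conjugate keeps $\bar{a}\underset{A}{\forkindep}B$ and $\bar{a}\in\mathrm{dcl}(B,\bar{b})$, keeps each $\bar{b}_i$ inside some set $Q_i'\in\mathcal{P}$ (here we use that $\mathcal{P}$ is $\emptyset$-invariant), and the product $Q:=Q_1'\times\cdots\times Q_k'$ is again one-based because one-basedness of definable sets is stable under finite disjoint unions and products. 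A short forking computation --- $\bar{a}\underset{A}{\forkindep}B$ and $B\underset{A\bar{a}}{\forkindep}C$ give $\bar{a}C\underset{A}{\forkindep}B$, hence $\bar{a}\underset{C}{\forkindep}B$ --- shows $\mathrm{stp}(\bar{a}/CB)$ is a non-forking extension of $\mathrm{stp}(\bar{a}/C)$, so $c:=\mathrm{Cb}(\mathrm{stp}(\bar{a}/C))=\mathrm{Cb}(\mathrm{stp}(\bar{a}/CB))$ up to interdefinability.

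Next I would compute $c$ over $CB$. As $\bar{a}\in\mathrm{dcl}(B,\bar{b})$, the tuples $\bar{a}\bar{b}$ and $\bar{b}$ are interdefinable over $B$, so their canonical bases over $CB$ are interdefinable over $B$, while the canonical base of the sub-tuple $\bar{a}$ is algebraic over that of $\bar{a}\bar{b}$; thus $c\in\mathrm{acl}^{eq}\bigl(B,\mathrm{Cb}(\mathrm{stp}(\bar{b}/CB))\bigr)$. Since $\bar{b}$ is a tuple from the one-based set $Q$, and the canonical base of such a tuple over an arbitrary parameter set lies in $\mathrm{dcl}^{eq}$ of a Morley sequence all of whose terms are tuples from $Q$ --- so that one-basedness of $Q^{ind}$ applies --- we get $\mathrm{Cb}(\mathrm{stp}(\bar{b}/CB))\subseteq\mathrm{acl}^{eq}(\bar{b})$, whence $c\in\mathrm{acl}^{eq}(B\bar{b})$. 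On the other hand $c\in\mathrm{acl}^{eq}(C)$. Finally, $B\bar{b}\underset{A\bar{a}}{\forkindep}C$ yields $\mathrm{acl}^{eq}(A\bar{a}B\bar{b})\cap\mathrm{acl}^{eq}(A\bar{a}C)=\mathrm{acl}^{eq}(A\bar{a})$, and as $c$ lies in both of the left-hand sets we conclude $c\in\mathrm{acl}^{eq}(A\bar{a})$.

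The step I expect to require the most care is the choice of the internality witness $(B,\bar{b})$: it has to be generic over $\bar{a}$ (this is imposed by the definition of $\mathcal{P}$-internality) and at the same time generic over $C$ relative to $\bar{a}$ (obtained by the conjugation). The second genericity is exactly what makes the final intersection work, because the coordinatising map $\bar{a}=f(\bar{b})$ need not be injective, so a priori $\mathrm{Cb}(\mathrm{stp}(\bar{a}/C))$ only sits inside $\mathrm{acl}^{eq}(B\bar{b})$, which is much bigger than $\mathrm{acl}^{eq}(A\bar{a})$; it is precisely the independence $B\bar{b}\underset{A\bar{a}}{\forkindep}C$ that pulls it back down. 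The remaining ingredients --- one-basedness passing to induced structures, finite products and interpretations; invariance of canonical bases under non-forking extensions and under interdefinable reparametrisations; and the detection of one-basedness on Morley sequences inside a one-based set --- are all standard facts of geometric stability theory (see \cite{PillayStability}).
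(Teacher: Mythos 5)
The paper does not prove Wagner's theorem; it only cites \cite{WagnerInternal}, so there is no in-paper proof to compare your argument against. What you have written is a reconstruction, and structurally it is the expected one: verify the canonical-base characterisation of one-basedness for tuples in $X$; by the extension axiom push the internality witness $(B,\bar b)$ to the free side of $C$ over $A\bar a$; pass to the non-forking extension $\mathrm{stp}(\bar a/CB)$; transfer the canonical base across the $B$-interdefinability $\bar a\bar b\leftrightarrow\bar b$; use one-basedness of the coordinatising set to bound $\mathrm{Cb}(\mathrm{stp}(\bar b/CB))$ by $\mathrm{acl}^{eq}(B\bar b)$; and finish with the intersection lemma $\mathrm{acl}^{eq}(A\bar aB\bar b)\cap\mathrm{acl}^{eq}(A\bar aC)=\mathrm{acl}^{eq}(A\bar a)$. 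All the forking computations you perform (transitivity, base monotonicity, conjugation preserving $\bar a\forkindep_A B$, $\emptyset$-invariance of $\mathcal P$ preserving membership after conjugation) are correct.

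The one place you should not wave through is the claim that ``one-basedness of definable sets is stable under finite disjoint unions and products,'' which you use to declare $Q:=Q_1'\times\cdots\times Q_k'$ one-based and then apply the Morley-sequence criterion to a tuple $\bar b$ from $Q$. This assertion is exactly the $B\subseteq\mathrm{acl}^{eq}(A)$ case of the theorem you are proving: take $\mathcal P=\{Q_1',\dots,Q_k'\}$, $B=\emptyset$, $\bar a=\bar b$ to see that ``product of one-based is one-based'' is already an instance of Wagner. It is true, and it is in the literature (it is part of the general machinery of one-based types in \cite{PillayStability}), but it is not a one-line observation like closure under reducts; proving it requires essentially the same canonical-base bookkeeping you run for the general case, and a straightforward iterative attempt (peeling off $\bar b_k$, then $\bar b_{k-1}$, \dots) runs into the usual difficulty that the intermediate canonical bases $\mathrm{Cb}(\mathrm{stp}(\bar b_i/CB\bar b_{<i}))$ are algebraic over $CB\bar b_{<i}$ rather than over $CB$, so the "$E\subseteq\mathrm{acl}^{eq}$ of the parameter set" hypothesis needed to locate $\mathrm{Cb}(\mathrm{stp}(\bar b/CB))$ inside $\mathrm{acl}^{eq}(E)$ fails. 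So either prove the product lemma explicitly (via a Morley sequence over a model, using the fact that each coordinate projection of the Morley sequence is again a Morley sequence in a one-based type), or cite it precisely; as written, your argument reduces Wagner's theorem to a special case without noticing that the special case is not free.

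One small further remark: since $Q$ is $B$-definable rather than $\emptyset$-definable, one-basedness of $Q$ only gives $\mathrm{Cb}(\mathrm{stp}(\bar b/CB))\subseteq\mathrm{acl}^{eq}(B\bar b)$, not $\mathrm{acl}^{eq}(\bar b)$ as you wrote; this is harmless because the next line passes to $\mathrm{acl}^{eq}(B\bar b)$ anyway, but the intermediate claim as stated is too strong.
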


\section{Actions on trees}
The goal of this section is to present a structure theorem for groups acting on trees. 
In the first subsection we will be interested in group actions on simplicial trees. These actions 
can be analysed using Bass-Serre theory, and we will explain the duality between the notion 
of a graph of groups and these actions. 

In the second subsection we 
will record the notion of a real tree and quickly describe some natural group actions on real trees.

\subsection{Bass-Serre Theory}\label{BS}
Bass-Serre theory gives a structure theorem for groups acting on (simplicial) trees, i.e. acyclic connected graphs. 
It describes a group (that acts on a tree) 
as a series of {\em amalgamated free products} and {\em HNN extensions}. The mathematical notion that 
contains these instructions is called a graph of groups. For a complete treatment we refer the reader to 
\cite{SerreTrees}.

We start with the definition of a graph.

\begin{definition}
A graph $G(V,E)$ is a collection of data that consists of two sets $V$ (the set of vertices) and $E$ (the set of edges) 
together with three maps:
\begin{itemize}
 \item an involution $\bar{\phantom{1}}:E\to E$, where $\bar{e}$ is called the inverse of $e$;
 \item $\alpha:E\to V$, where $\alpha(e)$ is called the initial vertex of $e$; and
 \item $\tau:E\to V$, where $\tau(e)$ is called the terminal vertex of $e$. 
\end{itemize}
so that $\bar{e}\neq e$, and $\alpha(e)=\tau(\bar{e})$ for every $e\in E$.
\end{definition}

\begin{definition}[Graph of Groups]
A graph of groups $\mathcal{G}:=(G(V,E), \{G_u\}_{u\in V}, \{G_e\}_{e\in E},$ $\{f_e\}_{e\in E})$ consists of the following data:
\begin{itemize}
 \item a graph $G(V,E)$;
  \item a family of groups $\{G_u\}_{u\in V}$, i.e. a group is attached to each vertex of the graph;
  \item a family of groups $\{G_e\}_{e\in E}$, i.e. a group is attached to each edge of the graph. Moreover, $G_{e}=G_{\bar{e}}$;
  \item a collection of injective morphisms $\{f_e:G_e\to G_{\tau(e)} \ | \ e\in E\}$, i.e. each edge group comes 
  equipped with two embeddings to the incident vertex groups. 
\end{itemize}

\end{definition}

The fundamental group of a graph of groups is defined as follows.

\begin{definition}
Let $\mathcal{G}:=(G(V,E), \{G_u\}_{u\in V}, \{G_e\}_{e\in E}, \{f_e\}_{e\in E})$ be a graph of groups. Let $T$ be a maximal subtree of $G(V,E)$. 
Then the fundamental group, $\pi_1(\mathcal{G},T)$, of $\mathcal{G}$ with respect to $T$ is the group given by the following presentation:
$$\langle \{G_u\}_{u\in V}, \{t_e\}_{e\in E} \ | \ t^{-1}_e=t_{\bar{e}} \ \textrm{for} \ e\in E, t_e=1 \ \textrm{for} \ e\in T, 
f_e(a)=t_ef_{\bar{e}}(a)t_{\bar{e}} \ \textrm{for} \ e\in E \ a\in G_e\rangle$$
\end{definition}

\begin{remark}
It is not hard to see that the fundamental group of a graph of groups does not depend on the 
choice of the maximal subtree up to isomorphism (see \cite[Proposition 20, p.44]{SerreTrees}).
\end{remark}

In order to give the main theorem of Bass-Serre theory we need the following definition.

\begin{definition}  
Let $G$ be a group acting on a simplicial tree $T$ without inversions,
denote by $\Lambda$ the corresponding quotient graph and by $p$ the quotient map $T \to \Lambda$. 
A Bass-Serre presentation for the action of $G$ on $T$ is a triple $(T^1, T^0, \{\gamma_e\}_{e\in E(T^1)\setminus E(T^0)})$ consisting of
\begin{itemize}
\item a subtree $T^1$ of $T$ which contains exactly one edge of $p^{-1}(e)$ for each edge $e$ of $\Lambda$;
\item a subtree $T^0$ of $T^1$ which is mapped injectively by $p$ onto a maximal subtree of $\Lambda$;
\item a collection of elements of $G$, $\{\gamma_e\}_{e\in E(T^1)\setminus E(T^0)}$, such that if $e=(u,v)$ with $v\in T^1\setminus T^0$, 
then $\gamma_e\cdot v$ belongs to $T^0$.
\end{itemize}
\end{definition}

\begin{theorem}
Suppose $G$ acts on a simplicial tree $T$ without inversions. Let $(T^1,T^0, \{\gamma_e\})$ be a Bass-Serre presentation for the action. 
Let $\mathcal{G}:=(G(V,E), \{G_u\}_{u\in V}, \{G_e\}_{e\in E}, \{f_e\}_{e\in E})$ be the following graph of groups:
\begin{itemize}
 \item $G(V,E)$ is the quotient graph given by $p:T\to \Gamma$;
 \item if $u$ is a vertex in $T^0$, then $G_{p(u)}=Stab_G(u)$;
 \item if $e$ is an edge in $T^1$, then $G_{p(e)}=Stab_G(e)$;
 \item if $e$ is an edge in $T^1$, then $f_{p(e)}:G_{p(e)}\to G_{\tau(p(e))}$ is given by the identity 
 if $e\in T^0$ and by conjugation by $\gamma_e$ if not.
\end{itemize}

Then $G$ is isomorphic to $\pi_1(\mathcal{G})$. 
\end{theorem}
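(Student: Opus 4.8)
This statement is the fundamental structure theorem of Bass--Serre theory; the plan is to exhibit an explicit isomorphism $\phi\colon\pi_1(\mathcal{G})\to G$ and check its properties. Recall that $\pi_1(\mathcal{G})$, presented with respect to the maximal subtree $p(T^0)$, is generated by the vertex groups $G_{p(u)}=\Stab_G(u)$ together with stable letters $t_e$, one for each edge $e$ of the quotient graph. First I would define $\phi$ on generators: by the identity inclusion $\Stab_G(u)\hookrightarrow G$ on each vertex group, by $t_e\mapsto\gamma_e$ for $e\in E(T^1)\setminus E(T^0)$, and by $t_e\mapsto1$ for $e$ in the maximal subtree (with the convention $\gamma_{\bar e}=\gamma_e^{-1}$). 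Verifying that $\phi$ respects the defining relations reduces, beyond the trivial ones, to the assertion that for $a\in\Stab_G(e)$ the two embeddings of the edge stabilizer into the adjacent vertex stabilizers differ by conjugation by $\gamma_e$; this is immediate from the identity $\gamma\,\Stab_G(x)\,\gamma^{-1}=\Stab_G(\gamma\cdot x)$ together with the defining property $\gamma_e\cdot v\in T^0$ of a Bass--Serre presentation. Hence $\phi$ is a well-defined homomorphism.

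Next I would prove surjectivity. Let $H\leq G$ be the image of $\phi$, i.e. the subgroup generated by the $\Stab_G(u)$ for $u$ a vertex of $T^0$ and by the $\gamma_e$. One shows $H\cdot T^1=T$: the set $H\cdot T^1$ is connected, since $T^1$ is connected and each generator of $H$ carries $T^1$ onto a translate meeting $T^1$ in a vertex, so it is a subtree of $T$; it contains an edge from every $G$-orbit; and $T$ is connected, whence $H\cdot T^1=T$ by a standard argument on connected trees. It then follows easily that $G=H$: for a base vertex $v_0\in T^0$ and $g\in G$ one has $g\cdot v_0\in H\cdot T^1$, and correcting $g$ by a suitable element of $H$ shows $g\in H$. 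Hence $\phi$ is onto.

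Finally, injectivity. Here I would use the normal form (Britton-type) description of elements of $\pi_1(\mathcal{G})$: any nontrivial element is represented by a reduced word $g_0t_{e_1}g_1\cdots t_{e_k}g_k$. For $k=0$ this is a nontrivial element of some $\Stab_G(u)$, which $\phi$ sends to itself. For $k\geq1$, I would apply $\phi$ and follow the broken path in $T$ obtained by successively applying $\phi(g_0),\gamma_{e_1},\phi(g_1),\dots$ starting from a base vertex; the condition that the word is reduced translates precisely into the statement that this path does not backtrack in $T$, so it has distinct endpoints and $\phi$ of the word moves the base vertex. Therefore $\ker\phi=1$, and $\phi$ is an isomorphism.

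I expect the injectivity step to be the main obstacle, since it is where the purely combinatorial notion of a reduced word in a graph of groups must be matched with the geometry of the $G$-action on $T$; this is exactly the content of the normal form theorems for amalgamated products and HNN extensions, and of Serre's general analysis. An equivalent and perhaps cleaner packaging of the entire proof is to build the Bass--Serre tree $T_{\mathcal{G}}$ of $\mathcal{G}$, exhibit a $\phi$-equivariant simplicial map $T_{\mathcal{G}}\to T$ (well-defined because $G_u$ fixes $u$), check that it is bijective using the surjectivity argument together with local injectivity on stars of vertices, conclude it is a simplicial isomorphism since trees are simply connected, and thereby obtain injectivity of $\phi$. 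All of this is classical and is carried out in detail in \cite{SerreTrees}.
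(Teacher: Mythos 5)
Your proof is correct and follows the classical Bass--Serre argument: the paper itself does not prove this theorem but cites \cite{SerreTrees}, and your outline (explicit homomorphism on generators, surjectivity via $H\cdot T^1 = T$, injectivity via reduced words and non-backtracking paths, or equivalently the equivariant map $T_{\mathcal{G}}\to T$) is precisely the approach carried out there.
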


\begin{remark}
The other direction of the above theorem also holds. Whenever a group $G$ is isomorphic to the fundamental group of a graph of groups 
then there is a natural way to obtain a simplicial tree $T$ and an action of $G$ on $T$ (see \cite[section 5.3, p.50]{SerreTrees}).
\end{remark}

Among splittings of groups we will distinguish those 
with some special type vertex groups called {\em surface type vertex groups}.

\begin{definition}
Let $G$ be a group acting on a tree $T$ without inversions and $(T_1,T_0,\{\gamma_e\})$ be a Bass-Serre presentation for this action. 
Then a vertex $v\in T^0$ is called a surface type vertex if the following conditions hold:
\begin{itemize}
 \item $\Stab_G(v)=\pi_1(\Sigma)$ for a connected compact surface $\Sigma$ with non-empty boundary;
 \item For every edge $e\in T_1$ adjacent to $v$, $\Stab_G(e)$ embeds onto a maximal boundary 
 subgroup of $\pi_1(\Sigma)$, and this induces a one-to-one correspondence between the 
 set of edges (in $T^1$) adjacent to $v$ and the set of boundary components of $\Sigma$.
\end{itemize}

\end{definition}

We next follow \cite{BFNotesOnSela} and define the notion of a Generalized Abelian Decomposition (GAD). 

\begin{definition}
A GAD of a group $G$ is a graph of groups $\mathcal{G}(V,E)$ with abelian edge groups and such that $V$ is partitioned as $V_S\cup V_A \cup V_R$ where:
\begin{itemize}
 \item each vertex in $V_S$ is a vertex of surface type for the corresponding action on a tree;
 \item each vertex group for a vertex in $V_A$ is non-cyclic abelian; and
 \item each vertex group for a vertex $V_R$ is called rigid.
\end{itemize}

\end{definition}

\begin{definition}[Peripheral subgroup]
Let $A$ be a vertex group of a GAD of $G$, $(\mathcal{G}(V,E), (V_S,$ $V_A,V_R))$, whose vertex is in $V_A$. Then the we denote 
by $P(A)$ the subgroup of $A$ generated by all incident edges groups. Moreover the subgroup of $A$ that dies under every morphism 
$h:A\rightarrow \Z$ that kills $P(A)$ is called the peripheral subgroup and denoted by $\bar{P}(A)$.
\end{definition}

\begin{definition}[Dehn twists]
Let $H$ be a subgroup of $G$. Suppose $G$ splits as an: 
\begin{itemize} 
 \item amalgamated free product $A*_CB$ so that $H$ is a subgroup of $A$. Let $g$ be an element in the centralizer of $C$ in $G$. Then 
 a Dehn twist in $g$ is the automorphism fixing $A$ pointwise and sending each element $b$ of $B$ to $gbg^{-1}$;
 \item $HNN$-extension $A*_C$ so that $H$ is a subgroup of $A$. Let $g$ be an element in the centralizer of $C$ in $G$. Then 
 a Dehn twist in $g$ is the automorphism fixing $A$ and sending the Bass-Serre element $t$ to $tg$.
\end{itemize}

\end{definition}

\begin{definition}[Relative modular automorphisms]
Let $H$ be a subgroup of $G$. Let $\Delta:=(\mathcal{G}(V,E), (V_S,V_A,V_R))$ be a GAD of $G$ in which $H$ can be conjugated into a vertex group. 
Then $Mod_H(\Delta)$ is the subgroup of $Aut_H(G)$ generated by:
\begin{itemize}
 \item inner automorphisms;
 \item unimodular automorphisms of $G_u$ for $u\in V_A$ that fix the peripheral subgroup of $G_u$ and every other vertex group;
 \item automorphisms of $G_u$ for $u\in V_S$ coming from homeomorphisms of the corresponding surface that fix all boundary components;
 \item Dehn twists in elements of centralizers of edge groups, after collapsing the GAD to a one edge splitting in which $H$ 
 is a subgroup of a vertex group. 
\end{itemize}
Moreover we define the modular group of $G$ relative to $H$, $Mod_H(G)$, 
to be the group generated by $Mod_H(\Delta)$ for every $GAD$ $\Delta$ of $G$.  
\end{definition}

\subsection{Actions on real trees}
Real trees (or $\R$-trees) generalize simplicial trees in the following way. 

\begin{definition}
A real tree is a geodesic metric space in which 
for any two points there is a unique arc that connects them.
\end{definition}

When we say that a group $G$ acts on an real tree $T$ we will always mean an action by isometries.

Moreover, an action $G\curvearrowright T$ of a group $G$ on a real tree $T$ is called {\em non-trivial} if there is no 
globally fixed point and {\em minimal} if there is no proper $G$-invariant subtree. Lastly, an action is called {\em free} 
if for any $x\in T$ and any non trivial $g\in G$ we have that $g\cdot x\neq x$.

%We also note that once we know the axis of a hyperbolic element $g\in G$, then it is easy to understand how it acts 
%on any specific point $x\in T$: $d_T(x,gx)= 2d_T(x, Axis(g))\ +\ tr(g)$, 
%where $d_T(x, Axis(g)):=inf_{y\in Axis(g)}\{d_T(x,y)\}$. 

We next collect some families of group actions on real trees.

\begin{definition}
Let $G\curvearrowright^{\lambda} T$ be a minimal action of a finitely generated group $G$ on a real tree $T$. Then we say:
\begin{itemize}
 \item[(i)] $\lambda$ is of discrete (or simplicial) type, if every orbit $G.x$ is discrete in $T$. In this case $T$ is simplicial 
 and the action can be analysed using Bass-Serre theory;
 \item[(ii)] $\lambda$ is of axial (or toral) type, if $T$ is isometric to the real line $\R$ and $G$ acts with dense orbits, 
 i.e. $\overline{G.x}=T$ for every $x\in T$;
 \item[(iii)] $\lambda$ is of surface (or IET) type, if $G=\pi_1(\Sigma)$ where $\Sigma$ is a surface with (possibly empty) boundary 
 carrying an arational measured foliation and $T$ is dual to $\tilde{\Sigma}$, i.e. $T$ is the lifted leaf space in $\tilde{\Sigma}$ 
 after identifying leaves of distance $0$ (with respect to the pseudo-metric induced by the measure);
\end{itemize}
\end{definition}

%\begin{fact}
%Suppose $\pi_1(\Sigma)$ acts on a real tree $T$ by a surface type action. Then the action is  ``almost free'', i.e. 
%only elements that belong to subgroups that correspond to the boundary components fix points in $T$ and segment stabilizers are trivial. 
%In particular when $\Sigma$ has empty boundary the action is free (see \cite{MorganShalenFree}).
%\end{fact}

We will use the notion of a graph of actions in order to glue real trees equivariantly. 
We follow the exposition in \cite[Section 1.3]{GuirardelRTrees}.

\begin{definition}[Graph of actions]
A graph of actions $(G\curvearrowright T,\{Y_u\}_{u\in V(T)},\{p_e\}_{e\in E(T)})$ consists of the following data:
\begin{itemize}
 \item A simplicial type action $G\curvearrowright T$; 
 \item for each vertex $u$ in $T$ a real tree $Y_u$;
 \item for each edge $e$ in $T$, an attaching point $p_e$ in $Y_{\tau(e)}$.
\end{itemize}

Moreover:
\begin{enumerate}
 \item $G$ acts on $R:=\{\coprod Y_u : u\in V(T)\}$ so that $q:R\to V(T)$ with $q(Y_u)=u$ is $G$-equivariant;
 \item for every $g\in G$ and $e\in E(T)$, $p_{g\cdot e}=g\cdot p_e$.
\end{enumerate}

\end{definition}

To a graph of actions $\mathcal{A}:=(G\curvearrowright T,\{Y_u\}_{u\in V(T)},\{p_e\}_{e\in E(T)})$ we can assign an $\R$-tree $Y_{\mathcal{A}}$ endowed with 
a $G$-action. Roughly speaking this tree will be $\coprod_{u\in V(T)} Y_{u}/\sim$, where the equivalence relation $\sim$ identifies $p_e$ with 
$p_{\bar{e}}$ for every $e\in E(T)$. We say that a real $G$-tree $Y$ {\em decomposes as a graph of actions} $\mathcal{A}$, if there is 
an equivariant isometry between $Y$ and $Y_{\mathcal{A}}$. 

%Assume a real $G$-tree $Y$ decomposes as a graph of actions. Then a useful property is that $Y$ is covered by $(Y_u)_{u\in V(T)}$ 
%and moreover these trees intersect ``transversally''.

%\begin{definition}
%Let $Y$ be an $\R$-tree and $(Y_i)_{i\in I}$ be a family of subtrees that cover $Y$. Then we call this covering 
%a transverse covering if the following conditions hold:
%\begin{itemize}
% \item for every $i\in I$, $Y_i$ is a closed subtree;
% \item for every $i,j\in I$ with $i\neq j$, $Y_i\cap Y_j$ is either empty or a point;
% \item every segment in $Y$ is covered by finitely many $Y_i$'s.
%\end{itemize}

%\end{definition}

%The next lemma is by no means hard to prove (see \cite[Lemma 4.7]{GuirardelRnTrees}.

%\begin{lemma}
%Let $\mathcal{A}:=(G\curvearrowright T,\{Y_u\}_{u\in V(T)},\{p_e\}_{e\in E(T)})$ be a graph of actions. 
%Suppose $G\curvearrowright Y$ decomposes as the graph of actions $\mathcal{A}$.  
%Then $(Y_u)_{u\in V(T)}$ is a transverse covering of $Y$. 
%\end{lemma}

Interesting actions on real trees can be obtained by sequences of morphisms from a finitely generated group to a free group. 
We explain how in the next subsection.

\subsection{The Bestvina-Paulin method}
The construction we are going to record is credited to Bestvina \cite{BesLimit} and Paulin \cite{PaulinGromov} independently. 

We fix a finitely generated group $G$ and we consider the set of non-trivial 
equivariant pseudometrics $d:G\times G\to \R^{\geq 0}$, denoted by $\mathcal{ED}(G)$. 
We equip $\mathcal{ED}(G)$ with the compact-open topology (where $G$ is given the discrete topology). Note 
that convergence in this topology is given by: 
$$(d_i)_{i<\omega}\to d\ \ \textrm{if and only if} \ \ d_i(1,g)\to d(1,g)\ \ (\textrm{in $\R$}) \ \ \textrm{for any}\ \ g\in G$$

Is not hard to see that $\R^+$ acts cocompactly on $\mathcal{ED}(G)$ by rescaling, thus 
the space of {\em projectivised equivariant pseudometrics} on $G$ is compact.

We also note that any based $G$-space $(X,*)$ (i.e. a metric space with a distinguished point equipped with an action of 
$G$ by isometries) gives rise to an equivariant pseudometric on $G$ as follows: $d(g,h)=d_X(g\cdot *,h\cdot *)$. 

We say that a sequence of $G$-spaces $(X_i,*_i)_{i<\omega}$ converges to a $G$-space $(X,*)$, if the 
corresponding pseudometrics induced by $(X_i,*_i)$ converge to the pseudometric induced by $(X,*)$ in $\mathcal{PED}(G)$. 

A morphism $h:G\to H$ where $H$ is a finitely generated group induces an action of $G$ on  
$\mathcal{X}_H$ (the Cayley graph of $H$) in the obvious way, thus making $\mathcal{X}_H$ a $G$-space. We have:

\begin{lemma}\label{LimitAction} 
Let $\F$ be a non abelian free group. Let $(h_n)_{n<\omega}:G\to\F$ be a sequence of pairwise non-conjugate morphisms. 
Then for each $n<\omega$ there exists a base point $*_n$ 
in $\mathcal{X}_{\F}$ such that the sequence of $G$-spaces $(\mathcal{X}_{\F},*_n)_{n<\omega}$ has a convergent subsequence to 
a real $G$-tree $(T,*)$, where the action of $G$ on $T$ is non-trivial.% satisfies the following properties:
%\begin{enumerate}
% \item $\lambda$ is non-trivial
% \item if $L:=G/Ker\lambda$, where $Ker\lambda:=\{g\in G | \lambda(g,x)=x \ \textrm{for all} \ x\in T\}$, then $L$ acts on $T$ as follows:
% \begin{itemize}
% \item[(i)] tripod stabilizers are trivial;
% \item[(ii)] arc stabilizers are abelian;
% \item[(iii)] the action is super-stable.
%\end{itemize}
%\end{enumerate}
\end{lemma}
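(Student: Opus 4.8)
The statement is the standard Bestvina–Paulin construction, so the proof follows the well-known recipe; I will sketch it. Fix a finite generating set $S$ of $G$. For each $n$, the morphism $h_n\colon G\to\F$ gives an action of $G$ on the Cayley graph $\mathcal{X}_\F$, and I define the \emph{displacement} of this action at a point $x\in\mathcal{X}_\F$ to be $\lambda_n(x):=\max_{s\in S} d_{\mathcal{X}_\F}(x,h_n(s)\cdot x)$. Since $\mathcal{X}_\F$ is a simplicial tree on which $\F$ acts cocompactly, this function attains a minimum; I pick a base point $*_n$ realizing (up to an additive constant) $\mu_n:=\inf_x\lambda_n(x)$. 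The rescaled pseudometrics $\tfrac{1}{\mu_n}d_{\mathcal{X}_\F}(\,\cdot\,*_n,\,\cdot\,*_n)$ then live in the compact space $\mathcal{PED}(G)$, so a subsequence converges to some pseudometric, hence (passing to the associated metric space) to a based $G$-space $(T,*)$; because $\mathcal{X}_\F$ is $0$-hyperbolic and the $0$-hyperbolicity inequality is a closed condition, $T$ is a real tree, and it inherits an isometric $G$-action.

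**Key steps, in order.** First, establish that $\mu_n\to\infty$: this is where I use that the $h_n$ are pairwise non-conjugate. If $\mu_n$ stayed bounded along a subsequence, then after conjugating each $h_n$ we could assume $*_n$ lies in a fixed ball around the identity vertex; but a ball of fixed radius in $\mathcal{X}_\F$ contains only finitely many vertices, so infinitely many of the $h_n(s)\cdot *_n$ would coincide for all $s\in S$, forcing infinitely many $h_n$ to be conjugate — a contradiction. (One must be slightly careful: bounded $\mu_n$ really forces, after conjugation, that the tuples $(h_n(s))_{s\in S}$ take finitely many values, hence the $h_n$ fall into finitely many conjugacy classes.) Second, with $\mu_n\to\infty$ in hand, extract the convergent subsequence by compactness of $\mathcal{PED}(G)$ and check the limit is a real tree as above. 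Third, verify the action on $T$ is non-trivial, i.e.\ has no global fixed point: by the normalization, $\max_{s\in S}\tfrac{1}{\mu_n}d(*_n,h_n(s)\cdot *_n)=1$ for every $n$, and this passes to the limit to give $\max_{s\in S}d_T(*,s\cdot *)=1\neq 0$, so $*$ is not fixed, hence $G$ does not fix a point. (If one wants $T$ minimal as well, one replaces $T$ by its unique minimal $G$-invariant subtree, which is non-empty precisely because the action is non-trivial; the lemma as stated only asks for non-triviality.)

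**Main obstacle.** The only genuinely substantive point is the divergence $\mu_n\to\infty$, and more precisely the bookkeeping that ``bounded displacement after recentering'' implies ``finitely many conjugacy classes''. Everything else — cocompactness of the $\F$-action giving a near-optimal base point, compactness of $\mathcal{PED}(G)$, and $0$-hyperbolicity being a closed condition so that the limit is an $\R$-tree — is soft and standard. I would therefore spend the bulk of the argument making the non-conjugacy hypothesis do its work at exactly that step, and treat the passage to the limit as routine invocation of the Bestvina–Paulin machinery, citing \cite{BesLimit} and \cite{PaulinGromov}.
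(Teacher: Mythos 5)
The paper states this lemma without any proof, crediting the construction to Bestvina \cite{BesLimit} and Paulin \cite{PaulinGromov}, so there is no in-paper argument to compare against; your sketch is the standard Bestvina--Paulin proof of exactly the kind those references give, and all the main steps (near-optimal recentering, divergence of $\mu_n$ from the pairwise non-conjugacy hypothesis, compactness of $\mathcal{PED}(G)$, $0$-hyperbolicity being a closed condition) are in place and correctly deployed.

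One inference as written is not valid, though your own setup already contains the fix. You conclude non-triviality of the limit action from ``$\max_{s\in S}d_T(*,s\cdot *)=1\neq 0$, so $*$ is not fixed, hence $G$ does not fix a point.'' The second ``hence'' does not follow from the first: a priori $G$ could fix some point of $T$ other than $*$. The correct deduction uses the near-optimality of $*_n$, not just the fact that $*$ moves. If $p\in T$ were a global fixed point, an approximating sequence $(p_n)$ in $\mathcal{X}_{\F}$ would have $\lambda_n(p_n)/\mu_n\to \max_{s\in S}d_T(p,s\cdot p)=0$; but $\lambda_n(p_n)\ge \mu_n - O(1)$ by the (near-)minimizing choice of $*_n$, so $\lambda_n(p_n)/\mu_n\to 1$, a contradiction. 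Since you already chose $*_n$ to minimize displacement, this is a one-line repair; you should just make the appeal to minimality explicit rather than treating ``$*$ moves'' as sufficient for ``no global fixed point.''
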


\subsection{Limit groups}

\begin{definition}
Let $G$ be a group and $(h_n)_{n<\omega}:G\rightarrow \F$ be a sequence of morphisms. Then the sequence $(h_n)_{n<\omega}$ is 
called convergent if for every $g\in G$, there exists $n_g$ such that either $h_n(g)\neq 1$ for all $n>n_g$ or $h_n(g)=1$ for all $n>n_g$. 

Moreover, if $(h_n)_{n<\omega}:G\rightarrow\F$ is a convergent sequence, then we define its stable kernel 
$\Ker h_n:=\{ g\in G \ | \ g \ \textrm{is eventually killed by}\ h_n \}$  
\end{definition}

\begin{definition}
Let $G$ be a finitely generated group. Then $G$ is a limit group if there exists a convergent sequence $(h_n)_{n<\omega}:G\rightarrow\F$ 
with trivial stable kernel.
\end{definition}

Limit groups can be given a constructive definition. To this end we define:

\begin{definition}
Let $\Delta:=(\mathcal{G}(V,E), (V_S,V_A,V_R))$ be a $GAD$ of a group $G$. For each $G_v$ with $v\in V_R$ we define its envelope 
$\tilde{G_v}$ in $\Delta$ in the following way: for every $a\in V_A$ we replace $G_a$ in $\mathcal{G}(V,E)$ by its peripheral subgroup, then $\tilde{G_v}$ 
is the group generated by $G_v$ together with the centralizers of incident edge groups.
\end{definition}

\begin{definition}[Strict morphisms]
Let $\eta:G\twoheadrightarrow L$ be an epimorphism and $\Delta:=(\mathcal{G}(V,E),$  $(V_S,V_A,V_R))$ be a GAD of $G$ in which every edge group is maximal abelian 
in at least one vertex group of the one edged splitting induced by the edge. Then $\eta$ is strict with respect to $\Delta$ if the following hold:
\begin{itemize}
 \item $\eta$ is injective on each edge group;
 \item $\eta$ is injective on $\tilde{G_v}$ for every $v\in V_R$;
 \item $\eta$ is injective on the peripheral subgroup of each abelian vertex group;
 \item $\eta(G_s)$ is not abelian for every $s\in V_S$.
\end{itemize}

\end{definition}

\begin{definition}
A group $L$ is a constructive limit group if it belongs to the following hierarchy of groups defined recursively:
\\
{\bf Base step.} Level $0$ consists of finitely generated free groups;\\ 
{\bf Recursive step.} A group $G$ belongs to level $i+1$ if it is either the free product of two groups that belong to 
 level $i$, or there exists a $GAD$, $\Delta$, for $G$ and a strict map $\eta:G\twoheadrightarrow H$ with respect to $\Delta$ 
 onto some $H$ that belongs to level $i$.

\end{definition}

\begin{theorem}[Sela]
Let $L$ be a finitely generated group. Then $L$ is a limit group if and only if it is a constructive limit group.
\end{theorem}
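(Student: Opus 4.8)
The plan is to prove the two directions separately, the "constructive $\Rightarrow$ limit" direction being the easier one and the converse being the substantial part.

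First I would prove that every constructive limit group is a limit group, by induction on the level in the hierarchy. At level $0$ a finitely generated free group is a limit group since the identity map (or any faithful embedding into $\F$) gives a convergent sequence with trivial stable kernel. For the recursive step, suppose $G$ has level $i+1$. If $G = A * B$ with $A,B$ of level $i$, one picks convergent sequences $h_n^A\colon A\to\F$ and $h_n^B\colon B\to\F$ with trivial stable kernels, and after conjugating the $h_n^B$ by a sufficiently "generic/long" element of $\F$ one forms $h_n = h_n^A * h_n^B$; a ping-pong argument shows that for each fixed nontrivial $g\in A*B$, written in normal form, $h_n(g)\neq 1$ for large $n$, so the stable kernel is trivial. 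If instead there is a strict map $\eta\colon G\twoheadrightarrow H$ onto a level-$i$ group $H$ and a GAD $\Delta$ of $G$, one precomposes a convergent sequence $f_n\colon H\to\F$ with trivial stable kernel with $\eta$, and then "unfolds" along $\Delta$ by precomposing with a sequence of modular automorphisms $\sigma_n\in \Mod(\Delta)$ (large Dehn twists along edge groups, and — for surface and abelian vertices — automorphisms that stretch the corresponding pieces); strictness of $\eta$ is exactly what guarantees that the stretched sequence $f_n\circ\eta\circ\sigma_n$ eventually detects every nontrivial element of $G$, so its stable kernel is trivial.

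For the converse, that every limit group is a constructive limit group, I would argue by induction on a suitable complexity invariant of $G$ (for instance, the rank of $G$, or more robustly a lexicographic pair involving rank and first Betti number). Given a finitely generated limit group $G$ that is freely indecomposable (the freely decomposable case being handled by splitting off free factors and applying the induction hypothesis to each factor), take a convergent sequence $h_n\colon G\to\F$ with trivial stable kernel and no subsequence that is eventually injective into $\F$ — otherwise $G$ embeds in $\F$ and is already a level-$0$ group. After discarding finitely many terms and passing to a subsequence, the $h_n$ are pairwise non-conjugate, so by Lemma~\ref{LimitAction} together with the Bestvina--Paulin construction the sequence yields a non-trivial action of $G$ on a real tree $T$ with the Rips-machine structure: $T$ decomposes as a graph of actions with pieces of discrete, axial, and surface type. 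One reads off this decomposition a GAD $\Delta$ of $G$ (surface pieces give surface-type vertices, axial pieces give abelian vertices, the simplicial part gives the rigid vertices and edges), arranging along the way that edge groups are abelian and maximal abelian in an adjacent vertex group. Then one defines $H$ to be the quotient of $G$ by the (non-trivial, by non-injectivity) kernel of a suitable "shortest" sequence of automorphisms-precomposed maps, so that the induced map $\eta\colon G\twoheadrightarrow H$ is strict with respect to $\Delta$; strictness on rigid envelopes, on edge groups, and on peripheral subgroups follows because these subgroups act elliptically with trivial arc stabilisers in $T$ and are therefore detected faithfully, while non-abelianity of the images of surface vertices comes from the surface pieces being genuinely two-dimensional. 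Finally one checks that $H$ has strictly smaller complexity than $G$, so by induction $H$ is a constructive limit group, whence $G$ is one too (being a strict quotient's source over a GAD of a lower-level group).

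The main obstacle I expect is the converse direction, and specifically two points within it: first, extracting a GAD of $G$ from the real-tree action in which \emph{every edge group is maximal abelian in at least one adjacent vertex group of the associated one-edge splitting} — this requires the Rips machine for stable actions plus some bookkeeping (shortening arguments, making abelian vertex groups "as large as possible") to put the decomposition in the normal form demanded by the definition of strict morphism; second, verifying the strictness clauses for $\eta$, which is really where the Bestvina--Paulin limiting analysis does its work, since one must know that the relevant subgroups (rigid envelopes, edge groups, peripheral subgroups) embed into the limit group $H$ rather than being collapsed — this is the analogue of the classical fact that vertex groups of the abelian JSJ are not killed, and it is where the hypothesis "trivial stable kernel, not eventually injective" is used most delicately. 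Ensuring the complexity drops (so the induction terminates) is a third, more routine but still necessary, point.
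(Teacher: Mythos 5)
The paper does not prove this theorem: it is stated as a known result of Sela (from the Makanin--Razborov / shortening-quotient machinery of Diophantine geometry over groups, also treated in the Bestvina--Feighn notes and in Champetier--Guirardel), and is cited without proof. So there is no in-paper proof to compare against, and your proposal has to be judged on its own.

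Your outline does follow the standard architecture: the forward direction by inductively building convergent sequences using ping-pong for free products and modular-automorphism stretching for strict maps, and the converse by Bestvina--Paulin limiting, the Rips machine to extract a GAD from the limit $\R$-tree, the shortening argument to produce a proper strict quotient, and induction. In that sense the skeleton is the right one. But there is a genuine gap in the termination of your induction. You propose to induct on the rank of $G$, or on a lexicographic pair involving rank and first Betti number, and to ``check that $H$ has strictly smaller complexity than $G$.'' Neither invariant is guaranteed to strictly decrease when passing from $G$ to a proper shortening quotient $H$: $H$ is a proper quotient, so its rank and first Betti number are at most those of $G$, but equality can and does occur (the abelianization of $H$ is a quotient of that of $G$, so $b_1$ can be preserved under a proper quotient whose kernel lies in the commutator subgroup, and rank is only monotone non-increasing). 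The termination of the descent is in fact one of the substantial theorems in the theory, not a bookkeeping step: it is Sela's descending chain condition for limit groups (there is no infinite sequence $G_1 \twoheadrightarrow G_2 \twoheadrightarrow \cdots$ of proper epimorphisms of limit groups), which is proved by a compactness/Zariski-closure argument and is essentially equivalent to the equational Noetherianity of $\F$. Without invoking some form of this, your induction is not known to terminate, so the proof is incomplete as written.

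A secondary, more technical point worth making explicit: the shortening argument you gesture at (precomposing $h_n$ by modular automorphisms and looking at the stable kernel of a shortest sequence in the $\Mod$-orbit) requires that $G$ be freely indecomposable and non-free, and the resulting shortening quotient is proper precisely because of the structure of the limit action produced by the Rips machine. You do state this, but when you then claim that strictness of $\eta$ ``follows because these subgroups act elliptically with trivial arc stabilisers,'' this conflates the elliptic subgroups of the limit tree with the subgroups on which one must verify injectivity; one actually has to show that the rigid vertex groups (enlarged to their envelopes), edge groups, and peripheral subgroups survive in the quotient by the stable kernel of the shortened sequence, which uses stability of the action and the normal-form / folding analysis rather than just ellipticity. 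This is a point you would need to spell out, but it is not a conceptual gap in the way the termination issue is.
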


\subsection{Graded limit groups}

A {\em graded limit group} is a limit group together with a distinguished finitely generated subgroup. We will 
be interested in a special kind of graded limit groups called {\em solid limit groups}.

\begin{definition}
Let $G$ be a limit group which is freely indecomposable with respect to a finitely generated subgroup $H$. 
We fix a finite generating set, $\Sigma$ for $G$ and a basis $\bar{a}$ for $\F:=\F(\bar{a})$.  
A morphism $h:G\rightarrow \F$ is short with respect to $Mod_H(G)$ if for every $\sigma\in Mod_H(G)$ and 
every $g\in \F$ that commutes with $h(H)$ we have that $max_{s\in\Sigma}\abs{h(s)}_{\F}\leq max_{s\in\Sigma}\abs{Conj(g)\circ h\circ \sigma}_{\F}$.
\end{definition}

\begin{definition}
Let $G$ be a limit group which is freely indecomposable with respect to a finitely generated subgroup $H$. 
Let $(h_n)_{n<\omega}:G\rightarrow \F$ be a convergent sequence of short morphisms with respect to $Mod_H(G)$. 
Then we call $G/\Ker h_n$ a shortening quotient of $G$ with respect to $H$.
\end{definition}

\begin{definition}[Solid limit group]
Let $S$ be a limit group and $H$ be a finitely generated subgroup of $S$. 
Suppose $S$ is freely indecomposable with respect to $H$. Then 
$S$ is solid with respect to $H$ if there exists a shortening quotient  
of $S$ with respect to $H$ which is isomorphic to $S$.
\end{definition}

\begin{example}
The surface group $\langle x_1,x_2,\ldots,x_{2k},e_1,e_2,\ldots,e_{2m} \ | \ [x_1,x_2][x_3,x_4]
 \ldots[x_{2k-1},$ $x_{2k}][e_1,e_2]$ $\ldots[e_{2m-1},e_{2m}]\rangle$ is a solid limit group with respect to the subgroup 
 $\langle e_1,\ldots,e_{2m}\rangle$.
\end{example}

\begin{definition}
Let $Sld$ be a solid limit group with respect to a finitely generated subgroup $H$ with generating set $\Sigma_H$. 
Let $(h_n)_{n<\omega}:Sld\rightarrow \F$. We call $(h_n)_{n<\omega}$ a flexible sequence if for every $n$, either: 
\begin{itemize}
 \item the morphism $h_n=h'_n\circ\eta_n$, where $\eta_n:Sld\twoheadrightarrow \F*\Gamma$ for some group $\Gamma$, 
 $H$ is mapped onto $\F$ by $\eta_n$, $h'_n:\F*\Gamma\rightarrow\F$ stays the identity on $\F$, and 
 $\eta_n$ is short with respect to $Mod_H(Sld)$; or   
 \item the morphism $h_n$ is short with respect to $Mod_H(Sld)$ and moreover 
 $$max_{g\in B_n}\abs{h_n(g)}_{\F}> 2^n(1+max_{s\in\Sigma_H}\abs{h_n(s)}_{\F})$$ 
 where $B_n$ is the ball of radius $n$ in the Cayley graph of $Sld$.
\end{itemize}
If $(h_n)_{n<\omega}:Sld\rightarrow\F$ is a convergent flexible sequence, then we call $Sld/\Ker h_n$ a flexible quotient of $Sld$.
\end{definition}

It is not hard to see, using the shortening argument, that flexible quotients are proper. Moreover one can 
define a partial order and an equivalence relation on the class of flexible quotients of a solid limit group. 
Let $Sld$ be a solid limit group with respect to a finitely generated subgroup $H$ 
and $\eta_i:Sld\twoheadrightarrow Q_i$ for $i\leq 2$ be flexible quotients with their canonical quotient maps. 
Then $Q_2\leq Q_1$ if $ker\eta_1\subseteq ker\eta_2$. And $Q_1\sim Q_2$ if there exists $\sigma\in Mod_H(Sld)$ 
such that $ker(\eta_1\circ\sigma)=ker\eta_2$.

\begin{theorem}[Sela]
Let $Sld$ be a solid limit group with respect to a finitely generated subgroup $H$. Assume that $Sld$ admits a flexible quotient. Then there exist  
finitely many classes of maximal flexible quotients. 
\end{theorem}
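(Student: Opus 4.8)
# Proof proposal for the finiteness of maximal flexible quotients

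\textbf{The plan} is to run a compactness argument in the space of projectivised equivariant pseudometrics on $Sld$, in the same spirit as the Bestvina-Paulin method recorded above, combined with the descending chain condition on limit quotients. The key idea is that flexible quotients are genuine quotients of $Sld$ (they are proper, as noted in the text via the shortening argument), so any strictly $\leq$-descending chain of them must stabilise; the remaining content is to upgrade this to \emph{finiteness} of the $\sim$-classes of \emph{maximal} ones. First I would fix a finite generating set $\Sigma_H$ of $H$ and a finite generating set $\Sigma$ of $Sld$. Suppose toward a contradiction that there are infinitely many pairwise non-equivalent maximal flexible quotients $\eta_n\colon Sld\twoheadrightarrow Q_n$. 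Since each $Q_n$ is itself a limit group, choose for each $n$ a convergent sequence $g^{(n)}_m\colon Q_n\to\F$ with trivial stable kernel, and precompose with $\eta_n$ to get $h^{(n)}_m\colon Sld\to\F$ with stable kernel exactly $\ker\eta_n$. By a diagonal choice we may extract, for each $n$, a single morphism $f_n\colon Sld\to\F$ that "remembers" $\ker\eta_n$ up to a large ball, e.g. $f_n$ kills the ball $B_n$ in $\ker\eta_n$ and is injective on the ball $B_n$ outside $\ker\eta_n$; moreover, using that each $\eta_n$ is (equivalent to) a flexible quotient, we may take the $f_n$ to be short with respect to $Mod_H(Sld)$ and to satisfy the growth estimate in the definition of a flexible sequence.

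\textbf{The core step} is then to apply Lemma~\ref{LimitAction} to the sequence $(f_n)_{n<\omega}$ (after discarding finitely many conjugates if necessary): we obtain a limiting action of $Sld$ on a real tree $(T,*)$, and the stable kernel of $(f_n)$ defines a limit quotient $Sld\twoheadrightarrow L_\infty$ acting faithfully (as a limit group) on $T$. The flexibility hypothesis on the $f_n$ — specifically, the fact that the $h_n$ were chosen short with respect to $Mod_H(Sld)$, together with the exponential growth of $\max_{g\in B_n}|h_n(g)|$ relative to $\max_{s\in\Sigma_H}|h_n(s)|$ — forces the action on $T$ to be nontrivial with $H$ elliptic (fixing $*$, after rescaling so that $H$ has bounded displacement). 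One then invokes the structure theory: $T$ decomposes as a graph of actions whose vertex actions are of simplicial, axial, or surface type, and from this decomposition one extracts a GAD $\Delta$ of $L_\infty$ (relative to $H$) in which the relevant non-rigid vertices survive. The shortening argument now produces, for large $n$, an element $\sigma_n\in Mod_H(Sld)$ that strictly shortens $f_n$ unless $f_n$ already factors through $L_\infty$ up to $Mod_H$-equivalence — contradicting the shortness of $f_n$ — so that eventually $\ker\eta_n\supseteq\ker(Sld\to L_\infty)$, i.e. $Q_n\leq L_\infty$ for all large $n$.

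\textbf{To finish}, I would argue that $L_\infty$ is itself (equivalent to) a flexible quotient of $Sld$, or at worst dominates one, so that each $Q_n$ with $n$ large is $\leq$ a \emph{fixed} flexible quotient. By maximality this forces $Q_n\sim L_\infty$ for all large $n$, contradicting pairwise non-equivalence. Combined with the observation that the $\leq$-predecessors of a fixed flexible quotient (which is a proper quotient of $Sld$, hence of strictly smaller "complexity") again satisfy the descending chain condition — here one uses the fact, implicit in the constructive description of limit groups, that properly descending chains of limit quotients of a fixed finitely generated group terminate — a straightforward induction on this complexity measure (e.g. the number of levels in the constructive hierarchy, or Sela's complexity of the associated abelian JSJ) yields that there are only finitely many maximal flexible quotients up to $\sim$.

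\textbf{The main obstacle} I anticipate is the shortening step: making precise that the limiting action on $T$ produces, via the decomposition into simplicial/axial/surface pieces, a modular automorphism $\sigma_n$ that actually \emph{strictly} decreases the length of $f_n$ — this requires the Rips machine and a careful treatment of the surface and axial vertex actions, exactly the technology that the later sections of the paper (Theorems~\ref{RigidGradedTower}, \ref{PrepareExtFormalSolutions}, \ref{ExtFormalSolutions}) are designed to handle. A secondary subtlety is the bookkeeping needed to guarantee that the diagonally chosen $f_n$ can \emph{simultaneously} be made short, satisfy the flexible-sequence growth inequality, and retain enough of $\ker\eta_n$; one has to interleave the "remember $\ker\eta_n$ on $B_n$" requirement with the shortening without the two demands conflicting, which is why one works on balls of growing radius and extracts a genuine limiting action rather than arguing at any finite stage.
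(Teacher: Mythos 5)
The paper does not actually prove this statement: it is stated as ``Theorem [Sela]'' and is a black-box import from Sela's work (essentially \cite[Theorem~2.9]{Sel3} and its surroundings). So there is no in-paper proof to compare against, and your attempt must stand on its own.

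Your high-level strategy (Bestvina--Paulin limit plus shortening) is the right circle of ideas, but there is a genuine gap at the central deduction. You form a diagonal sequence $f_n$ that agrees with $\ker\eta_n$ on the ball $B_n$, take the limiting action on a real tree, let $K_\infty$ be the stable kernel and $L_\infty = Sld/K_\infty$, and then assert that ``eventually $\ker\eta_n\supseteq K_\infty$, i.e.\ $Q_n\leq L_\infty$ for all large $n$.'' This does not follow. With your choice of $f_n$, an element $g$ lies in $K_\infty$ iff $g\in\ker\eta_n$ for all $n$ sufficiently large \emph{depending on $g$}; in other words $K_\infty=\liminf_n\ker\eta_n$. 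There is no uniform $N$ such that $K_\infty\subseteq\ker\eta_n$ for all $n\geq N$: $K_\infty$ is typically a strictly increasing union and need not be contained in any single $\ker\eta_n$. The shortening argument, applied to the sequence $(f_n)$, speaks about the sequence collectively (it shows the $f_n$ eventually factor through $L_\infty$ \emph{modulo modular automorphisms and up to the stable kernel}, or else can be shortened); it does not deliver, for each large fixed $n$, the containment $K_\infty\subseteq\ker\eta_n$ that you need to compare $Q_n$ with $L_\infty$ in the partial order. Without that, the maximality of the $Q_n$ cannot be brought to bear, and the contradiction does not materialise.

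A secondary weakness is the unproved claim that $L_\infty$ is itself (equivalent to) a flexible quotient, which is essential for invoking maximality at all, and the final ``induction on complexity'' is waved at rather than carried out. Sela's actual argument does not try to force each individual $Q_n$ under a single limit: it exploits the equational Noetherianity of $Sld$ (every sequence of limit quotients stabilises) together with a Makanin--Razborov-style covering argument --- the set of flexible morphisms is covered by morphisms factoring through finitely many limit quotients, each of which is dominated by a maximal flexible one, and DCC then bounds the number of $\sim$-classes. Your sketch would need to be restructured along those lines: argue about a finite covering of the \emph{space} of flexible morphisms rather than trying to absorb infinitely many prescribed quotients into one Bestvina--Paulin limit.
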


A morphism from a solid limit group to a free group that does not factor through one of the maximal flexible quotients (after precomposition by a modular automorphism) 
is called a {\em solid morphism}.

\section{Towers}\label{Towers}
In this section we are interested in limit groups that have a very special structure, namely the structure of a {\em tower}. A 
tower is built recursively adding {\em floors} to a given basis, which is taken to be a free product of fundamental groups of surfaces with free abelian 
groups. Each floor is built by ``gluing'' a finite set of {\em surface flats} and {\em abelian flats} to the previous one following specific rules, 
to be made precise in the next subsection.  

Limit groups that admit the structure of a tower play a significant role in 
the proof of the elementary equivalence of non abelian free groups. This class of limit groups is connected with Merzlyakov-type 
theorems as proved in \cite{Sel2}. We will analyse and further expand this connection in section \ref{Envelopes}.

\subsection{The construction of a tower}

We start with defining the notion of a surface flat. 

\begin{definition}[Surface flat]
Let $G$ be a group and $H$ be a subgroup of $G$. Then $G$ has the structure of a surface flat over $H$,  
if $G$ acts minimally on a tree $T$ and the action admits a Bass-Serre presentation 
$(T^1, T^0, \{\gamma_e\})$ such that:
\begin{itemize}
\item the set of vertices of $T^1$ is partitioned in two sets, $\{v\}$ and $V$, where $v$ is a surface type vertex; 
\item $T^1$ is bipartite between $v$ and $V(T^1)\setminus \{v\}$; 
\item $H$ is the free product of the stabilizers of vertices in $V$;
\item either there exists a retraction $r:G\to H$ that sends $\Stab_G(v)$ to a non abelian image  
or $H$ is cyclic and there exists a retraction $r': G * \Z \to H * \Z$ which sends $\Stab_G(v)$ to a non abelian image.
\end{itemize}

\end{definition}

 \begin{figure}[ht!]
\centering
\includegraphics[width=.7\textwidth]{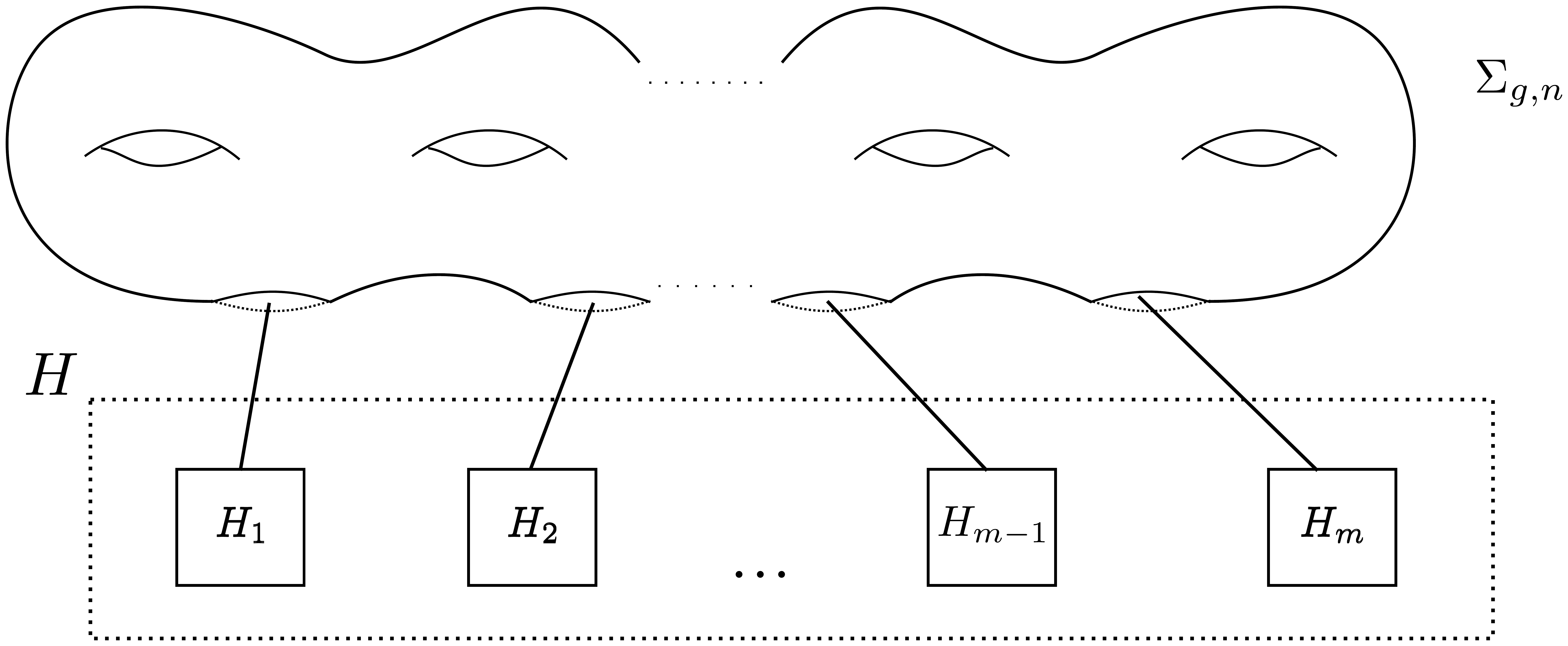}
\caption{A surface flat.}
\end{figure}

\begin{remark}
A more concise way to refer to a group $G$ that has the structure of a surface flat over a subgroup $H$ is to say 
that $G$ is obtained from $H$ by gluing a surface $\Sigma_{g,n}$ along its boundary onto the subgroups $\{E_i\ :\ i\leq n\}$ of $H$.
\end{remark}

\begin{example}
The surface group $\pi_1(\Sigma_{g})=\langle x_1,\ldots,x_{2g} \ | \ [x_1,x_2]\cdot\ldots\cdot[x_{2g-1},x_{2g}]\rangle$ 
has the structure of a surface flat over $\F_g:=\langle x_1,\ldots,x_g\rangle$. 

Concisely, $\pi_1(\Sigma_g)$ is obtained from $\F_g$ by gluing $\langle b,x_{g+1},\ldots,x_{2g} \ | 
\ b^{-1}\cdot[x_{g+1},x_{g+2}]\cdot\ldots\cdot$ $[x_{2g-1},x_{2g}]\rangle$ along its boundary onto $\langle [x_1,x_2]\ldots[x_{g-1},x_g]\rangle$

\begin{figure}[ht!]
\centering
\includegraphics[width=.9\textwidth]{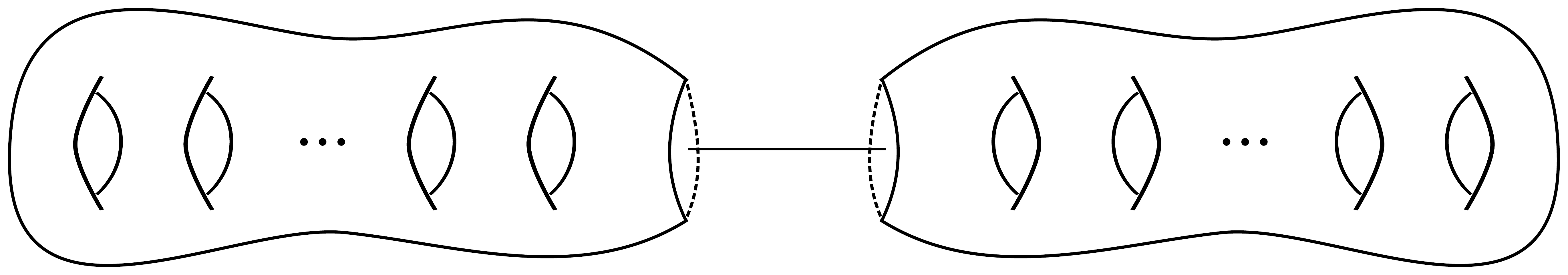}
\caption{An example of a surface group that has the structure of a surface flat over a free group.}
\end{figure}
\end{example}

We similarly define the notion of an abelian flat. 

\begin{definition}
Let $G$ be a group and $H$ be a subgroup of $G$. Then $G$ has the structure of an abelian flat over $H$,  
if $G$ acts minimally on a tree $T$ with a single orbit of edges, 
and the action admits a Bass-Serre presentation $(T^1=T^0, T^0)$ so that if $e=\{u,v\}$ is the unique edge in $T^0$, 
then $\Stab_G(u)=H$, $\Stab_G(e)$ is a maximal abelian subgroup of $H$, which we call the peg of the abelian flat, and 
$\Stab_G(v)=\Stab_G(e)\oplus\Z^m$ for some $m<\omega$. 
\end{definition}

\begin{figure}[ht!]
\centering
\includegraphics[width=.5\textwidth]{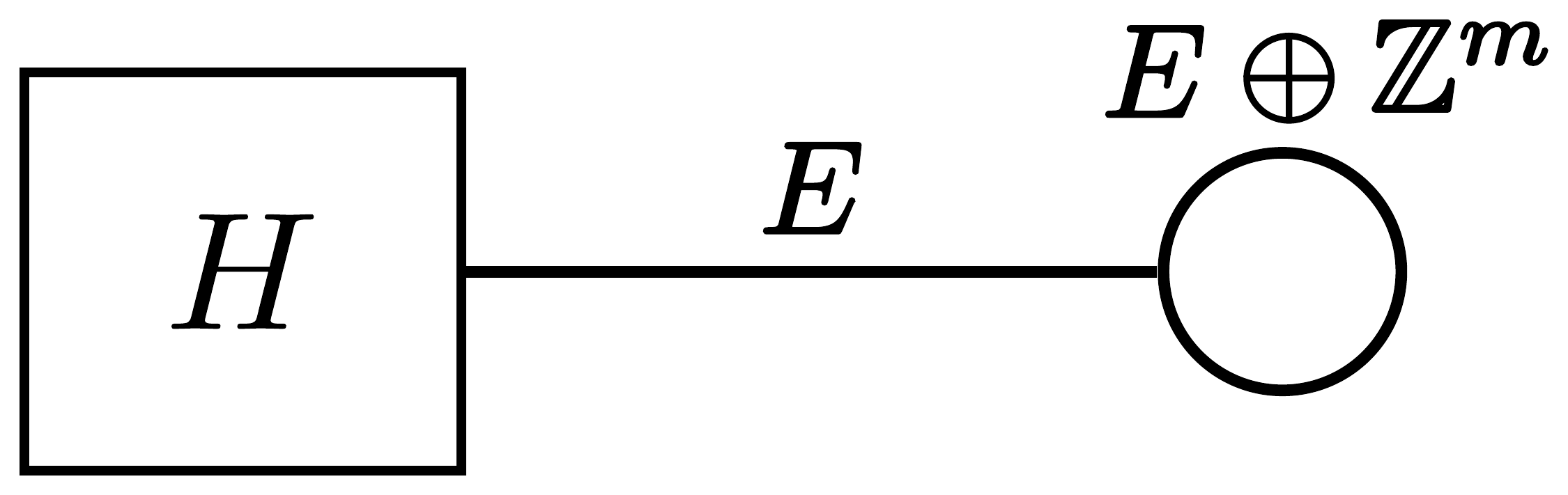}
\caption{An abelian flat.}
\end{figure}

\begin{remark}
A more concise way to refer to a group $G$ that has the structure of an abelian flat over a subgroup $H$, is to say that 
$G$ is obtained from $H$ by gluing a free abelian group $\Z^n$ along the (maximal abelian) subgroup $E$ of $H$.
\end{remark}

We note that when we say ``gluing $\Z^n$ along the subgroup $E$ of $H$'', the outcome will really be the amalgamated free product $(E\oplus\Z^n)*_EH$, 
but we keep this terminology as it will be convenient in the sequel.

We observe that if $G$ has the structure of an abelian flat over a subgroup $H$, then it is not hard to find a retraction $r:G\rightarrow H$:  
one can use the projection of $E\oplus\Z^m$ to $E$ and extend this to a morphism from $G$ to $H$ which 
fixes $H$.

\begin{example}
The group $G:=\langle x_1,\ldots,x_n,z_1,\ldots,z_k \ | \ x_1^2\cdot\ldots\cdot x_n^2=1, [x_1,z_i], [z_i,z_j], i,j\leq k\rangle$ 
has the structure of a free abelian flat over the subgroup $\langle x_1,\ldots, x_n \ | \ x_1^2\cdot\ldots\cdot x_n^2 \rangle$.

Concisely, $G$ is obtained from $\langle x_1,\ldots, x_n \ | \ x_1^2\ldots x_n^2\rangle$ by gluing the free 
abelian group $\Z^k$ along the subgroup $\langle x_1\rangle$ of $\langle x_1,\ldots, x_n \ | \ x_1^2\ldots x_n^2\rangle$.
\end{example}

We can combine surface and abelian flats in order to obtain the ``floors'' of a tower.

\begin{definition}[Floor]
Let $G$ be a group and $H$ be a subgroup of $G$. Then $G$ has the structure of a floor over $H$,  
if $G$ acts minimally on a tree $T$ and the action admits a Bass-Serre presentation 
$(T^1, T^0, \{\gamma_e\})$, where the set of vertices of $T^1$ is partitioned in three subsets, $V_S, V_A$ and $V_R$, 
such that:
\begin{itemize}
\item each vertex in $V_S$ is a surface type vertex; 
\item for each vertex $u\in V_2$, its stabilizer $G_{u}$ is a free abelian group;  
\item the tree $T^1$ is bipartite between $V_S\cup V_A$ and $V_R$; 
\item the subgroup $H$ of $G$ is the free product of the stabilizers of vertices in $V_R$;
\item for each $u\in V_A$, there is a unique edge $e=\{v,u\}$ connecting $u$ to a vertex $v$ in $V_R$. Moreover, 
$\Stab_G(e)$ is maximal abelian in $G$ and $Stab_G(u)=Stab_G(e)\oplus\Z^m$. In addition, the stabilizer $Stab_{G}(e)$ of $e$ 
cannot be conjugated to any other stabilizer $\Stab_G(e')$ for $e'\neq e$ an edge connecting  
a vertex in $V_A$ to a vertex in $V_R$;
\item either there exists a retraction $r:G\to H$ that, for each $v\in V_S$, sends $\Stab_G(v)$ to a non abelian image  
or $H$ is cyclic and there exists a retraction $r': G * \Z \to H * \Z$ that, for each $v\in V_S$, sends $\Stab_G(v)$ to a non abelian image.
\end{itemize}
\end{definition}

 \begin{figure}[ht!]
\centering
\includegraphics[width=.9\textwidth]{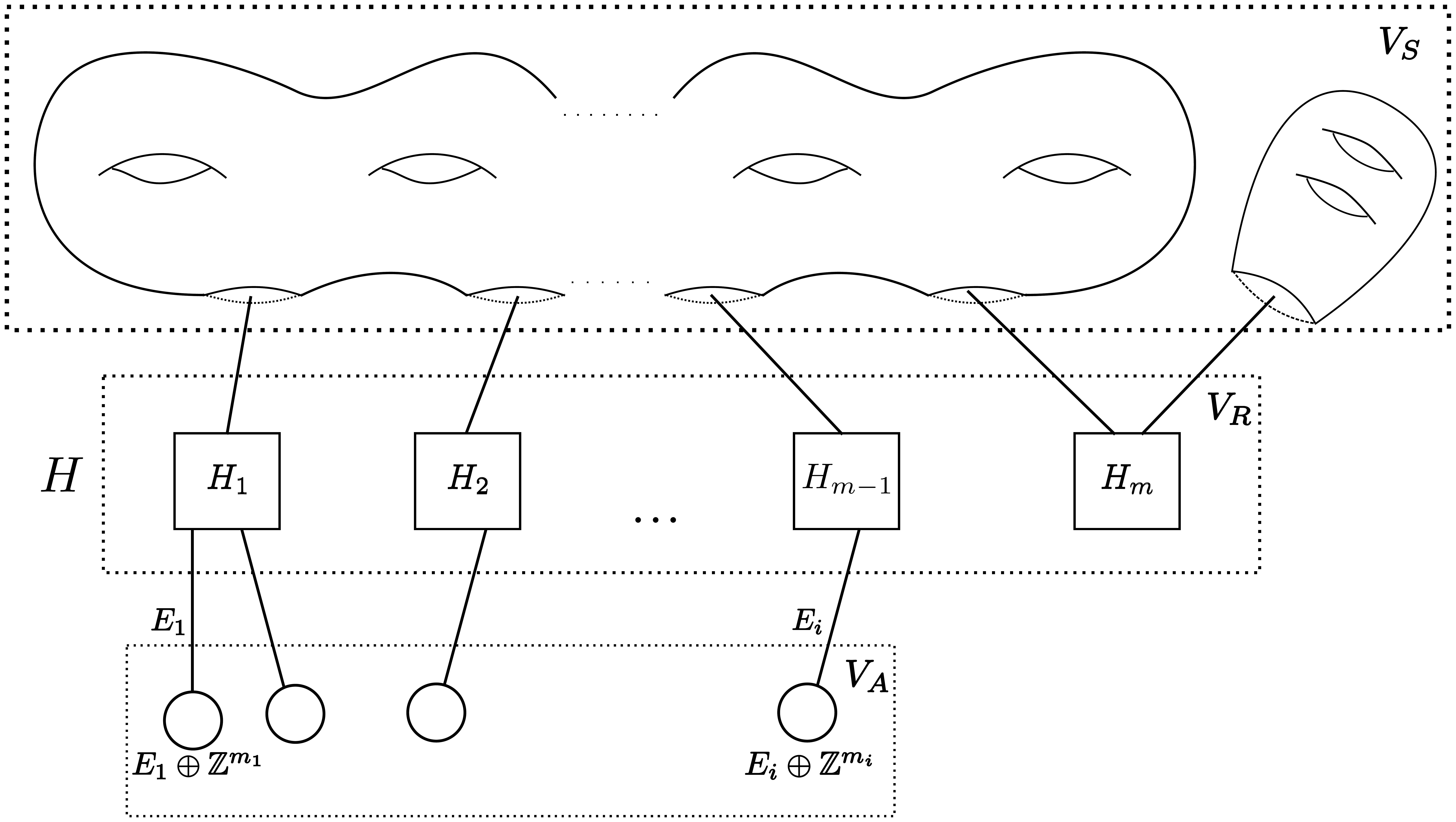}
\caption{A floor together with the partition of its vertices}
\end{figure}

%%% Add comments about abelian flats in a floor ans some terminology for use in the %%%

In the opposite direction a floor can be {\em decomposed into flats} in many possible ways, i.e. a floor can be seen as 
a sequence of surface and abelian flats, and we will often see such a sequence as giving a preferred order to the flats of the floor. 

We can now bring everything together to define:

\begin{definition}
A group $G$ has the structure of a tower (of height $m$) over a subgroup $H$ if there 
exists a sequence $G=G^m>G^{m-1}>\ldots>G^0=H$ such that for each $i$, $0\leq i<m$, 
one of the following holds:
\begin{itemize}
 \item[(i)] the group $G^{i+1}$ has the structure of a floor over $G^i$, in which $H$ is contained in one of the vertex groups that generate $G_i$ in the 
 floor decomposition of $G^{i+1}$ over $G^i$. Moreover, the pegs of the abelian flats of the floor
 are glued along (maximal abelian) groups that are not conjugates of each other and 
 they cannot be conjugated into groups which correspond to abelian flats of any previous floor;
 \item[(ii)] the group $G^{i+1}$ is a free product of $G^{i}$ with a finitely generated free group.
\end{itemize}

\end{definition}

The next lemma follows from the definition of a constructible limit group.

\begin{lemma}
If $G$ has the structure of a tower over a limit group, then $G$ is a limit group.
\end{lemma}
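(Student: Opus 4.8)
The plan is to show that $G$ is a constructive limit group by downward induction on the height $m$ of the tower, building the associated hierarchy level by level. First I would observe that the base case is immediate: a tower of height $0$ over $H$ is just $H$ itself, which is a limit group by hypothesis, and by Sela's theorem (the equivalence of limit groups and constructive limit groups) it occupies some level of the hierarchy. For the inductive step, suppose the result holds for towers of height $m-1$, so that $G^{m-1}$ is a constructive limit group, say at level $i$; I would show $G=G^m$ is a constructive limit group, hence a limit group by the same theorem of Sela.

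The step from $G^{m-1}$ to $G^m=G$ is one of the two cases in the definition of a tower. In case (ii), $G$ is the free product of $G^{m-1}$ with a finitely generated free group; since a finitely generated free group is at level $0$ and hence at level $i$ (levels are cumulative), $G$ is a free product of two groups at level $i$, so $G$ is at level $i+1$. In case (i), $G$ has the structure of a floor over $G^{m-1}$; the task is to exhibit a GAD $\Delta$ of $G$ together with a strict epimorphism $\eta:G\twoheadrightarrow G^{m-1}$ with respect to $\Delta$. The natural candidate for $\Delta$ is the graph of groups underlying the floor decomposition, with $V_S$ the surface-type vertices, $V_A$ the abelian vertices, and $V_R$ the rigid vertices whose stabilizers free-generate $G^{m-1}$; and the natural candidate for $\eta$ is the retraction $r:G\to G^{m-1}$ guaranteed by the last clause in the definition of a floor (in the cyclic case one uses $r':G*\Z\to H*\Z$ and restricts). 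One then verifies the four conditions in the definition of a strict morphism: $\eta$ is injective on each edge group (these are abelian and $\eta$ is a retraction fixing the rigid part, so it is faithful on the incident edge groups by the maximality conditions built into the floor), injective on each rigid-vertex envelope $\widetilde{G_v}$ (again because $\eta$ fixes $G^{m-1}$, which contains $G_v$, and the centralizers of incident edge groups are controlled by the maximal-abelian hypotheses on the pegs), injective on the peripheral subgroup of each abelian vertex group (the peg maps isomorphically under $r$ since $r$ is the projection $E\oplus\Z^m\to E$ followed by the inclusion), and finally $\eta(G_s)$ is non-abelian for each surface vertex $s$ — which is precisely the content of the phrase ``sends $\Stab_G(v)$ to a non abelian image'' in the definition of a floor. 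One must also check the standing hypothesis in the definition of strict morphisms that every edge group is maximal abelian in at least one vertex group of the induced one-edged splitting; for the abelian flats this is guaranteed by ``$\Stab_G(e)$ is maximal abelian in $G$'', and for the surface-flat boundary edges it follows from the surface-type vertex condition together with a standard argument.

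The main obstacle I expect is the bookkeeping in case (i) when $H$ (equivalently $G^{m-1}$ restricted to the relevant vertex) is cyclic and one must pass to $G*\Z$: one needs to argue that $G$ being a constructive limit group follows from $G*\Z$ being one, which is true because a constructive limit group is freely indecomposable into cyclic-plus-rest pieces in a controlled way — more precisely, if $G*\Z$ is a limit group then so is its free factor $G$, since limit groups are closed under taking finitely generated subgroups. A secondary subtlety is that the decomposition of a floor into flats is not canonical, so one should either fix one such decomposition or note that the strict-morphism conditions are insensitive to the choice. Everything else is essentially unwinding the definitions, so the proof is short; the content is really the observation that ``floor'' was engineered to be exactly a ``strict GAD'' situation.
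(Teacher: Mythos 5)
Your proof is correct and takes exactly the approach the paper intends: the paper gives no argument beyond the remark ``follows from the definition of a constructible limit group,'' and your proposal simply unwinds that definition by induction on tower height, recognizing each floor as providing a strict GAD via the retraction and each free-product step as the free-product clause of the hierarchy. The two subtleties you flag (the cyclic case via $G*\Z$, and the maximal-abelian-in-a-vertex-group-of-the-one-edged-splitting hypothesis) are genuine but resolve as you indicate, so this is a faithful elaboration of the paper's one-line justification.
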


If $G$ has the structure of a tower over a subgroup $H$ it will be useful to collect the information witnessing it, thus we 
define:

\begin{definition}
Suppose $G$ has the structure of a tower (of height $m$) over $H$. Then the tower corresponding to $G$, denoted by $\mathcal{T}(G,H)$, 
is the following collection of data:
$$((\mathcal{G}(G^1,G^0),r_1),(\mathcal{G}(G^2,G^1),r_2),\ldots,(\mathcal{G}(G^m,G^{m-1}),r_m))$$
where:
\begin{itemize}
 \item the splitting $\mathcal{G}(G^{i+1},G^i)$ is the splitting that witnesses that $G^{i+1}$ has the structure of a floor over $G^i$ (respectively the 
  free splitting $G^{i}*\F_n$ for some finitely generated free group $\F_n$);
  \item the morphism $r_{i+1}:G^{i+1}\rightarrow G^i$ (or $r_{i+1}:G^{i+1}\rightarrow G^{i}*\Z$) is the retraction that witnesses 
  that $G^{i+1}$ has the structure of a floor over $G^{i}$ (respectively the retraction $r_{i+1}:G^i*\F_n\rightarrow G^i$).
\end{itemize}
\end{definition}

\begin{remark} The notation $\mathcal{G}(G)$ will refer to a splitting of $G$ as a graph of groups. The notation 
 $\mathcal{G}(G,H)$ will refer either to a free splitting of $G$ as $H*\F_n$ or to a splitting that 
 corresponds to a floor structure of $G$ over $H$.
\end{remark}

A tower in which no abelian flat occurs in some (any) decomposition of its floors into flats is called a {\em hyperbolic tower}. Furthermore, 
if a floor consists only of abelian flats we call it an {\em abelian floor}. 

\noindent
For the rest of the paper we assume the following:
\\ \\
\noindent
{\bf Convention:} Suppose $\mathcal{T}(G,\F)$ is a tower. Let $\{E_j\}_{j\in J}$  
be the collection of pegs that correspond to the abelian flats that occur along the floors of the tower. 
Let $\{E_j\}_{j\in J'}$, with $J'\subseteq J$, be the subcollection  
of the pegs that can be conjugated into a subgroup of the base floor $\F$, 
i.e. there is $\gamma_j\in G$ such that $E_j^{\gamma_j}\leq \F$ for every $j\in J'$. 

Then, we assume that:
\begin{enumerate}
 \item when the above subcollection is not empty, the first floor $\mathcal{G}(G^1,G^0)$ of the tower $\mathcal{T}(G,\F)$ 
  consists only of the abelian flats corresponding to the above subcollection and glued along $E_j^{\gamma_j}$ 
  to $\F$, and each floor above the first (abelian) floor is either a free product or it consists of a single flat (abelian or surface);
 \item when the above subcollection is empty, we assume that each floor is either a free product or it consists of a single flat (abelian or surface). 
\end{enumerate}

\subsection{Twin towers}

We next work towards constructing a tower by ``gluing'' two copies of a given tower together.  

\begin{definition}\label{FloorDouble}
Suppose $H$ has the structure of an abelian floor over $\F$ and $\mathcal{G}(H,\F)$ is the splitting witnessing it. 
Let $\{\Z^{m_i}\}_{i\in I}$ be the collection of the free abelian groups that we glue along the corresponding pegs $\{E_i\}_{i\in I}$ 
in forming the abelian flats of the floor.

Then the double of $H$ with respect to $\mathcal{G}(H,\F)$, denoted by $H_{Db}:=H_{Db}(\mathcal{G}(G,\F))$, is the group obtained 
as the fundamental group of a graph of groups in which 
all the data is as in $\mathcal{G}(H,\F)$ apart from replacing $\{\Z^{m_i}\}_{i\in I}$ 
by their doubles $\{\Z^{m_i}\oplus\Z^{m_i}\}_{i\in I}$. The above graph of groups $Db(\mathcal{G}(H,\F)):=\mathcal{G}(H_{Db},\F)$ 
is called the floor double of $\mathcal{G}(H,\F)$ and naturally witnesses that $H_{Db}$ is an abelian floor over $\F$.

%More generally, if $f:H\hookrightarrow G$ is an embedding of $H$ into a group $G$. 
%Then the double of $G$ with respect to $\mathcal{G}(H,\F)$ and $f$, denoted by $G_{Db}:=G_{Db}(f,\mathcal{G}(H,\F))$, is the group obtained by gluing 
%an abelian flat of rank $m_i$ to $G$ along the image $E_i\oplus\Z^{m_i}$ under $f$ for each $i\in I$. We denote this 
%latter graph of groups by $\mathcal{G}(G_{Db},G)$.
\end{definition}

 \begin{figure}[ht!]
\centering
\includegraphics[width=.7\textwidth]{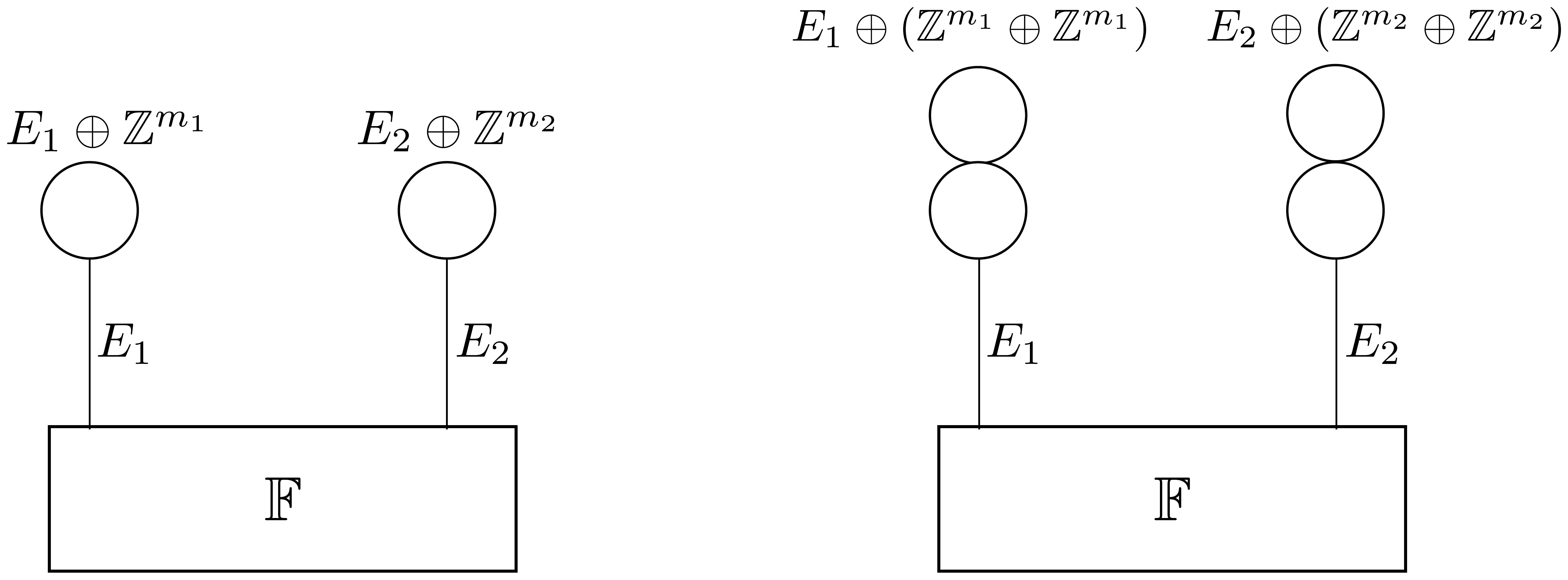}
\caption{An abelian floor over a free group and its double.}
\end{figure}

\begin{lemma}\label{DoubleEmbedd}
Suppose $H$ has the structure of an abelian floor 
$\mathcal{G}(H,\F)$ over $\F$. Then $H$ admits two natural embeddings $f_1,f_2$ into $H_{Db}$. 

Moreover, for each $i\leq 2$, the group $H_{Db}$ admits an abelian floor structure over $f_i(H)$. 
\end{lemma}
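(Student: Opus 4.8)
\emph{Proof plan.} Write $H = \pi_1(\mathcal{G}(H,\F))$. By the definition of an abelian floor, $\mathcal{G}(H,\F)$ has rigid vertex groups whose free product is $\F$ and, for each $i$ in the index set $I$, one abelian vertex group $A_i := E_i\oplus\Z^{m_i}$ attached to the rigid part along the peg $E_i$. By Definition~\ref{FloorDouble}, $H_{Db} = \pi_1(\mathcal{G}(H_{Db},\F))$ is obtained from the same graph of groups by replacing each $A_i$ with $\hat A_i := E_i\oplus\Z^{m_i}\oplus\Z^{m_i}$; the two copies of $\Z^{m_i}$ single out subgroups $A_i^{(1)} := E_i\oplus\Z^{m_i}\oplus 0$ and $A_i^{(2)} := E_i\oplus 0 \oplus\Z^{m_i}$ of $\hat A_i$, each equipped with the obvious isomorphism onto $A_i$ restricting to the identity on $E_i$. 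The plan is first to define, for $j = 1,2$, a homomorphism $f_j : H \to H_{Db}$ which is the identity on the rigid vertex groups and on the Bass--Serre stable letters and is the isomorphism $A_i \cong A_i^{(j)}$ on each abelian vertex group. Since every edge monomorphism of $\mathcal{G}(H,\F)$ restricts on $E_i$ to the same inclusion used in $\mathcal{G}(H_{Db},\F)$, the defining relations of $\pi_1(\mathcal{G}(H,\F))$ are respected, so $f_j$ is well defined.

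Injectivity would follow from a retraction: let $\rho_j : H_{Db} \to H$ be the identity on the rigid vertex groups and on the stable letters and, on each $\hat A_i$, the projection that kills the copy of $\Z^{m_i}$ \emph{not} carrying $f_j$ and maps $A_i^{(j)}$ isomorphically onto $A_i$. This is well defined because it fixes each $E_i$, and $\rho_j \circ f_j = \id_H$ by construction; hence $f_j$ is injective, and $f_1, f_2$ are the two asserted embeddings.

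For the second assertion, fix $j$; say $j = 1$, the case $j=2$ being symmetric after swapping the two copies of each $\Z^{m_i}$. I would realize an abelian floor structure of $H_{Db}$ over $f_1(H)$ via the following graph of groups $\Gamma$: a single rigid vertex $v_0$ with $\Stab(v_0) = f_1(H)$; for each $i \in I$ an abelian vertex $u_i$ with $\Stab(u_i) = \hat A_i$; and one edge $e_i$ from $u_i$ to $v_0$ with $\Stab(e_i) = A_i^{(1)}$, included into $f_1(H)$ as the $i$-th abelian vertex group of the copy $f_1(\mathcal{G}(H,\F))$ of $\mathcal{G}(H,\F)$, and into $\hat A_i$ as $A_i^{(1)}$. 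To identify $\pi_1(\Gamma)$ with $H_{Db}$ compatibly with $f_1$, I would refine $\Gamma$ at $v_0$ by the graph of groups $f_1(\mathcal{G}(H,\F))$ (the edges $e_i$ are elliptic there, being attached along a vertex group); the resulting graph of groups differs from $\mathcal{G}(H_{Db},\F)$ only by an extra edge joining each $A_i^{(1)}$-vertex to $u_i$ whose edge group is all of $A_i^{(1)}$, so collapsing those edges, which amounts to the trivial identity $A_i^{(1)} *_{A_i^{(1)}} \hat A_i = \hat A_i$, returns $\mathcal{G}(H_{Db},\F)$ exactly. It then remains to check the abelian-floor axioms for $\Gamma$: there are no surface vertices; each $\Stab(u_i) = \hat A_i$ is free abelian; $\Gamma$ is bipartite between $\{v_0\}$ and $\{u_i\}_{i\in I}$; the base $f_1(H)$ is the free product of the rigid vertex stabilizers (just $v_0$); each $u_i$ meets the rigid part in the unique edge $e_i$, with $\Stab(u_i) = \Stab(e_i)\oplus\Z^{m_i}$; each peg $\Stab(e_i) = E_i\oplus\Z^{m_i}$ is maximal abelian in $f_1(H) \cong H$, since $E_i$ maximal abelian in $\F$ forces the centralizer of $E_i$ in $\F *_{E_i} A_i$ to equal $A_i$; the pegs $\Stab(e_i)$ are pairwise non-conjugate in $H_{Db}$, inherited from the non-conjugacy of the $E_i$ (the $\hat A_i$ are stabilizers of vertices in distinct $H_{Db}$-orbits); and the retraction $\rho_1$ constructed above witnesses the last clause, which is vacuous on surface vertices as there are none.

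The well-definedness of $f_j$ and $\rho_j$ and the centralizer and normal-form computations behind maximality and non-conjugacy of the new pegs are routine. The step that needs genuine care is the refinement/collapse of graphs of groups identifying $\pi_1(\Gamma)$ with $H_{Db}$: one must track the edge monomorphisms so that the identification is compatible with the inclusion $f_1(H) \hookrightarrow H_{Db}$ and so that the new pegs $A_i^{(1)}$ — which are maximal abelian only in the base $f_1(H)$, not in $H_{Db}$ itself — indeed play the role of pegs in the definition of a floor read relative to its base.
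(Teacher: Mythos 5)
Your proof is correct and follows essentially the same route as the paper's: $f_1$ is the inclusion, $f_2$ is the identity on $\F$ carrying each $\Z^{m_i}$ onto the second copy, and the floor structure over $f_j(H)$ has pegs $E_i\oplus\Z^{m_i}$ (the maximal abelian subgroups of $H$ containing the original pegs $E_i$). The paper states this tersely and leaves the verifications implicit; you have added the useful details (the retractions $\rho_j$, the explicit graph of groups $\Gamma$, the refinement/collapse identification $\pi_1(\Gamma)\cong H_{Db}$), which the paper merely regards as ``clear.''
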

\begin{proof} 
The first embedding $f_1$ can be taken to be the identity since $H$ is a subgroup of $H_{Db}$ and clearly 
$H_{Db}$ has an abelian floor structure over $H$ with the pegs corresponding to the maximal abelian groups of $H$ that 
contain the pegs of $\mathcal{G}(H,\F)$. 
The second embedding $f_2$ is obtained as follows: 
\begin{itemize}
\item it agrees with $Id$ on $\F$; and
\item it sends each free abelian group $\Z^m$ that is glued along a peg $E$ in $\F$ in forming the abelian flats of the abelian floor $\mathcal{G}(H,\F)$ 
isomorphically onto the corresponding free abelian group glued along the peg in $H$ that contains $E$ 
in forming the abelian flats of the abelian floor $\mathcal{G}(H_{Db},H)$. 
\end{itemize} 

\end{proof}

The following lemmata are immediate.

\begin{lemma}\label{FreeProdDouble}
Let $H$ be a subgroup of a group $G$. Suppose $H$ has the structure of an abelian floor 
$\mathcal{G}(H,\F)$ over $\F$. Let $G_{Db}:=H_{Db}*_HG$.  
Let $\mathcal{G}(G)$ be a splitting of $G$ in which $H$ is a subgroup of a vertex group $G_v$. Then $G_{Db}$ 
is isomorphic to the fundamental group of the graph of groups that has the same data as $\mathcal{G}(G)$ apart 
from replacing $G_v$ by $(G_v)_{Db}:=H_{Db}*_HG_v$.
\end{lemma}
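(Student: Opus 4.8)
The plan is to treat this as a routine manipulation of graphs of groups. First I note that the abelian-floor hypothesis is essentially inert here: it serves only to give meaning to $H_{Db}$ through Definition \ref{FloorDouble}, and the argument uses nothing about $H_{Db}$ beyond the inclusion $H\hookrightarrow H_{Db}$. Likewise, the only feature of $\mathcal{G}(G)$ that matters is the chain of natural embeddings $H\hookrightarrow G_v\hookrightarrow G=\pi_1(\mathcal{G}(G))$ supplied by Bass--Serre theory.

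My approach is to build a common refinement and read off the fundamental group in two ways. Form the graph of groups $\mathcal{G}^+$ obtained from $\mathcal{G}(G)$ by adjoining a new vertex $w$ with vertex group $H_{Db}$ and a new edge $f=\{v,w\}$ with edge group $H$, whose two edge morphisms are the inclusions $H\hookrightarrow G_v$ and $H\hookrightarrow H_{Db}$. On one hand, collapsing the edge $f$ --- that is, merging $v$ and $w$ and amalgamating their vertex groups over $H$, a move which does not change the fundamental group (cf.\ \cite{BFNotesOnSela}) --- turns $\mathcal{G}^+$ precisely into the graph of groups described in the statement, namely $\mathcal{G}(G)$ with $G_v$ replaced by $(G_v)_{Db}=H_{Db}*_HG_v$ and the incident edge morphisms post-composed with $G_v\hookrightarrow(G_v)_{Db}$; hence $\pi_1(\mathcal{G}^+)$ is the group we wish to identify. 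On the other hand, $w$ is a leaf of $\mathcal{G}^+$, so $f$ is a separating edge whose removal splits $\mathcal{G}^+$ into the graph of groups $\mathcal{G}(G)$ (the side containing $v$) and the single vertex group $H_{Db}$; the standard amalgam decomposition of $\pi_1$ along a separating edge then gives $\pi_1(\mathcal{G}^+)\cong\pi_1(\mathcal{G}(G))*_HH_{Db}=G*_HH_{Db}=G_{Db}$, with $H$ embedded in $G$ via $H\le G_v\le G$ and in $H_{Db}$ by inclusion. Comparing the two computations yields the desired isomorphism, and by construction it is the identity on each $G_u$ and on $H_{Db}$.

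As a more pedestrian alternative I would compare presentations directly: once a maximal subtree of the underlying graph of $\mathcal{G}(G)$ is fixed, the standard presentation of the modified graph of groups and the presentation of $H_{Db}*_HG$ obtained by plugging the graph-of-groups presentation of $G$ into the amalgam consist of exactly the same generators and relators --- the generators and relators of all the $G_u$ and of $H_{Db}$, the relations $t_et_{\bar e}=1$, $t_e=1$ for $e$ in the subtree, $f_e(a)=t_ef_{\bar e}(a)t_e^{-1}$, and the identification of $H\le G_v$ with $H\le H_{Db}$. That the resulting group is independent of the chosen maximal subtree is inherited from the corresponding fact for fundamental groups of graphs of groups recalled in the excerpt.

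I do not expect a genuine obstacle. The only points deserving a word of care are that the natural maps of vertex groups into the fundamental group of a graph of groups are injective --- so that ``$H\le G$'' and the amalgam $H_{Db}*_HG$ are literally meaningful --- which is part of Bass--Serre theory, and the purely formal check that adjoining $f$ and $w$ to $\mathcal{G}(G)$ leaves the rest of $\mathcal{G}(G)$ untouched. Both are immediate once the refinement $\mathcal{G}^+$ is set up, which is why the authors classify the lemma as immediate.
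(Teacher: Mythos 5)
Your argument is correct. The paper offers no proof of this lemma (it is declared ``immediate''), and your refinement--collapse argument --- adjoin a leaf vertex $w$ with vertex group $H_{Db}$ and edge group $H$ over the edge $\{v,w\}$, then compute $\pi_1(\mathcal{G}^+)$ once by collapsing the new edge into $(G_v)_{Db}=H_{Db}*_HG_v$ and once by cutting the separating edge to get $G*_HH_{Db}$ --- is exactly the standard Bass--Serre justification the authors have in mind.
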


\begin{lemma}\label{MaxAb0}
Suppose $G$ has an abelian floor structure over a limit group $L$. Let $B_1, B_2$ be non conjugate (in $L$) maximal abelian subgroups of $L$ 
that cannot be conjugated into any of the pegs of the abelian floor. Then $B_1,B_2$ are non conjugate maximal abelian subgroups of $G$.
\end{lemma}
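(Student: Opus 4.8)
\emph{The plan} is to read both statements off the action of $G$ on the Bass--Serre tree $T$ of the abelian floor splitting $\mathcal{G}(G,L)$, using that in that splitting $L$ appears as a genuine vertex stabilizer. Recall that $L$, being a limit group, is commutative transitive (indeed CSA), so that $C_L(y)$ is abelian for every nontrivial $y\in L$; equivalently, every maximal abelian subgroup of $L$ is malnormal. An abelian floor has a single rigid vertex (each abelian vertex is joined by a single edge to $V_R$, and the underlying graph is connected), so $T$ carries a vertex $v_0$ with $\Stab_G(v_0)=L$, and every edge of $T$ incident to $v_0$ has stabilizer of the form $hE_ih^{-1}$ with $h\in L$ and $E_i$ one of the pegs: indeed $\Stab_G(v_0)=L$ acts transitively on the incident edges in each $G$-orbit, and the orbit representative coming from the Bass--Serre presentation has the peg $E_i$ itself as stabilizer.

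From this we record two facts. (1) If $b\in L\setminus\{1\}$ fixes an edge of $T$ incident to $v_0$, then $b\in hE_ih^{-1}$ for some $h\in L$ and some peg $E_i$. (2) If moreover $b$ lies in a maximal abelian subgroup $B$ of $L$, then $h^{-1}Bh=E_i$: the abelian group $C_L(h^{-1}bh)$ contains both the maximal abelian subgroup $E_i$ and the group $h^{-1}Bh$, hence $h^{-1}Bh\subseteq C_L(h^{-1}bh)=E_i$, and maximality of $h^{-1}Bh$ makes this an equality. Combining (1) and (2): if $B\leq L$ is maximal abelian and not conjugate in $L$ into any peg, then no nontrivial element of $B$ fixes an edge of $T$ incident to $v_0$; as every such element fixes $v_0$ and its fixed-point set in $T$ is a subtree, we conclude $\Fix_T(b)=\{v_0\}$ for every nontrivial $b\in B$.

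Assume $L$ is non-abelian --- otherwise $L$ has a unique maximal abelian subgroup and the hypothesis of the lemma is empty --- so that $B_1$ and $B_2$ are nontrivial and the paragraph above applies to each. \emph{Maximality.} Let $B\in\{B_1,B_2\}$, pick a nontrivial $b\in B$, and let $A\supseteq B$ be abelian in $G$. Each element of $A$ commutes with $b$, hence preserves $\Fix_T(b)=\{v_0\}$; so $A$ fixes $v_0$, i.e.\ $A\subseteq\Stab_G(v_0)=L$. Then $A$ is an abelian subgroup of $L$ containing the maximal abelian subgroup $B$, so $A=B$. \emph{Non-conjugacy.} Suppose $gB_1g^{-1}=B_2$ for some $g\in G$ and pick a nontrivial $b_1\in B_1$. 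Then $gb_1g^{-1}\in B_2\subseteq L$ fixes $v_0$, and it also fixes $gv_0$ since $b_1$ fixes $v_0$; hence it fixes the geodesic $[v_0,gv_0]$. If $gv_0\neq v_0$, this geodesic contains an edge incident to $v_0$ that is fixed by the nontrivial element $gb_1g^{-1}$ of the maximal abelian subgroup $B_2\leq L$, and facts (1)--(2) then force $B_2$ to be conjugate in $L$ to a peg, against the hypothesis. So $gv_0=v_0$, whence $g\in\Stab_G(v_0)=L$ and $B_1,B_2$ are conjugate in $L$, again a contradiction.

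The only point that needs care is the equality $\Fix_T(b)=\{v_0\}$ for nontrivial $b\in B_k$: this is exactly where the hypothesis that $B_k$ is not conjugate into a peg, and the commutative transitivity of the limit group $L$, come into play. Once this is established, both conclusions follow by routine Bass--Serre bookkeeping.
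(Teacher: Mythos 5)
The paper omits a proof of Lemma~\ref{MaxAb0}, classifying it among lemmata that are ``immediate''; your argument supplies exactly the kind of Bass--Serre bookkeeping the authors must have had in mind, and it is correct. The one observation that does the real work --- that a nontrivial element of a maximal abelian $B\leq L$ not conjugate into a peg fixes exactly the vertex $v_0$ with $\Stab_G(v_0)=L$ --- is justified carefully via the commutative transitivity of the limit group $L$, and your reduction to the case $L$ non-abelian, your identification of the edge stabilizers around $v_0$ with $L$-conjugates of the pegs, and the subsequent maximality and non-conjugacy arguments via fixed-point sets are all sound.

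One small remark worth keeping in mind when you rely on this lemma later in the paper: your argument that the abelian floor has a unique rigid vertex uses that in a floor decomposition every abelian vertex is a leaf attached to $V_R$ and the quotient graph is connected, so that $|V_R|=1$; it is good that you spelled this out, since for a general floor (with surface vertices) $V_R$ need not be a singleton. Also note that you quietly use that the abelian vertex $u$ and the rigid vertex $v_0$ lie in different $G$-orbits, which holds because $\Stab_G(u)=E_i\oplus\Z^m$ is abelian while $L$ is not --- consistent with your standing assumption that $L$ is non-abelian.
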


We now pass to proving that replacing the first (abelian) floor by its double yields a natural tower structure for the corresponding group.  

\begin{lemma}\label{ToweroverDouble}
Suppose $G$ has the structure of a tower (of height $m$)
$\mathcal{T}(G,\F):=((\mathcal{G}(G^1,G^0),$ $r_1),(\mathcal{G}(G^2,G^1),$ $r_2),\ldots,(\mathcal{G}(G^m,G^{m-1}),r_m))$ over $\F$ 
and that the first floor $\mathcal{G}(G^1,G^0)$ is an abelian floor.
Let, for each $i\leq m$, the group $G^i_{Db}$ be the amalgamated free product $G^1_{Db}*_{G^1}G^i$. 

Then $G_{Db}:=G^m_{Db}$ admits a structure of a tower over $\F$ 
witnessed by $G^m_{Db}>G^{m-1}_{Db}>\ldots> G^1_{Db}>\F$ and splittings 
$\mathcal{G}(G^{i+1}_{Db},G^i_{Db})$, which are naturally inherited from the corresponding splittings in $\mathcal{T}(G,\F)$.
\end{lemma}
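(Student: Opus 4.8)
The plan is to proceed by induction on the height $m$, constructing the floors $\mathcal{G}(G^{i+1}_{Db},G^i_{Db})$ one at a time from the given floors $\mathcal{G}(G^{i+1},G^i)$ and verifying that each inherited splitting genuinely witnesses the tower axioms. The base of the induction is handled by Lemma \ref{DoubleEmbedd}: taking $f_1$ to be the identity, $G^1_{Db}=H_{Db}$ has an abelian floor structure over $\F=G^0_{Db}$ (via the floor double $Db(\mathcal{G}(G^1,\F))$ of Definition \ref{FloorDouble}), so the first floor of the new tower is exactly this doubled abelian floor.

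For the recursive step, fix $i$ with $1\le i<m$ and assume $G^i_{Db}=G^1_{Db}*_{G^1}G^i$ carries the tower structure up to level $i$. There are two cases according to the type of the original $(i{+}1)$-st floor. If $G^{i+1}$ is a free product $G^i*\F_n$, then since amalgamation over $G^1$ distributes over the free factor, $G^{i+1}_{Db}=G^1_{Db}*_{G^1}(G^i*\F_n)\cong(G^1_{Db}*_{G^1}G^i)*\F_n=G^i_{Db}*\F_n$, so the free-product floor is inherited verbatim. If instead $G^{i+1}$ has the structure of a floor (surface or abelian, with its partition $V_S\cup V_A\cup V_R$) over $G^i$, then $G^1$ lies in one of the rigid vertex groups $G_v$ of that floor decomposition; I apply Lemma \ref{FreeProdDouble} with $H:=G^1$ and $G:=G^{i+1}$ to conclude that $G^{i+1}_{Db}=G^1_{Db}*_{G^1}G^{i+1}$ is the fundamental group of the graph of groups obtained from $\mathcal{G}(G^{i+1},G^i)$ by replacing $G_v$ with $(G_v)_{Db}=G^1_{Db}*_{G^1}G_v$. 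This gives the candidate splitting $\mathcal{G}(G^{i+1}_{Db},G^i_{Db})$: same underlying graph, same surface and abelian vertex groups, same edge groups, with only the single rigid vertex group $G_v$ enlarged, and $G^i_{Db}$ is the free product of the (possibly enlarged) rigid vertex groups.

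It then remains to check the floor axioms for this candidate. The surface-type vertices, their adjacent edge groups mapping onto maximal boundary subgroups, the bipartiteness, and the abelian-flat conditions ($\Stab(e)$ maximal abelian in $G^{i+1}_{Db}$, $\Stab(u)=\Stab(e)\oplus\Z^m$) are all unaffected by enlarging a rigid vertex group, provided the relevant abelian subgroups remain maximal abelian in the larger group — this is where Lemma \ref{MaxAb0} is used, applied to $G^1_{Db}$ over $G^1$ to see that maximal abelian subgroups not conjugate into the doubled pegs stay maximal abelian and pairwise non-conjugate after the amalgamation. The retraction axiom is the last point: I extend the retraction $r_{i+1}:G^{i+1}\to G^i$ to $r_{i+1}^{Db}:G^{i+1}_{Db}\to G^i_{Db}$ by declaring it the identity on the extra factor $G^1_{Db}$ inside $(G_v)_{Db}=G^1_{Db}*_{G^1}G_v$ — this is well-defined precisely because $r_{i+1}$ fixes $G^1\le G_v$ pointwise — and it still sends each $\Stab(v)$, $v\in V_S$, to a non-abelian image since that image is computed inside $G^i$, untouched by the doubling. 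Finally, the tower-level coherence conditions on pegs (the abelian pegs of distinct floors being pairwise non-conjugate and not conjugable into earlier floors) transfer because the non-first floors are never doubled and Lemma \ref{MaxAb0} preserves non-conjugacy under the amalgamation $*_{G^1}$.

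The main obstacle I expect is the bookkeeping around maximality and non-conjugacy of abelian subgroups after the amalgamated product $G^1_{Db}*_{G^1}(-)$: one must be careful that doubling the first-floor pegs does not create unexpected conjugacies among pegs of higher floors or destroy the maximality of an edge stabilizer, and this is exactly the content that Lemma \ref{MaxAb0} (together with the Convention on the placement of the base-conjugable pegs on the first floor) is designed to control. Everything else is a routine verification that the graph-of-groups data is inherited unchanged except at the single rigid vertex carrying $G^1$.
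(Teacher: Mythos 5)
Your proof is correct and follows essentially the same route as the paper's: induction on height, base step via Definition~\ref{FloorDouble}, inductive step applying Lemma~\ref{FreeProdDouble} to replace the single rigid vertex group containing $G^1$ by its amalgam with $G^1_{Db}$, extending the retraction by the identity on $G^1_{Db}$, and invoking Lemma~\ref{MaxAb0} to preserve maximality and non-conjugacy of pegs. The only cosmetic difference is that you treat the surface-flat and abelian-flat floors in a single unified case rather than splitting them as the paper does.
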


\begin{proof}
The proof is by induction on the height $m$ of the tower. 
\\
{\bf Base step.} The group $G^1_{Db}$ has a natural abelian floor structure over $\F$ as observed in Definition \ref{FloorDouble}; 
\\
{\bf Inductive step.} We will assume that the result holds for any tower of height at most $i$ and we show it for towers of height $i+1$. 
We take cases according to whether $G^{i+1}$ is a free product over $G^i$, or has a surface flat 
structure over $G^i$, or has an abelian flat structure over $G^i$:
\begin{itemize}
   \item assume that $G^{i+1}=G^i*\F_n$, then, by Lemma \ref{FreeProdDouble}, $G^{i+1}_{Db}=G^i_{Db}*\F_n$. So, 
      by the induction hypothesis $G^{i+1}_{Db}$ has a tower structure over $\F$ 
      corresponding naturally to the tower structure of $G^{i+1}$ over $\F$;
      
   \item assume that $G^{i+1}$ has a surface flat structure over $G^i$ witnessed by $(\mathcal{G}(G^{i+1},G^i),r_{i+1})$. 
   We consider the rigid vertex group, say $G_u$, of the above graph of groups that contains $\F$. Since $G^1$ is 
   freely indecomposable with respect to $\F$, the vertex group $G_u$ must contain $G^1$. We consider the graph of groups 
   with the same data as $(\mathcal{G}(G^{i+1},G^i),r_{i+1})$ apart from replacing $G_u$ by $G_u*_{G^1}G^1_{Db}$. Then by Lemma \ref{FreeProdDouble} 
   the fundamental 
   group of this latter graph of groups is isomorphic to $G^{i+1}_{Db}$ and together with the retraction 
   $r_{i+1}': G^{i+1}_{Db}\rightarrow G^i_{Db}$ that agrees with $r_{i+1}$ on $G^{i+1}$ and stays the identity on $G^1_{Db}$ 
   they witness that $G^{i+1}_{Db}$ has a surface flat structure over $G^i_{Db}$;
   
   \item assume that $G^{i+1}$ has an abelian flat structure over $G^i$, i.e. $G^{i+1}=G^i*_A(A\oplus \Z^n)$. 
    We consider the splitting $(G^i*_{G^1}G^1_{Db})*_A(A\oplus\Z^n)$. By Lemma \ref{FreeProdDouble} this is 
    a splitting of $G^{i+1}_{Db}$. It is not hard to see that the group $A$ is maximal abelian in $G^i*_{G^1}G^1_{Db}$: 
    indeed since $A$ cannot be conjugated to any of the pegs of the first abelian floor $\mathcal{G}(G^1,\F)$, and 
    $A$ is maximal abelian in $G^i$ it must be maximal abelian in $G^i_{Db}$. 
    Moreover, by Lemma \ref{MaxAb0}, it cannot be conjugated to any other peg in $G^i_{Db}$. Thus, together with 
    the retraction $r_{i+1}':G^{i+1}_{Db}\rightarrow G^i_{Db}$ that agrees with $r_{i+1}$ on $G^{i+1}$ and stays 
    the identity on $G^1_{Db}$ they witness that $G^{i+1}$ has an abelian flat structure over $G^i_{Db}$.

      \end{itemize}
 
\end{proof}

Changing slightly the hypothesis of the previous lemma yields the following remark.

\begin{remark}\label{ToweroverfDouble}
Suppose $G$ has the structure of a tower (of height $m$)
$\mathcal{T}(G,\F):=((\mathcal{G}(G^1,G^0),$ $r_1),(\mathcal{G}(G^2,G^1),$ $r_2),\ldots,(\mathcal{G}(G^m,G^{m-1}),r_m))$ over $\F$ 
and that the first floor $\mathcal{G}(G^1,G^0)$ is an abelian floor.
Let $f:G^1\hookrightarrow G^1_{Db}$ be the non-identity natural map from $G^1$ to its double (see Lemma \ref{DoubleEmbedd}). 
Let, for each $i\leq m$, the group $_fG^i_{Db}$ be the amalgamated free product $G^1_{Db}*_{f(G^1)}G^i$. 

Then $_fG_{Db}:={_fG^m_{Db}}$ admits a structure of a tower over $\F$ 
witnessed by $_fG^m_{Db}>{_fG^{m-1}_{Db}}>\ldots>{_fG^1_{Db}}>\F$ and splittings 
$\mathcal{G}({_fG^{i+1}_{Db}},{_fG^i_{Db}})$, as in Lemma \ref{ToweroverDouble}.
\end{remark}

Before moving to the definition of a twin tower we record some easy lemmata that will help us 
prove that our construction of a twin tower is indeed a tower. 

\begin{lemma}\label{MaxAb1}
Suppose $G$ has the structure of a tower over a limit group $L$. Let $E$ be a maximal abelian subgroup of $G$ and 
suppose $E\cap L$ is not trivial. Let $B$ be the maximal abelian group in $L$ that contains $E\cap L$. 
Then either $E$ is $B$, or $E$ is  
the free abelian group $B\oplus \Z^n$ that corresponds to an abelian flat of some floor of the tower glued along $B$ to $L$. 
\end{lemma}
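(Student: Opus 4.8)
The plan is to induct on the height $m$ of the tower $\mathcal{T}(G,L)$, peeling off the top floor. For $m=0$ we have $G=L$ and $E=B$ trivially. For the inductive step write $G=G^m>G^{m-1}>\dots>G^0=L$ and let $G':=G^{m-1}$, so that $G$ is obtained from $G'$ by one of the three operations (free product with a free group, surface flat, abelian flat). The key reduction is the following: I claim $E':=E\cap G'$ is either trivial or a maximal abelian subgroup of $G'$, so that the induction hypothesis applies to $G'$ over $L$ once we know $E'\cap L=E\cap L$ is nontrivial. If $E'$ is nontrivial we then get that $E'$ is $B$ or a flat abelian group $B\oplus\Z^k$ of some floor below the top, and it remains to compare $E$ with $E'$ inside the single step $G'\rightsquigarrow G$.

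First I would handle the three cases of the top floor. In the free-product case $G=G'*\F_n$, any maximal abelian subgroup of $G$ meeting $G'$ nontrivially is conjugate into a factor, and since it meets $G'$ it must lie in $G'$ and be maximal abelian there; so $E=E'$ and we are done by induction. In the surface-flat and abelian-flat cases, $G'$ is a vertex group (up to the retraction structure) of the graph-of-groups splitting $\mathcal{G}(G,G')$, and I would use the standard fact from Bass-Serre theory (available from Section \ref{BS}) that a subgroup of $G$ which is abelian and meets the vertex group $G'$ nontrivially either is conjugate into a vertex group or is conjugate into an edge group, or — in the abelian-flat case — can equal the flat vertex group $E_0\oplus\Z^m$ itself. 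Since $E$ meets $G'$ (indeed meets $L\le G'$) nontrivially, either $E$ is already contained in $G'$, in which case $E=E'$ is maximal abelian in $G'$ and induction finishes it, or $E$ is the abelian flat $B'\oplus\Z^m$ glued along some peg $B'$; in the latter situation $E\cap G'=B'$ is the peg, which is maximal abelian in $G'$ by the definition of an abelian flat, and then the induction hypothesis applied to $B'=E'$ in $G'$ forces $B'=B$ (the peg cannot itself be a lower flat $B\oplus\Z^k$ with $k>0$ because pegs are glued along maximal abelian subgroups that are not conjugate to earlier pegs, per the tower Convention). Hence $E=B\oplus\Z^m$ is a flat of the top floor, as claimed.

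The one point requiring care — and what I expect to be the main obstacle — is verifying that in the surface-flat case $E$ cannot ``straddle'' the splitting, i.e. cannot be an infinite abelian subgroup that intersects $G'$ nontrivially but is not conjugate into a vertex group. For this I would invoke that $G$ is a limit group (Lemma ``If $G$ has the structure of a tower over a limit group, then $G$ is a limit group''), hence is commutative transitive and CSA, so every abelian subgroup of $G$ is contained in a unique maximal abelian subgroup and, by a standard argument, maximal abelian subgroups are elliptic in any abelian splitting where they are not themselves the whole picture; more concretely, in the one-edge splitting underlying a surface flat the edge groups are boundary subgroups of the surface group, an infinite-index phenomenon, and an abelian subgroup meeting the rigid part $G'$ nontrivially must be elliptic and thus conjugate into $G'$ — and being maximal abelian and meeting $G'$, it lies in $G'$. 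Once ellipticity is pinned down, the rest is the bookkeeping above; I would also record the small observation, used implicitly, that $E\cap L\ne 1 \Rightarrow E\cap G'\ne 1$, which is immediate since $L\le G'$.
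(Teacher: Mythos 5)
The paper records this lemma as one of the ``easy lemmata'' and supplies no proof, so there is nothing in the source to compare your attempt against; you are genuinely filling a gap. Your inductive strategy --- peel off the top floor, compare $E$ with $E':=E\cap G^{m-1}$, and treat the three floor types separately --- is the natural route and, at the level of ideas, it is correct.

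Two places deserve more care than you give them, though both gaps are fillable with routine limit-group facts. First, the claim that $E':=E\cap G'$ is maximal abelian in $G'$ when nontrivial is asserted without argument. It does hold, but only via CSA: since $G$ is a limit group (the earlier lemma on towers over limit groups), the maximal abelian $B'$ of $G'$ containing $E\cap G'$ is abelian in $G$ and meets $E$ nontrivially, hence $B'\le E$ by maximality and CSA, forcing $B'=E\cap G'$. Second, and more substantively, the step ``elliptic and thus conjugate into $G'$ --- and being maximal abelian and meeting $G'$, it lies in $G'$'' in the surface-flat case conflates being conjugate into a \emph{vertex group} (what ellipticity gives) with being contained in $G'$. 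Once you know $E$ is elliptic, you get $E\le g\Stab(w)g^{-1}$ for some vertex $w$. If $w$ is a translate of the surface vertex you must first push $E$ into an adjacent $V$-vertex group, using that the only abelian subgroups of $\pi_1(\Sigma)$ meeting an edge (boundary) subgroup nontrivially are contained in that edge group --- malnormality of boundary subgroups, which holds because the retraction forces $\pi_1(\Sigma)$ nonabelian, so $\Sigma$ is not an annulus or M\"obius band (the same nondegeneracy you implicitly used to establish ellipticity). After that you have $E\le gG'_jg^{-1}$ with $E\cap G'\ne 1$, and you still need the malnormality of the free factors $G'_j$ in $G'$ (or a short Bass--Serre argument on the free-product tree) to pull $g$ into $G'$ and conclude $E\le G'$ up to replacing $E$ by a $G'$-conjugate; since the statement is really a conjugacy statement anyway, this is harmless but should be said.

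One small bookkeeping remark: the fact that the peg of an abelian flat is not conjugate to any earlier flat's abelian vertex group --- which you rightly use to force $B'=B$ rather than $B\oplus\Z^k$ --- comes from the floor/tower definition (the clause ``cannot be conjugated into groups which correspond to abelian flats of any previous floor''), not from the Convention paragraph; the Convention only normalizes where such flats sit in the tower.
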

%\begin{proof}
%The proof is by induction on the height of the tower. We note that 
%\end{proof}

The following lemma is an easy exercise in normal forms.

\begin{lemma}\label{MaxAb2}
Let $G:=A*_CB$ be limit group and $E$ be a maximal abelian group in $A$. Suppose that no 
non trivial element of $E$ commutes with a non trivial element of $C$, then $E$ is maximal abelian in $G$.
\end{lemma}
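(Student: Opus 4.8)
Let $G := A *_C B$ be a limit group and $E$ be a maximal abelian subgroup of $A$. Suppose that no non-trivial element of $E$ commutes with a non-trivial element of $C$. Then $E$ is maximal abelian in $G$.

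The plan is to argue directly from the normal form theorem for amalgamated free products (Bass--Serre theory), which is available from the previous section. First I would observe that $E$ is certainly abelian in $G$, so it suffices to show that if $g \in G$ commutes with some non-trivial $e_0 \in E$, then $g \in A$; once that is done, $g$ commutes with all of $E$ (being in $A$ and centralizing the non-trivial element $e_0$ of the maximal abelian $E \le A$ forces $g \in E$, using maximality of $E$ in $A$ — here one may also want to invoke that centralizers in a limit group are abelian, or simply that $E$ is malnormal-like within $A$ in the relevant sense; in the cleanest version one shows $C_G(e_0) \subseteq A$ and then $C_G(e_0) = C_A(e_0) = E$).

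The core step is therefore: \emph{if $g \in G$ commutes with a non-trivial element $e_0$ of $E$, then $g \in A$.} Suppose not, and write $g$ in reduced form $g = g_1 g_2 \cdots g_k$ with $k \ge 1$ syllables alternating between $A \setminus C$ and $B \setminus C$ (if $g \notin A$ then necessarily some syllable lies in $B\setminus C$, so the reduced length with respect to this particular "starting letter" bookkeeping is at least $1$ in a genuine way). The relation $g e_0 = e_0 g$ rewrites as $e_0^{-1} g e_0 = g$, i.e. $e_0^{-1} g_1 g_2 \cdots g_k e_0$ must reduce to $g_1 g_2 \cdots g_k$. Now I would do the standard case analysis on the first and last syllables of $g$:

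\begin{itemize}
\item If $g$ begins and ends with a syllable in $B \setminus C$, then $e_0^{-1} g e_0$ is already in reduced form with $k+2$ syllables (the outer $e_0^{\pm 1} \in A$, and $e_0 \notin C$ since $e_0 \ne 1$ and $E \cap C = 1$ by hypothesis), contradicting that it equals the length-$k$ element $g$ — reduced length is well-defined.
\item If $g$ begins with a syllable in $A \setminus C$, say $g_1 \in A\setminus C$, then in $e_0^{-1} g_1 \cdots e_0$ the product $e_0^{-1} g_1$ lies in $A$; for a cancellation to occur and bring the length down, we need $e_0^{-1} g_1 \in C$, whence $g_1 \in e_0 C$. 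Pushing this through (similarly on the right end) and iterating, one extracts from the cyclic conjugation relation an element of $C$ conjugate by a cyclically reduced piece to something that commutes with $e_0$; ultimately one is forced into the situation that some non-trivial power or conjugate of $e_0$ — still a non-trivial element of a conjugate of $E$ inside $A$, but more to the point an actual non-trivial element of $E$ itself — lies in $C$ or commutes with a non-trivial element of $C$, contradicting the hypothesis.
\end{itemize}

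The bookkeeping in the second bullet is the only mildly delicate part: one has to track that the element of $C$ produced genuinely centralizes, or is swallowed into a conjugate of, a non-trivial element of $E$, and conclude the hypothesis "no non-trivial element of $E$ commutes with a non-trivial element of $C$" is violated. Concretely I expect the cleanest route is: the hypothesis says $C_G(e_0)$ meets no conjugate $gCg^{-1}$ in the relevant way, and a standard fact (the stabilizer of a point of the Bass--Serre tree that also centralizes a hyperbolic-or-elliptic element behaves predictably) pins $C_G(e_0)$ inside a single vertex stabilizer — namely $A$, since $e_0 \in A$ fixes the base vertex and the hypothesis prevents $e_0$ from fixing any edge, so $\mathrm{Fix}(e_0)$ is exactly that one vertex, forcing $C_G(e_0) \le \mathrm{Stab}(\text{that vertex}) = A$. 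This is the slick phrasing and I would likely present it this way: $e_0$ acts elliptically on the Bass--Serre tree $T$ of $A *_C B$ with fixed-point set a single vertex $v$ (single because $e_0$ fixes no edge, as edge stabilizers are conjugates of $C$ and $e_0$ commutes with no non-trivial element of $C$, in particular is not conjugate into $C$); any $g$ commuting with $e_0$ preserves $\mathrm{Fix}(e_0) = \{v\}$, hence fixes $v$, hence $g \in \mathrm{Stab}(v) = A$. Then $g \in C_A(e_0) = E$ by maximality of $E$ in $A$ together with $C_A(e_0)$ being abelian (limit groups are commutative transitive), completing the proof. The main obstacle is just making the "$\mathrm{Fix}(e_0)$ is a single vertex" step airtight — i.e., verifying from the hypothesis that $e_0$ is conjugate into neither $C$ nor... wait, $e_0 \in A$ already, so the only worry is edges: $e_0$ fixes an edge iff $e_0$ lies in a conjugate (by an element of $A$) of $C$, and if $a e_0 a^{-1} \in C$ then $e_0$ commutes with... hmm, that needs $e_0$ itself to meet a conjugate of $C$; one reduces to $a \in A$ and uses that $a^{-1}Ca \cap E \ne 1$ would give a non-trivial element of $E$, namely $e_0$ is not quite in $C$ but — cleanest is: $\mathrm{Stab}(v') = a C a^{-1}$ for neighbors, $e_0 \in aCa^{-1}$ means $a^{-1} e_0 a \in C$ is non-trivial and commutes with nothing forbidden... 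I would handle this by noting $a^{-1}e_0 a \in C$ and $a^{-1} e_0 a$ commutes with $a^{-1} e_0 a$, but I need an element of $E$ — so instead directly: if $e_0$ fixes an edge incident to $v$ then $C_A(e_0) \supseteq$ nothing new, rather use that the hypothesis is about $E$, so replace "$e_0$ commutes with a non-trivial element of $C$" reading: were $E$ to have a non-trivial element $e_0$ with $e_0 \in aCa^{-1}$ for $a \in A$, then... I'll phrase the final write-up to use commutative transitivity to upgrade "$e_0$ commutes with $e_0 \in aCa^{-1}$" appropriately, and I expect a clean two-line resolution there.
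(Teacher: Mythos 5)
The paper gives no proof of this lemma (it is declared an ``easy exercise in normal forms''), so there is nothing to compare your write-up against; what matters is whether your argument actually closes. The Bass--Serre skeleton you chose is exactly the right one, and the final step --- $g\in A$ together with commutative transitivity forces $g\in C_A(e_0)=E$ --- is fine. But the step you flag as delicate is, in fact, a genuine gap, and it cannot be filled from the hypothesis as literally stated. You need $e_0$ to fix no edge of the tree, and the edges incident to the vertex $v$ with $\Stab(v)=A$ are stabilized by $A$-conjugates $aCa^{-1}$. So what you need is $e_0\notin aCa^{-1}$ for every $a\in A$. The stated hypothesis only gives you $e_0\notin C$ (indeed $\langle e_0\rangle\cap C=1$, since $e_0$ commutes with itself). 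It does not rule out $e_0\in aCa^{-1}$ for $a\notin E$: if $a^{-1}e_0a\in C$, the elements of $C$ that commute with $e_0$ are of the form $a^{-1}ca$, which need not lie in $C$.

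This is not just a weakness in your argument; the lemma as printed is false without a strengthened hypothesis. Take $A=\F(a,b)$, $C=\langle a\rangle$, $E=\langle bab^{-1}\rangle$. Then $E$ is maximal abelian in $A$, and no non-trivial element of $E$ commutes with a non-trivial element of $C$ (the elements $a$ and $bab^{-1}$ generate a free group of rank two, so no powers of one commute with powers of the other). Now let $B=\langle a\rangle\oplus\langle d\rangle\cong\Z^2$ and glue along $C$, so $G=A*_CB$ is a limit group (an abelian flat over $\F(a,b)$). The element $bdb^{-1}$ centralizes $e_0:=bab^{-1}$, because $d$ commutes with $a$ in $B$, yet $bdb^{-1}\notin E$ (it is not even in $A$; its reduced form has syllable length three). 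So $E$ is not maximal abelian in $G$. The point is precisely that $e_0\in bCb^{-1}$ stabilizes an edge, so $\Fix(e_0)$ is larger than $\{v\}$ and the centralizer picks up extra elements from a neighbouring vertex group. The correct hypothesis, and the one the paper implicitly satisfies whenever it invokes this lemma (the pegs of the higher abelian flats cannot be conjugated into the base floor $\F$), is that $E\cap aCa^{-1}=1$ for all $a\in A$, equivalently that $E$ is not $A$-conjugate into $C$. Under that hypothesis your argument goes through cleanly: $\Fix(e_0)=\{v\}$, so $C_G(e_0)\le\Stab(v)=A$, and then $C_G(e_0)=C_A(e_0)=E$ by commutative transitivity. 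You should state the strengthened hypothesis explicitly and then the two-line resolution you anticipated really is two lines.
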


We define the notion of a twin tower, first in a case which is free of some technical complexity, in the following proposition.

\begin{proposition}[Twin tower - non abelian case]
Suppose $G$ has the structure of a tower $\mathcal{T}(G,\F)$ over $\F$. Assume that the first floor 
$\mathcal{G}(G^1,G^0)$ is not an abelian floor. Then the amalgamated free product $G*_{\F}G$ 
admits a natural tower structure over $\F$ which we call the twin tower of $G$ with respect to $\mathcal{T}(G,\F)$.
\end{proposition}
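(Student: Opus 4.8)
The plan is to build the tower structure on $G *_{\F} G$ floor by floor, using the given tower $\mathcal{T}(G,\F)$ on each of the two copies and amalgamating along $\F$. Write $\mathcal{T}(G,\F)$ as $G = G^m > G^{m-1} > \dots > G^0 = \F$ with floor data $(\mathcal{G}(G^{i+1},G^i), r_{i+1})$. The key observation, exploiting the hypothesis that the first floor $\mathcal{G}(G^1,G^0)$ is \emph{not} an abelian floor, is that the base $\F$ sits inside a rigid (or free-factor) vertex group at every floor of the tower, so that a second copy of the whole tower can be attached to that vertex group without disturbing the surface-type or abelian-type vertices, exactly as in the surface-flat case of the inductive step of Lemma \ref{ToweroverDouble}.

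Concretely, I would proceed by induction on the height $m$. For the first floor: since the first floor is a surface floor (not abelian), $\F$ lies in a rigid vertex group $G_u$ of $\mathcal{G}(G^1,\F)$, and one forms $(G^1)_{tw} := G^1 *_{\F} G^1$, realizing it as the fundamental group of the graph of groups obtained from $\mathcal{G}(G^1,\F)$ by replacing $G_u$ with $G_u *_{\F} G^1$ (this is the Bass–Serre/normal-form bookkeeping of Lemma \ref{FreeProdDouble}, applied with $H = \F$, which is legitimate here because $\F$ sits in a vertex group of the splitting $\mathcal{G}(G^1)$ — indeed it \emph{is} essentially the base). The retraction witnessing the floor structure is $r_1' : (G^1)_{tw} \to \F$ which is $r_1$ on the first copy and $r_1$ on the second copy (both copies retract to $\F$, and the two retractions agree on the amalgamated $\F$, so they glue); this sends each surface-type vertex group to a non-abelian image on the first copy. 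For the inductive step, set $G^i_{tw} := (G^1)_{tw} *_{G^1} G^i$ and argue exactly as in Lemma \ref{ToweroverDouble}: if $G^{i+1} = G^i * \F_n$ use Lemma \ref{FreeProdDouble}; if $G^{i+1}$ is a surface flat over $G^i$, the rigid vertex containing $\F$ also contains $G^1$ (free indecomposability of $G^1$ rel $\F$), so replace that vertex group $G_v$ by $G_v *_{G^1}(G^1)_{tw}$ and extend $r_{i+1}$ by the identity on the new part; if $G^{i+1}$ is an abelian flat over $G^i$ with peg $A$, then $A$ is maximal abelian in $G^i$ and, since (by the Convention on towers, case 2, and the non-abelian first floor) $A$ is not conjugate into $\F$, Lemmata \ref{MaxAb2}, \ref{MaxAb1} and \ref{MaxAb0} give that $A$ stays maximal abelian and non-conjugate to the other pegs in $G^i_{tw} = G^i *_{G^1}(G^1)_{tw}$, so the abelian flat persists over $G^i_{tw}$ with retraction extended by the identity. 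This yields the tower $G^m_{tw} > \dots > G^1_{tw} > \F$, and $G^m_{tw} = G *_{\F} G$ by construction.

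The main obstacle — and the point requiring the most care — is verifying that the \emph{pegs} of the abelian flats behave correctly in the amalgam $G^i *_{G^1}(G^1)_{tw}$: one must check both that each peg remains maximal abelian and that distinct pegs remain non-conjugate, and that none of them becomes conjugate into a peg coming from the \emph{other} copy of the tower. This is exactly where the hypothesis "first floor is not abelian" is essential: it forces all the action (surface vertices, abelian pegs) of the tower to be supported \emph{away} from the amalgamating subgroup $\F$, which by the Convention means no peg is conjugate into $\F$, and hence the two copies of the tower only interact through the rigid/free part — so a normal-form argument (Lemma \ref{MaxAb2}) over the amalgam $G^i *_{G^1}(G^1)_{tw}$ shows the pegs of one copy see only trivial intersection with the relevant edge groups of the other. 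The remaining checks (that $r_i'$ is a well-defined retraction, that surface vertices get non-abelian images, that edge groups stay elliptic) are routine given the inductive hypothesis and the corresponding facts for $\mathcal{T}(G,\F)$.
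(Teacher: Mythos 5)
Your inductive formula does not produce the group $G*_{\F}G$. You set $(G^1)_{tw}:=G^1*_{\F}G^1$ and then $G^i_{tw}:=(G^1)_{tw}*_{G^1}G^i$, where the amalgamation identifies one copy of $G^1$ inside $(G^1)_{tw}$ with the subgroup $G^1\leq G^i$. By associativity of amalgamated products this collapses to $G^i_{tw}\cong G^i*_{\F}G^1$, so at the top you get $G^m_{tw}\cong G*_{\F}G^1$, \emph{not} $G*_{\F}G$ (they differ whenever the tower has height $m>1$). The source of the confusion is that you transplanted the formula of Lemma \ref{ToweroverDouble} verbatim. There the identity $G_{Db}=G^1_{Db}*_{G^1}G$ is correct precisely because the double only alters the first abelian floor and leaves every higher floor untouched, so the unchanged part of the tower (all of $G$ above $G^1$) is simply re-attached. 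The twin construction duplicates the \emph{entire} tower, so the analogous identity $G*_{\F}G\overset{?}{=}(G^1*_{\F}G^1)*_{G^1}G$ is false: the right-hand side only duplicates the first floor.

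The paper's construction is structurally different and avoids this: it keeps the height-$m$ tower $\F=G^0<G^1<\dots<G^m=G$ unchanged and then stacks a second copy of the whole tower on top, setting $G^{m+i}:=G^i*_{\F}G$ for $0\leq i\leq m$ (so that $G^{m+0}=G$ and $G^{2m}=G*_{\F}G$) and taking $\mathcal{G}(G^{m+i+1},G^{m+i})$ to be $\mathcal{G}(G^{i+1},G^i)$ with the vertex group $G_u$ containing $\F$ replaced by $G_u*_{\F}G$. This yields a tower of height $2m$, not $m$. Your verification of where pegs, rigid vertices and retractions go (Lemmata \ref{MaxAb0}, \ref{MaxAb1}, \ref{MaxAb2}, \ref{FreeProdDouble}, the Convention on non-conjugate pegs, etc.) is the right toolkit and would be needed in either presentation, but it must be run along the correct sequence of groups.
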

\begin{proof}
Let $((\mathcal{G}(G^1,G^0),r_1),\ldots,(\mathcal{G}(G^m,G^{m-1}),r_m))$ be the sequence witnessing that $G$ 
is a tower over $\F$. Let $G^{m+i}:=G^i*_{\F}G$ be the amalgamated free product of $G^i$ with $G$ over $\F$. We claim 
that there exists a sequence 
$$((\mathcal{G}(G^1,G^0),r_1),\ldots,(\mathcal{G}(G^m,G^{m-1}),r_m), (\mathcal{G}(G^{m+1},G^m),r_{m+1}),\ldots, (\mathcal{G}(G^{2m},G^{2m-1}),r_{2m}))$$
where the splitting $\mathcal{G}(G^{m+i+1},G^{m+i})$ has the same data as the splitting 
$\mathcal{G}(G^{i+1},G^i)$ apart from replacing the vertex group $G_u$ that contains $\F$ with $G_u*_{\F}G$ and moreover it witnesses that 
$G*_{\F}G$ has the structure of a tower over $\F$. We proceed by induction:
\\
{\bf Base step.} We show that $G^{m+1}$ is a free product or has a surface flat structure over or has an abelian flat structure over $G^m=G$, according to whether $G^1$ 
 is a free product or has a surface flat structure $\F$. In addition, we show that it respects the requirements 
 of being a floor of a tower together with the already given sequence of floors. We take cases:
       \begin{itemize}
      \item assume that $G^1=\F*\F_n$, then $G^{m+1}=(\F*\F_n)*_{\F}G=G*\F_n$. Thus, $G^{m+1}$ has a free product structure over $G$;
      
      \item assume that $G^1$ has a surface flat structure over $\F$. We consider the graph of groups with the same data as in $\mathcal{G}(G^1,\F)$ apart from 
      replacing the vertex group $\F$ by the amalgamated free product $G*_{\F}\F$. Then the fundamental group of this graph of groups is isomorphic to $G^{m+1}$ 
      and together with the retraction $r_{m+1}:G^{m+1}\rightarrow G^m$ that agrees with $r_1$ on $G^1$ and stays the identity on $G$ they witness that 
      $G^{m+1}$ has a surface flat structure over $G^m$;
      
      %\item assume that $G^1$ has a floor structure $\mathcal{G}(G^1,\F)$ over $\F$. Consider the partition $V_S\cup V_A\cup V_R$ of the vertices of $T^1$ 
      %of the Bass-Serre presentation $(T^1,T^0,\{\gamma_e\})$ that corresponds to $\mathcal{G}(G^1,\F)$. Then by our convention and the proposition hypothesis $V_A$ is empty 
      %and moreover by the definition of a tower there is 
      %a unique vertex $v$ in $V_R$ with $Stab_{G^1}(v)=\F$. We consider the graph of groups with the same data as in $\mathcal{G}(G^1,\F)$ apart from 
      %replacing the vertex group $\F$ by $G$. Then the fundamental group of this graph of groups is isomorphic to $G^{m+1}$. Consequently, $G^{m+1}$ acts on 
      %a tree with Bass-Serre presentation $(T^1,T^0,\{\gamma_e\})$ identical to the previous one and we can partition 
      %the set of vertices of $T^1$ in the same subsets $V_S\cup V_A\cup V_R$. Moreover, the stabilizers of vertices and edges as well as the Bass-Serre 
      %elements are the same as before with the only difference that the vertex $v$ 
      %is stabilized by $G$. Since in this case there are no abelian flats we only need to find a retraction that sends the surface groups 
      %of the surface flats to non abelian images. Indeed, we define the map $r_{m+1}:G^{m+1}\twoheadrightarrow G$ to agree with 
      %$r_1$ on $G^1$ and stay the identity on $G$.
      \end{itemize}
{\bf Inductive step.} Assume that the result holds for all $G^{m+1},\ldots, G^{m+i}$. We will show that it holds for $G^{m+i+1}$. 
 We take cases according to whether $G^{i+1}$ is a free product or has a surface flat structure or an abelian flat structure over $G^i$:
      
      \begin{itemize}
      \item assume that $G^{i+1}=G^i*\F_n$, then $G^{m+i+1}=G^{i+1}*_{\F}G=G^i*_{\F}G*\F_n=G^{m+i}*\F_n$. Thus, $G^{m+i+1}$ 
      is a free product of $G^m$ with $\F_n$ and satisfies the conditions of being part of a tower with the already given sequence of floors;
      
      \item assume that $G^{i+1}$ has a surface flat structure $\mathcal{G}(G^{i+1},G^i)$ over $G^i$. Consider the graph of groups decomposition 
      with the same data as in $\mathcal{G}(G^{i+1},G^i)$ apart from replacing the vertex group $G_u$ that contains $\F$ with the amalgamated free 
      product $G_u*_{\F}G$. The fundamental group of this graph of groups is isomorphic to $G^{m+i+1}$ and together with the retraction $r_{m+i+1}:G^{m+i+1}\rightarrow G^{m+i}$ 
      that agrees with $r_{i+1}$ on $G^{i+1}$ and stays the identity on $G$ they witness that $G^{m+i+1}$ has a surface flat structure over $G^{m+i}$. 
      
      \item assume that $G^{i+1}$ has an abelian flat structure $G^i*_A(A\oplus\Z^n)$ over $G^i$. Consider the amalgamated free product $(G^i*_{\F}G)*_A(A\oplus\Z^n)$. 
      This is a splitting of $G^{m+i+1}$ and moreover, by Lemma \ref{MaxAb2}, $A$ is maximal abelian in $G^i*_{\F}G$. A maximal abelian group 
      of $G^{m+i}$ that contains a peg of 
      a previous abelian flat must either live in $G^i$ or in $G$. Now it is enough to observe that $A$ cannot be conjugated to a maximal abelian group of $G$ 
      and if a conjugate $A^{g}$ of $A$ intersects non-trivially another maximal abelian group of $G^i$, then $g\in G^i$. 
      In addition, we define the map $r_{m+i+1}:G^{m+i+1}\twoheadrightarrow G^{m+i}$ to agree with $r_{i+1}$ on $G^{i+1}$ and stay the identity on $G$.
      \end{itemize}

\end{proof}

 \begin{figure}[ht!]
\centering
\includegraphics[width=.8\textwidth]{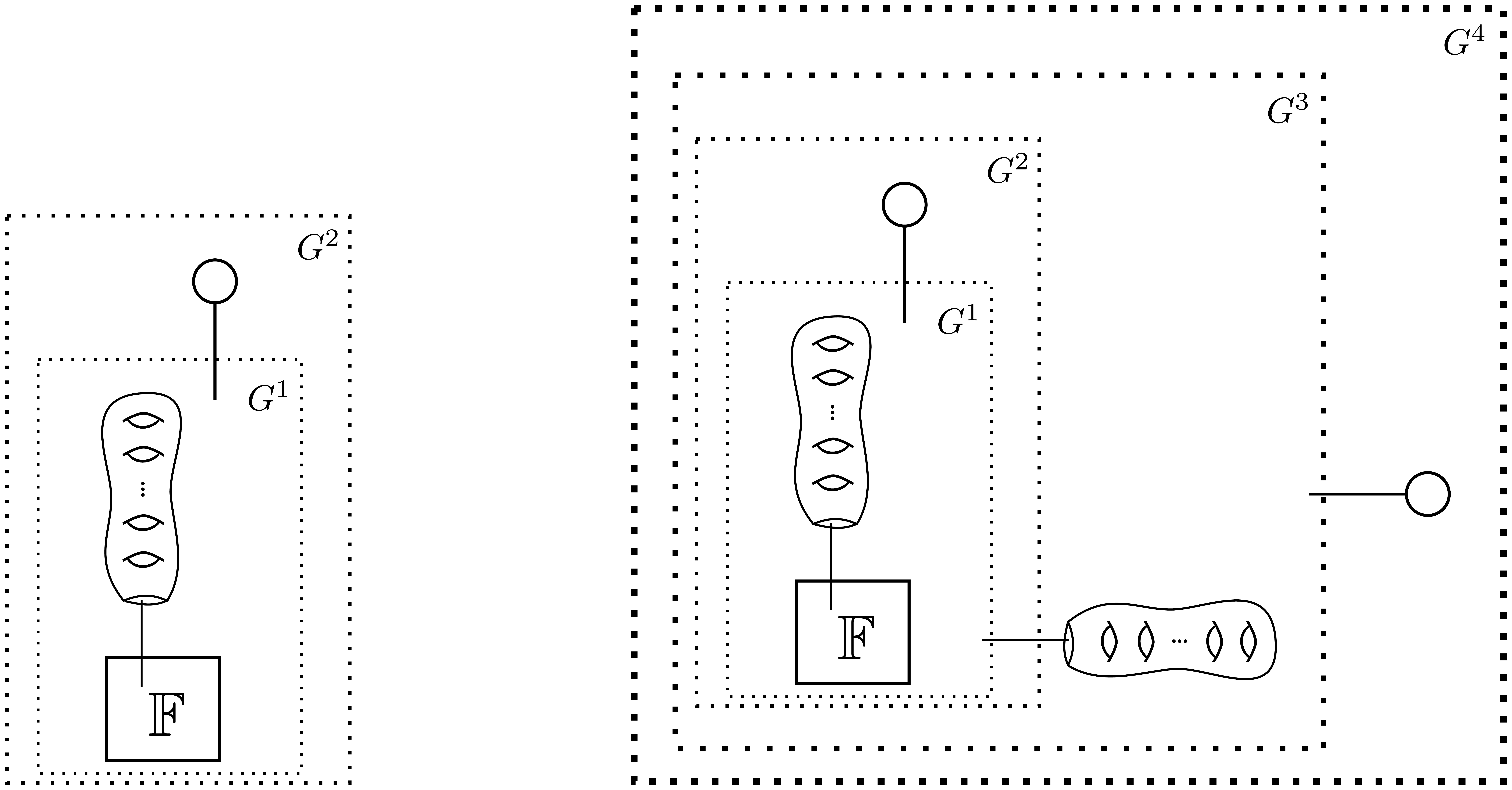}
\caption{A tower and its corresponding twin tower - non abelian case.}
\end{figure}

\begin{example}
Let $G:=\langle x_1,x_2,e_1,e_2,z_1,z_2 \ | \ [x_1,x_2]\cdot[e_1,e_2], [x_1^3x_2^4,z_1], [x_1^3x_2^4,z_1], [z_1,z_2] \rangle$. We can give $G$ 
a tower structure over $\F_2:=\langle e_1,e_2\rangle$ as follows. The tower consists of two floors:
\begin{itemize}
\item the first floor is just a surface flat  
that it is obtained by gluing $\Sigma_{1,1}$, whose fundamental group $\pi_1(\Sigma_{1,1})$ is $\langle x_1,x_2, s \ | \ s^{-1}\cdot[x_1,x_2]\rangle$, 
along its boundary to the subgroup $\langle [e_1,e_2]\rangle$ of the free group $\F_2$. In group theoretic terms the first floor 
is the amalgamated free product $G^1:=\F_2*_{[e_1,e_2]=s}\pi_1(\Sigma_{1,1})$. The retraction $r_1$ sends $x_i$ to $e_i$ and stays the identity on $e_i$;
\item the second floor is just an abelian flat that it is obtained by gluing $\Z^2$ along the (maximal) abelian subgroup $\langle x_1^3x_2^4\rangle$ of $G^1$. 
In group theoretic terms the second floor is the amalgamated free product $G^2:=G^1*_{x_1^3x_2^4=z}(\langle z\rangle\oplus\Z^2)$. The retraction $r_2$ 
sends $z_1$ and $z_2$ to $z$ and stays the identity on $G^1$.
\end{itemize}

The group $G*_{\F_2}G$ can be given a twin tower structure as follows. This tower has four floors, which we describe:
\begin{itemize}
 \item the first two floors are identical to the floors of the tower structure of $G$;
 \item the third floor is just a surface flat that it is obtained by gluing $\Sigma_{1,1}$, whose fundamental group 
 $\pi_1(\Sigma_{1,1})$ is $\langle y_1,y_2, s \ | \ s^{-1}\cdot[y_1,y_2]\rangle$, 
 along its boundary to the subgroup $\langle [e_1,e_2]\rangle$ of the free group $\F_2$. In group theoretic terms the third floor 
 is the amalgamated free product $G^3:=G^2*_{[e_1,e_2]=s}\pi_1(\Sigma_{1,1})$. The retraction $r_3$ sends $y_i$ to $e_i$ and stays the identity on $G^2$; 
 \item the fourth floor is just an abelian flat that it is obtained by gluing $\Z^2$ along the (maximal) abelian subgroup $\langle y_1^3y_2^4\rangle$ of $G^3$. 
 In group theoretic terms the fourth floor is the amalgamated free product $G^4:=G^3*_{y_1^3y_2^4=z}(\langle z\rangle\oplus\Z^2)$. The retraction $r_4$ 
 sends $z'_1$ and $z'_2$ to $z$ and stays the identity on $G^3$. 
\end{itemize}

\end{example}

\begin{proposition}[Twin tower - abelian case]\label{TwinAbel}
Suppose $G$ has the structure of a tower $\mathcal{T}(G,\F)$ (of height $m$) over $\F$. 
Assume that the first floor $\mathcal{G}(G^1,G^0)$ is an abelian floor. 

Let $G^1_{Db}$ be the double of $G^1$ with respect to $\mathcal{G}(G^1,G^0)$ and $f:G^1\hookrightarrow G^1_{Db}$ be the non-identity 
natural embedding of $G^1$ into $G^1_{Db}$. 

Let $G_{Db}$ be the double of $G$ with respect to $\mathcal{G}(G^1,G^0)$,    
and $_fG_{Db}$ be the amalgamated free product $G^1_{Db}*_{G^1}G$, where $f_{\bar{e}}:G^1\rightarrow G^1_{Db}$ is $f$ and $f_e:G^1\rightarrow G$ is the identity map. 
Then the amalgamated free product $G_{Db}*_{G^1_{Db}}(_fG_{Db})$ 
admits a natural tower structure over $\F$. %We call the twin tower of $G$ with respect to $\mathcal{T}(G,\F)$.
\end{proposition}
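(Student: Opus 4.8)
The plan is to exhibit the twin tower explicitly, as a concatenation of three blocks: the abelian floor $\mathcal{G}(G^1_{Db},\F)$ at the bottom, then the $m-1$ floors of $_fG_{Db}$ over $G^1_{Db}$, and finally $m-1$ further floors in which the tower $G_{Db}$ over $G^1_{Db}$ is amalgamated on top. Concretely, put $H^i:=G^i_{Db}*_{G^1_{Db}}({_fG_{Db}})$ for $1\leq i\leq m$, so that $H^1={_fG_{Db}}$ (the amalgam of $G^1_{Db}$ with $_fG_{Db}$ over $G^1_{Db}$ itself is just $_fG_{Db}$) and $H^m=G_{Db}*_{G^1_{Db}}({_fG_{Db}})$ is the group in the statement. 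The candidate tower is
\[
\F<G^1_{Db}={_fG^1_{Db}}<{_fG^2_{Db}}<\cdots<{_fG^m_{Db}}={_fG_{Db}}=H^1<H^2<\cdots<H^m ,
\]
with retractions assembled from the (doubled, resp.\ $f$-twisted) copies of $r_1,\dots,r_m$ exactly as in the proofs of Lemma~\ref{ToweroverDouble} and of the non-abelian twin tower proposition.

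First I would check the three blocks separately. The bottom step is an abelian floor by Definition~\ref{FloorDouble}: the floor double $Db(\mathcal{G}(G^1,\F))$ glues $\Z^{m_i}\oplus\Z^{m_i}$ along the pegs $E_i$, and these remain pairwise non-conjugate in $\F$, so the Convention on the base floor is respected. The middle block $G^1_{Db}={_fG^1_{Db}}<{_fG^2_{Db}}<\cdots<{_fG^m_{Db}}$ is precisely the tower of Remark~\ref{ToweroverfDouble} (recall that $_fG^1_{Db}=G^1_{Db}*_{f(G^1)}G^1$ collapses to $G^1_{Db}$, since $f$ identifies $G^1$ isomorphically with $f(G^1)$). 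For the top block I would argue by induction on $i$, mimicking the non-abelian case: $G^{i+1}_{Db}$ is a floor (resp.\ free product) over $G^i_{Db}$ by Lemma~\ref{ToweroverDouble}; replacing in $\mathcal{G}(G^{i+1}_{Db},G^i_{Db})$ the (rigid) vertex group carrying $G^1_{Db}$ by its amalgam with $_fG_{Db}$ over $G^1_{Db}$ yields, by the reasoning of Lemma~\ref{FreeProdDouble}, a graph of groups with fundamental group $H^{i+1}$; together with the retraction $H^{i+1}\twoheadrightarrow H^i$ that restricts to the corresponding retraction on $G^{i+1}_{Db}$ and to the identity on $_fG_{Db}$, this exhibits $H^{i+1}$ as a floor (resp.\ free product) over $H^i$.

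The step that requires genuine care, and which I expect to be the main obstacle, is verifying the abelian-flat side conditions of the tower definition in the top block: an abelian flat stacked there carries a peg $A$ which is maximal abelian in $G^i_{Db}$, and we must check that $A$ stays maximal abelian in $H^i=G^i_{Db}*_{G^1_{Db}}({_fG_{Db}})$ and that $A$ is not conjugate, in the final group, to any peg already present, in particular not to the ``same'' peg carried by the $_fG_{Db}$-side. This is exactly where doubling the first floor is used. By the tower Convention a peg of a floor of level $\geq 2$ is conjugate neither into $\F$ nor into an abelian flat of $\mathcal{G}(G^1,\F)$, so (by a normal-form analysis using Lemma~\ref{MaxAb1}, followed by Lemma~\ref{MaxAb2} in the amalgam over $G^1_{Db}$) it cannot be conjugated into $G^1$, hence not into $G^1_{Db}$. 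Since the $G_{Db}$-copy and the $_fG_{Db}$-copy of $A$ therefore sit in the two distinct vertex groups of the splitting $G_{Db}*_{G^1_{Db}}({_fG_{Db}})$ and neither is conjugate into the edge group $G^1_{Db}$, Bass--Serre theory forces them to be non-conjugate in $H^m$; the remaining coincidences among pegs are ruled out by Lemma~\ref{MaxAb0} exactly as in Lemma~\ref{ToweroverDouble}. Had we not passed to the double, the first-floor pegs would have been conjugable into $\F$ and the two copies of $G$ would genuinely collapse along them; the doubling ``absorbs'' that single shared abelian flat into the base floor $G^1_{Db}$, where the two copies are allowed to agree.
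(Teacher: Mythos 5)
Your proof is correct and follows essentially the same inductive scheme as the paper: build one of the two amalgamated copies floor by floor and then stack the flats of the other copy on top, using Lemmata \ref{ToweroverDouble}, \ref{FreeProdDouble}, \ref{MaxAb0}, \ref{MaxAb1}, \ref{MaxAb2} and Remark \ref{ToweroverfDouble}. The only difference is cosmetic — you stack $G_{Db}$ on top of $_fG_{Db}$ while the paper stacks $_fG_{Db}$ on top of $G_{Db}$ — which by the evident symmetry of the construction changes nothing.
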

\begin{proof} 
Suppose that $((\mathcal{G}(G^1,G^0),r_1),(\mathcal{G}(G^2,G^1),r_2),\ldots,(\mathcal{G}(G^m,G^{m-1}),r_m))$ 
witnesses that $G$ has the structure of a tower over $\F$.

Let 
$$\mathcal{T}(G_{Db},\F):=((\mathcal{G}(G^1_{Db},G^0),r^{Db}_1),(\mathcal{G}(G^2_{Db},G^1_{Db}),r^{Db}_2),\ldots,(\mathcal{G}(G^m_{Db},G^{m-1}_{Db}),r^{Db}_m))$$ 
be the natural tower structure (see Lemma \ref{ToweroverDouble}) of $G_{Db}$ over $\F$. And  

$$\mathcal{T}(_fG_{Db},G^1_{Db}):=((\mathcal{G}(_fG^2_{Db},G^1_{Db}),r^{f}_2),\ldots,(\mathcal{G}(_fG^m_{Db}, _fG^{m-1}_{Db}),r^{f}_m))$$
be the natural tower structure (see Remark \ref{ToweroverfDouble}) of $_fG_{Db}$ over $G^1_{Db}$. 

For each $i< m$, let $G^{m+i}_{Db}=G_{Db}*_{G^1_{Db}}(_fG^{i+1}_{Db})$. We claim 
that there exists a natural sequence of floors 
$((\mathcal{G}(G^1_{Db},G^0),r^{Db}_1),(\mathcal{G}(G^2_{Db},G^1_{Db}),r^{Db}_2),\ldots,(\mathcal{G}(G^m_{Db},G^{m-1}_{Db}),r^{Db}_m), 
(\mathcal{G}(G^{m+1}_{Db},G^m_{Db}),$ \ \\ $r^{Db}_{m+1}),\ldots,
(\mathcal{G}(G^{2m},G^{2m-1}),r^{Db}_{2m}))$
that witnesses that $G_{Db}*_{G^1_{Db}}(_fG_{Db})$ has a tower structure over $\F$. We construct this sequence starting with the $m$ floors of the 
tower $\mathcal{T}(G_{Db},\F)$ and we proceed recursively as follows:  
\\
{\bf Base step.} We show that $G_{Db}^{m+1}$ is a free product or has a surface flat structure or an abelian flat structure over $G_{Db}^m$  
 according to whether $_fG^2_{Db}$ is a free product or has a surface flat structure or an abelian flat structure over $G^1_{Db}$: 
      \begin{itemize}
      \item assume that $_fG^2_{Db}=G^1_{Db}*\F_n$, then $G^{m+1}_{Db}=G_{Db}*_{G^1_{Db}}(G^1_{Db}*\F_n)=G_{Db}*\F_n$;
      
      \item assume that $_fG^2_{Db}$ has a surface flat structure $(\mathcal{G}(_fG^2_{Db},G^1_{Db}),r^{f}_2)$ over $G^1_{Db}$. 
      We consider the graph of groups with the same data as $\mathcal{G}(_fG^2_{Db},G^1_{Db})$ apart from replacing the 
      vertex group $G^1_{Db}$ with the group $G_{Db}$. The fundamental group of this latter graph of groups is $G^{m+1}_{Db}$ 
      and together with the retraction $r_{m+1}^{Db}:G^{m+1}_{Db}\rightarrow G_{Db}$ that agrees with $r_2^f$ on $_fG^2_{Db}$ 
      and stays the identity on $G_{Db}$ they witness that $G^{m+1}_{Db}$ has a surface flat structure over $G_{Db}$;
      
      \item assume that $_fG^2_{Db}$ has an abelian flat structure $G^1_{Db}*_A (A\oplus\Z^n)$ over $G^1_{Db}$. Consider 
      the amalgamated free product $(G_{Db}*_{G^1_{Db}}G^1_{Db})*_A(A\oplus\Z^n)$: this is a splitting of $G^{m+1}_{Db}$. 
      By definition $A$ cannot be conjugated to any other peg of some abelian flat of $\mathcal{T}(G_{Db},\F)$, 
      thus it is maximal abelian in $G_{Db}$ and satisfies the properties that make $G_{Db}*_A(A\oplus\Z^n)$ the $m+1$-th 
      floor of our tower;  
      \end{itemize}
{\bf Recursive step.} Suppose we have constructed the $m+i$-th floor of the tower. We show that $G_{Db}^{m+i+1}$ is a free product 
or has a surface flat structure or an abelian flat structure over $G_{Db}^{m+i}$  
according to whether $_fG^{i+2}_{Db}$ is a free product or has a surface flat structure or an abelian flat structure over $_fG^{i+1}_{Db}$:  
       \begin{itemize}
       \item assume that $_fG^{i+2}_{Db}=\ _fG^{i+1}_{Db}*\F_n$, then $G^{m+1}_{Db}=G_{Db}*_{G^1_{Db}}(G^{i+1}_{Db}*\F_n)=G^{m+i}_{Db}*\F_n$;
       
       \item assume that $_fG^{i+2}_{Db}$ has a surface flat structure $(\mathcal{G}(_fG^{i+2}_{Db},\ _fG^{i+1}_{Db}),r^{f}_{i+2})$ over $_fG^{i+1}_{Db}$. 
      We consider the graph of groups with the same data as $\mathcal{G}(_fG^{i+2}_{Db},G^{i+1}_{Db})$ apart from replacing the 
      vertex group $G_u$ that contains $G^1_{Db}$ with the group $G_{Db}*_{G^1_{Db}}G_u$. The fundamental group of this latter graph of groups is $G^{m+i+1}_{Db}$ 
      and together with the retraction $r_{m+i+1}^{Db}:G^{m+i+1}_{Db}\rightarrow G^{m+i}_{Db}$ that agrees with $r_{i+2}^f$ on $_fG^{i+2}_{Db}$ 
      and stays the identity on $G_{Db}$ they witness that $G^{m+i+1}_{Db}$ has a surface flat structure over $G^{m+i}_{Db}$;
      
      \item assume that $_fG^{i+2}_{Db}$ has an abelian flat structure $_fG^{i+1}_{Db}*_A(A\oplus\Z^n)$ over $_fG^{i+1}_{Db}$. 
      We consider the amalgamated free product $(G_{Db}*_{G^1_{Db}}(_fG^{i+1}_{Db}))*_A(A\oplus\Z^n)$: this is a splitting 
      of $G^{m+i+1}_{Db}$. Since $A$ cannot be conjugated to any other previous peg of the tower $\mathcal{T}(G_{Db},\F)$ 
      and the tower $\mathcal{T}(_fG_{Db},G^1_{Db})$ we see that $A$ is maximal abelian in 
      $G_{Db}*_{G^1_{Db}}(_fG^{i+1}_{Db})$ and it satisfies the properties that make $G^{m+i}_{Db}*_A(A\oplus\Z^n)$ the $m+i+1$-th 
      floor of our tower;
       \end{itemize}

\end{proof}

\begin{example}
Let $G:=\langle e_1,e_2,z_1,z_2,x_1,x_2 \ | \ [z_1,z_2], [z_1,e_1^2e_2^2],[z_2,e_1^2e_2^2], [x_1,x_2]\cdot[z_1,e_1]\rangle$. 
We can give $G$ a tower structure over $\F_2:=\langle e_1,e_2\rangle$ as follows. The tower $\mathcal{T}(G,\F_2)$ consists of two floors:
\begin{itemize}
 \item the first floor is just an abelian flat obtained by gluing $\Z^2$ 
 along the maximal abelian group $\langle e_1^2e_2^2 \rangle$ of $\F_2$. In 
 group theoretic terms, the group corresponding to the first floor is the 
 amalgamated free product $G^1:=\F_2*_{e_1^2e_2^2=z}(\langle z\rangle\oplus\Z^2)$. The 
 retraction $r_1:G^1\twoheadrightarrow\F_2$ sends $z_1$ and $z_2$ to $z$ and stays the identity on $\F_2$.
 
 \item the second floor is just a surface flat obtained by gluing $\Sigma_{1,1}$, whose fundamental 
 group is $\langle x_1,x_2,s \ | \ s^{-1}[x_1,x_2]\rangle$, along its boundary to 
 the subgroup $\langle [z_1,e_1]\rangle$ of $G^1$. In group theoretic terms the group 
 corresponding to the second floor is the amalgamated free product $G^2:=G^1*_{[z_1,e_1]=s}\pi_1(\Sigma_{1,1})$. 
 The retraction $r_2:G^2\twoheadrightarrow G^1$ sends $x_1$ to $z_1$, $x_2$ to $e_1$ and stays the identity on $G^1$.
\end{itemize}

We now consider the double of $G^1$ with respect to the splitting $\mathcal{G}(G^1,\F)$ of the first point above. 
As a group $G^1_{Db}$ has the following presentation $\langle e_1,e_2,z_1,z_2,y_1,y_2 \ | 
\ [z_i,y_j] \ i,j\leq 2, [z_1,e_1^2e_2^2],[z_2,e_1^2e_2^2], [y_1,e_1^2e_2^2],[y_2,e_1^2e_2^2] \rangle$. 
It can be seen as the amalgamated free product $G^1_{Db}:=\F_2*_{e_1^2e_2^2=z}(\langle z\rangle\oplus\Z^4)$. 

The twin tower that corresponds to $\mathcal{T}(G,\F_2)$ consists of three floors as follows: 
\begin{itemize}
 \item the first floor is the floor double of $\mathcal{G}(G^1,\F_2)$ and the group corresponding to this floor is $G^1_{Db}$. 
 The retraction $r_1^{Db}$ sends each $z_1,z_2,y_1,y_2$ to $z$ and stays the identity on $\F_2$. 
 \item the second floor is just a surface flat obtained by gluing $\Sigma_{1,1}$, whose fundamental 
 group is $\langle x_1,x_2,s \ | \ s^{-1}[x_1,x_2]\rangle$, along its boundary to 
 the subgroup $\langle [z_1,e_1]\rangle$ of $G^1_{Db}$. In group theoretic terms the group 
 corresponding to the second floor is the amalgamated free product $G^2_{Db}:=G^1_{Db}*_{[z_1,e_1]=s}\pi_1(\Sigma_{1,1})$. 
 The retraction $r_2^{Db}:G^2_{Db}\twoheadrightarrow G^1_{Db}$ sends $x_1$ to $z_1$, $x_2$ to $e_1$ and stays the identity on $G^1_{Db}$.
 \item the third floor is again a surface flat obtained by gluing $\Sigma_{1,1}$, whose fundamental 
 group is $\langle p_1,p_2,s \ | \ s^{-1}[p_1,p_2]\rangle$, along its boundary to 
 the subgroup $\langle [y_1,e_1]\rangle$ of $G^2_{Db}$. In group theoretic terms the group 
 corresponding to the second floor is the amalgamated free product $G^3_{Db}:=G^2_{Db}*_{[y_1,e_1]=s}\pi_1(\Sigma_{1,1})$. 
 The retraction $r_3^{Db}:G^3_{Db}\twoheadrightarrow G^2_{Db}$ sends $p_1$ to $y_1$, $p_2$ to $e_1$ and stays the identity on $G^2_{Db}$.
\end{itemize}
The group corresponding to the twin tower has presentation $\langle e_1,e_2,z_1,z_2,y_1,y_2,x_1,x_2,p_1,p_2 \ | 
\ $ $[z_i,y_j] \ i,j\leq 2, [z_1,e_1^2e_2^2],[z_2,e_1^2e_2^2], [y_1,e_1^2e_2^2],[y_2,e_1^2e_2^2], [x_1,x_2]\cdot[z_1,e_1],[p_1,p_2]\cdot[y_1,e_1] \rangle$
\end{example}

 \begin{figure}[ht!]
\centering
\includegraphics[width=.8\textwidth]{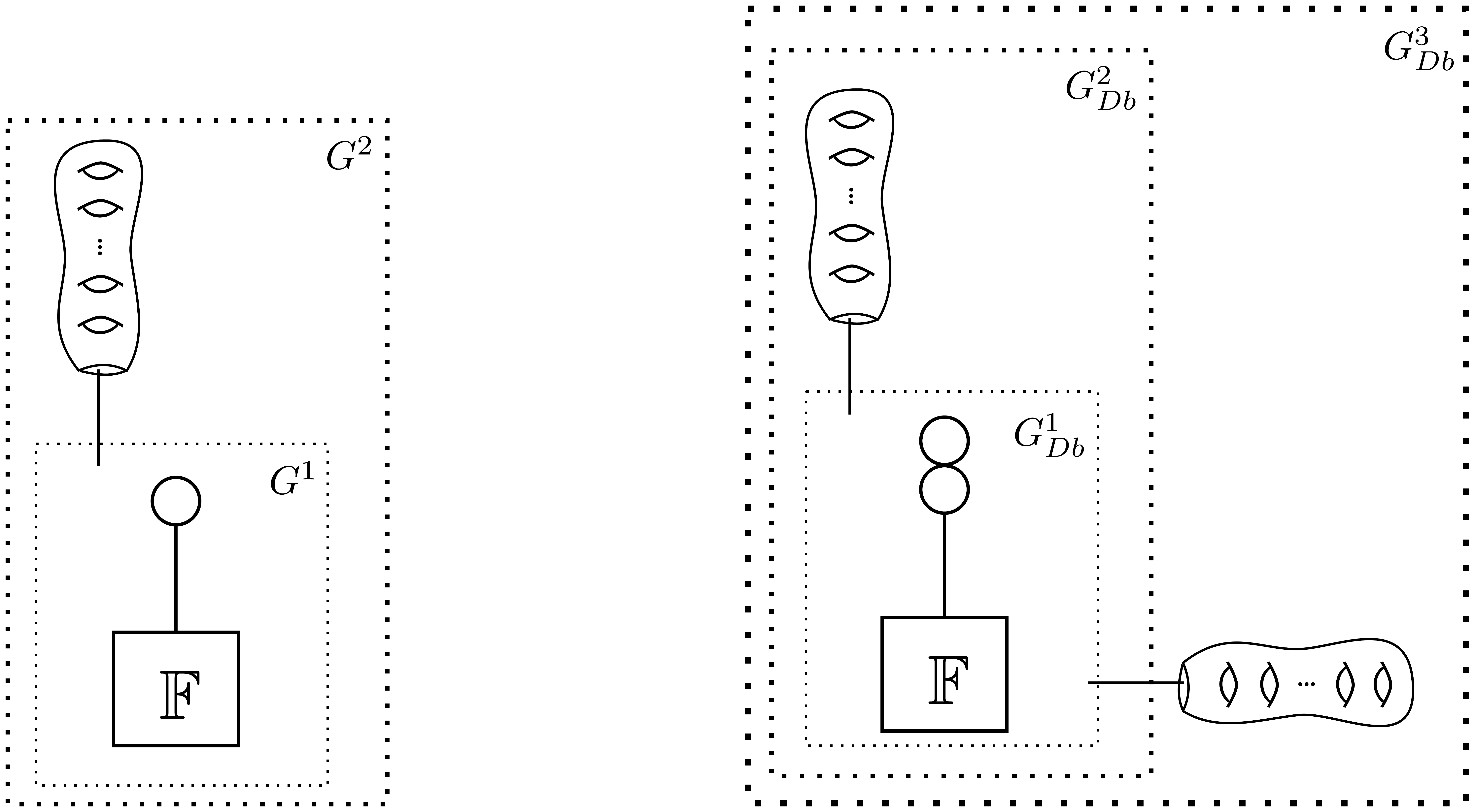}
\caption{A tower and its corresponding twin tower - abelian case.}
\end{figure}

\subsection{Closures of towers}
We pass to the notion of a tower closure. We first define the notion of an abelian floor closure
\begin{definition}[Abelian floor closure]
Suppose $G$ has the structure of an abelian floor over a limit group $L$ and $\mathcal{G}(G,L)$ is the splitting witnessing it. 
Let $\{\Z^{m_i}\}_{i\in I}$ be the collection of the free abelian groups 
that we glue along the corresponding pegs $\{E_i\}_{i\in I}$ in forming the abelian flats of the floor. 

Let $\{A^{m_i}\}_{i\in I}$ be free abelian groups and $f_i:E_i\oplus\Z^{m_i}\rightarrow E_i\oplus A^{m_i}$ be 
an embedding with $f_i\upharpoonright E_i=Id$ and such that $f_i(E_i\oplus\Z^{m_i})$ is a finite index subgroup of $E_i\oplus A^{m_i}$,  
for every $i\in I$. We call $\{f_i\}_{i\in I}$ a family of closure embeddings. 

We denote by $cl(G)$ the group that is the fundamental group of a graph of groups in which 
all the data is as in $\mathcal{G}(G,L)$ apart from replacing $\{\Z^{m_i}\}_{i\in I}$ 
by $\{\Z^{m_i}\oplus A^{m_i}\}_{i\in I}$ and add to the vertex groups $\{E_i\oplus\Z^{m_i}\oplus A^{m_i}\}_{i\in I}$ the 
relations corresponding to the 
family of closure embeddings, i.e. $z=f_i(z)$ for every $z\in E\oplus\Z^{m_i}$. Moreover, we call the 
latter graph of groups, $\mathcal{G}(cl(G),L)$, the floor closure of $\mathcal{G}(G,L)$ with respect to $\{f_i\}_{i\in I}$.
\end{definition}

\begin{remark}\label{MorphismExtensions}
Let $A^{m+1}:=\langle c, a_1,\ldots, a_m\rangle$, $\Z^{m+1}:=\langle c, z_1,\ldots, z_m\rangle$ be free abelian groups of rank $m+1$. 
Let $f:\Z^{m+1}\rightarrow A^{m+1}$ be an embedding for which $f(c)=c$ and $f(\Z^{m+1})$ has finite index in $A^{m+1}$. Then, 
to such an embedding we can assign a system of ($m$) linear equations (with $2m$ unknowns) over the integers as follows: 
for each $i\leq m$, if $f(z_i)=c^{k_{i_1}}a_1^{k_{i_2}}\cdots a_m^{k_{i_{m+1}}}$, then $x_i=k_{i+1}+k_{i_2}y_1+\ldots+k_{i_{m+1}}y_m$. 
We denote this system of equations $\Sigma_{f}(x_1,\ldots,x_m,y_1,\ldots,y_m)$.

Furthermore, for any such system of equations $\Sigma(x_1,\ldots,x_m,y_1,\ldots,y_m)$ there exists a 
finite index subgroup $U$ of the group $\mathbb{A}^m:=\langle(1,0,\ldots,0),\ldots,(0,0,\ldots,1)\rangle$ 
and a tuple of integers $(k_1,\ldots,k_m)$ so that for each tuple of integers $(p_1,\ldots,p_m)$ 
the system of linear equations over the integers with $m$ unknowns $\Sigma(p_1,\ldots,p_m,y_1,\ldots,y_m)$ 
has an integer solution if and only if $(p_1,\ldots,p_m)\in (k_1,\ldots,k_m)+U$.

On the other hand, whenever we have a finite index subgroup $U$ of $\mathbb{A}^m$, a coset $(k_1,\ldots,k_m)+U$ 
defines a system of $m$ linear equations with $2m$ unknowns, $\Sigma(x_1,\ldots,x_m,y_1,\ldots,y_m)$, such that 
the system of equations $\Sigma(p_1,\ldots,p_m,y_1,\ldots,y_m)$ has an integer solution if and only if $(p_1,\ldots,p_m)\in (k_1,\ldots,k_m)+U$. 
In turn, this system of equations defines an embedding $f:\Z^{m+1}\rightarrow A^{m+1}$ for which $f(c)=c$ and 
$f(\Z^{m+1})$ has finite index in $A^{m+1}$.
\end{remark}

The above remark characterizes the morphisms $h:G\rightarrow\F$ from a group $G$, that has a 
structure of a floor over a limit group $L$, which can be extended to morphisms $h':cl(G)\rightarrow\F$ 
from the closure of $G$ with respect to some closure embeddings $\{f_i\}_{i\in I}$.

\begin{example}
We consider the group $G:=\langle e_1,e_2, z \ | \ [z,e_1]\rangle$. The group $G$ admits 
the structure of an abelian flat over $\F_2:=\langle e_1,e_2\rangle$. This can be seen 
as the amalgamated free product $\F_2*_{e_1=c}(\langle c\rangle\oplus\langle z\rangle)$. 

Let $f:\langle c\rangle\oplus\langle z\rangle \rightarrow \langle c\rangle\oplus\langle a\rangle$ be the morphism 
that stays the identity on $c$ and sends $z$ to $c^2a^3$. Since $f$ is injective and $\langle c, c^2a^3\rangle$ is 
an index $3$ subgroup of $\langle c,a\rangle$ the conditions of Remark \ref{MorphismExtensions} are met. Moreover the 
corresponding system of equations is $\Sigma(x,y):= x = 2 +3y$ and $\Sigma(p,y)$ has integer solutions if and only if 
$p\in 2+3\mathbb\Z$.

In addition, the closure of $G$ with respect to $f$ can be seen as the amalgamated free product 
$cl(G):=\F_2*_{e_1=c}\langle c,z,a \ | \ [c,z], [c,a], [z,a], z=c^2a^3\rangle$. 
By our previous argument, a morphism $h:G\rightarrow \F_2$ (that stays the identity on $\F_2$) extends to a morphism $h':cl(G)\rightarrow\F_2$ 
if and only if it sends $z$ to $e_1^{2+3k}$, for some $k\in\Z$.
\end{example}

\begin{remark}
Suppose $G$ has the structure of a floor over a limit group $L$. Suppose $\{\Z^{m_i}\}_{i\in I}$ is the collection of the free abelian groups 
that we glue along the corresponding pegs $\{E_i\}_{i\in I}$ in forming the abelian flats of the floor. 
Let $(\mathcal{G}(cl(G),L), \{f_i\}_i\in I)$ be the closure of $G$ with respect to 
some family of closure embeddings. Let $G=G^1*\ldots *G^m$ be a free splitting of $G$. Then for each $i\in I$ 
we have that $\gamma_i E_i\oplus \Z^{m_i}\gamma_i^{-1}$ for some $\gamma_i\in G$, is a subgroup of some $G_j$, for $j\leq m$.

Thus, there exists a free splitting, $H^1*\ldots *H^m$ of $cl(G)$ such that each $H^i$ is the group 
obtained by gluing, $\gamma_iA^{m_i}\gamma_i^{-1}$ in $G^i$, along each maximal abelian group of the form 
$\gamma_i E_i\oplus \Z^{m_i}\gamma_i^{-1}$ that is contained in $G^i$ and adding the relations in this 
new vertex group according to the family of the closure embeddings, i.e. 
$\gamma_i z\gamma_i^{-1}=\gamma_if_i(z)\gamma_i^{-1}$ for every $z\in E_i\oplus\Z^{m_i}$.
\end{remark}

\begin{definition}[Tower closure]
Let $\mathcal{T}(G,\F)$ be a tower of height $m$. Let, for $i<m$, $\mathcal{G}(G^{i+1},G^i)$ be the $i$-th floor, and 
$\{f^i_j\}_{j\in J}$ a family of closure embeddings (in the case where the $i$-th floor is a free product or a hyperbolic floor we take the family to be empty). 
Then the tower closure, $cl(\mathcal{T}(G,\F))$, of the tower 
$\mathcal{T}(G,\F)$ with respect to the previous families of closure embeddings for each floor of the tower,
is defined recursively as follows:
\\
{\bf Base step.} The first floor consists of the floor closure $\mathcal{G}(cl(G^1),\F)$ with respect to $\{f^1_j\}_{j\in J}$;\\
{\bf Recursive step.} Let $G^i_{cl}$ be the group that corresponds to the $i$-th floor of the tower closure.% and $\alpha:G^1\rightarrow G^1_{cl}$ be the natural embedding. 
Then $G^{i+1}_{cl}$ is the fundamental group of the graph of groups in which:
    \begin{itemize}
     \item the underlying graph is the same as the underlying graph of $\mathcal{G}(G^{i+1},G^{i})$;
     \item the vertex groups, $G^i_1,\ldots,G^i_m$ whose free product 
     is $G^i$ are replaced by the corresponding groups in $G^i_{cl}$;
     \item the abelian flats $\Z^{m_j}$ are replaced by $A^{m_j}\oplus\Z^{m_j}$; and
     \item in the new abelian vertex groups $E_j\oplus A^{m_j}\oplus\Z^{m_j}$ we add relations 
     according to the family of closure embeddings, i.e. $z=f^{i+1}_j(z)$ for every $z\in E_j\oplus\Z^{m_j}$. 
    \end{itemize}
\end{definition}
It is not hard to see that.
\begin{lemma}
Let $\mathcal{T}(G,\F)$ be a tower over $\F$. Then $cl(\mathcal{T}(G,\F))$ with respect to any families of closure embeddings is a tower over $\F$.
\end{lemma}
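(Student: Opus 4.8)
The plan is to induct on the height $m$ of $\mathcal{T}(G,\F)$, unwinding the recursive definition of $cl(\mathcal{T}(G,\F))$ floor by floor and checking at each level that the prescribed data still satisfies the axioms of a floor — free product, surface flat, or abelian flat — over the closure of the preceding floor. Write $G^i_{cl}$ for the group obtained after performing the closure on the first $i$ floors, with $G^0_{cl}=\F$; the induction hypothesis is that $G^i_{cl}>G^{i-1}_{cl}>\ldots>\F$, together with the floor closures already constructed, is a tower over $\F$.

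For the base step, if the first floor is a free product or a surface (hyperbolic) flat the closure family is empty and $cl(G^1)=G^1$, so there is nothing to prove; if it is an abelian floor, I read off from the definition of the abelian floor closure that $\mathcal{G}(cl(G^1),\F)$ is again an abelian floor over $\F$. Indeed, the only change is that each free abelian flat $\Z^{m_i}$ glued along a peg $E_i$ is replaced by $\Z^{m_i}\oplus A^{m_i}$ modulo the relations $z=f_i(z)$; since $f_i$ embeds $E_i\oplus\Z^{m_i}$ as a finite index subgroup of $E_i\oplus A^{m_i}$, the resulting abelian vertex group is a finitely generated torsion free abelian group containing $E_i$, hence of the form $E_i\oplus\Z^{m_i}$ after the evident identification, while the peg $E_i$ is untouched, so it remains maximal abelian in $\F$, the pegs remain pairwise non conjugate, and (in the situation of our Convention) conjugate into $\F$. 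The retraction witnessing the abelian floor structure is obtained from the one for $\mathcal{G}(G^1,\F)$ by composing with the projection of each enlarged abelian vertex group onto its peg.

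For the recursive step, assume $G^i_{cl}$ carries the inherited tower structure and take cases according to the type of the $(i+1)$-st floor of $\mathcal{T}(G,\F)$. If $G^{i+1}=G^i*\F_n$, the family is empty and, using the remark above describing free splittings of a closure, $G^{i+1}_{cl}=G^i_{cl}*\F_n$, a free product floor. If $G^{i+1}$ is a surface flat over $G^i$ witnessed by $(\mathcal{G}(G^{i+1},G^i),r_{i+1})$, then the closure only modifies the rigid vertex groups (whose free product is $G^i$), replacing them by the corresponding vertex groups of $G^i_{cl}$, while leaving the surface type vertex and all incident edge groups intact; hence the same Bass-Serre presentation exhibits a surface flat over $G^i_{cl}$, and the retraction $r'_{i+1}$ agreeing with $r_{i+1}$ on the surface part and with the identity on $G^i_{cl}$ still sends the surface vertex group to a non abelian image. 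If $G^{i+1}=G^i*_A(A\oplus\Z^n)$ is an abelian flat with closure embedding $f$, then the vertex group $A\oplus\Z^n$ is replaced by $(A\oplus\Z^n\oplus A^n)/\langle z=f(z)\rangle$, again free abelian of the form $A\oplus\Z^n$ by the finite index hypothesis on $f$; it then remains to check that $A$ is maximal abelian in $G^i_{cl}$ and is not conjugate into any peg coming from the closures of the lower floors. This is where Lemmas \ref{MaxAb0}, \ref{MaxAb1} and \ref{MaxAb2} enter: since $A$ is maximal abelian in $G^i$ and cannot be conjugated into any peg of a previous floor of $\mathcal{T}(G,\F)$, a normal form argument in the relevant amalgamated product — exactly as in the proofs of the twin tower propositions — shows that enlarging the lower abelian flats by finite index overgroups along those (pairwise non conjugate, and distinct from $A$) pegs leaves $A$ maximal abelian and non conjugate to the other pegs, so the usual properties of a floor persist; the retraction extends as in the surface case.

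The only delicate point, and the one I would spend the most care on, is precisely this verification in the abelian case: the closure operations enlarge the maximal abelian subgroups of the tower that contain the lower pegs, and one must make sure this does not accidentally render the current peg $A$ non maximal, or conjugate to another peg. Once one notes that all closure modifications are localized at pegs that are pairwise non conjugate and distinct from $A$, so that Lemmas \ref{MaxAb1} and \ref{MaxAb2} apply verbatim, every remaining step is routine bookkeeping against the definitions of an abelian flat, a surface flat, and a floor.
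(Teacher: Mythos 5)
The paper itself gives no proof here, only the phrase ``it is not hard to see that,'' so the relevant comparison is to the implicit routine verification the authors have in mind. Your proposal is precisely that verification, carried out carefully: induct on the height, treat each floor type separately, and observe that the only non-trivial point is that, after enlarging the lower abelian flats by their finite-index overgroups, a peg $A$ of a higher abelian flat stays maximal abelian and non-conjugate to the other (modified) pegs. You correctly reduce the closure relations $z=f(z)$ on $E\oplus\Z^m\oplus A^m$ to the observation that this group is just $E\oplus A^m$ (so the amalgam structure, and hence the normal-form arguments behind Lemmas~\ref{MaxAb0}--\ref{MaxAb2}, carry over), and you correctly build the new retractions by projecting the enlarged abelian vertex groups onto their pegs. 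One small caveat worth flagging: Lemmas~\ref{MaxAb0}--\ref{MaxAb2} do not apply quite ``verbatim'' as stated (e.g.\ Lemma~\ref{MaxAb0} is phrased for a single abelian floor and Lemma~\ref{MaxAb2} for a one-edged amalgam), so a sentence making the inductive reduction explicit — that $G^i_{cl}$ is, by the inductive hypothesis, itself a tower over $\F$, so Lemma~\ref{MaxAb1} can be invoked with $G^i_{cl}$ in the role of $G$ and $\F$ in the role of $L$, and that the new abelian flat is attached by a genuine one-edged amalgam to which Lemma~\ref{MaxAb2} applies — would tighten the argument. Apart from that, the proof is correct and matches the approach the paper intends.
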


\subsection{Symmetrizing closures of twin towers} 
When moving to the closure of a twin tower it could be the case that the ``twin'' abelian flats that appear 
in the floors of the twin tower embed in different ways in the ambient free abelian free groups. For example, 
if $E\oplus\langle z\rangle$ is an abelian flat and $\hat{E}\oplus\langle \hat{z}\rangle$ is its twin, 
then we could have the closure embeddings: $f:E\oplus\langle z\rangle\rightarrow E\oplus\langle a\rangle$ with $f(z)=a^2$, 
and $\hat{f}:\hat{E}\oplus\langle \hat{z}\rangle\rightarrow\hat{E}\oplus\langle b\rangle$ with $\hat{f}(\hat{z})=b^3$. 
We would like to ``symmetrize'' the situation by moving to a closure of our given closure demanding that the 
closure embeddings will be: $f_1:E\oplus\langle a\rangle\rightarrow E\oplus\langle s\rangle$ with $f_1(a)=s^3$, and 
$\hat{f}_1:\hat{E}\oplus\langle b\rangle\rightarrow\hat{E}\oplus\langle q\rangle$ with $\hat{f}_1(b)=q^2$. 
The main point is that the image of $E\oplus\langle z\rangle$ in $E\oplus\langle s\rangle$ under $f_1\circ f$ can be 
identified with the image of $\hat{E}\oplus\langle \hat{z}\rangle$ in $\hat{E}\oplus\langle q\rangle$ under $\hat{f}_1\circ f$. 

We define the symmetric closure of a closure of a twin tower as follows. 

\begin{definition}
Suppose $G$ has the structure of a tower (of height $m$) $\mathcal{T}(G,\F)$ 
and that the first floor $\mathcal{G}(G^1,G^0)$ is an abelian floor. Let $\mathcal{T}\#\mathcal{T}(G,\F)$ 
be the twin tower of $\mathcal{T}(G,\F)$. For each $1< i\leq m$, we call the the $m+i-1$ the twin floor of the $i$-th floor.

Let $cl(\mathcal{T}\#\mathcal{T}(G,\F))$ be a closure with respect to some closure embeddings $\{(f_i,\hat{f}_i)_{i\in I}\}$ 
where $\hat{f}$ is the embedding defined on the abelian flat that corresponds to the twin of the abelian flat where $f$ is defined on. 
For each abelian flat $E\oplus\Z^m$ and its twin $\hat{E}\oplus\hat{Z}^m$ we consider the cosets $p+U$ and $\hat{p}+\hat{U}$, 
where $U,\hat{U}$ are finite index subgroups of $\langle(1,0,\ldots,0),\ldots, (0,0,\ldots,1)\rangle$, that correspond to the 
closure embeddings $f,\hat{f}$. For each such couple $p+U, \hat{p}+\hat{U}$, we consider the cosets $p+U\cap\hat{U}, \hat{p}+U\cap\hat{U}$ 
and the induced corresponding embeddings defined on $E\oplus\Z^m, \hat{E}\oplus\hat{\Z}^m$.

We call the closure of $\mathcal{T}\#\mathcal{T}(G,\F)$ obtained by the above closure embeddings the symmetric closure 
of $cl(\mathcal{T}\#\mathcal{T}(G,\F))$ with respect to $\{(f_i,\hat{f}_i)_{i\in I}\}$. 
\end{definition}

\begin{lemma}
The symmetric closure of the closure of a twin tower is a closure of its closure. Moreover, 
for each abelian flat $E\oplus\Z^m$ and its twin $\hat{E}\oplus\hat{\Z}^m$ the corresponding 
subgroups $U,\hat{U}<\langle (1,0,\ldots,0),\ldots, (0,0,\ldots,1)\rangle$, obtained by the  
closure embeddings $f,\hat{f}$, are the same, i.e. $U=\hat{U}$.
\end{lemma}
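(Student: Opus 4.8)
The plan is to strip the statement down to a single abelian flat together with its twin and to work entirely inside the dictionary of Remark~\ref{MorphismExtensions} between closure embeddings and cosets of finite-index subgroups of $\mathbb{A}^{m}$.

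\textbf{Reduction to one flat.} Both the tower-closure construction and the symmetrizing construction act separately on each abelian flat: they replace the free abelian group $\Z^{m}$ glued along a peg by a larger free abelian group carrying prescribed relations, while leaving every peg (an edge group) and the whole underlying graph of groups untouched. So it is enough to fix one abelian flat $E\oplus\Z^{m}$ of the twin tower and its twin $\hat E\oplus\hat\Z^{m}$ (glued along the twin peg $\hat E$), with closure embeddings $f$ on the flat and $\hat f$ on its twin, and to verify the two assertions for this pair; performing the construction simultaneously over all flat/twin pairs then gives the result for the whole tower, the outcome being again a tower over $\F$ by the lemma asserting that any closure of a tower is a tower.

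\textbf{The dictionary, and the second assertion.} By Remark~\ref{MorphismExtensions} a closure embedding of $E\oplus\Z^{m}$ is the same datum as a coset $p+U$ of a finite-index subgroup $U\le\mathbb{A}^{m}$: $p+U$ is exactly the set of exponent tuples (relative to the peg) of the morphisms of the flat to $\F$ that extend over the closed flat, and conversely every coset yields a genuine closure embedding. Under this correspondence the abelian vertex group of $cl(\mathcal{T}\#\mathcal{T}(G,\F))$ on our flat records the coset $p+U$ coming from $f$, while the vertex group of the symmetric closure records, by its very construction, the coset $p+(U\cap\hat U)$; likewise on the twin the two cosets are $\hat p+\hat U$ and $\hat p+(U\cap\hat U)$. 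The second assertion is then immediate from the dictionary: the subgroup attached in the symmetric closure to $E\oplus\Z^{m}$ is the subgroup of $p+(U\cap\hat U)$, namely $U\cap\hat U$, and the one attached to $\hat E\oplus\hat\Z^{m}$ is the subgroup of $\hat p+(U\cap\hat U)$, again $U\cap\hat U$; so they coincide.

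\textbf{Composition of closures (the crux), and the first assertion.} For the first assertion one must realize the symmetric closure as a tower closure of $cl(\mathcal{T}\#\mathcal{T}(G,\F))$ itself. For this I would prove the following single-flat statement: if an abelian flat already realizes a coset $p+U$, then any further closure of it realizes a coset of the form $p'+W$ with $W\le U$ of finite index and $p'\in p+U$, and conversely every such sub-coset is realized by some further closure. This is a finite computation with the dictionary: normalizing the peg to be sent to a primitive element, a closure embedding corresponds to an integer matrix $M$ with $\det M\ne 0$ and a shift vector $\vec k$, the realized coset being $\vec k+M\Z^{m}$; composing embeddings $f$ then $g$ gives the coset $\vec k^{f}+M_{f}\vec k^{g}+M_{f}M_{g}\Z^{m}$, and as $(M_{g},\vec k^{g})$ range one obtains exactly the sub-cosets of $\vec k^{f}+M_{f}\Z^{m}$. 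Applying this on our flat with $W=U\cap\hat U\le U$ and $p'=p$, and on its twin with $W=U\cap\hat U\le\hat U$ and $p'=\hat p$, produces closure embeddings of the abelian flats of $cl(\mathcal{T}\#\mathcal{T}(G,\F))$ whose effect is to replace the realized cosets $p+U$ and $\hat p+\hat U$ by $p+(U\cap\hat U)$ and $\hat p+(U\cap\hat U)$ respectively. Carrying this out for every flat/twin pair exhibits the symmetric closure — up to the canonical identification, supplied by Remark~\ref{MorphismExtensions}, of two closures realizing the same coset over the same base — as a tower closure of $cl(\mathcal{T}\#\mathcal{T}(G,\F))$, which is the first assertion.

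\textbf{Main obstacle.} The only step that is not pure bookkeeping is the composition-of-closures computation: verifying in integral linear-algebra terms which coset a flat realizes after two successive closures, and that the second closure can achieve an arbitrary sub-coset of the first. Once that is in place the globalization causes no trouble, since the pegs and the underlying graphs of groups are never modified, the non-conjugacy conditions on the pegs are conditions on these unchanged edge groups, and closures of towers are again towers.
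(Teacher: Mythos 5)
Your proof is correct and is essentially the argument the paper leaves implicit (the lemma is stated without proof in the paper, immediately following the definition of the symmetric closure). The second assertion really is tautological once one notices that the coset attached to each flat and to its twin in the symmetric closure is, by construction, $p+(U\cap\hat U)$ respectively $\hat p+(U\cap\hat U)$, so both subgroups are $U\cap\hat U$. The first assertion is precisely the composition-of-closure-embeddings computation you carry out, together with the observation from Remark~\ref{MorphismExtensions} that a closure of an abelian flat is determined, up to isomorphism over the flat, by the coset it realizes (the integral linear-algebra check that two embeddings realizing $\vec k_1+M_1\Z^m=\vec k_2+M_2\Z^m$ differ by an automorphism of the codomain fixing $E$, namely $P=M_1^{-1}M_2\in GL(m,\Z)$ with shift $-\vec v$ where $\vec k_1-\vec k_2=M_1\vec v$, is what makes your phrase \emph{canonical identification} honest). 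One small remark: since the pegs, the underlying graphs, and the non-conjugacy data are untouched by both the closure and the symmetrizing operations, and the paper already records that any closure of a tower is a tower, the per-flat reduction that opens your argument is safe, as you note.
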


\section{Solid limit groups and strictly solid morphisms}\label{Completions}
In this section we record the definitions of a strictly solid morphism and a family of such morphism as given by Sela. 
A strictly solid morphism is a morphism from a solid limit group to a free group, that satisfy certain conditions. These 
morphisms are of fundamental importance in the work of Sela to answering Tarski's question: first because of a boundedness result (see \cite[Theorem 2.9]{Sel3}), and second because 
in contrast to solid morphisms they are first order definable.

The definition of a strictly solid morphism requires a technical construction, 
called the {\em completion} of a strict map. The next subsection explains this construction. 

In subsection \ref{SldMorph} we define the above special class of morphisms and their families.    

\subsection{Completions}

We start by modifying a $GAD$ for a limit group $G$ in order to simplify the conditions 
for a map $\eta:G\twoheadrightarrow L$ to be strict with respect to it. The goal is to transform the $GAD$ in a way that 
the rigid vertex groups will be enlarged to their envelopes, every edge group connecting two rigid vertex groups would be 
maximal abelian in both vertex groups  
of the one edged splitting induced by its edge (and after replacing all abelian vertex groups with their peripheral subgroups), 
and abelian vertex groups will be leaves connected through a rigid vertex 
to the rest of the graph.

The following lemma of Sela will be helpful. 

\begin{lemma}[Sela]
Let $L$ be a limit group and $M$ be a non-cyclic maximal abelian subgroup. Then:
\begin{itemize}
 \item if $L$ admits an amalgamated free product splitting with abelian edge group, then $M$ can be conjugated into one of the factors;
 \item if $L$ admits an $HNN$ extension splitting $A*_C$ where $C$ is abelian, then either $M$ can be conjugated into $A$ or 
 $M$ can be conjugated in $\langle C,t\rangle$ and $L=A*_C\langle C,t\rangle$.
 
\end{itemize}

\end{lemma}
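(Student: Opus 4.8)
The plan is to let $L$ act on the Bass--Serre tree $T$ of the given one-edge splitting and exploit that limit groups (and their subgroups, which are again limit groups) are CSA: in particular commutative transitive, with centralisers of nontrivial elements abelian and maximal abelian subgroups malnormal, hence self-normalising. Recall the trichotomy for an abelian subgroup $M$ acting on $T$: either $M$ fixes a point (is \emph{elliptic}), or it preserves a unique line $\ell\subseteq T$. In the second case $M$ acts on $\ell$ by translations --- a reflection in $M$ would, by conjugation, invert every translation in $M$, so by commutativity such a translation would equal its own inverse, impossible since $M$ is torsion-free and, being non-elliptic and finitely generated, contains a hyperbolic element --- giving a homomorphism $\tau\colon M\to\mathbb Z$ recording translation lengths, with kernel $M_0$ the pointwise stabiliser of $\ell$. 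In the amalgamated case $L=A*_C B$ the elliptic subgroups are exactly those conjugate into $A$ or $B$, and in the HNN case $L=A*_C$ exactly those conjugate into $A$; so it suffices to treat the non-elliptic case and show it is impossible (amalgam) or forces the second alternative (HNN).

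Assume then that $M$ is not elliptic. Since $M$ is non-cyclic while every subgroup of $\mathbb Z$ is cyclic, $M_0\neq 1$; fix $1\neq m_0\in M_0$. As $C_L(m_0)$ is abelian and contains $M$, maximality gives $M=C_L(m_0)$. Now $m_0$ fixes every edge of $\ell$, so for each edge $e\subseteq\ell$ its abelian stabiliser $L_e$, a conjugate of $C$, contains $m_0$ and hence lies in $C_L(m_0)=M$. If $C=1$ this contradicts $M_0\neq 1$ and $M$ is elliptic, so assume $C\neq 1$. Commutative transitivity of $L$ then forces $C$ to be maximal abelian in at least one of the vertex groups of the splitting: otherwise one finds elements $x,y$ in distinct vertex groups, each commuting with a common nontrivial element of $C$ but with $[x,y]\neq 1$ by a normal-form computation, contradicting commutative transitivity. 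Pick a vertex $v$ of $\ell$ whose vertex group $L_v$ contains a conjugate of $C$ that is maximal abelian in it; the two edges of $\ell$ at $v$ then have stabilisers which are conjugates of this maximal abelian subgroup inside $L_v$, each containing $m_0\neq 1$, so by malnormality of maximal abelian subgroups of the limit group $L_v$ these two stabilisers coincide. But distinct edges at $v$ correspond to distinct cosets of an edge group inside $L_v$, and when that edge group is self-normalising in $L_v$, equal stabilisers force equal cosets and hence a single edge --- a contradiction --- \emph{unless} the two edges of $\ell$ at $v$ arise from the two different embeddings of $C$ into $A$, which can occur only in the HNN case.

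In the amalgamated case this is a contradiction, so $M$ is elliptic, i.e.\ conjugate into $A$ or $B$: the first bullet. In the HNN case we conclude instead that the two embeddings $\alpha,\beta\colon C\to A$ have conjugate images in $A$; absorbing the conjugator into the stable letter (which changes the HNN presentation only by the isomorphism $t\mapsto ta$ for a suitable $a\in A$) we may assume $\beta(C)=\alpha(C)$, so that $\langle C,t\rangle$ is the subgroup $C\rtimes_{\phi}\mathbb Z$ of $L$ with $\phi=\beta\alpha^{-1}\in\Aut(C)$, and the HNN presentation rewrites as the amalgam $L=A*_C\langle C,t\rangle$. Finally $M\le\Stab(\ell)$, and applying the already-established amalgamated case to this last splitting --- using that $M$ is not conjugate into $A$ --- gives that $M$ is conjugate into $\langle C,t\rangle$: the second bullet. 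I expect the main work to be the Bass--Serre bookkeeping of the previous paragraph: justifying the reduction to edge groups maximal abelian in a vertex group (including the degenerate HNN cases), and extracting cleanly, from ``the two edges of $\ell$ at $v$ share a stabiliser'', the split between an outright contradiction in the amalgamated case and the degeneration $\beta(C)\sim_A\alpha(C)$ in the HNN case, while carefully tracking both embeddings of $C$ and conjugation inside $L_v$.
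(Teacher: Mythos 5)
The paper does not actually prove this lemma; it is stated as a fact of Sela's with no argument or citation attached, so there is no proof in the source to compare against, and I am evaluating your argument on its own. Your overall strategy (push $M$ onto the Bass--Serre tree, split the elliptic from the line-preserving case, and use CSA/malnormality to rule out or analyse the line case) is the right one; the elliptic half, the commutative-transitivity reduction, and the final rewriting of $A*_C$ as $A*_C\langle C,t\rangle$ once one knows $\alpha(C)\sim_A\beta(C)$ are all sound.

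There is, however, a real gap at the hinge of the non-elliptic case, one level below the ``bookkeeping'' you flag. When the two edges $e_1,e_2$ of $\ell$ at $v$ come from the two different embeddings $\alpha,\beta\colon C\hookrightarrow A$ (which, as you note, happens only in the HNN case), only one of $\alpha(C),\beta(C)$ is forced by commutative transitivity to be maximal abelian in $A$; CSA in $L_v$ then yields only a one-sided containment $L_{e_2}\subseteq L_{e_1}$, not the equality your sentence ``by malnormality\ldots these two stabilisers coincide'' asserts, and without equality the needed conjugacy $\alpha(C)\sim_A\beta(C)$ does not follow. (Your ``no reflections'' step has a milder cousin of the same issue: inverting a translation $m$ on $\ell$ gives only $m^2\in N$, the pointwise stabiliser of $\ell$, not $m^2=1$; one should add that $m^2$ is hyperbolic while $N$ is elliptic.) The cleanest repair also makes the maximality reduction superfluous. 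Every edge group $L_e$ along $\ell$ is abelian and contains $m_0$, hence lies in $M=C_L(m_0)$; since $N$ is a nontrivial abelian normal subgroup of $\Stab_L(\ell)$ contained in $M$, malnormality of $M$ gives $\Stab_L(\ell)=M$; therefore any $g\in L_e$ preserves $\ell$ and fixes an edge of it, hence fixes $\ell$ pointwise, so in fact $L_e=N$ for \emph{every} edge of $\ell$. Combined with $M\cap L_v=N$ (an element of $M$ fixing $v$ and preserving $\ell$ is not a reflection, hence lies in $N$), one gets that the relevant conjugate of $C$ is self-normalising in $L_v$, which produces the contradiction in the amalgam case and the genuine conjugacy $\alpha(C)\sim_A\beta(C)$ in the mixed HNN case; after that your closing paragraph goes through unchanged.
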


\begin{lemma}\label{ModificationGAD}
Let $G$ be a limit group and $\Delta:=(\mathcal{G}(V,E),(V_S,V_A,V_R))$ be a $GAD$ for $G$, where the image of each edge group 
is maximal abelian in at least one vertex group of the one edged splitting induced by the edge. Assume moreover that $V_A$ is empty. 
Then there exists a $GAD$ $\hat{\Delta}:=(\hat{\mathcal{G}}(\hat{V},\hat{E}),(\hat{V}_S, \hat{V}_A, \hat{V}_R))$ for $G$ satisfying the following properties:
\begin{enumerate}
 \item the underlying graph $\hat{\mathcal{G}}$ is the same as $\mathcal{G}$ up to some sliding of edges;
 \item the set of rigid vertices and the set of surface type vertices are the same; 
 \item every rigid vertex group of the graph of groups $\hat{\mathcal{G}}$ coincides with the envelope 
 of the corresponding rigid vertex group in $\mathcal{G}$;
 \item the image of every edge group connecting two rigid vertices of $\hat{V}_R$ is maximal abelian in both vertex groups 
 in the splitting induced by the edge. 
\end{enumerate}
\end{lemma}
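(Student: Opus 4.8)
The plan is to pass from $\Delta$ to $\hat\Delta$ by a finite sequence of elementary moves, each preserving $G$ as the fundamental group of a graph of groups: \emph{enlarging} an edge group to its centralizer in $G$ and simultaneously enlarging the two incident rigid vertex groups to absorb that centralizer, and \emph{sliding} an edge when the relevant centralizer is only reached through a neighbouring edge. Since $V_A=\emptyset$, the envelope of a rigid vertex $v$ is just $\tilde G_v=\langle G_v,\ Z_G(G_e): e \text{ incident to } v\rangle$; so it suffices to arrange that along every edge the centralizer of the edge group has been absorbed into both endpoints and the edge group has grown to equal that centralizer. Once this is achieved, (3) holds because each rigid vertex group has absorbed exactly the centralizers of its incident edge groups; (4) holds because a maximal abelian subgroup of $G$ contained in a subgroup is maximal abelian there; and (1)--(2) are visible from the moves performed. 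We induct on the number of edges whose edge group is not yet maximal abelian in both adjacent vertex groups of the one--edged splitting it induces.

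So let $e$ be such an edge. By hypothesis $G_e$ is maximal abelian in one of the incident vertex groups, say $G_{\tau(e)}$, but not in the other, $G_{\alpha(e)}$. Put $M:=Z_G(G_e)$; since $G$ is a limit group, $M$ is the unique maximal abelian subgroup of $G$ containing $G_e$, it is malnormal, and $M\supsetneq G_e$. Collapse $\Delta$ to the one--edge splitting induced by $e$: an amalgam $G=A*_{G_e}B$ with $G_{\tau(e)}\le A$ and $G_{\alpha(e)}\le B$, or an $HNN$ extension $G=A*_{G_e}$. Apply the preceding lemma of Sela on non--cyclic maximal abelian subgroups of abelian splittings to $M$: in the amalgam case a conjugate of $M$ lies in $A$ or in $B$, and in the $HNN$ case either a conjugate of $M$ lies in $A$, or $G=A*_{G_e}\langle G_e,t\rangle$ with a conjugate of $M$ in $\langle G_e,t\rangle$. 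A case analysis using malnormality of $M$, the containment $G_e\le M$, and the action on the Bass--Serre tree (noting that $M$, being the centralizer of the edge stabilizer $G_e$, fixes an endpoint of $e$) pins $M$ to the $B$--side in the amalgam case — it cannot be carried by $G_{\tau(e)}$, where $G_e$ is already maximal abelian — so after adjusting the Bass--Serre presentation we may take $M\le B$. (If $M$ is cyclic one argues directly: $G_e$ then has finite index in $M$, $M$ fixes the edge of the Bass--Serre tree fixed by $G_e$, and the same conclusion holds.)

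Now absorb $M$. Since $M\le B$, $G_{\tau(e)}\le A$ and $A\cap B=G_e$, we have $G_{\tau(e)}\cap M=G_e$, so $\langle G_{\tau(e)},M\rangle=G_{\tau(e)}*_{G_e}M$. Replace in $\Delta$ the vertex group $G_{\tau(e)}$ by $G_{\tau(e)}*_{G_e}M$ and the edge group of $e$ by $M$; by associativity of amalgams $A*_{G_e}B=(A*_{G_e}M)*_M B$, so the fundamental group is unchanged. If $M\le B$ but $M\not\le G_{\alpha(e)}$, first slide $e$ along the edge of the $B$--side through which $M$ is reached — this is the source of the sliding allowed in (1). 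The new edge groups are abelian, so the GAD conditions persist; surface type vertices and their groups are untouched; and no abelian vertex is created, once one checks that the exceptional $HNN$ alternative $G=A*_{G_e}\langle G_e,t\rangle$ cannot occur here (it would present a non--cyclic abelian group glued along $G_e$, forcing $\alpha(e)$ to carry it, incompatible with $V_A=\emptyset$). Each move strictly lowers the induction count, so after finitely many steps we obtain $\hat\Delta$ satisfying (1)--(4).

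The main obstacle is the middle step: determining on which side of the splitting induced by $e$ the centralizer $M=Z_G(G_e)$ is carried, and checking that pulling it across $e$ leaves $G$ unchanged. This is where the preceding lemma of Sela, the malnormality of abelian subgroups of a limit group, and a careful analysis of the action on the Bass--Serre tree come together; the $HNN$ ``abelian flat'' subcase must also be ruled out or routed away so that the hypothesis $V_A=\emptyset$ is genuinely inherited by $\hat\Delta$.
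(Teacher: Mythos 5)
The overall strategy — local moves that absorb centralizers into rigid vertex groups and enlarge the corresponding edge groups, combined with edge sliding and an induction — is the right one and matches the paper's intent, and you have correctly identified the role of Sela's lemma on maximal abelian subgroups and the CSA/malnormality property. But there is a real gap where you pin $M := Z_G(G_e)$ to the $B$-side. You read the hypothesis as ``$G_e$ is maximal abelian in one of the \emph{incident} vertex groups, say $G_{\tau(e)}$'', whereas the lemma in fact assumes $G_e$ is maximal abelian in one of the two vertex groups of the \emph{one-edged splitting induced by $e$} — that is, in $A$ or $B$ after collapsing the rest of the graph. These are different: $A$ is generally much larger than $G_{\tau(e)}$. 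Sela's lemma places a conjugate of $M$ into $A$ or $B$, so maximality of $G_e$ only in $G_{\tau(e)}$ does not rule out $M$ being carried in $A$ but outside $G_{\tau(e)}$, and your conclusion ``so after adjusting the Bass--Serre presentation we may take $M\le B$'' does not follow as written. The repair is simply to use the actual hypothesis: $G_e$ maximal abelian in $A$ gives $M\cap A = G_e$, while the non-maximality on the other side gives $Z_B(G_e)\supsetneq G_e$, hence $M\supsetneq G_e$, so $M$ cannot sit in $A$ and must be carried by $B$.

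Two smaller slips. The parenthetical ``$M$, being the centralizer of the edge stabilizer $G_e$, fixes an endpoint of $e$'' is false in general: $M$ stabilizes $T^{G_e}$ and, if elliptic, fixes \emph{some} vertex of that subtree, not necessarily an endpoint of $\tilde e$; you tacitly contradict it yourself when you later allow $M$ to sit deeper in the $B$-side and slide $e$ towards it (the slide is legal because $G_e$ fixes the entire path to $M$'s fixed point, so $G_e$ lies in every intervening edge stabilizer). Also, $\langle G_{\tau(e)},M\rangle = G_{\tau(e)}*_{G_e}M$ does not follow from $G_{\tau(e)}\cap M = G_e$ alone; one needs a ping-pong argument on the Bass--Serre tree of the one-edged splitting (both groups elliptic at distinct vertices, the separating edge stabilized exactly by $G_e$), and likewise the claim that the move preserves $\pi_1$ should be argued via associativity in the collapsed splitting \emph{after} the slide. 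These two points are repairable in a few lines, but the misread hypothesis is a genuine gap that must be fixed for the argument to stand.
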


\begin{remark} Suppose $\eta:G\rightarrow L$ be a strict map with respect to the $GAD$ $\Delta$ for $G$. We consider the following modification of 
 $\Delta$:
\begin{itemize}
 \item[Step 1] we replace every vertex group $G_u$ with $u\in V_A$ by its peripheral subgroup and we place $u$ in the set $V_R$ (i.e. we consider it rigid), we call 
 this $GAD$ $\Delta_0$;
 \item[Step 2] we modify $\Delta_0$ according to Lemma \ref{ModificationGAD} in order to obtain $\hat{\Delta}_0$; 
 \item[Step 3] to every rigid vertex in $\hat{\Delta}_0$ whose vertex group was a peripheral subgroup in $\Delta_0$ we attach 
 an edge whose edge group is the peripheral subgroup itself and the vertex group on its other end is the abelian group that contained 
 the peripheral subgroup in $\Delta$, these new vertex groups will be abelian type vertex groups. We denote by $\hat{\Delta}$ this $GAD$ for $G$.   
\end{itemize}
We will either explicitly or implicitly use the above modification for the rest of the paper.  
\end{remark}

\begin{definition}\label{Completion}
Let $G$ be a group and $\mathcal{G}(G)$ be a $GAD$ for $G$ with $m$ edges and at least one rigid vertex. 
Let $\eta:G\twoheadrightarrow H$ and $(\mathcal{G}(G),\eta)$ be strict. 

Let $\mathcal{G}_0\subseteq\mathcal{G}_1\subseteq\ldots\subseteq\mathcal{G}_m:=\mathcal{G}(G)$ be a sequence of sub-graph of groups such that $\mathcal{G}_i$ 
has $i$ edges. We define the group $Comp_i$ together with its splitting $\mathcal{G}(Comp_i)$ by the following recursion:\\
\noindent
{\bf Base step.} The subgraph of groups $\mathcal{G}_0$ consists of a single vertex $V$ which we may assume rigid, 
then $Comp_0$ is the group $H$ and $\mathcal{G}(Comp_0)$ is the trivial splitting;
\\ \\
\noindent
{\bf Recursion step.} Let $e_{i+1}$ be the edge in $\mathcal{G}_{i+1}\setminus\mathcal{G}_i$. We take cases:
     \begin{itemize}
     \item[Case 1:] suppose $e_{i+1}$ connects two rigid vertices. We further take cases:
            \begin{itemize}
            \item[1A.] Assume that the centralizer of $\eta(G_{e_{i+1}})$ in $Comp_i$, cannot be conjugated  
            neither to the centralizer of $\eta(G_{e})$ for some edge $e$ in $\mathcal{G}_i$ that connects two rigid vertex groups 
            nor to the centralizer of the image of the peripheral subgroup of some abelian vertex group in $\mathcal{G}_i$. 
            Then $Comp_{i+1}$ is the fundamental group of the graph of groups 
            obtained by gluing to $\mathcal{G}(Comp_i)$ a free abelian flat of rank $1$ along the centralizer of $\eta(G_{e_{i+1}})$ in $Comp_i$. The 
            latter graph of groups is $\mathcal{G}(Comp_{i+1})$;
            \item[1B.] Assume that the centralizer of $\eta(G_{e_{i+1}})$ in $Comp_i$, can be conjugated to 
            the centralizer $C$ (in $Comp_i$) either of $\eta(G_{e})$ for some edge $e$ in $\mathcal{G}_i$ that connects two rigid vertex groups, or 
            of the image of the peripheral group of some abelian vertex group (i.e. a vertex group whose vertex belongs to $V_A$) in $\mathcal{G}_i$. Then 
            $Comp_i$ is the fundamental group of the graph of groups obtained by gluing to $\mathcal{G}(Comp_i)$ a free abelian 
            flat of rank $1$ along $C$. 
            %The latter graph of groups after collapsing the edge that induces a trivial splitting is $\mathcal{G}(Comp_{i+1})$;
            %\item[1C.] Assume that the centralizer of $\eta(G_{e_{i+1}})$ in $Comp_i$, can be conjugated to 
            %the centralizer $C$ (in $Comp_i$) of the image of the peripheral subgroup of some abelian vertex group in $\mathcal{G}_i$. 
            %Then $Comp_{i+1}$ is the fundamental group of the graph of groups obtained by gluing to $\mathcal{G}(Comp_i)$ a free abelian 
            %flat of rank $1$ along $C$. The latter graph of groups after collapsing the edge that induces a trivial splitting is $\mathcal{G}(Comp_{i+1})$; 
            \end{itemize}
     \item[Case 2:] suppose $e_{i+1}$ connects a rigid vertex with a free abelian vertex group of rank $n$ and let $P$ be its peripheral subgroup. 
     We may assume that the free abelian vertex group is not in $\mathcal{G}_i$ and we further take cases:
             \begin{itemize} 
             \item[2A.] assume that $\eta(P)$ cannot be conjugated neither 
             to the centralizer of $\eta(G_{e})$ for some edge $e$ in $\mathcal{G}_i$ that connects two rigid vertex groups 
             nor to the centralizer of the image of the peripheral subgroup of some abelian vertex group in $\mathcal{G}_i$. 
             Then $Comp_{i+1}$ is the fundamental group of the graph of groups 
             obtained by gluing to $\mathcal{G}(Comp_i)$ a free abelian flat of rank $n$ along the centralizer of $\eta(P)$ in $Comp_i$ and moreover 
             in the new abelian vertex group we add the relations identifying the peripheral subgroup in its centralizer. The latter 
             graph of groups is $\mathcal{G}(Comp_{i+1})$;
             \item[2B.] assume that $\eta(P)$ can be conjugated to the centralizer $C$ (in $Comp_i$) either of $\eta(G_{e})$ for some edge $e$ in $\mathcal{G}_i$ 
             that connects two rigid vertex groups, or 
             of the image of the peripheral group of some abelian vertex group (i.e. a vertex group whose vertex belongs to $V_A$) in $\mathcal{G}_i$. Then 
             $Comp_{i+1}$ is the group corresponding to the graph of groups obtained by gluing to $\mathcal{G}(Comp_i)$ a free abelian 
             flat of rank $n$ along $C$. 
             %The latter graph of groups after collapsing the edge that induces a trivial splitting is $\mathcal{G}(Comp_{i+1})$;
             %\item[2C.] assume that $\eta(P)$ can be conjugated to 
             %the centralizer $C$ (in $Comp_i$) of the image of the peripheral subgroup of some abelian vertex group in $\mathcal{G}_i$. 
             %Then $Comp_{i+1}$ is the group corresponding to the graph of groups obtained by gluing to $\mathcal{G}(Comp_i)$ a free abelian 
              %flat of rank $n$ along $C$. The latter graph of groups after collapsing the edge that induces a trivial splitting is $\mathcal{G}(Comp_{i+1})$. 
             \end{itemize}
     \item[Case 3:] suppose $e_{i+1}:=(u,v)$ connects a surface type vertex with a rigid vertex (i.e. a vertex that belongs to $V_R$). We further take cases 
     according to whether the surface type vertex belongs to $\mathcal{G}_i$ or not: 
           \begin{itemize}
           \item[3A.] assume that the surface vertex group $G_v$ does not belong to $\mathcal{G}_i$. Then $Comp_{i+1}$ 
           is the amalgamated free product $Comp_i*_{G_{e_{i+1}}}\tilde{G}_v$ where $\tilde{G}_v$ is an isomorphic copy of $G_v$ 
           witnessed by the isomorphism $f:G_v\rightarrow \tilde{G}_v$, and the edge group embeddings $\tilde{f}_{e_{i+1}}, \tilde{f}_{\bar{e}_{i+1}}$ 
           are defined as follows: $\tilde{f}_{e_{i+1}}=f\circ f_{e_{i+1}}$ and $\tilde{f}_{\bar{e}_{i+1}}=\eta\circ f_{\bar{e}_{i+1}}$ 
           where $f_{e_i+1}, f_{\bar{e}_{i+1}}$ are the injective morphisms that correspond to the edge group of the splitting 
           $\pi_1(\mathcal{G}_i)*_{G_{e_{i+1}}}G_v$;
           \item[3B.] assume that the surface vertex group $G_u$ belongs to $\mathcal{G}_i$. Then by our recursive hypothesis there 
           exists an isomorphic copy of $G_u$, say $f:G_u\rightarrow \tilde{G}_u$, in $Comp_i$. We define $Comp_{i+1}$ to be the   
           $HNN$ extension $Comp_i*_{G_{e_{i+1}}}$, where the edge group embeddings $\tilde{f}_{e_{i+1}}, \tilde{f}_{\bar{e}_{i+1}}$ 
           are defined as follows: $\tilde{f}_{e_{i+1}}=\eta\circ f_{e_{i+1}}$ and $\tilde{f}_{\bar{e}_{i+1}}=f\circ f_{\bar{e}_{i+1}}$, 
           where $f_{e_{i+1}},f_{\bar{e}_{i+1}}$ are the injective morphisms that correspond to the edge group of the splitting 
           $\pi_1(\mathcal{G}_i)*_{G_{e_{i+1}}}G_v$ (in the case $v$ is not in $\mathcal{G}_i$) or of the splitting 
           $\pi_1(\mathcal{G}_i)*_{G_{e_{i+1}}}$ (in the case $v$ is in $\mathcal{G}_i$).
           \end{itemize}
     \end{itemize}

Finally the group $Comp(\mathcal{G}(G),\eta):=Comp_m$ is called the completion of $G$ with respect to $\mathcal{G}(G)$ the sequence $\mathcal{G}_0\subset
\ldots\subset\mathcal{G}_m$ and $\eta$.
\end{definition}

The completion of a group $G$ with respect to a strict map $\eta:G\rightarrow L$ has a natural structure of a floor over $L$. Moreover,    
Sela has proved  \cite[Lemma 1.13]{Sel2} that $G$ admits a natural embedding into its completion.

\begin{lemma}\label{CompletionEmbedding}
Let $G$ be a group and $(\mathcal{G}(G),(V_S,V_A,V_R))$ be a $GAD$ for $G$. Let $L$ be a limit group 
and $\eta:G\twoheadrightarrow L$ be such that $(\mathcal{G}(G),\eta)$ is strict. 
Let $Comp(\mathcal{G}(G),\eta)$ be the completion of $G$ with respect to $\mathcal{G}(G)$ and $\eta$. 
Then $G$ admits a natural embedding to $Comp(\mathcal{G}(G),\eta)$.  
\end{lemma}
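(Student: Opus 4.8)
The plan is to write down the ``natural'' map $\iota\colon G\to Comp_m$ explicitly along the recursion defining the completion in Definition \ref{Completion}, and then prove it is injective by induction on the number $i$ of edges in the chain $\mathcal{G}_0\subseteq\mathcal{G}_1\subseteq\cdots\subseteq\mathcal{G}_m=\mathcal{G}(G)$. The map is specified on the pieces of the graph of groups $\mathcal{G}(G)$ as follows: on each rigid vertex group it agrees with $\eta$ (whose image lies in $Comp_0=L\subseteq Comp_i$); on each surface-type vertex group $G_v$ it is the isomorphism $f\colon G_v\to\tilde G_v$ onto the fresh copy of $G_v$ amalgamated into the completion in Case 3; on an abelian vertex group it sends the peripheral subgroup $P$ via $\eta$ and is the natural injection on the complementary free abelian directions into the fresh abelian flat glued in Case 2; and the Bass-Serre/amalgamation elements of $\mathcal{G}(G)$ are sent to the corresponding elements of $Comp_m$, namely the Dehn-twist generator $z$ of a newly glued rank-one abelian flat (Case 1) or the stable letter of an HNN extension performed on the completion side. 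By construction $\iota$ restricts to a map $\iota_i\colon\pi_1(\mathcal{G}_i)\to Comp_i$ for each $i$.

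For the base step $\mathcal{G}_0$ is a single vertex $V$, taken rigid, $Comp_0=L$, and $\iota_0=\eta|_{G_V}$; after modifying the GAD as in Lemma \ref{ModificationGAD} and the remark following it we may assume $G_V$ coincides with its own envelope, and $\eta$ is injective on envelopes of rigid vertices by strictness, so $\iota_0$ is injective.

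For the inductive step suppose $\iota_i$ is injective and let $e_{i+1}$ be the new edge. Then $\pi_1(\mathcal{G}_{i+1})$ is either the amalgamated product $\pi_1(\mathcal{G}_i)*_{G_{e_{i+1}}}G_v$ (when $e_{i+1}$ attaches a new vertex $v$) or the HNN extension $\pi_1(\mathcal{G}_i)*_{G_{e_{i+1}}}$ (when both endpoints already lie in $\mathcal{G}_i$), and correspondingly $Comp_{i+1}$ is obtained from $Comp_i$ by an amalgamation or HNN extension along (the centralizer of) the image of $G_{e_{i+1}}$, exactly as prescribed by Cases 1--3. In each case one checks that $\iota_{i+1}$ is well defined — the two edge-group embeddings become compatible because the newly introduced flat generator, resp. stable letter, commutes with $\eta(G_{e_{i+1}})$ by the very choice of the subgroup along which we glue — and that the constituent maps are injective: $\iota_i$ by the inductive hypothesis, $f$ because it is an isomorphism, and the map on an abelian vertex group because $\eta$ is injective on peripheral subgroups. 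Injectivity of $\iota_{i+1}$ then follows from the normal-form theorem for fundamental groups of graphs of groups (Section \ref{BS}), whose only non-formal input is the intersection identity: the image of $\pi_1(\mathcal{G}_i)$, resp. of $G_v$, meets the relevant edge subgroup, resp. centralizer, of $Comp_{i+1}$ exactly in $\iota(G_{e_{i+1}})$. This identity is where one uses that, in the modified GAD, each edge group is maximal abelian in the adjacent rigid vertex group (Lemma \ref{ModificationGAD}), together with the injectivity of $\eta$ on edge groups and on envelopes, and Lemma \ref{MaxAb2}.

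The main obstacle is Case 1, edges joining two rigid vertices, and in particular the sub-cases 1B and 2B in which the new rank-one flat is glued along a centralizer already occupied by a previously processed edge. There $\eta$ genuinely identifies subgroups that were kept apart by an amalgamation or HNN extension inside $G$, and the completion repairs the situation only through the Dehn-twist direction $z$ of the new flat; the point to verify is that conjugation by $z$ really separates the two rigid vertex groups, i.e. that $\iota_i(\pi_1(\mathcal{G}_i))\cap z\,\eta(G_v)\,z^{-1}=\iota(G_{e_{i+1}})$, which reduces, via injectivity of $\eta$ on the envelope of $v$, to the edge group being maximal abelian in $G_v$. A secondary, purely bookkeeping, difficulty is with non-tree (HNN) edges between rigid vertices: one must fix once and for all which of the two images of the edge group is used to form the centralizer in $Comp_i$ and then check that the completion's stable letter serves as $\iota$ of the Bass-Serre element — this is the only step sensitive to the chosen Bass-Serre presentation of $G$. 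All remaining verifications are routine manipulations of normal forms.
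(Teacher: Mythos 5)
Your description of the embedding is not the same map the paper constructs, and under the literal reading of your proposal the map fails to be injective. You state that ``on each rigid vertex group it agrees with $\eta$.'' Consider the simplest instance: $\mathcal{G}(G)$ a single (tree) edge joining two rigid vertices $u,v$, so $G=G_u*_{G_e}G_v$, and $Comp = L*_{C}(C\oplus\langle z\rangle)$ where $C=C_L(\eta(G_e))$. A tree edge contributes no Bass--Serre letter in $G$, so your rule produces the map that is $\eta$ on $G_u$ and $\eta$ on $G_v$, hence the whole map factors through $\eta\colon G\twoheadrightarrow L\subseteq Comp$ and therefore has the same nontrivial kernel as $\eta$. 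The point of the completion is precisely to \emph{separate} the copies of the rigid vertex groups that $\eta$ folds together, and that separation is achieved by sending $G_v$ to a conjugate $z\,\eta(G_v)\,z^{-1}$ \emph{inside the new abelian flat's conjugate}, not to $\eta(G_v)\subseteq L$. The same issue recurs in the surface and abelian vertex cases: your map sends $G_v$ to $\tilde G_v$ (resp.\ the new $\Z^n$) without the conjugation by the element $\gamma_u$ accumulated from earlier stages, so it is again incompatible with the already-built restriction to $\pi_1(\mathcal{G}_i)$.

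This is not a presentational slip but the heart of the argument. The paper's induction carries the strengthened invariant that $f_i$ restricted to each rigid vertex group equals $\mathrm{Conj}(\gamma_u)\circ\eta$ for a conjugator $\gamma_u$ that is either trivial or \emph{does not lie in} $L$ (the $\gamma_u$'s are built up from the flat generators $z$ and the HNN letters). That invariant is exactly what the reduced-form verifications use: to conclude that $\gamma_u^{-1}f_i(a_j)\gamma_u\notin C$ one first pulls the conjugation off, applies injectivity of $f_i$, and only then invokes the maximality of $G_e$ in the adjacent vertex group. Your proposed induction hypothesis --- ``$\iota_i$ is injective'' --- is too weak: without tracking how $\iota_i$ looks on each rigid vertex group you cannot carry out the step where you show that an alternating word has reduced image. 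You do identify the correct secondary difficulties (cases 1B/2B, the choice of edge orientation in the HNN setting), but the primary mechanism --- that the natural embedding is $\eta$ only \emph{up to a running conjugation by elements outside $L$}, and that this conjugation is what makes the map injective --- is missing, and with it the proof collapses.
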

\begin{proof}
Let $\mathcal{G}_0\subset \mathcal{G}_1\subset\ldots\subset\mathcal{G}_m:=\mathcal{G}(G)$ 
be the sequence of subgraphs of groups that covers the graph of groups $\mathcal{G}(G)$ and with respect to which we 
have constructed the completion $Comp(\mathcal{G}(G),\eta)$. We will prove by induction that for each $i\leq m$, there exists an 
injective map $f_i:\pi_1(\mathcal{G}_i)\rightarrow Comp_i$ such that $f_{i+1}\supset f_i$ 
and $\bigcup f_i:=f:G\rightarrow Comp(\mathcal{G}(G),\eta)$ agrees with $\eta$ up to conjugation, by an element that is either trivial or does not live in $L$, in 
the vertex groups whose vertices belong to $V_R$ in the $GAD$ for $G$. 
\\ \\ \noindent
{\bf Base step.} We take $f_0$ to be $\eta\upharpoonright \pi_1(\mathcal{G}_0)$. Since we have assumed that 
 the unique vertex in $\mathcal{G}_0$ belongs $V_R$ and $Comp_0$ is $L$, one sees that $f_0:\pi_1(\mathcal{G}_0)\rightarrow Comp_0$ 
 is injective and respects our hypothesis on vertex groups whose vertices belong to $V_R$.
\\ \\ \noindent
{\bf Induction step.} Let $f_i:\pi_1(\mathcal{G}_i)\rightarrow Comp_i$ be the morphism that satisfies our induction hypothesis. 
 We find an injective morphism $f_{i+1}:\pi_1(\mathcal{G}_{i+1})\rightarrow Comp_{i+1}$ that extends $f_i$ and satisfies the 
 hypothesis on vertex groups whose vertices belong to $V_R$. We take cases according to the initial and terminal vertices of 
 the edge $e:=(u,v)\in\mathcal{G}_{i+1}\setminus\mathcal{G}_i$. 
     \begin{itemize}
      \item Suppose we are in Case 1A of Definition \ref{Completion}. Then 
      $\pi_1(\mathcal{G}_{i+1})$ is either the amalgamated free product $\pi_1(\mathcal{G}_i)*_{G_e}G_v$ (if $v\notin\mathcal{G}_i$) or 
      the $HNN$ extension $\pi_1(\mathcal{G}_i)*_{G_e}$ (if $v\in\mathcal{G}_i$) and 
      $Comp_{i+1}$ is the amalgamated free product $Comp_i*_C(C\oplus\langle z\rangle)$ where $C$ is the 
      centralizer of $\eta(G_e)$ in $Comp_i$. 
      
      {\bf Suppose that $\pi_1(\mathcal{G}_{i+1})$ is an amalgamated free product}. By the induction hypothesis 
      $f_i\upharpoonright G_u=Conj(\gamma_u)\circ\eta\upharpoonright G_u$. 
      We define $f_{i+1}$ to agree with $f_i$ on $\pi_1(\mathcal{G}_i)$ and $f_{i+1}(g)=\gamma_u z \eta(g)z^{-1}\gamma_u^{-1}$ for $g\in G_v$. Note that 
      obviously $\gamma_uz$ does not belong to $Comp_i$, thus it does not belong to $L$. The map 
      $f_{i+1}$ is indeed a morphism since for any $g\in G_e$ we have that $f_{\bar{e}}(g)$ is an element that lives in $G_u$, 
      thus $f_{i+1}(f_{\bar{e}}(g))=f_i(f_{\bar{e}}(g))=\gamma_u\eta(f_{\bar{e}}(g))\gamma_u^{-1}$, on the other hand 
      $f_{i+1}(f_{e}(g))=\gamma_uz\eta(f_{e}(g))z^{-1}\gamma_u^{-1}$ and since $\eta(f_e(g))$ is in $C$ we get that 
      $f_{i+1}(f_{e}(g))=\gamma_u\eta(f_e(g))\gamma_u^{-1}$. Therefore, since $\eta(f_{\bar{e}}(g))=\eta(f_e(g))$, we see that 
      $f_{i+1}(f_e(g))=f_{i+1}(f_{\bar{e}}(g))$. We continue by proving that $f_{i+1}$ is injective. Let $g:=a_1b_1\ldots a_nb_n$ 
      be an element of $\pi_1(\mathcal{G}_i)*_{G_e}G_v$ in reduced form. Then $f_{i+1}(g)=f_i(a_1)\gamma_u z \eta(b_1)z^{-1}\gamma_u^{-1}\ldots f_i(a_n)
      \gamma_u z \eta(b_n)z^{-1}\gamma_u^{-1}$, we show that this form is reduced with respect to $Comp_i*_C(C\oplus\langle z\rangle)$. Suppose not, 
      then either $\gamma_u^{-1}f_i(a_j)\gamma_u$, for some $2\leq j\leq n$, or $\eta(b_j)$ for some $j\leq n$ is in $C$. 
      In the first case 
      this means that $\gamma_u^{-1}f_i(a_j)\gamma_u$ commutes with some (any) non-trivial element, say $\gamma$, of $\eta(G_e)$, thus $f_i(a_j)$ 
      commutes with $\gamma_u\gamma\gamma_u^{-1}$, but $\gamma_u\gamma\gamma_u^{-1}$ is the image of an element in $G_e$ under $f_i$ 
      and since $f_i$ is injective we have that $a_j$ commutes with an element of $G_{e}$, by the maximality condition of $G_{e}$ 
      this shows that $a_j$ belongs to $G_e$, a contradiction. In the second case, this means that $\eta(b_j)$ commutes with some (any) non-trivial element, 
      say $\gamma$, of $\eta(G_e)$, thus since $\eta$ is injective on $G_v$, we see that $b_j$ commutes with an element of $G_e$. By the maximality condition 
      of $G_e$, $b_j$ must belong to $G_e$, a contradiction.

      {\bf Suppose that $\pi_1(\mathcal{G}_{i+1})$ is an $HNN$ extension}. By the induction hypothesis  
      $f_i\upharpoonright G_u=Conj(\gamma_u)\circ\eta\upharpoonright G_u$, 
      $f_i\upharpoonright G_v=Conj(\gamma_v)\circ\eta\upharpoonright G_v$. We define $f_{i+1}$ to agree with 
      $f_i$ on $\pi_1(\mathcal{G}_i)$ and $f_{i+1}(t)=\gamma_v\eta(t)z\gamma_u^{-1}$, where $t$ is the stable letter of 
      the $HNN$ extension. The map 
      $f_{i+1}$ is indeed a morphism since for any $g\in G_e$ we have that $f_{e}(g)$ is an element that lives in $G_v$, 
      thus $f_{i+1}(f_{e}(g))=f_i(f_e(g))=\gamma_v\eta(f_e(g))\gamma_v^{-1}$, on the other hand 
      $f_{\bar{e}}(g)$ is an element that lives in $G_u$, 
      thus $f_{i+1}(tf_{\bar{e}}(g)t^{-1})= \gamma_v\eta(t)z\gamma_u^{-1}\gamma_u\eta(f_{\bar{e}}(g))\gamma_u^{-1}\gamma_u z^{-1}\eta(t)^{-1}
      \gamma_v^{-1}=\gamma_v\eta(tf_{\bar{e}}(g)t^{-1})\gamma_v^{-1}$, and it follows that $f_{i+1}$ is a morphism. We continue by proving that 
      $f_{i+1}$ is injective. Let $g:=g_0t^{\epsilon_1}g_1t^{\epsilon_2}\ldots t^{\epsilon_n}g_n$ with $\epsilon_i\in\{-1,1\}$ 
      be an element in reduced form with respect to the $HNN$ extension, 
      then $f_{i+1}(g)=f_i(g_0)(\gamma_v\eta(t)z\gamma_u^{-1})^{\epsilon_1}f_i(g_1)(\gamma_v$ $\eta(t)z\gamma_u^{-1})^{\epsilon_2}\ldots$ $
      (\gamma_v\eta(t)z\gamma_u^{-1})^{\epsilon_n}f_i(g_n)$. We will show by induction that for every $n\geq 1$, if $g$ is an element 
      of $\pi_1(\mathcal{G}_{i+1})$ of length $n$ (in reduced form) with respect to the $HNN$ extension that $\pi_1(\mathcal{G}_{i+1})$ admits, then 
      $f_{i+1}(g)$ can be put in reduced form of length at least one with respect to the amalgamated free product $Comp_i*_C(C\oplus\langle z\rangle)$ 
      that $Comp_{i+1}$ admits, moreover $f_{i+1}(g)$ ends with either $z\gamma_u^{-1}f_i(g_n)$ or $z^{-1}\eta(t)^{-1}\gamma_v^{-1}f_i(g_n)$ depending on 
      whether $\epsilon_n$ is positive or negative. For the base step ($n=1$), the result is obvious. Suppose it is true for every $k<n$, we show it is true 
      for $n$. We take cases with respect to whether $\epsilon_{n-1},\epsilon_{n}$ are positive or negative. Since the cases when both are negative or both are 
      positive are symmetric we assume that both are positive and we leave the symmetric case as an exercise. Thus, 
      $f_{i+1}(g)=f_{i+1}(g_0t^{\epsilon_1}g_1t^{\epsilon_2}\ldots tg_{n-1})\cdot f_{i+1}(t g_n)$ and by the induction hypothesis 
      $f_{i+1}(g_0t^{\epsilon_1}g_1t^{\epsilon_2}\ldots t^{\epsilon_{n-1}}g_{n-1})$ can be put in reduced form of length at least one that ends with $z\gamma_u^{-1}f_i(g_{n-1})$. 
      Therefore $f_{i+1}(g)=\gamma z\gamma_u^{-1}f_i(g_{n-1})\gamma_v\eta(t)z\gamma_u^{-1}f_i(g_n)$. This latter element has the desired properties since 
      if $\gamma_u^{-1}f_i(g_{n-1})\gamma_v\eta(t)$ belongs to $C$, then we consider $z\gamma_u^{-1}f_i(g_{n-1})\gamma_v\eta(t)z$ 
      as an element of $C\oplus\langle z\rangle\setminus C$ and if not then already the element is in reduced form ending with $z\gamma_u^{-1}f_i(g_n)$. 
      We now treat the case where $\epsilon_{n-1}=1$ and $\epsilon_n=-1$. In this case 
      $f_{i+1}(g)=f_{i+1}(g_0t^{\epsilon_1}g_1t^{\epsilon_2}\ldots tg_{n-1})\cdot f_{i+1}(t^{-1} g_n)$ and by the induction hypothesis 
      $f_{i+1}(g_0t^{\epsilon_1}g_1t^{\epsilon_2}\ldots tg_{n-1})$ can be put in reduced form of length at least one that ends with 
      $z\gamma_u^{-1}f_i(g_{n-1})$. Thus, $f_{i+1}(g)=\gamma z\gamma_u^{-1}f_i(g_{n-1})\gamma_uz^{-1}\eta(t)^{-1}\gamma_v^{-1}f_i(g_n)$. 
      Is enough to show that $\gamma_u^{-1}f_i(g_{n-1})\gamma_u$ does not belong to the centralizer $C$ of $\eta(G_e)$. Suppose, for 
      a contradiction, that it does, then $\gamma_u^{-1}f_i(g_{n-1})\gamma_u$ commutes with some (any) element of $\eta(G_e)$, such an 
      element can be written as $\gamma_u^{-1}f_i(\beta)\gamma_u$ for some $\beta\in G_u$. Therefore, $f_i(g_{n-1})$ commutes with $f_i(b)$ 
      and since $f_i$ is injective, we see that $g_{n-1}$ commutes with $b$. We can now use the maximality of $f_{\bar{e}}(G_e)$ to 
      conclude that $g_{n-1}$ belongs to it, contradicting the reduced form for $g$. The case where $\epsilon_{n-1}=-1$ and $\epsilon_n=1$ 
      is symmetric to the previous case and we leave it to the reader.
      
      \item Suppose we are in case 1B of Definition \ref{Completion}. Suppose the centralizer of $\eta(G_{e})$ (in $Comp_i$) can 
      be conjugated, by the element $\gamma$, into $C$, where $C$ satisfies the hypothesis of case 1B. Then 
      $\pi_1(\mathcal{G}_{i+1})$ is either the amalgamated free product $\pi_1(\mathcal{G}_i)*_{G_e}G_v$ (if $v\notin\mathcal{G}_i$) or 
      the $HNN$ extension $\pi_1(\mathcal{G}_i)*_{G_e}$ (if $v\in\mathcal{G}_i$) and 
      $Comp_{i+1}$ is the amalgamated free product $Comp_i*_C(C\oplus\langle z\rangle)$.
      
      {\bf Suppose that $\pi_1(\mathcal{G}_{i+1})$ is an amalgamated free product}. By the induction hypothesis 
      $f_i\upharpoonright G_u=Conj(\gamma_u)\circ\eta\upharpoonright G_u$. 
      We define $f_{i+1}$ to agree with $f_i$ on $\pi_1(\mathcal{G}_i)$ and $f_{i+1}(g)=\gamma_u\gamma^{-1} z \gamma \eta(g)\gamma^{-1} z^{-1}\gamma \gamma_u^{-1}$ 
      for $g\in G_v$. 
      It is not hard to check that $f_{i+1}$ is a morphism, the reason being that for any element $g$ of $G_e$, since $\gamma \eta(f_{e}(g)) \gamma^{-1}$ 
      belongs to $C$, it commutes with $z$, thus $f_{i+1}(f_e(g))=\gamma_u\eta(f_e(g))\gamma_u^{-1}$ and this is enough. We next prove that $f_{i+1}$ 
      is injective. Let $g:=a_1b_1\ldots a_nb_n$ 
      be an element of $\pi_1(\mathcal{G}_i)*_{G_e}G_v$ in reduced form. Then $f_{i+1}(g)=f_i(a_1)\gamma_u\gamma^{-1} z\gamma \eta(b_1)\gamma^{-1}z^{-1}\gamma\gamma_u^{-1}\ldots 
      f_i(a_n)
      \gamma_u\gamma^{-1} z\gamma $ $\eta(b_n)\gamma^{-1}z^{-1}\gamma\gamma_u^{-1}$, we show that this form is reduced with respect to $Comp_i*_C(C\oplus\langle z\rangle)$. 
      Suppose not, 
      then either $\gamma\gamma_u^{-1}f_i(a_j)\gamma_u\gamma^{-1}$, for some $2\leq j\leq n$, or $\gamma\eta(b_j)\gamma^{-1}$ for some $j\leq n$ is in $C$. 
      In the first case 
      this means that $\gamma\gamma_u^{-1}f_i(a_j)\gamma_u\gamma^{-1}$ commutes with some (any) non-trivial element, say $\beta$, of $\gamma\eta(G_e)\gamma^{-1}$, 
      thus $\gamma_u^{-1}f_i(a_j)\gamma_u$ 
      commutes with $\gamma^{-1}\beta\gamma$, which is a non trivial element of $\eta(G_e)$. Thus $f_i(a_j)$ commutes with 
      $\gamma_u\gamma^{-1}\beta\gamma\gamma_u^{-1}$, and the latter can be taken to be the image of some element of $G_e$ under $f_i$. 
      The injectivity of $f_i$ tells us, that $a_j$ commutes with a non trivial element of $G_e$ and the maximality of $G_e$ 
      gives us that $a_j$ belongs to $G_e$, a contradiction. The argumentation for the case where $\gamma\eta(b_j)\gamma^{-1}$, for some $j\leq n$, is in $C$ 
      is similar. 
      
      {\bf Suppose that $\pi_1(\mathcal{G}_{i+1})$ is an $HNN$ extension}. By the induction hypothesis  
      $f_i\upharpoonright G_u=Conj(\gamma_u)\circ\eta\upharpoonright G_u$, 
      $f_i\upharpoonright G_v=Conj(\gamma_v)\circ\eta\upharpoonright G_v$. We define $f_{i+1}$ to agree with 
      $f_i$ on $\pi_1(\mathcal{G}_i)$ and $f_{i+1}(t)=\gamma_v\eta(t)\gamma^{-1}z\gamma\gamma_u^{-1}$, where $t$ is the stable letter of 
      the $HNN$ extension. One can check in a similar way to the corresponding case 1A, that $f_{i+1}$ is an injective morphism.
      
      \item Suppose we are in case 2A of Definition \ref{Completion}. Then $\pi_1(\mathcal{G}_{i+1})$ is the 
      amalgamated free product $\pi_1(\mathcal{G}_i)*_{G_e}G_v$, where $G_v$ is a free abelian group of rank $n$, 
      and $Comp_{i+1}$ is the amalgamated free product $Comp_i*_CA$ where $C$ is isomorphic to the centralizer of the image of the 
      peripheral subgroup $P(G_v)$ by $\eta$ (which is actually the same as $G_e$) and $A$ is the free abelian group $C\oplus \Z^n$ 
      with the relations identifying the peripheral subgroup as a subgroup of $C$ and as a subgroup of $\Z^n$. 
      
      By the induction hypothesis $f_i\upharpoonright G_u=Conj(\gamma_u)\circ\eta\upharpoonright G_u$. We define $f_{i+1}$ 
      to agree with $f_i$ on $\pi_1(\mathcal{G}_i)$ and $f_{i+1}(g)=\gamma_u f(g)\gamma_u^{-1}$ for $g\in G_v$, where 
      $f:G_v\rightarrow \Z^n$ is an isomorphism between $G_v$ and its copy in $Comp_{i+1}$. It is not hard to check that $f_{i+1}$ 
      is a morphism: let $g\in G_e$, then $f_{i+1}(f_e(g))= \gamma_uf(f_e(g))\gamma_u^{-1}$ and $f_{i+1}(f_{\bar{e}}(g))=\gamma_u \eta(f_{\bar{e}}(g))\gamma_u^{-1}$, 
      since $f(f_e(g))=\eta(f_{\bar{e}}(g))$ in $Comp_{i+1}$, we have what we wanted. We next prove that $f_{i+1}$ 
      is injective. Let $g:=a_1b_1\ldots a_nb_n$ be an element of $\pi_1(\mathcal{G}_i)*_{G_e}G_v$ in reduced form. Then 
      we prove that $f_{i+1}(g)=f_i(a_1)\gamma_u f(b_1)\gamma_u^{-1}\ldots f_i(a_n)\gamma_u f(b_n)\gamma_u^{-1}$ is in reduced 
      form with respect to $Comp_i*_CA$. It is trivial to check that $f(b_j)$ does not live in $\tilde{f}_e(C)$ (since $b_j\in G_v\setminus G_e$), thus 
      we only need to check that $\gamma_u^{-1} f_i(a_j)\gamma_u$ is not in $\tilde{f}_{\bar{e}}(C)$ for $2\leq j\leq n$. Suppose not, then 
      $\gamma_u^{-1}f_i(a_j)\gamma_u$ commutes with some (any) element of $\eta(f_{\bar{e}}(G_e))$. Therefore, $f_i(a_j)$ 
      commutes with $\gamma_u \eta(f_{\bar{e}}(g))\gamma_u^{-1}$, for some $g\in G_e$, but that is the image of $g$ under $f_i$, and since 
      $f_i$ is injective we see that $a_j$ commutes with $f_{\bar{e}}(g)$, by the maximality of $f_{\bar{e}}(G_e)$ 
      we have that $a_j$ belongs to it, a contradiction.
      
      %\item Suppose we are in case 2B of Definition \ref{Completion}...
      
      \item Suppose we are in case 3A of Definition \ref{Completion}. Then 
      $\pi_1(\mathcal{G}_{i+1})$ is the amalgamated free product $\pi_1(\mathcal{G}_i)*_{G_e}G_v$ and 
      $Comp_{i+1}$ is the amalgamated free product $Comp_i*_{G_e}\tilde{G}_v$, where $\tilde{G}_v$ is 
      an isomorphic copy of $G_v$ witnessed by the isomorphism $f:G_v\rightarrow \tilde{G}_v$.
      
      By the induction hypothesis 
      $f_i\upharpoonright G_u=Conj(\gamma_u)\circ\eta\upharpoonright G_u$. 
      We define $f_{i+1}$ to agree with $f_i$ on $\pi_1(\mathcal{G}_i)$ and $f_{i+1}(g)=\gamma_u f(g) \gamma_u^{-1}$ for $g\in G_v$. 
      It is not hard to check that $f_{i+1}$ is a morphism: let $g\in G_e$, then $f_{i+1}(f_e(g))=\gamma_u f(f_e(g)) \gamma_u^{-1}$ 
      and $f_{i+1}(f_{\bar{e}}(g))=\gamma_u \eta(f_{\bar{e}}(g))\gamma_u^{-1}$, since $\eta(f_{\bar{e}}(g))=f(f_e(g))$ in $Comp_i*_{G_e}\tilde{G}_v$ 
      we have what we wanted. We next prove that $f_{i+1}$ is injective. Let $g:=a_1b_1\ldots a_nb_n$ be an element of $\pi_1(\mathcal{G}_i)*_{G_e}G_v$
      in reduced form, then we prove that $f_{i+1}(g)=f_i(a_1)\gamma_u f(b_1) \gamma_u^{-1}\ldots f_i(a_n)\gamma_u f(b_n) \gamma_u^{-1}$ is in reduced form 
      with respect to $Comp_i*_{G_e}\tilde{G}_v$. We only need to check that $\gamma_u^{-1}f_i(a_j)\gamma_u$ does not live in $\eta(G_e)$, for any $2\leq j\leq n$. 
      Suppose, for a contradiction, not, then $f_i(a_j)$ is $\gamma_u\eta(\gamma)\gamma_u^{-1}$, for some $\gamma\in G_e$, but the latter is the 
      image of $\gamma$ under $f_i$, and since $f_i$ is injective we have that $a_j$ is $\gamma$, a contradiction.
      
      \item Suppose we are in case 3B of Definition \ref{Completion}. Then 
      $\pi_1(\mathcal{G}_{i+1})$ is the amalgamated free product $\pi_1(\mathcal{G}_i)*_{G_e}G_v$ or the $HNN$ extension 
      $\pi_1(\mathcal{G}_i)*_{G_e}$ and $Comp_{i+1}$ is the $HNN$ extension $Comp_i*_{G_e}$.
      
      {\bf Suppose that $\pi_1(\mathcal{G}_{i+1})$ is an amalgamated free product}. By the induction hypothesis $f_i\upharpoonright G_u=Conj(\gamma_u)\circ f$, 
      where $f:G_u\rightarrow \tilde{G}_u$ is an isomorphism, and $\tilde{G}_u$ is the isomorphic copy of $G_u$ in $Comp_i$. We define $f_{i+1}$ to agree 
      with $f_i$ on $\pi_1(\mathcal{G}_i)$ and $f_{i+1}(g)=\gamma_u t^{-1}\eta(g) t\gamma_u^{-1}$ for $g\in G_v$, where $t$ is the stable letter of 
      the $HNN$ extension $Comp_i*_{G_e}$. It is not hard to check that $f_{i+1}$ is a morphism: let $g\in G_e$, then 
      $f_{i+1}(f_{e}(g))=\gamma_u t^{-1}\eta(f_e(g))t\gamma_u^{-1}$ and $f_{i+1}(f_{\bar{e}}(g))=\gamma_u f(f_{\bar{e}}(g))\gamma_u^{-1}$, since 
      $\eta(f_e(g))=t f(f_{\bar{e}}(g)) t^{-1}$ in $Comp_i*_{G_e}$, we have what we wanted. We next prove that $f_{i+1}$ is injective. 
      Let $g:=a_1b_1\ldots a_nb_n$ be an element of $\pi_1(\mathcal{G}_i)*_{G_e}G_v$
      in reduced form, then we prove that $f_{i+1}(g)= f_i(a_1) \gamma_u t^{-1} \eta(b_1) t \gamma_u^{-1} \ldots f_i(a_n)\gamma_u t^{-1} \eta(b_n) t\gamma_u^{-1}$ is 
      in reduced form with respect to $Comp_i*_{G_e}$. We need to check that $\eta(b_j)$ is not in $\tilde{f}(G_e)$ (which is actually $\eta(f_e(G_e))$) for any $j\leq n$
      and $\gamma_u^{-1}f_i(a_j)\gamma_u$ is not in $\tilde{f}_{\bar{e}}(G_e)$ (which is actually $f(f_{\bar{e}}(G_e))$) for any $2\leq j\leq n$. But both follow easily 
      by the fact that $b_j$ is in $G_v\setminus f_e(G_e)$ and $a_j\in\pi_1(\mathcal{G}_i)\setminus f_{\bar{e}}(G_e)$.
      
      {\bf Suppose that $\pi_1(\mathcal{G}_{i+1})$ is an $HNN$ extension}. By the induction hypothesis $f_i\upharpoonright G_u=Conj(\gamma_u)\circ f$, 
      where $f:G_u\rightarrow \tilde{G}_u$ is an isomorphism, and $\tilde{G}_u$ is the isomorphic copy of $G_u$ in $Comp_i$, and $f_i\upharpoonright G_v= 
      Conj(\gamma_v)\circ \eta$. We define $f_{i+1}$ to agree with $f_i$ on $\pi_1(\mathcal{G}_i)$ and $f_{i+1}(t)=\gamma_v \tilde{t}\gamma_u^{-1}$ where $\tilde{t}$ 
      is the Bass-Serre element of the $HNN$ extension $Comp_i*_{G_e}$. 
      It is not hard to check that $f_{i+1}$ is a morphism: let $g\in G_e$, then $f_{i+1}(f_e(g))=\gamma_v\eta(f_e(g))\gamma_v^{-1}$ and 
      $f_{i+1}(tf_{\bar{e}}(g))t^{-1})= \gamma_v\tilde{t}f(f_{\bar{e}})\tilde{t}^{-1}\gamma_v^{-1}$, since $\eta(f_e(g))= \tilde{t}f(f_{\bar{e}})\tilde{t}^{-1}$ 
      in $Comp_{i+1}$, we have what we wanted. We next prove that $f_{i+1}$ is injective. Let $g:= g_0t^{\epsilon_1}g_1t^{\epsilon_2}\ldots t^{\epsilon_n}g_n$,  
      with $\epsilon_j\in\{1,-1\}$ be an element in reduced form with respect to the $HNN$ extension $\pi_1(\mathcal{G}_i)$, then we prove that 
      $f_{i+1}(g)=f_i(g_0)(\gamma_v\tilde{t}\gamma_u^{-1})^{\epsilon_1}f_i(g_1)\ldots(\gamma_v\tilde{t}\gamma_u^{-1})^{\epsilon_n}f_i(g_n)$ is in 
      reduced form. We need to check that when $\epsilon_j=1$ and $\epsilon_{j+1}=-1$, then $\gamma_u^{-1}f_i(g_j)\gamma_u$ does not live in $\tilde{f}_{\bar{e}}(G_e)$ 
      (which is actually $f(f_{\bar{e}}(G_e))$), and when $\epsilon_j=-1$ and $\epsilon_{j+1}=1$, then $\gamma_v^{-1}f_i(g_j)\gamma_v$ does not live in 
      $\tilde{f}_e(G_e)$ (which is actually $\eta(f_e(G_e))$). We show just the former, since the latter case is symmetric. Suppose, for a contradiction, 
      that $\gamma_u^{-1}f_i(g_j)\gamma_u$ is in $f(f_{\bar{e}}(G_e))$, then there exists an element in $\gamma\in f_{\bar{e}}(G_e)$, such that 
      $f_i(g_j)=\gamma_u f(\gamma)\gamma_u^{-1}$, but then $f_i(g_j)=f_i(\gamma)$ and since $f_i$ is injective we see that $g_j=\gamma$, a contradiction. 
     
     \end{itemize}

\end{proof}

It is not hard to deduce from the construction of the completion that: 

\begin{lemma}\label{SolidInjections}
Let $Sld$ be a solid group with respect to a finitely generated subgroup $H$. 
Let $JSJ_{H}(Sld)$ be the abelian $JSJ$ 
decomposition of $Sld$ with respect to $H$ and $Comp(Sld, Id)$ the completion of $Sld$ 
with respect to $Id:Sld\rightarrow Sld$ and the modification $\hat{JSJ}_{H}(Sld))$ of its relative $JSJ$ decomposition. 
Let $\hat{H}$ be the vertex group in $\hat{JSJ}_H(Sld)$ that contains $H$ and 
$i_t,i_b$ be the natural embeddings from $Sld$ to $Comp(Sld, Id)$. Then $i_t(Sld)\cap i_b(Sld)=\hat{H}$.
\end{lemma}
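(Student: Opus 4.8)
The plan is to read both images off the Bass-Serre tree of the floor decomposition $\mathcal{G}(Comp_m)$ of $Comp(Sld,Id)$. Write $Comp:=Comp(Sld,Id)$ and let $\hat{T}$ be the Bass-Serre tree of $\mathcal{G}(Comp_m)$. By the base step of Definition~\ref{Completion}, $Comp_0=Sld=i_b(Sld)$, and $Comp_0$ remains a vertex group of $\mathcal{G}(Comp_m)$ throughout the recursion (every later step either amalgamates a new vertex or performs an HNN extension, and never alters an existing vertex group), so $Comp_0$ is the stabiliser of a vertex $v_b\in\hat{T}$. Hence for every subgroup $K\leq Comp$ one has $K\cap i_b(Sld)=\Stab_K(v_b)$, and applying this to $K=i_t(Sld)$ reduces the statement to the single identity $\Stab_{i_t(Sld)}(v_b)=\hat{H}$.

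For the inclusion $\hat{H}\subseteq i_t(Sld)\cap i_b(Sld)$, we take the initial subgraph $\mathcal{G}_0$ in the construction of $Comp$ to be the (rigid) vertex of $\hat{JSJ}_H(Sld)$ carrying $\hat{H}$, which is a legitimate choice. Then, in the notation of the proof of Lemma~\ref{CompletionEmbedding}, $f_0=\eta\upharpoonright\hat{H}=Id\upharpoonright\hat{H}$, so $i_t\upharpoonright\hat{H}=Id\upharpoonright\hat{H}$ and $i_t(\hat{H})=\hat{H}$; and since $\hat{H}$ is a vertex group of the GAD $\hat{JSJ}_H(Sld)$ of $Sld$ we also have $\hat{H}\leq Sld=i_b(Sld)$. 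For the reverse inclusion, suppose $g\in Sld$ with $i_t(g)\in Comp_0$, and assume $g\notin\hat{H}$. Write $g$ in reduced form with respect to the graph of groups $\hat{JSJ}_H(Sld)$, based at the vertex carrying $\hat{H}$; as $g\notin\hat{H}$ this form has positive syllable length. Applying $i_t=f$ syllable by syllable according to the explicit formulas of Lemma~\ref{CompletionEmbedding} — a rigid syllable in $G_u$ becomes $\gamma_u(\cdot)\gamma_u^{-1}$ with $\gamma_u$ trivial or supported on the flats glued above $Comp_0$, a surface or abelian syllable becomes its image in the corresponding surface or abelian flat vertex group, and a Bass-Serre letter of $\hat{JSJ}_H(Sld)$ becomes a word in the new stable letters — the reduced-form analysis carried out in the proof of that lemma shows that the resulting expression is a reduced word of positive length in $\mathcal{G}(Comp_m)$ that involves a syllable lying outside $Comp_0$. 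Thus $i_t(g)\notin Comp_0$, a contradiction, so $g\in\hat{H}$.

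The only real work lies in the last step, and it is an extraction from, rather than a repetition of, the argument for Lemma~\ref{CompletionEmbedding}: one must verify that the conjugators $\gamma_u$ and the words substituted for the Bass-Serre letters of $\hat{JSJ}_H(Sld)$ do not collapse back into the base vertex group $Comp_0$ when the image word is put into reduced form in $\mathcal{G}(Comp_m)$. This is precisely where the maximality of the edge groups built into $\hat{JSJ}_H(Sld)$ (see Lemma~\ref{ModificationGAD}) enters, exactly as it does in the injectivity proof of Lemma~\ref{CompletionEmbedding}; so the whole proof consists in noting that that proof in fact yields the slightly stronger bookkeeping statement that the reduced form of $i_t(g)$ has a syllable outside $Comp_0$ whenever $g\notin\hat{H}$.
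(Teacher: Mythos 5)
The paper states this lemma without proof, attributing it to ``the construction of the completion,'' so there is no argument of the paper's to compare against; your task was essentially to supply the missing argument. Your proof is correct and is what the paper presumably has in mind. The easy inclusion is exactly right: with $\mathcal{G}_0$ chosen to be the vertex carrying $\hat H$, the base step of the construction gives $f_0=\mathrm{Id}\upharpoonright\hat H$, so $i_t(\hat H)=\hat H\leq Comp_0=i_b(Sld)$.

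For the hard inclusion, the substance of your argument is right, but it can be phrased more cleanly by exploiting the filtration $\mathcal{G}_0\subset\mathcal{G}_1\subset\cdots\subset\mathcal{G}_m=\hat{JSJ}_H(Sld)$ directly, rather than working with a global normal form in $\hat{JSJ}_H(Sld)$ at once. If $g\in Sld\setminus\hat H$, let $i\geq 1$ be minimal with $g\in\pi_1(\mathcal{G}_i)$. Then $g\notin\pi_1(\mathcal{G}_{i-1})$, so $g$ has positive reduced length in the one-edged splitting (amalgam or HNN) of $\pi_1(\mathcal{G}_i)$ over $\pi_1(\mathcal{G}_{i-1})$. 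The case-by-case injectivity computation in the proof of Lemma~\ref{CompletionEmbedding} shows precisely that $f_i(g)$ is then in reduced form of positive length with respect to the corresponding one-edged splitting of $Comp_i$ over $Comp_{i-1}$, and hence $f_i(g)\notin Comp_{i-1}$. Since the $Comp_j$ form an increasing chain, $Comp_0\subseteq Comp_{i-1}$, so $i_t(g)=f_i(g)\notin Comp_0=i_b(Sld)$. This is exactly the ``bookkeeping extraction'' you identify at the end; stating it via the filtration avoids any hand-waving about applying $i_t$ ``syllable by syllable'' to a full $\hat{JSJ}_H(Sld)$-normal form, and it also renders the opening Bass--Serre-tree reformulation unnecessary (one only needs that $i_b(Sld)=Comp_0$, which is immediate from the construction).

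One small remark worth keeping: the conclusion genuinely depends on choosing $\mathcal{G}_0$ to be the rigid vertex carrying $\hat H$. Definition~\ref{Completion} permits any rigid vertex as the base, so this is a convention that the statement of Lemma~\ref{SolidInjections} leaves implicit and you are right to make explicit; with a different base vertex the intersection would be that vertex group instead.
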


For the notion of the abelian $JSJ$ decomposition of a solid limit group we refer the reader to \cite[Theorem 9.2]{Sel1}.

\subsection{Strictly solid morphisms and families}\label{SldMorph}

We start by defining the notion of a degenerate map with respect to a tower over a solid limit group. 

\begin{definition}
Let $\mathcal{T}(G,Sld)$ be a tower of height $m$ over a limit group $L$, 
$$\{G,(\mathcal{G}(G^m,G^{m-1}),r_m),(\mathcal{G}(G^{m-1},G^{m-2}),r_{m-1}),\ldots, (\mathcal{G}(G^1,G^0),r_1), L\}$$ 
and $s:L\rightarrow \F$ be a morphism. 
We say that a morphism $h:G\rightarrow\F$ factors through the 
tower $\mathcal{T}(G,L)$ based on $s$ if $h=s\circ r_1\circ \alpha_1\circ\ldots\circ r_m\circ \alpha_m$ where 
$\alpha_i\in Mod(\mathcal{G}(G^i,G^{i-1}))$. 
\end{definition}

\begin{definition}
Let $\mathcal{T}(G,Sld)$ be a tower of height $m$ over a solid limit group $Sld$,  
$$\{G,(\mathcal{G}(G^m,G^{m-1}),r_m),(\mathcal{G}(G^{m-1}, G^{m-2}),r_{m-1}),\ldots, (\mathcal{G}(G^1,G^0),r_1), Sld\}$$  
and let $s:Sld\rightarrow \F$ be a morphism. 
We say that $s$ is degenerate if for all morphisms $h:G\rightarrow\F$ that factor through the 
tower $\mathcal{T}(G,Sld)$ based on $s$ and some $i<m$ one of the following holds:
\begin{itemize}
 \item an edge group of $(\mathcal{G}(G^{i+1},G^i),(V_S,V_A,V_R))$ is always mapped to the trivial element;
 \item a vertex group, $G_u$, of $(\mathcal{G}(G^{i+1},G^i),(V_S,V_A,V_R))$ with $u\not\in V_A$ is always 
 mapped to a cyclic subgroup of $\F$.
\end{itemize}

\end{definition}

As a matter of fact being degenerate is a condition definable by a system of equations. 

\begin{lemma}[Sela]\label{DefDeg}
Let $\F:=\F(\bar{a})$ be a non abelian free group. Let $\mathcal{T}(G,Sld)$ be a graded tower over a solid limit 
group $Sld:=\langle \bar{x},\bar{a} \ | \ \Sigma(\bar{x},\bar{a})\rangle$. 
Then there exists a finite system of equations $\Psi(\bar{x},\bar{a})=1$ such that a morphism $s:Sld\rightarrow\F$ is 
degenerate with respect to $\mathcal{T}(G,Sld)$ if and only if $\Psi(s(\bar{x}),\bar{a})=1$.
\end{lemma}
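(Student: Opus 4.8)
The plan is to identify a morphism $s\colon Sld\to\F$ with the tuple $\bar b:=s(\bar x)$ (subject to $\Sigma(\bar b,\bar a)=1$), and to establish two facts about the set $D:=\{\bar b : s_{\bar b}\text{ is degenerate for }\mathcal{T}(G,Sld)\}$. First, that its complement — the locus of non-degenerate $\bar b$ — is $\exists$-definable, so that $D$ itself is definable; second, that $D$ is closed under specialisation. A definable, specialisation-closed subset of $\F^{|\bar x|}$ is (as is standard in this setting) the solution set of a system of equations over $\F$, and by the equational Noetherianity of the free group this system may be taken finite; thus $D=\{\bar b:\Psi(\bar b,\bar a)=1\}$ for a finite $\Psi$, which is exactly the assertion. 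Note that proceeding this way yields a single system $\Psi$, rather than a disjunction over the levels of the tower.

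The substantive step is the first, and it amounts to understanding, uniformly in $s$, the morphisms $h=s\circ r_1\circ\alpha_1\circ\cdots\circ r_m\circ\alpha_m\colon G\to\F$ factoring through $\mathcal{T}(G,Sld)$ based on $s$. Such an $h$ is determined by $\bar b$ together with the modular data $\alpha_1,\dots,\alpha_m$, and the point is that this data is definably parametrised modulo the unavoidable collapses. Indeed, inner automorphisms contribute only conjugations and are irrelevant to whether an edge group is killed or a vertex group has cyclic image; the Dehn twists occurring in the $\alpha_i$ lie in centralisers of the (abelian) edge groups, so after composing with the retractions and with $s$ they become twists by elements commuting with the images of those edge groups, i.e.\ they are recorded by auxiliary variables subject to commutation equations; and the unimodular automorphisms of the non-cyclic abelian vertex groups are annihilated by the very next retraction $r_i$, hence do not affect $h$ at all. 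The genuinely delicate contributions are those of the surface-type vertices: here one invokes the defining retraction property of a floor (each surface vertex group is sent to a non-abelian image by $r_i$) together with the Merzlyakov-type control over formal solutions and test sequences developed in Sections~\ref{Completions}--\ref{Envelopes} and in Sela's \cite{Sel2,Sel3}. The conclusion needed is that among all $h$ factoring through $\mathcal{T}$ based on $s$, a single canonical object — a test sequence for $\mathcal{T}$ over $s$ — realises the best possible behaviour on every edge group and on every non-$V_A$ vertex group, so that a given edge group is killed (resp.\ a given non-$V_A$ vertex group has cyclic image) for \emph{some} such $h$ iff it already behaves so along this test sequence. This reduces non-degeneracy of $s_{\bar b}$ to the condition that the test sequence over $\bar b$ eventually keeps each chosen edge-group generator non-trivial and keeps, for each non-$V_A$ vertex group, a chosen pair of elements non-commuting; since the values along a test sequence are given by explicit words in $\bar b$ and in the large parameters, this is a finite conjunction of non-triviality and non-commutation assertions, hence an $\exists$-definable condition on $\bar b$ once the parameters are eliminated.

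That $D$ is specialisation-closed is routine: if $\bar b'$ is a specialisation of $\bar b$ (every equation over $\F$ holding at $\bar b$ also holds at $\bar b'$), then every $h'$ factoring through $\mathcal{T}$ based on $s_{\bar b'}$ is the image, under the same specialisation, of an $h$ factoring through based on $s_{\bar b}$ with the same modular data; so $h'$ collapses at level $i$ whatever $h$ collapses there, and $s_{\bar b'}$ inherits degeneracy from $s_{\bar b}$. I expect the \textbf{main obstacle} to be precisely the surface-type analysis inside the parametrisation: one must rule out that some combination of surface mapping classes revives an edge group or a non-$V_A$ vertex group whose collapse is already forced by $s$, and one must confirm that a test sequence realises all the open non-degeneracy conditions whenever any $h$ does. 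This is exactly where the structural input of \cite{Sel2,Sel3} — finiteness of the relevant Makanin--Razborov-type data, the closure and completion constructions, and the boundedness theorem \cite[Theorem 2.9]{Sel3} — is indispensable; granting it, the two steps above combine to produce the finite system $\Psi(\bar x,\bar a)$.
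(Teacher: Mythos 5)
The paper does not prove this lemma: it states it as an attributed result (labelled ``[Sela]'') and uses it as a black box in the proof of Theorem~\ref{PrepareExtFormalSolutions}. So there is no proof in the paper to compare against, and I can only assess whether your proposal would constitute a valid proof on its own terms.

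Your overall strategy --- show that the non-degenerate locus is $\exists$-definable, show that the degenerate locus $D$ is closed under specialisation, and conclude that $D$ is the zero set of a finite system of equations by invoking equational Noetherianity --- is conceptually attractive and your specialisation argument is correct as stated (if $\bar b'$ is a specialisation of $\bar b$ witnessed by $\psi\colon\langle\bar b,\bar a\rangle\to\F$, then $s_{\bar b'}=\psi\circ s_{\bar b}$, so every $h'$ factoring through $\mathcal{T}$ based on $s_{\bar b'}$ with given modular data is $\psi\circ h$ for the corresponding $h$ based on $s_{\bar b}$, and degeneracy transfers). However, two of the pillars of the argument are not established and in my view cannot be waved away.

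First, you assert that a definable, specialisation-closed subset of $\F^{|\bar x|}$ is ``(as is standard in this setting)'' the zero set of a system of equations. This is not standard, and I do not see that it follows from equational Noetherianity alone. Specialisation-closure guarantees that $D$ contains the Zariski closure of each of its points, but this does not give $D=\overline{D}$: the irreducible components of $\overline D$ need not have generic points inside $D$, and definability (which, by Sela's quantifier elimination, only gives a boolean combination of $\forall\exists$-sets) does not obviously repair this. If you know a reference for this implication in the free group setting, it must be cited; otherwise this step is a genuine gap.

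Second, and this is the point you yourself flag as the ``main obstacle'': the $\exists$-definability of non-degeneracy requires showing that, for a fixed edge group $G_e$ or non-abelian vertex group $G_u$, the condition ``there exists $h$ factoring through $\mathcal{T}(G,Sld)$ based on $s$ with $h(G_e)\neq 1$ (resp.\ $h(G_u)$ non-cyclic)'' is Diophantine in $s(\bar x)$. Your analysis of the modular data correctly disposes of inner automorphisms, Dehn twists, and unimodular automorphisms of abelian vertex groups (after composing with the retraction, the latter produce only elements of the image of the peg, a definably parametrised set). But for a surface flat the modular group is the mapping class group of the punctured surface, and a homeomorphism $\alpha$ composed with $r_i$ does not admit a parametrisation by tuples of group elements subject to equations; the set of values $\{r_i\circ\alpha(g):\alpha\in\mathrm{MCG}\}$ is not a variety. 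Your proposed remedy --- replace the $\exists h$ by the eventual behaviour along a single test sequence, and argue that a test sequence is generic enough to witness any realisable non-degeneracy condition --- is exactly the content of the non-trivial fact needed, and ``granting it'' does not make a proof. Since you are using Sela's machinery here in any case, it would be cleaner and more honest simply to cite the lemma to Sela (as the paper does); your proposal does not provide an independent proof, and I would not accept it as a complete argument.
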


\begin{definition}[Strictly solid morphism]
Let $Sld$ be a solid group with respect to a finitely generated subgroup $H$. Let $JSJ_{H}(Sld)$ be the abelian $JSJ$ 
decomposition of $Sld$ with respect to $H$ and $Comp(Sld, Id)$ the completion of $Sld$ 
with respect to $Id:Sld\rightarrow Sld$ and the modification $\hat{JSJ}_{H}(Sld))$ of its relative $JSJ$ decomposition. 
Let $i_t,i_b$ be the natural embeddings 
from $Sld$ to $Comp(Sld, Id)$. The morphisms $h_1,h_2:Sld\to \F$ are tower equivalent with respect 
to $Comp(Sld, Id)$ if there is a morphism $H:Comp(Sld)\to\F$ such that $H\circ i_t=h_1$ and $H\circ i_b=h_2$. 

Moreover, a morphism $h:Sld\rightarrow\F$ is called strictly solid if it is non degenerate and it is 
not tower equivalent with respect to $Comp(Sld,Id)$ to a flexible morphism;
\end{definition}

The relation of tower equivalence between two morphisms is an equivalence relation and we call the 
set of non degenerate morphisms that belong to the class of a strictly solid morphism a {\em strictly solid family}.

We prove the following:

\begin{lemma}\label{StrictlySolidRel}
Let $Sld$ be a solid limit group with respect to a finitely generated subgroup $H$. Let $h_1,h_2:Sld\rightarrow\F$ be two morphisms. Then $h_1, h_2$ 
are tower equivalent with respect to $Comp(Sld, Id)$ if and only if they agree up to conjugation 
in the rigid vertices of the relative $JSJ$ decomposition of $Sld$ and they are 
identical on the vertex group that contains $H$.
\end{lemma}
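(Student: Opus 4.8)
The plan is to exploit the recursive description of $Comp(Sld,Id)$ from Definition \ref{Completion} together with the explicit form of the two natural embeddings $i_t,i_b\colon Sld\to Comp(Sld,Id)$ that is actually produced in the proof of Lemma \ref{CompletionEmbedding}. Recall that $Comp(Sld,Id)$ is assembled along a filtration $\mathcal{G}_0\subset\mathcal{G}_1\subset\cdots\subset\mathcal{G}_m=\hat{JSJ}_{H}(Sld)=:\hat\Delta$, that $Comp_0=Sld$ (the target of $\eta=Id$), that $\mathcal{G}_0$ is the single rigid vertex $V$ whose vertex group $\hat H:=G_V$ contains $H$, that $i_b$ is the inclusion $Sld=Comp_0\hookrightarrow Comp(Sld,Id)$, and that $i_t=\bigcup_i f_i$ for the maps $f_i$ built in that proof. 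The one feature of those maps I will use is: for every rigid vertex group $R$ of $\hat\Delta$ there is $\gamma_R\in Comp(Sld,Id)$ with $i_t\!\upharpoonright\! R=Conj(\gamma_R)\circ i_b\!\upharpoonright\! R$, and $\gamma_V$ may be taken trivial, so that $i_t$ and $i_b$ coincide on $\hat H$ (consistently with Lemma \ref{SolidInjections}).

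For the ``only if'' direction, suppose $\Phi\colon Comp(Sld,Id)\to\F$ realizes the tower equivalence, i.e.\ $\Phi\circ i_t=h_1$ and $\Phi\circ i_b=h_2$. Restricting to $\hat H$ gives $h_1\!\upharpoonright\!\hat H=\Phi\circ i_t\!\upharpoonright\!\hat H=\Phi\circ i_b\!\upharpoonright\!\hat H=h_2\!\upharpoonright\!\hat H$, so $h_1,h_2$ are identical on $\hat H$, hence on the vertex group of $JSJ_H(Sld)$ containing $H$. For an arbitrary rigid vertex group $R$ of $JSJ_H(Sld)$, Lemma \ref{ModificationGAD} puts a conjugate of $R$ inside a rigid vertex group $\tilde R$ of $\hat\Delta$ (its envelope); applying $\Phi$ to $i_t\!\upharpoonright\!\tilde R=Conj(\gamma_{\tilde R})\circ i_b\!\upharpoonright\!\tilde R$ yields $h_1\!\upharpoonright\!\tilde R=Conj(\Phi(\gamma_{\tilde R}))\circ h_2\!\upharpoonright\!\tilde R$, and restriction to $R$ shows $h_1$ and $h_2$ agree up to conjugation there. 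This disposes of the easy direction.

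For the ``if'' direction I will construct $\Phi$ by building a compatible tower of morphisms $\Phi_i\colon Comp_i\to\F$ along the filtration, subject to $\Phi_i\!\upharpoonright\! Comp_0=h_2$, $\Phi_i\circ f_i=h_1\!\upharpoonright\!\pi_1(\mathcal{G}_i)$, and the invariant that for each rigid vertex $u$ of $\hat\Delta$ occurring in $\mathcal{G}_i$ one has $h_1\!\upharpoonright\! G_u=Conj(\Phi_i(\gamma_u))\circ h_2\!\upharpoonright\! G_u$. The base case $\Phi_0:=h_2$ works because $h_1=h_2$ on $\hat H=G_V$ and $\gamma_V$ is trivial. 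For the recursion I go through the cases of Definition \ref{Completion}: when a surface flat is glued (Case 3), I extend $\Phi_i$ over the new surface vertex group by transporting $h_1$ through the isomorphism with $G_v$, conjugated by the element already at hand; when an abelian flat of rank $n$ is glued (Cases 1 and 2), I map the new free abelian group into the centralizer in $\F$ of the image of the relevant edge group — which is cyclic — choosing images of its generators so that $\Phi_{i+1}\circ f_{i+1}=h_1$ on the corresponding vertex group, and I record the conjugator for any newly-visible rigid vertex accordingly. In every case, well-definedness on the amalgamating (edge) subgroup follows from the invariant together with the bare fact that $h_1$ and $h_2$ are homomorphisms of $\pi_1(\hat\Delta)$: this pins down how $h_1$ and $h_2$ differ on each edge group incident to a rigid vertex of $\hat\Delta$ through the conjugator of that rigid vertex, and for an edge joining two rigid vertices it forces the two conjugators to agree on the edge group up to the (cyclic) centralizer of its image. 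The commutation relations demanded inside each new abelian flat then hold because all the centralizers in $\F$ occurring share the image of the edge group and are cyclic, hence pairwise commute. Setting $\Phi:=\Phi_m$ finally gives $\Phi\circ i_b=h_2$ and $\Phi\circ i_t=h_1$.

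The hard part will be the bookkeeping of conjugators along $\hat\Delta$ and the case-by-case verification of edge-group consistency in Definition \ref{Completion} — most delicately in the abelian-flat cases, where the extension of $\Phi_i$ over a summand $C\oplus\Z^n$ is necessarily non-injective into $\F$ and the images of the free abelian generators must be chosen to be simultaneously compatible with $\Phi_i$ on $C$ and with $h_1$ on the relevant JSJ vertex group. All of this ultimately rests on the elementary fact that commuting elements of a free group lie in a common maximal cyclic subgroup; once that is in place, the HNN (non-spanning-tree-edge) subcases run parallel to the amalgam subcases already indicated and are routine.
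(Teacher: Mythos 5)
Your proposal is correct and follows essentially the same strategy as the paper's proof: a recursive construction of $\Phi_i$ along the filtration $\mathcal{G}_0\subset\cdots\subset\mathcal{G}_m$, using the explicit form of the embeddings from Lemma \ref{CompletionEmbedding} and the cases of Definition \ref{Completion}, with the conjugator bookkeeping as the crux of the abelian-flat cases. The paper writes out those cases in full and only treats the (harder) ``if'' direction, whereas you additionally sketch the easy ``only if'' direction, but the underlying argument is the same.
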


\begin{proof}
Let $\mathcal{G}_0\subseteq\mathcal{G}_1\subseteq\ldots\subseteq\mathcal{G}_m:=JSJ(Sld)$ be the sequence of the subgraphs of groups 
used in the construction of the completion $Comp(Sld, Id)$. We recall that $\mathcal{G}_0$ is the vertex group that contains $H$ and 
it is rigid by definition. Let $f_b:Sld\rightarrow Comp(Sld)$ be the identity map that 
sends $Sld$ onto the $Comp_0$, and 
let, for each $i\leq m$, $g_i:\pi_1(\mathcal{G}_i)\rightarrow Comp_i$ 
be the natural injective map as in the proof of Lemma \ref{CompletionEmbedding}. We will define recursively 
a sequence of morphisms $(F_i)_{i\leq m}:Comp_i\rightarrow \F$, so that for each $i\leq m$ the morphism $F_i$:
\begin{enumerate}
 \item extends the map $F_{i-1}$; 
 \item satisfies $F_i\circ f_b = h_1$; and 
 \item it satisfies $F_i\circ g_i = h_2\upharpoonright \pi_1(\mathcal{G}_i)$.
\end{enumerate}
\noindent
{\bf Base step.} We define $F_0: Comp_0\rightarrow \F$ to be essentially the map $h_1$, i.e. a morphism 
such that $F_0\circ f_b=h_1$. Clearly, since $g_0(\pi_1(\mathcal{G}_0))$ belongs to $Comp_0$ and 
$h_1,h_2$ agree on $\pi_1(\mathcal{G}_0)$, the morphism $F_0$ satisfies the required properties. 
\\ \\
{\em Before moving to the recursive step we note that property (2) is automatically satisfied for $i>1$ once property (1) is satisfied.}
\\ \\
{\bf Recursive step.} Suppose there exists a morphism $F_i$ that satisfies properties (1)-(3), we define 
$F_{i+1}$ according to the cases for the edge $e:=(u,v)\in\mathcal{G}_{i+1}\setminus\mathcal{G}_i$.
\begin{itemize}
 \item Suppose we are in case 1A of Definition \ref{Completion}. Then $\pi_1(\mathcal{G}_{i+1})$ 
 is either the amalgamated free product $\pi_1(\mathcal{G}_i)*_{G_e}G_v$ (if $v\notin\mathcal{G}_i$) or 
 the $HNN$ extension $\pi_1(\mathcal{G}_i)*_{G_e}$ (if $v\in \mathcal{G}_i$) and $Comp_{i+1}$ 
 is the amalgamated free product $Comp_i*_C(C\oplus\langle z\rangle)$ where $C$ is the centralizer of $G_e$ 
 in $Comp_i$. By the hypothesis 
 we have that $h_1(g)=r_v h_2(g)r_v^{-1}$ for every $g\in G_v$ and $h_1(g)=r_uh_2(g)r_u^{-1}$ for every $g\in G_u$. 
 \\ 
 {\bf Suppose that $\pi_1(\mathcal{G}_{i+1})$ is an amalgamated free product.} The fact that $h_1$ and $h_2$ are morphisms 
 induces a relation between $r_v$ and $r_u$. For every $g\in G_e$ we have 
 that $h_2(f_{\bar{e}}(g))=h_2(f_e(g))$, thus $r_u^{-1}h_1(f_{\bar{e}}(g))r_u=r_v^{-1}h_1(f_e(g))r_v$. The last equation 
 implies that $r_vr_u^{-1}$ commutes with $h_1(f_e(g))$, thus $r_v=c r_u$ for some element $c$ in the centralizer of 
 $h_1(f_{e}(g))$.  
 
 By the inductive hypothesis and the 
 definition of the map $g_i:\pi_1(\mathcal{G}_i)\rightarrow Comp_i$ we see that, for every element $g\in G_u$, 
 $F_i\circ g_i(g)=F_i(\gamma_uf_b(g)\gamma_u^{-1})=h_2(g)=r_u^{-1}h_1(g)r_u$ for some $\gamma_u\in Comp_i$. This 
 implies that $r_uF_i(\gamma_u)$ commutes with $h_1(g)$ for every $g\in G_u$, therefore $r_uF_i(\gamma_u)=d$ for some element $d$ in the 
 intersection of all centralizers $C(h_1(g))$ for $g\in G_u$. 
 
 Finally, we define $F_{i+1}$ to 
 agree with $F_i$ in $Comp_i$ and $F_{i+1}(z)=d^{-1}r_ur_v^{-1}$. This definition makes indeed $F_{i+1}$ a morphism 
 as $r_ur_v^{-1}$ and $d$ both commute with the image of $C$ under $F_i$. Moreover, $F_{i+1}\circ g_{i+1}(g)=F_{i+1}(\gamma_uzf_b(g)z^{-1}\gamma_u^{-1})=
 r_v^{-1}h_1(g)r_v=h_2(g)$ for every $g\in G_v$, as we wanted.\\
 {\bf Suppose that $\pi_1(\mathcal{G}_{i+1})$ is an $HNN$ extension.} By the fact that $h_1$ and $h_2$ are morphisms 
 we get, for every $g\in G_e$, that  $h_1(t)h_1(f_{\bar{e}}(g))h_1(t)^{-1}=h_1(f_e(g))$, and 
 $h_2(t)h_2(f_{\bar{e}}(g))h_2(t)^{-1}=h_2(f_e(g))$. 
 
 By the inductive hypothesis and the 
 definition of the map $g_i:\pi_1(\mathcal{G}_i)\rightarrow Comp_i$ we see that, for every element $g\in G_u$, 
 $F_i\circ g_i(g)=F_i(\gamma_uf_b(g)\gamma_u^{-1})=h_2(g)$. 
 The same line of thought as above gives us that $F_i\circ g_i(g)=F_i(\gamma_vf_b(g)\gamma_v^{-1})=h_2(g)$  for $g\in G_v$. 
 
 Finally, we define $F_{i+1}$ to agree with $F_i$ in $Comp_i$ and $F_{i+1}(z)=h_1(t)^{-1}F_i(\gamma_v)^{-1}h_2(t)F_i(\gamma_u)$. We can easily check 
 that $F_{i+1}\circ g_{i+1}(t)=F_{i+1}(\gamma_v f_b(t)z\gamma_u^{-1})=h_2(t)$ as well as that $F_{i+1}$ is a morphism. For the latter we only need to 
 check that $F_{i+1}(z)$ commutes with $F_{i+1}(f_{\bar{e}}(g))$ for some non trivial $g\in G_e$. Indeed, $F_{i+1}(z)^{-1}F_{i+1}(f_{\bar{e}}(g))F_{i+1}(z)= 
 F_i(\gamma_u)^{-1}h_2(t)^{-1}F_i(\gamma_v)h_1(t)F_{i+1}(f_{\bar{e}}(g))h_1(t)^{-1}F_i(\gamma_v)^{-1}h_2(t)F_i(\gamma_u)$. We can replace 
 $h_1(t)$ $F_{i+1}(f_{\bar{e}}(g))h_1(t)^{-1}$ with $F_{i+1}(f_e(g))$, so $F_{i+1}(z)^{-1}F_{i+1}(f_{\bar{e}}(g))F_{i+1}(z)= F_i(\gamma_u)^{-1}h_2(t)^{-1}$ $F_i(\gamma_v)
 F_{i+1}(f_e(g))F_i(\gamma_v)^{-1}h_2(t)F_i(\gamma_u)$. Again we can replace, by the induction hypothesis, 
 $F_i(\gamma_v)F_{i+1}(f_e(g))F_i(\gamma_v)^{-1}$ with $h_2(f_e(g))$, so $F_{i+1}(z)^{-1}F_{i+1}(f_{\bar{e}}(g))F_{i+1}(z)=
 F_i(\gamma_u)^{-1}$ $h_2(t)^{-1}h_2(g)h_2(t)F_i(\gamma_u)$. We can continue by replacing $h_2(t)^{-1}h_2(f_e(g))h_2(t)$ with $F_i\circ g_i(f_{\bar{e}}(g))$, so 
 $F_{i+1}(z)^{-1}F_{i+1}(f_{\bar{e}}(g))F_{i+1}(z)=F_i(\gamma_u)^{-1}F_i\circ g_i(f_{\bar{e}}(g))F_i(\gamma_u)$. And finally, 
 $F_i(\gamma_u)^{-1}F_i\circ g_i(f_{\bar{e}}(g))F_i(\gamma_u)=F_i(f_{\bar{e}}(g))$ as we wanted.
 
 \item Suppose we are in case 2A of Definition \ref{Completion}. Then $\pi_1(\mathcal{G}_{i+1})$ is the amalgamated free product 
 $\pi_1(\mathcal{G}_i)*_{G_e}G_v$, where $G_v$ is a free abelian group of rank $n$, and $Comp_{i+1}$ is the amalgamated free product 
 $Comp_i*_CA$ where $C$ is isomorphic to the centralizer (in $Comp_i$) of the peripheral subgroup $P(G_v)$ of $G_v$ and $A$ is the free abelian group $C\oplus \Z^n$, 
 where $\Z^n$ is an isomorphic copy of $G_v$ witnessed by $\phi:G_v\rightarrow \Z^n$, together 
 with the relations identifying the peripheral subgroup as a subgroup of $C$ and as a subgroup of $\Z^n$. 
 Moreover, $g_{i+1}(g)=\gamma_u\phi(g)\gamma_u^{-1}$ for every $g\in G_u$, where $G_u$ is by our modification the peripheral subgroup of $G_v$. 
 
 By the hypothesis we have that $h_1(g)=r_u h_2(g)r_u^{-1}$ for every $g\in G_u$. By the inductive hypothesis we know that 
 $F_i\circ g_i(g)=F_i(\gamma_uf_b(g)\gamma_u^{-1})=h_2(g)$ for every $g\in G_u$, so we have that $F_i(\gamma_u)h_1(g) F_i(\gamma_u)^{-1}=r_u^{-1}h_1(g)r_u$ $(*)$. 
 We define $F_{i+1}$ to agree with $F_i$ in $Comp_i$ 
 and $F_{i+1}(\phi(g))=F_i(\gamma_u)^{-1}h_2(g)F_i(\gamma_u)$ for every $g\in G_v$. It is immediate to check 
 that $F_{i+1}\circ g_{i+1}(g)=h_2(g)$ for every $g\in G_v$, thus we only check that $F_{i+1}$ is a morphism. 
 Let $g\in G_u$, we show that $F_i(g)=F_{i+1}(\phi(g))$ 
 as these are the relation in the group $A$. Indeed, $F_i(g)=h_1(g)$ and $F_{i+1}(\phi(g))=F_i(\gamma_u)^{-1}h_2(g)F_i(\gamma_u)$ 
 and replacing $h_2(g)$ by $r_u^{-1}h_1(g)r_u$ we get $F_{i+1}(\phi(g))=F_i(\gamma_u)^{-1}r_u^{-1}h_1(g)r_uF_i(\gamma_u)$ which 
 is by $(*)$ equal to $h_1(g)$ as we wanted. 
 
 \item Suppose we are in case 3A of Definition \ref{Completion}. Then $\pi_1(\mathcal{G}_{i+1})$ is the amalgamated free product 
 $\pi_1(\mathcal{G}_i)*_{G_e}G_v$, where $G_v$ is the fundamental group of a surface, and $Comp_{i+1}$ is the amalgamated free product 
 $Comp_i*_{G_e}\tilde{G}_v$, where $\tilde{G}_v$ is an isomorphic copy of $G_v$ witnessed by $\phi:G_v\rightarrow \tilde{G}_v$ and the edge group 
 embeddings are $\tilde{f}_e=\phi\circ f_e$ and $\tilde{f}_{\bar{e}}=f_b\circ f_{\bar{e}}$. Moreover, 
 $g_{i+1}(g)=\gamma_u\phi(g)\gamma_u^{-1}$ for every $g\in G_u$, where $G_u$ is a rigid vertex. 
 
 By the hypothesis we have that $h_1(g)=r_u h_2(g)r_u^{-1}$ for every $g\in G_u$. By the inductive hypothesis we know that 
 $F_i\circ g_i(g)=F_i(\gamma_uf_b(g)\gamma_u^{-1})=h_2(g)$ for every $g\in G_u$, so we have that $F_i(\gamma_u)h_1(g) F_i(\gamma_u)^{-1}=r_u^{-1}h_1(g)r_u$ $(*)$. 
 We define $F_{i+1}$ to agree with $F_i$ in $Comp_i$ 
 and $F_{i+1}(\phi(g))=F_i(\gamma_u)^{-1}h_2(g)F_i(\gamma_u)$ for every $g\in G_v$. It is easy to see 
 that $F_{i+1}\circ g_{i+1}(g)=h_2(g)$ for every $g\in G_v$, thus we only check that $F_{i+1}$ is a morphism. 
 Indeed, $F_{i+1}(\tilde{f}_{\bar{e}}(g))=h_1(f_{\bar{e}}(g))$ and $F_{i+1}(\phi(\tilde{f}_e(g))=F_i(\gamma_u)^{-1}h_2(f_e(g))F_i(\gamma_u)$ 
 and replacing $h_2(f_e(g))$ by $h_2(f_{\bar{e}}(g))$ and then by $r_u^{-1}h_1(f_{\bar{e}}(g))r_u$ which is by $(*)$ equal to 
 $h_1(f_{\bar{e}}(g))$ we get what we wanted.
 
 \item Suppose we are in case 3B of Definition \ref{Completion}. Then $\pi_1(\mathcal{G}_{i+1})$ is the amalgamated free product 
 $\pi_1(\mathcal{G}_i)*_{G_e}G_v$ or the $HNN$ extension $\pi_1(\mathcal{G}_i)*_{G_e}$ and $Comp_{i+1}$ is the $HNN$ extension 
 $Comp_i*_{G_e}$. In this case there exist an isomorphic copy $\phi:G_u\rightarrow \tilde{G}_u$ of the surface group $G_u$ in $Comp_i$ and 
 the edge group embeddings are $\tilde{f}_e = f_b\circ f_e$, $\tilde{f}_{\bar{e}}=\phi\circ f_{\bar{e}}$.\\ 
 {\bf Suppose that $\pi_1(\mathcal{G}_{i+1})$ is an amalgamated free product.} In this case $g_i(g)=\gamma_u\phi(g)\gamma_u^{-1}$ for every $g\in G_u$, 
 and $g_{i+1}(g)=\gamma_u t^{-1}f_b(g)t\gamma_u^{-1}$ for every $g\in G_v$, 
 where $t$ is the Bass-Serre element of the $HNN$ extension $Comp_i*_{G_e}$. And by the hypothesis we have that $h_1(g)=r_vh_2(g)r_v^{-1}$ for every $g\in G_v$. 
 
 We define $F_{i+1}$ to agree with $F_i$ in $Comp_i$ and $F_{i+1}(t)=r_vF_{i+1}(\gamma_u)$. It is easy to see that $F_{i+1}\circ g_{i+1}(g)=h_2(g)$ for every $g\in G_v$, 
 thus we only need to check that $F_{i+1}$ is a morphism. Indeed, for every $g\in G_e$, $F_{i+1}(\tilde{f}_e(g))=h_1(f_e(g))$ and $F_{i+1}(t\tilde{f}_{\bar{e}}(g)t^{-1})= 
 F_{i+1}(t)F_{i+1}(\phi(f_{\bar{e}}(g)))F_{i+1}(t)^{-1}$. If, in the latter equation, we replace $F_{i+1}(t)$ with $r_vF_{i+1}(\gamma_u)$, we get 
 $F_{i+1}(t\tilde{f}_{\bar{e}}(g)t^{-1})=r_vF_{i+1}(\gamma_u)F_{i+1}(\phi(f_{\bar{e}}(g)))F_{i+1}(\gamma_u)^{-1}$ $r_v^{-1}$ and replacing 
 $\gamma_u\phi(f_{\bar{e}}(g))\gamma_u^{-1}$ with $g_{i+1}(f_{\bar{e}}(g))$ we get $F_{i+1}(t\tilde{f}_{\bar{e}}(g)t^{-1})=r_vF_{i+1}\circ g_{i+1}(f_{\bar{e}}(g))r_v^{-1}$ 
 which is $r_vh_2(f_{\bar{e}}(g))r_v^{-1}$ and finally it is $h_1(f_{\bar{e}}(g))$ as we wanted. \\ 
 {\bf Suppose that $\pi_1(\mathcal{G}_{i+1})$ is an $HNN$ extension.} In this case $g_i(g)=\gamma_u\phi(g)\gamma_u^{-1}$ for every $g\in G_u$ and 
 $g_i(g)=\gamma_vf_b(g)\gamma_v^{-1}$ for every $g\in G_v$. Moreover $g_{i+1}(t)=\gamma_v\tilde{t}\gamma_u^{-1}$ where $t$ is the Bass-Serre element 
 of the $HNN$ extension $\pi_1(\mathcal{G}_i)*_{G_e}$ and $\tilde{t}$ is the Bass-Serre element of the $HNN$ extension $Comp_i*_{G_e}$. 
 By the hypothesis we have that $h_1(g)=r_vh_2(g)r_v^{-1}$ for every $g\in G_v$. 
 
 We define $F_{i+1}$ to agree with $F_i$ in $Comp_i$ and $F_{i+1}(\tilde{t})=F_{i+1}(\gamma_v)^{-1}h_2(t)F_{i+1}(\gamma_u)$. It is easy to see that 
 $F_{i+1}\circ g_{i+1}(t)=h_2(t)$, thus we only need to check that $F_{i+1}$ is a morphism. Indeed, $F_{i+1}(\tilde{f}_e(g))=h_1(f_{e}(g))$ and 
 $F_{i+1}(\tilde{t}\tilde{f}_{\bar{e}}(g)\tilde{t}^{-1})=F_{i+1}(\gamma_v)^{-1}h_2(t)$ $F_{i+1}(\gamma_u)F_{i+1}(\tilde{f}_{\bar{e}}(g))
 F_{i+1}(\gamma_u)^{-1}h_2(t)^{-1}F_{i+1}(\gamma_v)$. If, in the latter equation, we replace $\gamma_u\tilde{f}_{\bar{e}}(g)\gamma_u^{-1}$ with 
 $g_{i+1}(f_{\bar{e}}(g))$, we get $F_{i+1}(\tilde{t}\tilde{f}_{\bar{e}}(g)\tilde{t}^{-1})=F_{i+1}(\gamma_v)^{-1}h_2(t)h_2(f_{\bar{e}}(g))h_2(t)^{-1}$ 
 $F_{i+1}(\gamma_v)$ and replacing $tf_{\bar{e}}(g)t^{-1}$ with $f_e(g)$ we get $F_{i+1}(\tilde{t}\tilde{f}_{\bar{e}}(g)\tilde{t}^{-1})= 
 F_{i+1}(\gamma_v)^{-1}h_2(f_e(g))$ $F_{i+1}(\gamma_v)$ which in turn is equal to $F_{i+1}(\gamma_v)^{-1}r_v^{-1}h_1(f_e(g))r_vF_{i+1}(\gamma_v)$ 
 that finally equals $h_1(f_e(g))$ as we wanted.
\end{itemize}

\end{proof}

%\begin{fact}
%If $s:Sld\to\F$ is strictly solid, then any non-degenerate map (with respect to the trivial graded tower $\mathcal{T}(Sld, Sld)$ 
%which is tower equivalent with respect to $Comp(Sld, Id)$ to $s$ is strictly solid. 

%Moreover the class of non-degenerate maps which are 
%tower equivalent to $s$ with respect to $Comp(Sld, Id)$ is called the strictly solid family of $s$.
%\end{fact}

%\begin{lemma}
%Let $Sld$ be a solid limit group with respect to a finitely generated subgroup $H$. 
%Let $h:Sld\to\F$ be a strictly solid morphism and $h':Sld\to\F$ a non-degenerate morphism. Suppose $h$ and $h'$ 
%agree (up to conjugation) to rigid vertices and edge groups of the abelian $JSJ$-decomposition $JSJ_H(Sld)$. Then $h'$ belongs 
%to the strictly solid family of $h$.
%\end{lemma}

\section{Test sequences, diophantine envelopes and applications}\label{Envelopes}
In this section we record a notion that we will use extensively throughout the rest of the paper: the notion of a {\em test sequence} over a tower. 
A test sequence is a sequence of morphisms from a group that has the structure of a tower to a free group that, roughly speaking, 
witnesses the tower structure of the group in the limit action. We give more details in subsection \ref{ts}.    

As we have already noted in the introduction of our paper, it is hard to decide when a subset of some cartesian product of a 
non abelian free group $\F$ is definable in $\F$. 
Our main idea is that one can deduce certain properties of a definable set through ``generic'' points in its {\em envelope}. 
The envelope of a definable set consists of a finite set of diophantine sets which moreover have a geometric structure. 
The union of the diophantine sets that take part in the envelope ``cover'' the definable set and in addition ``generic'' 
elements (with respect to the geometric structure of each diophantine set) live in the definable set. In subsection 
\ref{GrdDio} we give all the details.

In the final subsection we explain the connections between the geometric tools developed and model theory. We prove 
some variations of Merzlyakov-type theorems that make apparent the usefulness of passing from an arbitrary definable set, 
to the diophantine sets in its envelope.  

\subsection{test sequences}\label{ts}

We begin by giving some examples of groups that have the structure of a tower and we define the notion of a test sequence for them. 
The simplest cases of groups that admit a structure of a tower are finitely generated free groups and free abelian groups. 

For the rest of this section we fix a non abelian free group $\F$ and a basis of $\F$ with respect to which we will measure 
the length of elements of $\F$.

\begin{definition}\label{FreeTest}
Let $\langle x_1,\ldots, x_k\rangle$ be a free group of rank $k$. Then a sequence of morphisms $(h_n)_{n<\omega}:\langle\bar{x}\rangle\rightarrow\F$ is a 
test sequence with respect 
to $\langle\bar{x}\rangle$, if $h_n(\bar{x})$ satisfies the small cancellation property $C'(1/n)$ for each $n<\omega$. 
\end{definition}

\begin{remark}
Without loss of generality we will assume that all the $x_i$'s have similar growth under $(h_n)_{n<\omega}$, i.e. for each $i,j<k$ there 
are $c_{i,j},c'_{i,j}\in \R^+$ such that $c_{i,j}<\frac{\abs{h_n(x_i)}_{\F}}{\abs{h_n(x_j)}_{\F}}<c'_{i,j}$.
\end{remark}

\begin{definition}\label{FreeAbelianTest}
Let $\Z^k:=\langle x_1,\ldots, x_k \ | \ [x_i,x_j]\rangle$ be a free abelian group of rank $k$. Let $x_{i_1}>x_{i_2}>\ldots>x_{i_k}$ be some order on the $x_i$'s. 
Then a sequence of morphisms $(h_n)_{n<\omega}:\Z^k\rightarrow\F$ is a test sequence with respect 
to the free abelian group $\Z^k$ (and the given order), 
if $h_n(x_{i_1})=b_n^{m_{i_1}(n)}, \ldots, h_n(x_{i_k})=b_n^{m_{i_k}(n)}$ where $b_n$ satisfies the small cancellation property $C'(1/n)$ for each $n<\omega$ 
and $\frac{m_{i_{j+1}}(n)}{m_{i_j}(n)}$ goes to $0$ as $n\to\infty$ for every $j<k$. 
\end{definition}

We continue by defining a test sequence for a tower that consists of a single abelian flat over the parameter free group.

\begin{definition}\label{SingleAbelianFlatTest}
Let $\Z^k:=\langle x_1,\ldots, x_k \ | \ [x_i,x_j]\rangle$ be a free abelian group of rank $k$. Let $G$ be the amalgamated free product $\F*_CC\oplus \Z^k$, 
where $C:=\langle c\rangle$ is infinite cyclic and $f_{\bar{e}}(c)=a$ for 
some $a\in\F$ such that $\langle a\rangle$ is maximal abelian in $\F$ and $f_e(c)=c$. Let $x_{i_1}>x_{i_2}>\ldots>x_{i_k}$ be some order on the fixed 
basis of $\Z^k$. 

Then a sequence of morphisms $(h_n)_{n<\omega}:G\rightarrow\F$ is a test sequence with respect 
to (the tower structure of) $G$ (and the given order), 
if $h_n\upharpoonright\F=Id$ for every $n<\omega$ and $h_n(x_{i_1})=a^{m_{i_1}(n)}, \ldots, h_n(x_{i_k})=a^{m_{i_k}(n)}$ where 
$m_{i_k}\to\infty$ and $\frac{m_{i_{j+1}}(n)}{m_{i_j}(n)}$ goes to $0$ as $n\to\infty$ for every $j<k$. 
\end{definition}

\begin{remark}
In particular when in the above definition $k=1$, any infinite sequence $(h_n)_{n<\omega}:G\rightarrow\F$ with $h_n\upharpoonright\F=Id$ is 
a test sequence with respect to (the tower structure of) $G$.
\end{remark}

\begin{definition} 
If $(h_n)_{n<\omega}:G\to H$ is a sequence of morphisms from $G$ to a finitely generated group $H$ (with a fixed generating set) 
and $g_1,g_2$ are in $G$, then we say that {\em the growth of $g_1$ dominates the growth of $g_2$ (under $(h_n)_{n<\omega}$)} if 
$\frac{\abs{h_n(g_2)}_{H}}{\abs{h_n(g_1)}_{H}}\to 0$ as $n\to \infty$, where $\abs{g}_H$ is the word length of $g$ with respect to the fixed 
generating set for $H$. 
\end{definition}

When a tower consists only of abelian floors and free products we define a test sequence as follows: 

%\begin{definition}\label{ClosedSurfaceTest}
%Let $\F$ be a non abelian free group and $\Sigma$ be a closed surface of Euler characteristic at most $-2$. 
%Then a sequence of morphisms $(h_n)_{n<\omega}:\pi_1(\Sigma)\to\F$ is a test sequence for $\pi_1(\Sigma)$ if it 
%satisfies the combinatorial conditions $(i)-(ix)$ in \cite[p. 182]{Sel2}. 
%\end{definition}

\begin{definition}
Let $G$ be a group that has the structure of a tower $\mathcal{T}(G,\F)$ over $\F$. Suppose $\mathcal{T}(G,\F)$ only contains abelian floors 
and free products. For each abelian floor of the tower we choose an order for the abelian flats or equivalently we assume that each abelian floor 
consists of a single abelian flat. For each abelian flat $\Z^k:=\langle x_1,\ldots, x_k \ | \ [x_i,x_j]\rangle$ we choose an 
order $x_{i_1}>x_{i_2}>\ldots> x_{i_k}$ for the elements of the fixed basis $x_1,\ldots,x_k$.

Then a sequence of morphisms $(h_n)_{n<\omega}:G\to\F$ is called a test sequence for $\mathcal{T}(G,\F)$ (with respect to the 
given order of abelian flats and the given order of their generating sets) if the following conditions hold: 
\begin{itemize}
 \item $h_n\upharpoonright \F=Id$ for every $n<\omega$;
 \item we define the conditions of the restriction of $(h_n)_{n<\omega}$ to the $i+1$-th flat by taking cases according to 
 whether $G^{i+1}$ has a structure of a free product or a free abelian flat over $G^i$:
    \begin{enumerate}
     \item Suppose $G^{i+1}$ is the free product of $G^i$ with $\F_l$, then $h_n\upharpoonright \F_l$ satisfies 
     the requirements of Definition \ref{FreeTest}. Moreover, the growth of any non trivial element in $\F_l$ (under $h_n$) dominates 
     the growth of every element in $G^i$ (under $h_n$).  
     \item Suppose $G^{i+1}=G^i*_E(E\oplus\Z^k)$, is obtained from $G^i$ by gluing a free abelian flat along $E$ (where $E$ is maximal abelian in $G^i$). 
     Let $\gamma_n$ be the generator of the cyclic group (in $\F$) that $E$ is mapped into by $h_n\upharpoonright G^i$. Then we 
     define $h_n(x_{i_1})=\gamma_n^{m_{i_1}(n)}, \ldots, h_n(x_{i_k})=\gamma_n^{m_{i_k}(n)}$, where 
     $m_{i_k}(n)\to\infty$ and $\frac{m_{i_{j+1}}(n)}{m_{i_j}(n)}$ goes to $0$ as $n\to\infty$ for every $j<k$. Moreover the growth of $x_{i_k}$ (under $h_n$) dominates the 
     growth of every element in $G^i$ (under $h_n$).
    \end{enumerate}

\end{itemize}

\end{definition}

More generally, in order to define a test sequence for a group $G$ that has the structure of a tower $\mathcal{T}(G,\F)$ over $\F$ we first need to 
order the abelian flats, the surface flats and the free factors that appear in the floors of the tower and in addition we need to order the base elements of each 
abelian flat. 

\begin{definition}
Let $\mathcal{T}(G,\F):=((\mathcal{G}(G^1,G^0),$ $r_1),(\mathcal{G}(G^2,G^1),$ $r_2),\ldots,(\mathcal{G}(G^m,G^{m-1}),r_m))$ be a tower 
of height $m$ over $\F$. Assume that each floor is either a single flat (abelian or surface) or it is a free product. For each $i<m$, 
let $B_i$ be one of the following: 
\begin{itemize}
 \item if $\mathcal{G}(G^{i+1},G^i)$ is a surface flat that is obtained by gluing a surface $\Sigma_{g,n}$ along its boundary onto 
 some subgroups of $G_i$, then $B_i$ is the subgroup of $G^{i+1}$ generated by the fundamental group of the 
 surface together with the Bass-Serre elements $\langle \pi_1(\Sigma_{g,n}), t_1,\ldots, t_n\rangle$;
 \item if $\mathcal{G}(G^{i+1},G^i)$ is an abelian flat obtained from $G^i$ by gluing a free abelian group $\Z^k$ along the 
 maximal abelian subgroup $E$, then $B_i$ is $\Z^k$;
 \item if $\mathcal{G}(G^{i+1},G^i)$ is the free product $G^{i+1}=G^i*\F_l$, then $B_i$ is $F_l$. 
\end{itemize}

We say that $B_{i_0}<B_{i_1}<\ldots<B_{i_m}$ is a legitimate ordering if the following conditions hold: 
\begin{itemize}
 \item $r_1(B_{i_0})\leq \F$;
 \item for each $0<j<m$, the image of $B_{i_j}$ under the retraction $r_{i_j+1}$ is a subgroup of the following group $\langle \F, B_{i_0},\ldots, B_{i_j-1}\rangle$.
\end{itemize}

Moreover, the tower $\mathcal{T}(G,\F)$ together with a legitimate ordering 
and an order for the basis of each abelian flat is called an ordered tower. We will denote an ordered tower (for some ordering) 
by $(\mathcal{T}(G,\F), <)$
\end{definition}

A tower can always be ordered by choosing an arbitrary order on the surface flats and abelian flats of each floor, then placing the flats of the 
$i$-th floor before those of the $i+1$-th floor and choosing an order 
for the basis elements of each abelian flat. On the other hand, one could have more complicated legitimate orderings in the sense that 
a flat that is part of a higher floor than some other flat can be ordered before this latter flat. In any case, one can obtain from 
a legitimate ordering a tower structure by reshuffling the floors according to the legitimate order and change the retractions accordingly. We give some examples.

\begin{example}
We consider the tower over $\F$ with two floors defined as follows:
\begin{itemize}
 \item the first floor $\mathcal{G}(G^1,\F)$ is a surface flat, that is obtained by gluing the surface $\Sigma_{1,1}$ (whose fundamental 
 group is $\langle x_1,x_2\rangle$)
 along its boundary onto the subgroup of $\F$ which is generated by the commutator of two non commuting elements $a_1,a_2$. Thus, 
 $G^1:=\langle \F,x_1,x_2 \ | \ [x_1,x_2]=[a_1,a_2]\rangle$ and $r_1:G^1\twoheadrightarrow\F$ is the morphism staying the 
 identity on $\F$ and sending $x_i$ to $a_i$ for $i\leq 2$. Note that $B_0$ is $\langle x_1,x_2\rangle$;
 \item the second floor $\mathcal{G}(G^2,G^1)$ is a surface flat, that is obtained by gluing the surface $\Sigma_{1,1}$ (whose fundamental group 
 is $\langle y_1,y_2\rangle$) 
 along its boundary onto the subgroup of $\F$ which is generated by the commutator of two non commuting elements $b_1,b_2$. 
 Thus, $G^2:=\langle G^1, y_1,y_2 \ | \ [y_1,y_2]=[b_1,b_2]\rangle$ and $r_2:G^2\twoheadrightarrow G^1$ is the morphism staying  
 the identity on $G^1$ and sending $y_i$ to $b_i$ for $i\leq 2$. Note that $B_1$ is $\langle y_1,y_2\rangle$.  
\end{itemize}
This tower admits two legitimate orderings: the natural one $B_0<B_1$, but also the following $B_1<B_0$ since $r_2(B_1)\leq \F$ and 
$r_1(B_0)\leq \langle \F, B_1\rangle$. 

As noted above one can give $G^2$ the following tower structure:
\begin{itemize}
 \item the first floor $\mathcal{G}(\hat{G}^1,\F)$ is a surface flat, that is obtained by gluing the surface $\Sigma_{1,1}$ (whose fundamental 
 group is $\langle y_1,y_2\rangle$)
 along its boundary onto the subgroup of $\F$ which is generated by the commutator of $b_1,b_2$. Thus, 
 $\hat{G}^1:=\langle \F,y_1,y_2 \ | \ [y_1,y_2]=[b_1,b_2]\rangle$ and $\hat{r}_1:\hat{G}^1\twoheadrightarrow\F$ is $r_2$ 
 restricted on $\langle \F, y_1, y_2\rangle$;
 \item the second floor $\mathcal{G}(G^2,\hat{G}^1)$ is a surface flat, that is obtained by gluing the surface $\Sigma_{1,1}$ (whose fundamental group 
 is $\langle x_1,x_2\rangle$) 
 along its boundary onto the subgroup of $\F$ which is generated by the commutator of $a_1,a_2$. 
 Thus, $G^2:=\langle \hat{G}^1, x_1,x_2 \ | \ [x_1,x_2]=[a_1,a_2]\rangle$ and $\hat{r}_2:G^2\twoheadrightarrow \hat{G}^1$ is the morphism agreeing 
 with $r_1$ on $\langle \F, x_1,x_2\rangle$ and stays the identity on $\langle y_1,y_2\rangle$.
\end{itemize}

\end{example}

\begin{example}
We consider the tower over $\F$ with two floors defined as follows:
\begin{itemize}
 \item the first floor $\mathcal{G}(G^1,\F)$ is a surface flat, that is obtained by gluing the surface $\Sigma_{1,1}$ (whose fundamental 
 group is $\langle x_1,x_2\rangle$)
 along its boundary onto the subgroup of $\F$ which is generated by the commutator of two non commuting elements $a_1,a_2$. Thus, 
 $G^1:=\langle \F,x_1,x_2 \ | \ [x_1,x_2]=[a_1,a_2]\rangle$ and $r_1:G^1\twoheadrightarrow\F$ is the morphism staying the 
 identity on $\F$ and sending $x_i$ to $a_i$ for $i\leq 2$. Note that $B_0$ is $\langle x_1,x_2\rangle$;
 \item the second floor $\mathcal{G}(G^2,G^1)$ is a surface flat, that is obtained by gluing the surface $\Sigma_{1,1}$ (whose fundamental group 
 is $\langle y_1,y_2\rangle$) 
 along its boundary onto the subgroup which is generated by the commutator of $x_1$ and $b$ for some non trivial $b\in\F$. 
 Thus, $G^2:=\langle G^1, y_1,y_2 \ | \ [y_1,y_2]=[x_1,b]\rangle$ and $r_2:G^2\twoheadrightarrow G^1$ is the morphism staying  
 the identity on $G^1$ and sending $y_1$ to $x_1$ and $y_2$ to $b$. Note that $B_1$ is $\langle y_1,y_2\rangle$.  
\end{itemize}
This tower admits only one legitimate ordering: the natural one $B_0<B_1$. 

One can easily check that $B_1<B_0$ is not a legitimate ordering since $r_2(B_1)$ 
is not a subgroup of $\F$. 
\end{example}

\begin{remark} It is not hard to check that:
\begin{itemize}
 \item a twin tower admits two natural legitimate orderings;
 \item a tower closure inherits an ordering from the corresponding tower.
\end{itemize}
\end{remark}

Suppose $G$ has the structure of a tower $\mathcal{T}(G,\F)$ over $\F$ and let $(\mathcal{T}(G,\F), <)$ be some ordering on it. Then  
a sequence of morphisms $(h_n)_{n<\omega}:G\to\F$ is a {\em test sequence} 
for this (ordered) tower if it satisfies the combinatorial conditions $(i)-(xiv)$ in \cite[p.222]{Sel2}. The existence of a test sequence 
for a group that has the structure of a tower (for any ordering of the tower) has been proved in \cite[Lemma 1.21]{Sel2}.  

\begin{proposition}
Suppose $G$ has the structure of a tower $\mathcal{T}(G,\F)$ over $\F$. Let $(\mathcal{T}(G,\F), <)$ be some ordering. 
Then a test sequence for $(\mathcal{T}(G,\F), <)$ exists.
\end{proposition}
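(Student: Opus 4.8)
The plan is to construct $(h_n)_{n<\omega}$ recursively, working through the flats in the legitimate order $B_{i_0}<B_{i_1}<\cdots<B_{i_m}$, so that at each stage the growth of the newly treated flat dominates (in the sense of the Definition preceding this subsection) the growth of everything constructed before it; the combinatorial conditions $(i)$--$(xiv)$ of \cite[p.~222]{Sel2} are then arranged by choosing the successive length scales to separate fast enough. Since $<$ is legitimate, after reshuffling the floors according to $<$ and adjusting the retractions we may assume the natural bottom-up order coincides with $<$ and that each floor is a single flat (abelian or surface) or a free product $G^{i}*\F_l$; it then suffices to explain, given $h_n$ already defined on $H_j:=\langle\F,B_{i_0},\ldots,B_{i_{j-1}}\rangle$, how to extend it to $B_{i_j}$. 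Legitimacy guarantees $r_{i_j+1}(B_{i_j})\le H_j$, so the relators of the new flat only constrain $h_n$ through data already fixed, which is what makes the extension possible.

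First I would put $h_n\upharpoonright\F=\mathrm{Id}$ for every $n$. In the recursive step there are three cases. If $B_{i_j}=\F_l$ is a free factor, choose $h_n(B_{i_j})$ to be an $l$-tuple satisfying the small cancellation condition $C'(1/n)$, with the generators of comparable growth as in the Remark after Definition \ref{FreeTest}, and with every nontrivial element of $\F_l$ growing faster under $h_n$ than every element of $H_j$; no relator obstructs this. If $B_{i_j}=\Z^k=\langle x_1,\ldots,x_k\mid[x_s,x_t]\rangle$ is an abelian flat glued along a maximal abelian $E\le G^{i_j}$, let $\gamma_n$ be the generator of the cyclic group $h_n(E)\le\F$ and set $h_n(x_{i_1})=\gamma_n^{m_{i_1}(n)},\ldots,h_n(x_{i_k})=\gamma_n^{m_{i_k}(n)}$ with $m_{i_k}(n)\to\infty$, each ratio $m_{i_{s+1}}(n)/m_{i_s}(n)\to 0$, and $m_{i_k}(n)$ large enough that $x_{i_k}$ dominates $G^{i_j}$; compatibility with the gluing relation is automatic because powers of $\gamma_n$ commute with $h_n(E)$.

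The genuinely delicate case is a surface flat, where a surface $\Sigma$ with $p$ boundary components is glued along its boundary onto subgroups $E_1,\ldots,E_p\le G^{i_j}$, so that the $\ell$-th boundary word of $\pi_1(\Sigma)$ must be sent, up to the Bass--Serre conjugators $t_\ell$, to the already-determined generator of $h_n(E_\ell)$. One has to choose the images of the remaining interior generators of $\pi_1(\Sigma)$ so that (a) this boundary constraint is met, (b) the whole configuration of words, after forming the boundary product, behaves like a generic small-cancellation tuple, and (c) the growth of $B_{i_j}$ dominates that of $G^{i_j}$; concretely one takes the interior generators to be long generic words of a common scale $L_n$, uses one generator to absorb the relation forced by the prescribed boundary product, and lets $L_n$ grow fast relative to the scales of the lower floors. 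This forces the limiting $\R$-tree to contain a surface (IET) type vertex dual to an arational measured foliation on $\Sigma$, glued as a graph of actions to the tree of the previous floors, and the resulting length estimates yield conditions $(i)$--$(xiv)$. Carrying out (a)--(c) simultaneously and verifying all of Sela's inequalities is exactly the content of \cite[Lemma~1.21]{Sel2}; I expect this surface case — producing enough interior freedom while honouring the boundary gluing and controlling cancellation inside the boundary product — to be the main obstacle, the free and abelian cases being routine once the scales are separated.
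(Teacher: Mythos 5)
Your proposal matches the paper's treatment: the paper records the existence of test sequences by citing Sela's \cite[Lemma 1.21]{Sel2}, and you likewise defer the detailed verification of conditions $(i)$--$(xiv)$ to that lemma. The recursive sketch you supply on the way — initializing $h_n\upharpoonright\F=\mathrm{Id}$, reshuffling floors to make the bottom-up order coincide with the chosen legitimate ordering, handling free factors via $C'(1/n)$ tuples and abelian flats via separated powers of the peg, and correctly isolating the surface flat as the genuinely delicate case — is accurate and consistent with Sela's construction.
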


In this paper we will not use the full strength of the results connected with a test sequence. So, for our purposes the following facts, 
used extensively in \cite{Sel2} (cf. Theorem 1.3, Proposition 1.8, Theorem 1.18), about test sequences assigned to groups that have 
the structure of a tower will be enough.

\begin{fact}[Free product limit action]\label{GroundFloorFact}
Let $\mathcal{T}(G,\F)$ be a tower and $(h_n)_{n<\omega}:G\to\F$ 
be a test sequence for $\mathcal{T}(G,\F)$. 

Suppose $G^{i+1}$ is the free product of $G^i$ with a group $\F_l$ and $(h_n\upharpoonright G^{i+1})_{n<\omega}$ 
is the restriction of $(h_n)_{n<\omega}$ to $G^{i+1}$. Then, any subsequence of $(h_n\upharpoonright G^{i+1})_{n<\omega}$ 
that converges, as in Lemma \ref{LimitAction}, induces a faithful action of $G^{i+1}$ on a based real tree $(Y,*)$, with the following properties:  
  \begin{enumerate}
   \item the action $G^{i+1}\curvearrowright Y$ decomposes as 
   a graph of actions in the following way $(G^{i+1}\curvearrowright T, \{Y_u\}_{u\in V(T)}$ $,\{ p_e\}_{e\in E(T)})$;
   \item the Bass-Serre presentation for $G^{i+1}\curvearrowright T$, $(T_1=T_0,T_0)$, is a segment $(u,v)$;
   \item $Stab_G(u):=\F_l\curvearrowright Y_u$ is a simplicial type action, its Bass-Serre presentation, 
   $(Y_u^1,Y_u^0,t_1,$ $\ldots,t_l)$ consists of a ``star graph'' $Y_u^1:=\{(x,b_1),\ldots,(x,b_l)\}$ with all of its edges 
   trivially stabilized, a point 
   $Y_u^0=x$ which is trivially stabilized and Bass-Serre elements $t_i=e_i$, for $i\leq l$;
   \item $Y_v$ is a point and $Stab_G(v)$ is $G^i$;
   \item the edge $(u,v)$ is trivially stabilized.
   \end{enumerate}
   %\begin{figure}[ht!]
   %\centering
   %\includegraphics[width=.5\textwidth]{FreeGround.pdf}
   %\caption{A Bass-Serre presentation in the case of a free group.}\label{Fig4}
   %\end{figure}
\end{fact}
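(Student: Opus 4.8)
\emph{Proof plan.} The plan is to read the asserted decomposition off the Bestvina--Paulin limit supplied by Lemma \ref{LimitAction}, exploiting the two defining features of the restriction of a test sequence to a free-product floor $G^{i+1}=G^i*\F_l$: the tuple $(h_n(e_1),\ldots,h_n(e_l))$ satisfies the small cancellation condition $C'(1/n)$, and the growth of every nontrivial element of $\F_l$ strictly dominates the growth of every element of $G^i$. (This restriction is itself a test sequence for the sub-tower of $\mathcal{T}(G,\F)$ consisting of the first $i+1$ floors.) First I would fix the scaling constants $\lambda_n$ from Lemma \ref{LimitAction}; since the $e_j$ have maximal growth among the generators while every $g\in G^i$ grows strictly slower, and a $C'(1/n)$-tuple cannot be appreciably shortened by conjugation, $\lambda_n$ is comparable to $\max_k\abs{h_n(e_k)}_{\F}$. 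Passing to the convergent subsequence gives a non-trivial $G^{i+1}$-action on a based real tree $(Y,*)$, which I would replace by its minimal invariant subtree.

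Next I would identify the two pieces. For every $g\in G^i$ the translation length $\ell_Y(g)=\lim_n \ell_{\F}(h_n(g))/\lambda_n$ vanishes by strict domination, and the test sequence conditions ensure that the whole of $G^i$ fixes a common point $v\in Y$ and acts trivially on the part of $Y$ attached there, so that the ``$G^i$-piece'' $Y_v$ is the single point $\{v\}$. On the other side, $C'(1/n)$ with $1/n\to0$ forces in the limit that for $j\neq k$ the axes of $h_n(e_j)$ and $h_n(e_k)$, and those of all their conjugates, overlap in length $o(\lambda_n)$, hence meet in at most a point; the standard analysis of such limits (as carried out in \cite{Sel2}) then shows that the minimal $\F_l$-invariant subtree $Y_u\subseteq Y$ is simplicial, that $\F_l$ acts on it freely, and that the quotient graph of groups is a wedge of $l$ loops with trivial vertex and edge groups. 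This is precisely the ``star'' Bass--Serre presentation $(Y_u^1,Y_u^0,t_1,\ldots,t_l)$ with $Y_u^0$ a single point $x$, $Y_u^1=\{(x,b_1),\ldots,(x,b_l)\}$ having all $l$ edges trivially stabilized, and $t_j=e_j$.

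I would then assemble these into a graph of actions. Since $G^{i+1}=G^i*\F_l$ is generated by $\Stab(v)\supseteq G^i$ and $\Stab(u)\supseteq\F_l$, the $G^{i+1}$-translates of $Y_u$, together with the orbit of $v$, cover $Y$; collapsing each translate of $Y_u$ to a point produces the Bass--Serre tree $T$ of the one-edge splitting $G^{i+1}=\F_l*_{\{1\}}G^i$, whose quotient graph is a single edge with Bass--Serre presentation the segment $(u,v)$, $T_1=T_0$. Thus $Y$ decomposes as the graph of actions $(G^{i+1}\curvearrowright T,\{Y_u,Y_v\},\{p_e\})$ with $Y_v=\{v\}$ a point, $\Stab_{G^{i+1}}(v)=G^i$, $Y_u$ the simplicial $\F_l$-tree above, $\Stab_{G^{i+1}}(u)=\F_l$, and $p_e$ the point where $Y_u$ is attached to $v$. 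The edge $(u,v)$ is trivially stabilized, since its stabilizer lies in $\Stab(u)\cap\Stab(v)=\F_l\cap G^i=\{1\}$, using that distinct free factors of a free product and their conjugates intersect trivially. Faithfulness of $G^{i+1}\curvearrowright Y$ then follows at once: the $G^{i+1}$-equivariant collapse $Y\to T$ carries the kernel of $G^{i+1}\curvearrowright Y$ into the kernel of $G^{i+1}\curvearrowright T$, and the latter is trivial because it would be a normal subgroup of $\F_l*G^i$ contained in a free factor.

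The step I expect to be the real obstacle is the middle one: converting the quantitative small cancellation $C'(1/n)$ into the exact simplicial shape of the limiting $\F_l$-action (freeness, discreteness, and precisely the rose quotient) while ruling out any further identifications in $Y$ beyond those forced by the free-product structure. This is exactly the combinatorial content of the test-sequence analysis of \cite{Sel2} that the statement invokes.
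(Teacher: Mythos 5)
The paper does not prove this statement: it is presented as a \emph{Fact}, and the paragraph immediately preceding it explicitly defers the content to Sela, pointing to \cite[Theorem 1.3, Proposition 1.8, Theorem 1.18]{Sel2} (and the test-sequence conditions $(i)$--$(xiv)$ of \cite[p.~222]{Sel2}). So there is no in-paper argument to compare yours against; the relevant question is only whether your sketch is a faithful account of what goes into Sela's result.

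On that score your outline is correct and tracks the standard argument. Rescaling by the dominant growth, which (by domination and $C'(1/n)$) is comparable to $\max_k\abs{h_n(e_k)}$, makes $G^i$ elliptic in the limit — more precisely, every element of $G^i$ has translation length zero, and since $G^i$ is finitely generated this yields a global fixed point by the $\R$-tree version of Serre's lemma (your appeal to ``the test sequence conditions'' is really hiding this small step). The $C'(1/n)$ hypothesis on $(h_n(e_1),\dots,h_n(e_l))$ then forces the minimal $\F_l$-subtree to be a free, discrete, simplicial action with rose quotient, and collapsing its translates recovers the Bass--Serre tree of $G^{i+1}=\F_l\ast G^i$. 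Your faithfulness argument via the equivariant collapse to $T$ is fine: the kernel of $G^{i+1}\curvearrowright T$ is the intersection of the normal cores of the two free factors, which is trivial in a nontrivial free product. You are right that the genuinely hard step — turning the quantitative small-cancellation hypothesis into exactly the asserted simplicial decomposition with no extra identifications — is the combinatorial heart of \cite{Sel2}; a proof at the level of rigor of the present paper should (and does) simply cite Sela for it rather than reproduce the condition-by-condition verification.
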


\begin{fact}[Surface flat limit action]\label{SurfaceFact}
Let $\mathcal{T}(G,\F)$ be a tower and $(h_n)_{n<\omega}:G\to\F$ 
be a test sequence with respect to $\mathcal{T}(G,\F)$.
  
Suppose $G^{i+1}$ is a surface flat over $G^i:=G^i_1*\ldots * G^i_m$, witnessed by $\mathcal{A}(G^{i+1},G^i)$,  
and $(h_n\upharpoonright G^{i+1})_{n<\omega}$ is the restriction of $(h_n)_{n<\omega}$ to the $i+1$-flat of 
$\mathcal{T}(G,\F)$. 

Then, any subsequence of $(h_n\upharpoonright G^{i+1})_{n<\omega}$ that converges, as in Lemma \ref{LimitAction},  
induces a faithful action of $G^{i+1}$ on a based real tree $(Y,*)$, with the following properties:
   %\begin{figure}[ht!]
   %\centering
   %\includegraphics[width=.7\textwidth]{HyperbolicFlatAction.pdf}
   %\caption{A Bass-Serre presentation for $G^{i+1}\curvearrowright T$, in the case of a surface flat.}\label{Fig5}
   %\end{figure}
 \begin{enumerate}
  \item $G^{i+1}\curvearrowright Y$ decomposes as a graph of actions $(G^{i+1}\curvearrowright T, \{Y_u\}_{u\in V(T)}, \{ p_e\}_{e\in E(T)})$, 
  with the action $G^{i+1}\curvearrowright T$ being identical to $\mathcal{A}(G^{i+1},G^i)$; 
  %\item the Bass-Serre presentation, $(T^{1},T^{0},\{t_e\})$, for the action of $G^{i+1}$ on $T$, is identical with 
  %the Bass-Serre presentation of the surface flat splitting $\mathcal{A}(G^{i+1},G^{i})$;
  \item if $v$ is not a surface type vertex then $Y_v$ is a point stabilized by the corresponding $G^i_j$ for some $j\leq m$;
  \item if $u$ is the surface type vertex, then $Stab_G(u)=\pi_1(\Sigma_{g,l})$ and 
  the action $Stab_G(u)\curvearrowright Y_u$ is a surface type action coming from $\pi_1(\Sigma_{g,l})$;
  %\item edge stabilizers and Bass-Serre elements in $(T^{1},T^{0},\{t_e\})$ are as in the Bass-Serre presentation 
  %for $\mathcal{A}(G^{i+1},G^{i})$. 
 \end{enumerate}
\end{fact}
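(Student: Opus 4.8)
The plan is to derive the statement from the Bestvina--Paulin construction of Lemma~\ref{LimitAction} together with the defining combinatorial conditions $(i)$--$(xiv)$ of a test sequence, exactly as in the analysis of Sela \cite[Theorem 1.18, Proposition 1.8]{Sel2}; since the assertion is recorded here as a \emph{Fact}, the proof is really a matter of bookkeeping plus a pointer to that analysis. First I would observe that the restrictions $h_n\upharpoonright G^{i+1}$ are pairwise non-conjugate (the generators of the surface flat have unbounded $\F$-length along a test sequence, by the growth conditions), so Lemma~\ref{LimitAction} applies and any convergent subsequence yields a non-trivial action of $G^{i+1}$ on a based real tree $(Y,*)$. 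Faithfulness of this action follows because $G^{i+1}$ is itself a limit group (a tower over a limit group is a limit group) and the restricted test sequence is discriminating, i.e.\ has trivial stable kernel; combined with the stability of the limit tree this gives $\Ker = 1$, so the action is faithful.

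Next I would locate the elliptic subgroups and the simplicial skeleton of $Y$. The legitimate ordering underlying the test sequence guarantees that for each non-surface vertex group $G^i_j$ the growth of every one of its elements is dominated by the growth of the generators of the surface flat; hence, after rescaling the metrics on $\mathcal{X}_\F$ by the reciprocal of the maximal generator length, the orbit $G^i_j\cdot *_n$ has diameter tending to $0$, so $G^i_j$ (and its conjugates) fixes a point $Y_v$ in $Y$. The boundary subgroups $f_e(G_e)$ of $\pi_1(\Sigma_{g,l})$ are conjugate into the $G^i_j$, so they are elliptic as well. Collapsing the minimal invariant subtrees spanned by these elliptic families produces a simplicial $G^{i+1}$-tree; comparing its edge and vertex stabilizers with those of $\mathcal{A}(G^{i+1},G^i)$ — using that the edge groups of the surface flat are precisely the boundary subgroups of $\pi_1(\Sigma_{g,l})$, that the tree is bipartite between $\{v\}$-type and surface-type vertices, and that the test sequence does not make $\pi_1(\Sigma_{g,l})$ elliptic — identifies it with $\mathcal{A}(G^{i+1},G^i)$. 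This yields the decomposition of $Y$ as a graph of actions over $T=\mathcal{A}(G^{i+1},G^i)$, with the non-surface pieces $Y_v$ being points as in item~(2).

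It then remains to identify the surface piece. Restricting to the surface-type vertex $u$ with $\Stab_G(u)=\pi_1(\Sigma_{g,l})$ acting on its minimal subtree $Y_u$, Rips' structure theorem for finitely generated groups acting on real trees decomposes this action into simplicial, axial and IET parts. The combinatorial conditions $(i)$--$(xiv)$ imposed on the test sequence are tailored so that the images of the surface generators behave like a ``generic'' (arational) configuration: they rule out the simplicial and axial components and force $Y_u$ to be the dual real tree of an arational measured foliation on $\Sigma_{g,l}$, i.e.\ a surface-type action; in particular the boundary subgroups fix points, which is consistent with the attaching points $p_e$ of the graph of actions. This gives item~(3), and together with the previous paragraph, item~(1).

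The genuinely delicate step is this last one: extracting from the purely combinatorial small-cancellation-type conditions on $(h_n)_{n<\omega}$ the qualitative conclusion that the limit action of the surface group is arational of IET type, with no simplicial or axial summand and with the foliation filling $\Sigma_{g,l}$. This is precisely the heart of Sela's analysis of test sequences (cf.\ \cite[Theorem 1.18]{Sel2}), and I would reduce to it rather than reprove it; the remaining growth-domination estimates for the factors $G^i_j$ are routine given the definition of a legitimate ordering.
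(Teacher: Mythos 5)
The paper gives no proof of this statement: it is recorded as a \emph{Fact} and attributed directly to Sela's analysis of test sequences (\cite{Sel2}, cf.\ Theorem 1.3, Proposition 1.8, Theorem 1.18), exactly as you anticipated. Your sketch is consistent with that analysis and correctly isolates the one genuinely nontrivial step---forcing the surface-vertex limit action to be of IET/arational type from the combinatorial conditions on $(h_n)_{n<\omega}$---as the point where one must invoke Sela rather than reprove, so it takes essentially the same route as the paper.
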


\begin{fact}[Abelian flat limit action]\label{AbelianFact}
Let $\mathcal{T}(G,\F)$ be a limit tower and $(h_n)_{n<\omega}:G\to\F$ 
be a test sequence with respect to $\mathcal{T}(G,\F)$. 

Suppose $G^{i+1}=G^i*_A(A\oplus\Z)$ is obtained from $G^i$ by gluing a free abelian flat along $A$ (where $A$ is a maximal abelian subgroup of $G^i$) 
and $(h_n\upharpoonright G^{i+1})_{n<\omega}$ is the restriction of $(h_n)_{n<\omega}$ to the $i+1$-flat of $\mathcal{T}(G,\F)$. 

Then any subsequence of $(h_n\upharpoonright G^{i+1})_{n<\omega}$ that converges, as in Lemma \ref{LimitAction}, 
induces a faithful action of $G^{i+1}$ on a based real tree $(Y,*)$, with the following properties:
 \begin{enumerate}
  \item the action of $G^{i+1}$ on $Y$, $G^{i+1}\curvearrowright Y$, decomposes as 
  a graph of actions $(G^{i+1}\curvearrowright T, \{Y_u\}_{u\in V(T)},\{ p_e\}_{e\in E(T)})$;
   \item the Bass-Serre presentation for $G^{i+1}\curvearrowright T$, $(T_1=T_0,T_0)$, is a segment $(u,v)$;
   \item $Stab_G(u):=A\oplus\Z\curvearrowright Y_u$ is a simplicial type action, its Bass-Serre presentation, 
   $(Y_u^1,Y_u^0,t_e)$ consists of a segment $Y_u^1:=(a,b)$ whose stabilizer is $A$, a point 
   $Y_u^0=a$ whose stabilizer is $A$ and a Bass-Serre element $t_e$ which is $z$;
   \item $Y_v$ is a point and $Stab_G(v)$ is $G_i$;
   \item the edge $(u,v)$ is stabilized by $A$.
 \end{enumerate}
\end{fact}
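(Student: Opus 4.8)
The plan is to run the Bestvina--Paulin construction on the restricted sequence $(h_n\upharpoonright G^{i+1})_{n<\omega}$ and to read the asserted graph--of--actions structure off the combinatorial conditions that define a test sequence (conditions (i)--(xiv) of \cite[p.~222]{Sel2}); this is essentially the content of \cite[Theorem 1.18]{Sel2}, and I would indicate how the argument specializes here. First I would normalize: fix a finite generating set $\Sigma$ of $G^{i+1}$ adapted to the splitting $G^{i+1}=G^i*_A(A\oplus\Z)$ and take the basepoint in $\mathcal{X}_\F$ to be $1$, so that the pseudometric induced on $G^{i+1}$ by $h_n$ is $d_n(g,g')=\abs{h_n(g^{-1}g')}_\F$; rescale by $\mu_n:=\max_{s\in\Sigma}\abs{h_n(s)}_\F$. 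By the domination clause in the definition of a test sequence, $\mu_n$ is comparable to $\abs{h_n(z)}_\F=m(n)\cdot\abs{h_n(a)}_\F$ (with $a$ a generator of $A$), so $\abs{h_n(g)}_\F/\mu_n\to 0$ for every $g\in G^i$. Passing to a subsequence, the rescaled pseudometrics converge and, by Lemma \ref{LimitAction}, give a nontrivial minimal action of $G^{i+1}$ on a based real tree $(Y,*)$, nontrivial because $d_n(1,z)/\mu_n\to\ell>0$.

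Next I would identify the two kinds of vertex trees. Since $\abs{h_n(g)}_\F/\mu_n\to0$ for every $g\in G^i$, each such $g$ fixes $*$, hence $G^i$ fixes $*$ (and equivariantly $gG^ig^{-1}$ fixes $g\cdot*$). On the other hand $h_n$ maps $A=\langle a\rangle$ into a maximal cyclic subgroup $\langle\gamma_n\rangle$ of $\F$ and $z$ to $\gamma_n^{m(n)}$, so on $\mathcal{X}_\F$ the group $A\oplus\Z$ preserves the axis of $\gamma_n$, with $a$ translating by $\abs{\gamma_n}$ and $z$ by $m(n)\abs{\gamma_n}$; after rescaling the first translation length tends to $0$ and the second to $\ell>0$. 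Hence in the limit the subtree $Y_u$ obtained from these axes is a line on which $A$ acts elliptically, fixing $Y_u$ pointwise, and $z$ acts hyperbolically with axis $Y_u$; this is a simplicial-type action with Bass--Serre data $(Y_u^1=(a,b),\,Y_u^0=a,\,t_e=z)$, all stabilizers equal to $A$, which gives items (3) and (5). Taking $T$ to be the Bass--Serre tree of the one-edge splitting $G^{i+1}=G^i*_A(A\oplus\Z)$, with segment Bass--Serre presentation $(u,v)$, $\Stab(u)=A\oplus\Z$, $\Stab(v)=G^i$, edge stabilizer $A$, and setting $Y_v=\{*\}$, items (2) and (4) follow.

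It remains to show that $Y$ decomposes as the graph of actions $(G^{i+1}\curvearrowright T,\{Y_w\},\{p_e\})$, with attaching point $p_e\in Y_u$ the projection of $*$ onto the axis and $p_{\bar e}=*\in Y_v$; this is item (1) and is the step I expect to be the genuine obstacle. The natural equivariant map $Y_{\mathcal A}\to Y$ is onto because $Y$ is the minimal subtree on $G^{i+1}\cdot*$ and every orbit point is reached by concatenating translates of $Y_u$; the real point is injectivity, namely that when an element of $G^{i+1}$ is written in normal form with respect to the splitting the corresponding path in $Y$ is a geodesic, with no cancellation between consecutive blocks in the limit. This is exactly where the small-cancellation and growth clauses (i)--(xiv) of the test-sequence definition enter, and in a self-contained write-up one must argue carefully here; alternatively, as the authors do, one invokes \cite[Theorem 1.18]{Sel2}. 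Faithfulness of $G^{i+1}\curvearrowright Y$ is then the standard fact that the Bestvina--Paulin limit of a test sequence is a faithful (indeed super-stable) action, again a consequence of those small-cancellation clauses; so the main work, and the only delicate point, is the no-backtracking statement for limits of normal forms.
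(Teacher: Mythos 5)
The paper does not actually prove this statement: it records it as a Fact imported from Sela's work, explicitly deferring to the test--sequence conditions (i)--(xiv) in \cite[p.~222]{Sel2} and to \cite[Theorem 1.18]{Sel2}. Your sketch correctly reconstructs what such a proof would look like and, more to the point, correctly isolates the only genuine content --- that the Bestvina--Paulin limit is not merely the union of the pieces $Y_u$ and $Y_v$ but decomposes \emph{as} the graph of actions, i.e.\ that concatenations of normal-form blocks in $G^{i+1}=G^i*_A(A\oplus\Z)$ stay geodesic in the rescaled limit, with no asymptotic cancellation between consecutive blocks --- and you correctly note that this is exactly what clauses (i)--(xiv) are engineered to guarantee, so that in a self-contained argument one would either reprove that or cite Sela, as the paper itself does. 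The preliminary computations (that $G^i$ collapses to the basepoint, that $A$ becomes elliptic along the limiting line while $z$ survives with positive translation length, and that $T$ is the Bass--Serre tree of the one-edge splitting) are right; one small inaccuracy is that you write $A=\langle a\rangle$, whereas the Fact allows $A$ to be any maximal abelian subgroup of $G^i$ --- the point is rather that $h_n(A)$ lands in a cyclic subgroup $\langle\gamma_n\rangle$ of $\F$, which is all the argument uses. In short: your outline matches the approach the paper tacitly endorses, namely defer the no-backtracking/faithfulness step to Sela's Theorem 1.18.
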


\subsection{Graded towers and test sequences}

We begin by defining the notion of a graded tower.

\begin{definition}\label{GradedTower}
Let $Sld$ be a solid limit group with respect to the finitely generated subgroup $H$ and $Comp(Sld):=Comp(JSJ_H(Sld), Id)$ be  
its completion with respect to the relative $JSJ$ decomposition and the identity map $Id:Sld\rightarrow Sld$. Assume that $JSJ_H(Sld)$ is not 
trivial and let $i_b,i_t:Sld\rightarrow Comp(Sld)$ be the natural injective maps (where $i_b$ is the identity).
Let $\mathcal{T}(G,Sld)$ be a tower over $Sld$. 

Then the group corresponding to the graded tower $\mathcal{GT}(G,Sld)$ is the amalgamated free product $G*_{Sld}Comp(Sld)$, where $f_e:Sld\rightarrow Comp(Sld)$ is the map $i_t$,  
and $f_{\bar{e}}:Sld\rightarrow G$ maps $Sld$ isomorphically onto the ground floor of $\mathcal{T}(G,Sld)$. Moreover, $\mathcal{GT}(G,Sld)$ is the tower 
over $i_b(Sld)$ starting with the floors of $Comp(Sld)$ and continuing with the floors of $\mathcal{T}(G,Sld)$ over the $i_t(Sld)$.
\end{definition}

\begin{remark}
Concerning Definition \ref{GradedTower} we note the following: 
\ \begin{itemize}
   \item in the case where the relative $JSJ$ decomposition of $Sld$ is trivial, we take the graded tower to be the tower itself. In the arguments 
   that will follow this will always be a degenerate case, so we will always assume that the relative $JSJ$ of $Sld$ is not trivial;
   \item the name ``graded tower'' might be misleading since it is not obvious that $\mathcal{GT}(G,Sld)$ has the structure of a tower. Nevertheless, 
    this amalgamated free product can indeed be seen as a tower over the solid limit group that sits on the ground floor of $Comp(Sld)$ after a 
    few modifications as the reader can easily check. These would just involve the ``incrementation'' of some abelian flats 
    of the first floor of $Comp(Sld)$ since now some peg of some floor of $\mathcal{T}(G,Sld)$ can be conjugated to an abelian flat in this first floor, that did not 
    exist before. %Since we will not use the structure of a tower of $\mathcal{GT}(G,Sld)$ we do not make these 
    %modifications more precise.
  \end{itemize}

\end{remark}

A {\em graded test sequence} for a graded tower $\mathcal{GT}(G,Sld)$ (with respect to some ordering) is a sequence of morphisms 
that restricts to a fixed non degenerate (with respect to $\mathcal{GT}(G,Sld)$) strictly solid morphism on $i_b(Sld)$ and 
moreover it satisfies the same properties 
as in Facts \ref{GroundFloorFact}, \ref{SurfaceFact}, \ref{AbelianFact} for the floors of $\mathcal{GT}(G,Sld)$ 
with the exception that the group acting on the limit action is quotiened by the 
kernel of the fixed strictly solid morphism. 

\begin{fact}\label{GroundSolidFloorFact}
Let $\mathcal{GT}(G,Sld)$ be a graded tower over $Sld$. Let $s:Sld\rightarrow\F$ be a non degenerate (with respect to $\mathcal{GT}(G,Sld)$) 
strictly solid morphism and let $(h_n)_{n<\omega}:G*_{Sld}Comp(Sld)\to\F$ 
be a graded test sequence for $\mathcal{GT}(G,\F)$ (for some ordering) based on $s$. Let $K$ be the kernel of $s$.

Suppose $G^{i+1}$ is the free product of $G^i$ with a free group $\F_l:=\langle e_1,\ldots, e_l\rangle$ and $(h_n\upharpoonright G^{i+1})_{n<\omega}$ 
is the restriction of $(h_n)_{n<\omega}$ to the $i+1$-flat of 
$\mathcal{GT}(G,Sld)$. Then, any subsequence of $(h_n\upharpoonright G^{i+1})_{n<\omega}$ 
that converges, as in Lemma \ref{LimitAction}, induces a faithful action of $G^{i+1}/K$ on a based real tree $(Y,*)$, with the following properties:  
  \begin{enumerate}
   \item the action $G^{i+1}/K\curvearrowright Y$ decomposes as 
   a graph of actions in the following way $(G^{i+1}/K\curvearrowright T, \{Y_u\}_{u\in V(T)}$ $,\{ p_e\}_{e\in E(T)})$;
   \item the Bass-Serre presentation for $G^{i+1}/K\curvearrowright T$, $(T_1=T_0,T_0)$, is a segment $(u,v)$;
   \item $Stab_G(u):=\F_l\curvearrowright Y_u$ is a simplicial type action, its Bass-Serre presentation, 
   $(Y_u^1,Y_u^0,t_1,$ $\ldots,t_l)$ consists of a ``star graph'' $Y_u^1:=\{(x,b_1),\ldots,(x,b_l)\}$ with all of its edges 
   trivially stabilized, a point 
   $Y_u^0=x$ which is trivially stabilized and Bass-Serre elements $t_i=e_i$, for $i\leq l$;
   \item $Y_v$ is a point and $Stab_G(v)$ is $G^i/K$;
   \item the edge $(u,v)$ is trivially stabilized.
   \end{enumerate}
   %\begin{figure}[ht!]
   %\centering
   %\includegraphics[width=.5\textwidth]{FreeGround.pdf}
   %\caption{A Bass-Serre presentation in the case of a free group.}\label{Fig4}
   %\end{figure}
\end{fact}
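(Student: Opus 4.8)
The plan is to reduce the statement to its ungraded analogue, Fact~\ref{GroundFloorFact}, by quotienting out the kernel $K$ of the fixed strictly solid morphism $s\colon Sld\to\F$ on which the graded test sequence is based.

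First I would observe that $K$ lies in the stable kernel of $(h_n)_{n<\omega}$: by Definition~\ref{GradedTower} the graded tower $\mathcal{GT}(G,Sld)$ is a tower over $i_b(Sld)\cong Sld$ and $G^{i+1}$ appears as one of its floors, while a graded test sequence based on $s$ restricts to $s$ on $i_b(Sld)$, so every element of $K$ is sent to $1$ by $h_n$ for all $n$. Hence the normal closure $N:=\langle\langle K\rangle\rangle$ of $K$ in $\mathcal{GT}(G,Sld)$ is in the stable kernel, and $(h_n)$ descends to a sequence $(\bar h_n)_{n<\omega}\colon \bar G\to\F$, where $\bar G:=\mathcal{GT}(G,Sld)/N$.

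Next I would check that $\bar G$ inherits the structure of a tower over the finitely generated free group $\F':=s(Sld)\cong Sld/K$ (free by Nielsen--Schreier), with the tower structure of $\mathcal{GT}(G,Sld)$ over $i_b(Sld)$ descending floor by floor: each floor $X$ is replaced by $\bar X:=X/\langle\langle K\rangle\rangle_X$ and the retractions descend as well. The one delicate point is that the ``$Comp(Sld)$''-floors of the graded tower still form a tower of the same combinatorial type over $\F'$ after the quotient; this is where the non-degeneracy of $s$ with respect to $\mathcal{GT}(G,Sld)$ is used, guaranteeing that the edge groups of those floors are not killed and hence that the centralisers in $Comp(Sld)$ along which the abelian and surface flats were glued (Definition~\ref{Completion}) map to maximal abelian subgroups of the same type in $\bar G$. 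Since the combinatorial conditions defining a test sequence only refer to the $\F$-lengths and small-cancellation behaviour of the words $h_n(\cdot)=\bar h_n(\cdot)$, the sequence $(\bar h_n)$ is then an ordinary test sequence for $\bar G$ with the inherited ordering.

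Finally, since $G^{i+1}=G^i*\F_l$ with $K\leq G^i$, the normal closure of $K$ meets $\F_l$ trivially, so $\bar G^{i+1}=\bar G^i*\F_l$, and $\bar G^{i+1}$ is precisely the group denoted $G^{i+1}/K$ in the statement. Applying Fact~\ref{GroundFloorFact} to the tower $\bar G$ over $\F'$, the test sequence $(\bar h_n)$, and the free-product floor $\bar G^{i+1}$ over $\bar G^i$, any convergent subsequence induces a faithful action of $\bar G^{i+1}=G^{i+1}/K$ on a based real tree with exactly the graph-of-actions decomposition~(1)--(5); and because $h_n\upharpoonright G^{i+1}$ and $\bar h_n\upharpoonright\bar G^{i+1}$ determine the same based $G^{i+1}$-spaces (the action factoring through $N$), this is the limit action described in the statement. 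The main obstacle I expect is precisely the middle step: establishing that the completion part of the graded tower descends to an honest tower over $\F'$ forces one to unwind the recursion in Definition~\ref{Completion} and use the injectivity properties of strictly solid (non-degenerate) morphisms; everything else is bookkeeping around Fact~\ref{GroundFloorFact}.
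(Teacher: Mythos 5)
The paper does not actually prove this statement. Facts~\ref{GroundSolidFloorFact}--\ref{AbelianSolidFact} are presented as \emph{defining} properties of a graded test sequence: immediately before them the paper says a graded test sequence ``satisfies the same properties as in Facts~\ref{GroundFloorFact}, \ref{SurfaceFact}, \ref{AbelianFact}\dots with the exception that the group acting on the limit action is quotiened by the kernel of the fixed strictly solid morphism.'' So there is no paper proof to compare against; the content is imported from Sela's construction.

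As an independent derivation, your reduction strategy is natural, and several of its steps are sound: $K$ does lie in the stable kernel because $h_n\restriction i_b(Sld)=s$ for all $n$; the normal closure of $K$ in $G^{i+1}=G^i*\F_l$ does give $(G^i/\langle\langle K\rangle\rangle_{G^i})*\F_l$; and test-sequence conditions, being conditions on the words $h_n(\cdot)$ in $\F$, are insensitive to passing to the quotient $\bar G$. One small point: since $s$ fixes the parameters $\bar a$, you have $\F\le s(Sld)\le\F$, so $\F'=s(Sld)$ is in fact all of $\F$; the proposal reads as though $\F'$ could be a proper free subgroup, which muddies the appeal to Fact~\ref{GroundFloorFact}.

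The genuine gap is exactly the step you flag as ``delicate'' and then pass over: that $\bar G=\mathcal{GT}(G,Sld)/\langle\langle K\rangle\rangle$ inherits a tower structure over $\F$. You attribute this to the non-degeneracy of $s$, but non-degeneracy as defined in the paper is a condition on morphisms factoring through the floors of $\mathcal{T}(G,Sld)$; it says nothing directly about the $Comp(Sld)$-floors of the graded tower. To show the completion floors descend to genuine abelian/surface flats over $\F$ one must check, among other things, that the centralizers in $Comp(Sld)$ along which flats were glued map onto maximal abelian subgroups of $\F$ under $s$, that the non-conjugacy conditions on pegs survive the quotient, and that surface vertex groups of $\hat{JSJ}_H(Sld)$ retain non-abelian image. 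These need strict solidity (via Lemma~\ref{StrictlySolidRel} and Proposition~\ref{ShortEncore}), not merely non-degeneracy, and they are not ``bookkeeping'' — they are where essentially all the work of the graded theory is hidden. Without that analysis the reduction does not yet yield a proof, which is likely why the paper records these as facts rather than deriving them.
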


\begin{fact}\label{SurfaceSolidFact}
Let $\mathcal{GT}(G,Sld)$ be a graded tower over $Sld$. Let $s:Sld\rightarrow\F$ be a non degenerate (with respect to $\mathcal{GT}(G,Sld)$) 
strictly solid morphism and let $(h_n)_{n<\omega}:G*_{Sld}Comp(Sld)\to\F$ 
be a graded test sequence for $\mathcal{GT}(G,\F)$ (for some ordering) based on $s$. Let $K$ be the kernel of $s$.
  
Suppose $G^{i+1}$ is a surface flat over $G^i:=G^i_1*\ldots G^i_m$, witnessed by $\mathcal{A}(G^{i+1},G^i)$,  
and $(h_n\upharpoonright G^{i+1})_{n<\omega}$ is the restriction of $(h_n)_{n<\omega}$ to the $i+1$-flat of 
$\mathcal{GT}(G,Sld)$. 

Then, any subsequence of $(h_n\upharpoonright G^{i+1})_{n<\omega}$ that converges, as in Lemma \ref{LimitAction},  
induces a faithful action of $G^{i+1}/K$ on a based real tree $(Y,*)$, with the following properties:
   %\begin{figure}[ht!]
   %\centering
   %\includegraphics[width=.7\textwidth]{HyperbolicFlatAction.pdf}
   %\caption{A Bass-Serre presentation for $G^{i+1}\curvearrowright T$, in the case of a surface flat.}\label{Fig5}
   %\end{figure}
 \begin{enumerate}
  \item $G^{i+1}/K\curvearrowright Y$ decomposes as a graph of actions $(G^{i+1}/K\curvearrowright T, \{Y_u\}_{u\in V(T)}, \{ p_e\}_{e\in E(T)})$, 
  with the action $G^{i+1}/K\curvearrowright T$ having the same data as $\mathcal{A}(G^{i+1},G^i)$, apart from replacing the vertex stabilizer that contains 
  $f_{\bar{e}}(Sld)$ with its quotient $f_{\bar{e}}(Sld)/K$;
  %\item the Bass-Serre presentation, $(T^{1},T^{0},\{t_e\})$, for the action of $G^{i+1}/K$ on $T$, is identical with 
  %the Bass-Serre presentation of the surface flat splitting $\mathcal{A}(G^{i+1},G^{i})$;
  \item if $v$ is not a surface type vertex then $Y_v$ is a point stabilized by the corresponding $G^i_j/K$ for some $j\leq m$;
  \item if $u$ is the surface type vertex, then $Stab_G(u)=\pi_1(\Sigma_{g,l})$ and 
  the action $Stab_G(u)\curvearrowright Y_u$ is a surface type action coming from $\pi_1(\Sigma_{g,l})$;
  %\item edge stabilizers and Bass-Serre elements in $(T^{1},T^{0},\{t_e\})$ are as in the Bass-Serre presentation 
  %for $\mathcal{A}(G^{i+1},G^{i})$. 
 \end{enumerate}
\end{fact}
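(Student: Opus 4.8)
The plan is to derive this as the graded counterpart of Fact~\ref{SurfaceFact}, running that analysis over the quotient $G^i/K$ in place of $G^i$; the only genuinely new input is the control of the solid base. First I would verify that the limit action furnished by Lemma~\ref{LimitAction} factors through $G^{i+1}/K$, i.e.\ that $K$ lies in the stable kernel $\Ker(h_n\upharpoonright G^{i+1})$. The ground floor of the sub-tower $\mathcal{T}(G,Sld)$ sitting inside $G^{i+1}$ is a copy of $Sld$, which inside $\mathcal{GT}(G,Sld)=G*_{Sld}Comp(Sld)$ is the copy $i_t(Sld)=f_{\bar e}(Sld)$. By the definition of a graded test sequence based on $s$, the restriction of each $h_n$ to $Comp(Sld)$ obeys the test--sequence conditions over $i_b(Sld)$ with base $s$; in particular $h_n\upharpoonright i_t(Sld)$ and $s$ are tower equivalent with respect to $Comp(Sld,Id)$, hence by Lemma~\ref{StrictlySolidRel} (together with Lemma~\ref{SolidInjections}) they agree up to conjugation on the rigid vertices of the relative $JSJ$ of $Sld$ and coincide on the vertex group containing $H$. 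Therefore $\Ker(h_n\upharpoonright i_t(Sld))=K$ for every $n$, and since $i_t(Sld)\leq G^{i+1}$ the normal closure of $K$ in $G^{i+1}$ --- for which $G^{i+1}/K$ is a shorthand --- is contained in $\Ker(h_n\upharpoonright G^{i+1})$.

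Next I would read off the graph of actions from the combinatorial clauses of the test sequence, exactly as for an ungraded test sequence (Fact~\ref{SurfaceFact}). Along this floor the rescaling constant is governed by the generators of $\pi_1(\Sigma_{g,l})$ and the Bass--Serre elements of $\mathcal{A}(G^{i+1},G^i)$, whose growth dominates that of every element of $G^i$; hence in the rescaled limit each free factor $G^i_j$ of $G^i$ has bounded displacement and fixes a point $Y_v$, while $\pi_1(\Sigma_{g,l})$ acts on its own subtree $Y_u$. The surface/IET clauses force $\pi_1(\Sigma_{g,l})\curvearrowright Y_u$ to be a genuine surface--type action dual to the arational foliation on $\Sigma_{g,l}$; this is the step that uses non-degeneracy of $s$ with respect to $\mathcal{GT}(G,Sld)$, which prevents the image of the surface vertex group from becoming abelian (hence elliptic) and prevents an edge group from being killed. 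The attaching points $p_e$ are then recovered as the points fixed by the images of the boundary subgroups of $\Sigma_{g,l}$, and assembling these data reproduces $\mathcal{A}(G^{i+1},G^i)$ with the rigid vertex stabilizer carrying $f_{\bar e}(Sld)$ replaced by its quotient modulo $K$; this yields (1)--(3).

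Finally, faithfulness of $G^{i+1}/K\curvearrowright Y$ is the usual normal-form bookkeeping of Fact~\ref{SurfaceFact}, now carried out over $G^i/K$: an element of $\Ker(h_n\upharpoonright G^{i+1})$ fixes $Y$ pointwise, so by the graph-of-actions structure it is conjugate into a single vertex group; on a surface vertex a non-trivial element is hyperbolic, so it must be trivial there, while on the rigid vertex group it lies in the normal closure of $K$, that group modulo the normal closure of $K$ being a tower over $s(Sld)\leq\F$ and therefore acting faithfully by the ungraded statement applied to the lower floors. I expect the real obstacle to be precisely this last point in the graded setting: showing that the quotient induced in the limit is exactly $G^{i+1}/K$ and nothing larger, i.e.\ that the solid base does not degenerate along the tower. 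In the ungraded case this is automatic; here it rests on strict solidity and non-degeneracy of $s$, together with the control of $h_n\upharpoonright i_t(Sld)$ coming from the tower structure of $Comp(Sld)$ over $i_b(Sld)$ and Lemma~\ref{StrictlySolidRel}.
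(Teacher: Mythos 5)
The paper does not actually prove Fact~\ref{SurfaceSolidFact}: it is stated immediately after, and is essentially a verbatim unpacking of, the definition of a graded test sequence, which the paper \emph{defines} to be a sequence of morphisms restricting to $s$ on $i_b(Sld)$ and ``satisfying the same properties as in Facts~\ref{GroundFloorFact}, \ref{SurfaceFact}, \ref{AbelianFact} \dots with the exception that the group acting on the limit action is quotiened by the kernel of the fixed strictly solid morphism.'' In other words, the desired structure of the limit action is built into the definition, and the substantive mathematical work (that such sequences exist) is deferred and not carried out for this Fact. Your proposal therefore takes a genuinely different route: you try to \emph{derive} the graph-of-actions description of the limit from an implicit characterization of graded test sequences by combinatorial growth conditions, which is closer in spirit to reconstructing the underlying Sela-style argument than to the paper's treatment. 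That is a legitimate thing to want, but it is not what the paper does, and it surfaces a gap that the paper's definitional stance avoids entirely.

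The specific gap is in the first step. From the definition of the graded tower, $h_n\upharpoonright Comp(Sld)$ composes with $i_b$ to give the fixed $s$ and with $i_t$ to give $h_n\upharpoonright i_t(Sld)$, so $s$ and $h_n\upharpoonright i_t(Sld)$ are indeed tower equivalent, and Lemma~\ref{StrictlySolidRel} gives that they agree up to conjugation on the rigid vertices of $JSJ_H(Sld)$ and coincide on the vertex group containing $H$. But you then conclude $\Ker(h_n\upharpoonright i_t(Sld))=K$ for every $n$, and this does not follow: agreement up to conjugation on rigid vertices says nothing about the behaviour on surface-type and non-cyclic abelian vertex groups, where two morphisms in the same strictly solid family can genuinely differ and can kill different elements. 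What you need (and what the paper's definition simply decrees) is that $K$ lies in the \emph{stable} kernel of the sequence $(h_n\upharpoonright G^{i+1})_{n<\omega}$ and, conversely, that the stable kernel is no larger than the normal closure of $K$ in $G^{i+1}$. Neither inclusion is established by the tower-equivalence observation alone; the second is precisely what you flag as ``the real obstacle,'' and you leave it at the level of an appeal to strict solidity and non-degeneracy without an argument. Lemma~\ref{SolidInjections} (cited in passing) concerns $i_t(Sld)\cap i_b(Sld)=\hat{H}$ and does not close this gap. If you want a proof along these lines rather than adopting the paper's definitional convention, both inclusions have to be argued explicitly, most plausibly via the shortening argument and the boundedness properties of strictly solid families from Sela.
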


\begin{fact}\label{AbelianSolidFact}
Let $\mathcal{GT}(G,Sld)$ be a graded tower over $Sld$. Let $s:Sld\rightarrow\F$ be a non degenerate (with respect to $\mathcal{GT}(G,Sld)$) 
strictly solid morphism and let $(h_n)_{n<\omega}:G*_{Sld}Comp(Sld)\to\F$ 
be a graded test sequence for $\mathcal{GT}(G,\F)$ (for some ordering) based on $s$. Let $K$ be the kernel of $s$. 

Suppose $G^{i+1}=G^i*_A(A\oplus\Z)$ is obtained from $G^i$ by gluing a free abelian flat along $A$ (where $A$ is a maximal abelian subgroup of $G^i$) 
and $(h_n\upharpoonright G^{i+1})_{n<\omega}$ is the restriction of $(h_n)_{n<\omega}$ to the $i+1$-flat of $\mathcal{GT}(G,Sld)$. 

Then any subsequence of $(h_n\upharpoonright G^{i+1})_{n<\omega}$ that converges, as in Lemma \ref{LimitAction}, 
induces a faithful action of $G^{i+1}/K$ on a based real tree $(Y,*)$, with the following properties:
 \begin{enumerate}
  \item the action of $G^{i+1}/K$ on $Y$, $G^{i+1}/K\curvearrowright Y$, decomposes as 
  a graph of actions $(G^{i+1}/K\curvearrowright T, \{Y_u\}_{u\in V(T)},\{ p_e\}_{e\in E(T)})$;
   \item the Bass-Serre presentation for $G^{i+1}/K\curvearrowright T$, $(T_1=T_0,T_0)$, is a segment $(u,v)$;
   \item $Stab_G(u):=A/K\oplus\Z\curvearrowright Y_u$ is a simplicial type action, its Bass-Serre presentation, 
   $(Y_u^1,Y_u^0,t_e)$ consists of a segment $Y_u^1:=(a,b)$ whose stabilizer is $A/K$, a point 
   $Y_u^0=a$ whose stabilizer is $A/K$ and a Bass-Serre element $t_e$ which is $z$;
   \item $Y_v$ is a point and $Stab_G(v)$ is $G_i/K$;
   \item the edge $(u,v)$ is stabilized by $A/K$.
 \end{enumerate}
\end{fact}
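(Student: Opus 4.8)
The plan is to derive this as the graded counterpart of Fact~\ref{AbelianFact}, by passing to a quotient. Let $(h_n)_{n<\omega}:G*_{Sld}Comp(Sld)\to\F$ be a graded test sequence based on the strictly solid morphism $s$, and set $K:=\Ker s$ and $N:=\langle\langle K\rangle\rangle$, the normal closure of $K$ in $G*_{Sld}Comp(Sld)$. Since each $h_n$ restricts to $s$ on $i_b(Sld)$, each $h_n$ kills $K$, hence kills $N$, and so descends to $\bar{h}_n:\bar{Q}\to\F$ with $\bar{Q}:=(G*_{Sld}Comp(Sld))/N$. The image of $i_b(Sld)$ in $\bar{Q}$ is $Sld/K\cong s(Sld)\leq\F$, a finitely generated free group. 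I would then argue in three steps: (1) $\bar{Q}$ is a tower over $s(Sld)$ whose floors $\bar{G}^1,\bar{G}^2,\dots$ are the images of the floors of $\mathcal{GT}(G,Sld)$; (2) $(\bar{h}_n)_{n<\omega}$ is a test sequence for this ordered tower; (3) Fact~\ref{AbelianFact} applied to the abelian floor $\bar{G}^{i+1}=\bar{G}^i*_{\bar{A}}(\bar{A}\oplus\Z)$ of $\bar{Q}$ produces a faithful action of $\bar{G}^{i+1}$ on a based real tree with the graph-of-actions decomposition in the statement. Translating back through the quotient map one has $\bar{G}^{i+1}=G^{i+1}/K$, $\bar{G}^i=G^i/K$ and $\bar{A}=A/K$, which gives precisely the claimed picture: the segment vertex $v$ carries a point with stabilizer $G^i/K$, $\Stab(u)=A/K\oplus\Z$ acts simplicially with Bass-Serre element $z$, and the edge $(u,v)$ is stabilized by $A/K$.

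Step~(1) is the crux, and the step I expect to be the main obstacle. The floor retractions descend automatically, since each $r_j$ fixes $i_b(Sld)$ pointwise and hence carries the normal closure of $K$ into itself. What has to be excluded is that quotienting the base $Sld$ by $K$ collapses an edge group of some floor of $\mathcal{GT}(G,Sld)$ or turns a surface type vertex group abelian; this is exactly the non-degeneracy of $s$ with respect to $\mathcal{GT}(G,Sld)$, and it is where the strict solidity hypothesis is used (compare Lemma~\ref{DefDeg} and the definition preceding it). Were some edge group of the $(i+1)$-st floor always mapped to the trivial element, or some non-abelian vertex group always mapped to a cyclic subgroup, by every morphism factoring through the tower based on $s$, then $s$ would be degenerate; a graded test sequence exhibits, floor by floor, that this fails. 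For the abelian floor $G^{i+1}=G^i*_A(A\oplus\Z)$ one needs in addition that $\bar{A}$ is still maximal abelian in $\bar{G}^i$ and not conjugate there to any previously used peg: when the peg $A$ is introduced strictly above the $Sld$-floor, $A\cap N$ is trivial, so $\bar{A}\cong A$ and maximality together with non-conjugacy pass to the quotient by Lemmata~\ref{MaxAb0} and~\ref{MaxAb2}; when $A$ is conjugate into the base (as the Convention allows), one uses that a strictly solid morphism is injective on the rigid vertex groups and on the peripheral subgroups of the abelian vertex groups of the relative $JSJ$ of $Sld$, which again makes $A\cap N$ trivial and the argument goes through. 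Hence each $\bar{G}^{i+1}$ is a genuine abelian flat over $\bar{G}^i$, and iterating over the floors shows $\bar{Q}$ is a tower over $s(Sld)$.

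Step~(2) is then formal: the combinatorial conditions $(i)$--$(xiv)$ of \cite[p.~222]{Sel2} defining a test sequence refer only to the individual floors, which the quotient leaves unchanged, so they pass from $(h_n)_{n<\omega}$ to $(\bar{h}_n)_{n<\omega}$; alternatively one quotes \cite[Lemma 1.21]{Sel2} to build a test sequence for $\bar{Q}$ directly and matches the two sequences floor by floor, which is also what guarantees the existence of graded test sequences once Step~(1) is known. With (1) and (2) in hand, Step~(3) is a direct appeal to Fact~\ref{AbelianFact} applied to $\bar{Q}$, its abelian floor $\bar{G}^{i+1}$, and any convergent subsequence of $(\bar{h}_n\upharpoonright\bar{G}^{i+1})_{n<\omega}$. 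The graded versions of Facts~\ref{GroundFloorFact} and~\ref{SurfaceFact} (namely Facts~\ref{GroundSolidFloorFact} and~\ref{SurfaceSolidFact}) are obtained in exactly the same way, invoking the appropriate ungraded fact in place of Fact~\ref{AbelianFact}.
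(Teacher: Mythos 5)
The paper does not prove Fact~\ref{AbelianSolidFact} at all: it is folded into the \emph{definition} of a graded test sequence in the paragraph just before Fact~\ref{GroundSolidFloorFact} (``\,\dots\ and moreover it satisfies the same properties as in Facts~\ref{GroundFloorFact}, \ref{SurfaceFact}, \ref{AbelianFact} for the floors of $\mathcal{GT}(G,Sld)$, with the exception that the group acting on the limit action is quotiened by the kernel of the fixed strictly solid morphism\,''), with the existence of such sequences imported from Sela. So you are not reconstructing a proof the authors gave; you are proposing a derivation they chose not to carry out. That is fine in principle, but your reduction has gaps that you cannot paper over by pointing to non-degeneracy.

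The two central gaps are in your Step~(1). First, you set $\bar{Q}:=(G*_{Sld}Comp(Sld))/N$ with $N=\langle\langle K\rangle\rangle$ and claim the image of $G^{i+1}$ is $G^{i+1}/K$. What is actually true is that the image is $G^{i+1}/(G^{i+1}\cap N)$, and you have not shown $G^{i+1}\cap N$ coincides with the normal closure of $K$ in $G^{i+1}$ (nor that this is what the paper's informal ``$G^{i+1}/K$'' means). Second, and more seriously, the passage from ``$s$ is non-degenerate'' to ``$\bar{Q}$ is a tower over $s(Sld)$'' does not follow. Non-degeneracy is a statement about \emph{some} morphism $h$ factoring through the tower based on $s$ (not every such $h$ kills an edge group or abelianises a non-abelian vertex group); it is not a statement about the quotient group $\bar{Q}$ itself. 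To show $\bar{Q}$ is a tower you would need, at minimum, that $\bar{Q}$ is a limit group, that the images of the surface vertex groups remain surface groups, that the images of the pegs remain maximal abelian and non-conjugate, and that the floor retractions descend to retractions. None of this is automatic; for the floors coming from $Comp(Sld)$ (which are exactly the floors sitting over $i_b(Sld)$, where $K$ lives) the intersections $G_v\cap N$ are precisely the hard case, and invoking Lemmata~\ref{MaxAb0} and~\ref{MaxAb2} only addresses pegs that already lie above the solid base, not those built from the relative $JSJ$ of $Sld$. Your appeal to Lemma~\ref{DefDeg} does not help either: it characterises degeneracy by a system of equations on the parameters, it does not assert anything about the normal closure $N$.

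A smaller but real issue is Step~(2): the ungraded test sequence conditions include $h_n\upharpoonright\F=\mathrm{Id}$, whereas $\bar{h}_n$ restricts on $s(Sld)$ to a fixed embedding into $\F$, not the identity on $\F$. Fact~\ref{AbelianFact} is stated for a tower over $\F$ itself; you are silently generalising it to a tower over an arbitrary free subgroup with a fixed embedding as ground data, which is exactly the point where the ``graded'' case departs from the ungraded one and which the combinatorial conditions of \cite[p.~222]{Sel2} do not directly cover. If you want to pursue this route, the honest statement is that you are reproving (a special case of) Sela's construction of graded test sequences from \cite{Sel3}, and the quotient picture needs to be set up at the level of actions on trees (using Lemma~\ref{LimitAction} and the shortening argument for solid morphisms, cf.\ Proposition~\ref{ShortEncore}) rather than at the level of abstract group quotients.
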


Using the above properties we can prove. 

\begin{lemma}\label{RigidVertex}
Let $Sld$ be a solid limit group with respect to the finitely generated subgroup $H$. Let $\hat{JSJ}_H(Sld)$ be the modification 
of the relative $JSJ$ decomposition of $Sld$ and $\hat{H}$ be the vertex group where $H$ is contained. Let $Comp(Sld):=Comp(\hat{JSJ}_H(Sld), Id)$ be  
its completion with respect to the modification of the relative $JSJ$ decomposition and the identity map $Id:Sld\rightarrow Sld$. 

Let $\mathcal{T}(G,Sld)$ be a tower over $Sld$ and $(s_n)_{n<\omega}:Sld\rightarrow\F$ be a convergent sequence of non degenerate 
(with respect to $\mathcal{T}(G,Sld)$) strictly solid morphisms with trivial stable kernel. 

Let $g\in G$ and assume that for each $n$ there exists a graded test sequence $(h^n_m)_{m<\omega}:G*_{Sld}Comp(Sld)\rightarrow\F$ 
based on $s_n$ such that $\abs{\{h^n_m(g) \ | \ m<\omega\}}<\infty$. Then $g\in \hat{H}$.
\end{lemma}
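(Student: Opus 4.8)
The plan is to run the standard "limit tree / shortening" machinery, but applied to the whole family of graded test sequences simultaneously. Suppose for contradiction that $g\notin\hat H$. First I would record the consequences of the hypothesis that $\abs{\{h^n_m(g)\ |\ m<\omega\}}<\infty$ for every $n$: by passing to a subsequence in $m$ we may assume $h^n_m(g)$ is eventually constant, equal to some element $g_n\in\F$, and moreover (after conjugating the graded test sequences, which does not affect the combinatorial conditions of Facts \ref{GroundSolidFloorFact}, \ref{SurfaceSolidFact}, \ref{AbelianSolidFact}) we may assume the base morphism $s_n$ is literally the restriction $h^n_m\upharpoonright Sld$. The point of allowing $n$ to vary is that the stable kernel of $(s_n)_n$ is trivial, so in the end we recover a faithful limit action of $Sld$ itself, not merely of a shortening quotient.

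Next I would take the Bestvina--Paulin limit. For each fixed $n$, the graded test sequence $(h^n_m)_m$ gives, via Lemma \ref{LimitAction} applied to the subsequence where $g$ is constant, a real tree on which $G*_{Sld}Comp(Sld)$ (modulo the kernel $K_n$ of $s_n$) acts, and by Facts \ref{GroundSolidFloorFact}--\ref{AbelianSolidFact} this action decomposes as a graph of actions reflecting the floor structure of $\mathcal{GT}(G,Sld)$; crucially, since $g$ has bounded image under $h^n_m$, the element $g$ acts \emph{elliptically} on this limit tree (in fact with a global fixed point, as its translation length is $0$). Now I would let $n\to\infty$: rescaling the $s_n$ appropriately and extracting a further limit, we get a limiting real tree $Y$ on which $Sld$ acts faithfully (trivial stable kernel of $(s_n)_n$), and this action inherits from the $\hat{JSJ}_H(Sld)$/completion structure a graph-of-actions decomposition whose non-surface, non-abelian vertex stabilizers are exactly the rigid vertex groups of $\hat{JSJ}_H(Sld)$, with $\hat H$ the vertex group containing $H$ being elliptic (indeed fixing the basepoint, since $s_n\upharpoonright\hat H$ has bounded, eventually constant, image by the solidity/rigidity of $\hat H$). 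The element $g$, being elliptic in each of the $n$-level trees with bounded displacement, remains elliptic in $Y$.

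Then I would invoke the shortening argument against this faithful action of $Sld$ on $Y$: a faithful action of a solid limit group $Sld$ (relative to $H$) on a real tree, in which $H$ is elliptic and which is nontrivial, would exhibit $Sld/\Ker$ as a proper quotient through a modular automorphism — contradicting solidity — \emph{unless} the action is in fact trivial on $Sld$, i.e. the whole of $Sld$ fixes a point. Combined with the graph-of-actions structure this forces $Sld$ to be conjugate into a single vertex stabilizer, which (since $H$ is in $\hat H$ and the $\hat{JSJ}$ is a genuine JSJ) must be $\hat H$ itself; but then the action is trivial and the rescaling limit is degenerate — so the nontriviality had to come from the floors of $\mathcal{T}(G,Sld)$ stacked above, and it is there that $g$ must be ``seen''. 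The conclusion is that if $g$ were not in $\hat H$, then $g$ would act hyperbolically on some floor's limit tree (by the corresponding Fact, a test sequence ``sees'' every element not conjugate into the ground floor as having unbounded image), contradicting $\abs{\{h^n_m(g)\}}<\infty$. Hence $g\in\hat H$.

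The main obstacle, and the step I would spend the most care on, is the last one: making precise that an element $g\in G*_{Sld}Comp(Sld)$ lying outside $\hat H$ necessarily has unbounded growth along \emph{some} graded test sequence based on \emph{some} $s_n$ — equivalently, that one cannot have bounded displacement of $g$ simultaneously in the limit trees for all $n$. This requires analysing the normal form of $g$ with respect to the tower decomposition and the amalgam $G*_{Sld}Comp(Sld)$, and tracking, floor by floor, how the dominant-growth conditions ((xiv)-type conditions of \cite[p.222]{Sel2}, reproduced in Facts \ref{SurfaceSolidFact} and \ref{AbelianSolidFact}) force the length $\abs{h^n_m(g)}_\F$ to grow: the only elements that can stay bounded are those that land, after conjugation, in $\hat H$, where the base morphism $s_n$ is (by the rigidity half of solidity) eventually constant. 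The rest of the argument is a routine assembly of the Bestvina--Paulin limit, the graph-of-actions facts, and the shortening argument, all of which are available from the earlier sections.
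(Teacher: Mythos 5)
Your proposal takes a genuinely different, and considerably more elaborate, route than the paper's, and it contains a real gap at the step you present as a routine invocation. The paper's proof is a short induction on the height of the floor (of $\mathcal{T}(G,Sld)$, and within it of $Comp(Sld)$) on which $g$ sits. For the base case it quotes Lemma \ref{SolidInjections}, i.e.\ $i_t(Sld)\cap i_b(Sld)=\hat H$, so an element of $i_t(Sld)$ with bounded image under \emph{every} graded test sequence over \emph{every} $s_n$ must lie in $\hat H$; for the inductive case it uses the triviality of the stable kernel of $(s_n)_n$ to pick a single good $s_j$ for which $g/\Ker(s_j)$ lands strictly at a positive floor, and then appeals directly to Facts \ref{GroundSolidFloorFact}--\ref{AbelianSolidFact} (which already encode the Bestvina--Paulin limit action for a graded test sequence) to conclude that $g$ gets infinitely many values. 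No new limit over $n$, and no shortening argument, is needed.

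Concretely, two things in your sketch do not go through as stated. First, you invoke the shortening argument against a faithful, relatively-elliptic action of $Sld$ on the second-stage limit tree $Y$ and claim it "exhibits $Sld/\Ker$ as a proper quotient \dots contradicting solidity." That is not what solidity says: a solid limit group is precisely one that \emph{is} isomorphic to a shortening quotient of itself, so a faithful action is exactly what is expected of a convergent sequence of strictly solid morphisms, and yields no contradiction. The correct use of the strictly-solid hypothesis here is not via shortening of $Sld$ but via the structural fact $i_t(Sld)\cap i_b(Sld)=\hat H$ (Lemma \ref{SolidInjections}), which you never invoke. Second, you correctly identify, in your final paragraph, that the real work is to show that any $g$ outside $\hat H$ has unbounded length along some graded test sequence over some $s_n$, and that this needs a floor-by-floor normal form/growth analysis; but you leave this open as "the main obstacle." In the paper this is not an obstacle — it is the whole proof, carried out by the induction on floor height together with the already-established limit-action Facts — and the Bestvina--Paulin--plus--shortening scaffolding you build around it does no work toward closing it.
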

\begin{proof}
The proof is by induction on the height of the floor in the tower $\mathcal{T}(G,Sld)$ that $g$ belongs to. Suppose that 
$g$ belongs to the ground floor, i.e. $g\in Sld$. Then $g$ belongs to some floor of the $Comp(Sld)$. If $g$ belongs to the ground floor i.e. in $i_b(Sld)$, 
then by Lemma \ref{SolidInjections} we see that $g$ belongs to $\hat{H}$ as 
we wanted. On the other hand, if $g$ belongs to $Comp(Sld)^{i+1}\setminus Comp(Sld)^i$ (i.e. to some higher floor of the completion), 
we can choose a morphism $s_j:Sld\rightarrow\F$ to base the 
test sequence of the completion that gives finitely many values to $g$, so that $g/Ker(s_j)$ belongs to $Comp(Sld)^{i+1}/Ker(s_j)$ but not to 
$Comp(Sld)^i/Ker(s_j)$. By the properties of test sequences listed above this gives us a contradiction, since in this case $g$ must 
take infinitely many values. 

We continue by assuming that $g$ belongs to some higher level $G^{i+1}\setminus G^i$ in the tower $\mathcal{T}(G,Sld)$. In this case, we can choose 
a morphism $s_j:Sld\rightarrow \F$, so that $g/Ker(s_j)$ belongs to $G^{i+1}/Ker(s_j)$ but not to $G^i/Ker(s_j)$. By the properties of test sequences 
listed above $g$ must take 
infinitely many values under $(h_m^j)_{m<\omega}$ and this gives a contradiction.
\end{proof}

\subsection{Diophantine envelopes}\label{GrdDio}

In this subsection we start by collecting some theorems of Sela that give an understanding of 
the ``rough'' structure of definable sets or parametric families of definable sets 
in non abelian free groups. We will use the machinery developed in the previous subsections, namely towers and test sequences on them. 

To give the rough idea before moving to the detailed statements: we would like to have an object (e.g. a definable set equipped with some geometric structure) 
which we can more easily handle than a ``arbitrarily complicated'' definable set, but as close as possible (in terms of the solution sets) to the definable set.

\begin{theorem}[Sela - Graded diophantine envelope]\label{GradedEnvelope}
Let $\phi(\bar{x},\bar{y},\bar{a})$ be a parametric family (with respect to $\bar{y}$) of first order formulas over $\F$. Then there exist finitely many 
graded towers, $\{(\mathcal{GT}(G_i,Sld_i))_{i\leq k}\}$, where for each $i\leq k$, $Sld_i:=\langle\bar{v}_i,\bar{y},\bar{a}\rangle$ is a solid limit group with 
respect to the subgroup generated by $\langle\bar{y},\bar{a}\rangle$, for which the following hold:
\begin{itemize}
 \item[(i)] for each $i\leq k$, there exists a convergent sequence of non degenerate (with respect to $\mathcal{T}(G_i,Sld_i)$) strictly 
 solid morphisms $s_n:Sld_i\rightarrow\F$ with trivial stable kernel and for each $n$, there exists a graded test sequence 
 $(h^n_m)_{n<\omega}:G_i*_{Sld_i}Comp(Sld_i)\rightarrow\F$ based on $s_n$, with $\F\models \phi(h_m(\bar{x}),h_m(\bar{y}),\bar{a})$; 
 \item[(ii)] if $\F\models \phi(\bar{b}_0,\bar{c}_0)$. Then there exist $i\leq k$ and:
  \begin{itemize}
  \item[(1)] a non degenerate (with respect to $\mathcal{T}(G_i,Sld_i)$) strictly solid morphism $s:Sld_i\rightarrow \F$ with $s(\bar{y})=\bar{c}_0$;
  \item[(2)] a morphism $h:G_i\rightarrow \F$ that extends $s$, factors through the tower $\mathcal{T}(G_i,Sld_i)$, and such that $h(\bar{x})=\bar{b}_0$;
  \item[(3)] a graded test sequence $(h_m)_{m<\omega}:G^i*_{Sld_i}Comp(Sld_i)\rightarrow\F$ based on $s$, 
  such that $\F\models\phi(h_m(\bar{x}),h_m(\bar{y}),\bar{a})$.
  \end{itemize}
\end{itemize}

\end{theorem}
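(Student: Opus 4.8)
The plan is to follow Sela's construction of the graded Diophantine envelope: one runs the graded quantifier-elimination (``sieve'') procedure on the parametric family $\phi(\bar{x},\bar{y},\bar{a})$ and reads off the towers that appear along its terminating branches. First I would organize the solution set. By the Noetherianity of the category of limit groups (cf.\ \cite{Sel1}), the set of pairs $(\bar{b},\bar{c})$ with $\F\models\phi(\bar{b},\bar{c},\bar{a})$ is controlled by finitely many maximal \emph{graded} limit quotients of $\langle\bar{x},\bar{y},\bar{a}\rangle$ relative to the parameter subgroup $\langle\bar{y},\bar{a}\rangle$; here one passes from sequences of solutions to actions on real trees via Bestvina--Paulin (Lemma \ref{LimitAction}) and extracts limit groups carrying the distinguished subgroup. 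Since $\phi$ is an arbitrary formula and not merely a system of equations, one also invokes the quoted theorem of Sela on test sequences: whether a test sequence over a fixed tower eventually satisfies $\phi$ is independent of the chosen test sequence, so ``$\phi$ holds generically over a tower'' is a well-defined condition which the procedure is allowed to test.

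Next I would build the towers. For each maximal graded limit quotient apply the shortening argument iteratively: at the bottom one isolates a \emph{solid} limit group $Sld_i$ with respect to $\langle\bar{y},\bar{a}\rangle$ --- a shortening quotient rigid enough to be isomorphic to itself --- and above it the surface-type and abelian vertices of the successive relative $JSJ$ decompositions that are resolved by the procedure get glued on as surface flats and abelian flats, producing the tower $\mathcal{T}(G_i,Sld_i)$ and hence the graded tower $\mathcal{GT}(G_i,Sld_i)=G_i*_{Sld_i}Comp(Sld_i)$ of Definition \ref{GradedTower}. A convergent sequence $(s_n)_{n<\omega}$ of non-degenerate strictly solid morphisms $s_n:Sld_i\to\F$ with trivial stable kernel exists because $Sld_i$ is a limit group: start from a defining convergent sequence, shorten it, and discard the degenerate and the flexible members, which is possible by the definition of a solid limit group together with the definability of degeneracy (Lemma \ref{DefDeg}). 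Finiteness of the list $\{i\le k\}$ is forced by the descending chain condition on graded limit groups, which makes the procedure halt.

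For genericity, item (i): the branch producing $\mathcal{GT}(G_i,Sld_i)$ carries a family of formal solutions in the spirit of Merzlyakov's theorem (and its graded refinements) which certifies $\F\models\phi(h^n_m(\bar{x}),h^n_m(\bar{y}),\bar{a})$ for graded test sequences $(h^n_m)_{m<\omega}$ based on the $s_n$; once this holds for one test sequence based on a given $s_n$, the quoted theorem of Sela on test sequences upgrades it to all of them eventually. For coverage, item (ii): a solution $(\bar{b}_0,\bar{c}_0)$ is a point of one of the maximal graded limit quotients; precomposing the corresponding morphism with a modular automorphism so that it becomes short exhibits a non-degenerate strictly solid $s:Sld_i\to\F$ with $s(\bar{y})=\bar{c}_0$ and a map $h:G_i\to\F$ extending $s$, factoring through $\mathcal{T}(G_i,Sld_i)$, with $h(\bar{x})=\bar{b}_0$; the graded test sequence based on $s$ required by (ii)(3) is produced exactly as in the genericity argument, now with base point $s$, which is legitimate because the tower is built so that generic points over it satisfy $\phi$ uniformly in the base strictly solid morphism.

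The hard part is the simultaneous control demanded by (i) and (ii): the finitely many towers must \emph{cover} every solution while the \emph{generic} point of each one still satisfies $\phi$. Sela handles this by a delicate induction in a well-founded complexity measure on graded towers --- the loci where the tower structure degenerates, or where $\phi$ fails generically on a candidate tower, are carved out and re-fed into the procedure as parametric families of strictly smaller complexity, and termination of this nested recursion (well-foundedness of the complexity, and the verification that each peeled-off locus is genuinely simpler) is the technical heart occupying the bulk of \cite{Sel2,Sel3,SelaIm}. This is the step I expect to be the main obstacle; everything else is bookkeeping around the limit-group and tower machinery set up in the previous sections.
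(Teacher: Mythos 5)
The paper does not prove this theorem: it is stated as a result of Sela, imported from his quantifier-elimination work, and no argument is given in this text. So there is no proof of record here to compare your attempt against.

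That said, your sketch is a fair high-level description of the route Sela's construction takes — maximal graded limit quotients, shortening to extract solid limit groups, Merzlyakov-type formal solutions for genericity, and a descending-chain / sieve argument for finiteness and coverage. But it is not a proof, and you say so yourself: the last paragraph concedes that the simultaneous coverage-plus-genericity bookkeeping and the termination of the nested recursion are precisely the content of the theorem and that you have not established them. Those are not a missing appendix; they are the theorem. Two other points worth tightening if you ever try to flesh this out. First, your appeal to ``the quoted theorem of Sela on test sequences'' to upgrade ``one test sequence satisfies $\phi$'' to ``all do eventually'' is stated in this paper only for the ungraded tower case (Theorem \ref{FactSela}); the graded version needs the control over strictly solid families supplied by Proposition \ref{ShortEncore} and Lemma \ref{StrictlySolidRel}, and you should invoke those explicitly rather than waving at the ungraded statement. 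Second, item (ii)(1) asks for a \emph{strictly solid} morphism $s$ with $s(\bar y)=\bar c_0$, not merely a short one; shortening by a modular automorphism gives a short morphism, and an extra step (non-factorization through the finitely many maximal flexible quotients, together with non-degeneracy via Lemma \ref{DefDeg}) is needed to ensure strict solidity — your sketch elides this.

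Since this is a cited black-box result in the paper, the appropriate move is to state it with a pointer to \cite{Sel3} and \cite{SelaIm} rather than attempt a proof; if you want to include an argument, it would have to reconstruct the sieve procedure in detail, which is well beyond the scope of the present paper.
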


We also record the following easy corollary of the above theorem.

\begin{corollary}[Sela - Diophantine envelope]\label{Envelope}
Let $\phi(\bar{x},\bar{a})$ be a first order formula over $\F$. Then there exist finitely many 
towers over $\F$, $\{(\mathcal{T}(G_i,\F))_{i\leq k}\}$, where $G_i:=\langle \bar{u}_i,\bar{x},\bar{a}\ | \ \Sigma_i\rangle$, 
such that:
\begin{itemize}
 \item[(i)] $\F\models\phi(\bar{x},\bar{a})\rightarrow \exists \bar{u}_1,\ldots,\bar{u}_k(\bigvee_{i=1}^{k}\Sigma_i(\bar{u}_i,\bar{x},\bar{a})=1)$; 
 \item[(ii)] for each $i\leq k$, there exists a test sequence, $(h_n)_{n<\omega}:G_i\rightarrow\F$ for $\mathcal{T}(G_i,\F)$  
 such that $\F\models \phi(h_n(\bar{x}),\bar{a})$.
\end{itemize}

\end{corollary}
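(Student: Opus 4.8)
The plan is to obtain Corollary~\ref{Envelope} by specializing Theorem~\ref{GradedEnvelope} to the case of no graded parameters, that is, by applying the graded envelope to $\phi(\bar x,\bar a)$ viewed as the parametric family $\phi(\bar x,\bar y,\bar a)$ in which $\bar y$ is the empty tuple. The first step is to identify what the output degenerates to. Each solid limit group $Sld_i=\langle\bar v_i,\bar y,\bar a\rangle$ produced by Theorem~\ref{GradedEnvelope} is then solid with respect to $\langle\bar a\rangle=\F$; since the morphisms in play are over $\F=\F(\bar a)$ and have trivial stable kernel, one checks that $Sld_i$ has trivial relative $JSJ$ over $\F$ and that $Comp(Sld_i,\id)$ collapses to $\F$, so that by the Remark following Definition~\ref{GradedTower} the graded tower $\mathcal{GT}(G_i,Sld_i)$ is literally a tower $\mathcal{T}(G_i,\F)$ over $\F$; moreover a graded test sequence based on the (unique) strictly solid morphism is exactly a test sequence for $\mathcal{T}(G_i,\F)$ in the sense of Section~\ref{Envelopes}. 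Writing $G_i=\langle\bar u_i,\bar x,\bar a\mid\Sigma_i\rangle$, where $\bar u_i$ collects the floor generators of the tower, the finite family $\{\mathcal{T}(G_i,\F)\}_{i\le k}$ is the one we claim works.

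It then remains to read off the two clauses. For (ii): Theorem~\ref{GradedEnvelope}(i) supplies, for each $i\le k$, a graded test sequence $(h_n)_{n<\omega}$ with $\F\models\phi(h_n(\bar x),h_n(\bar y),\bar a)$, which after the degeneration above is exactly a test sequence $(h_n)_{n<\omega}\colon G_i\to\F$ for $\mathcal{T}(G_i,\F)$ with $\F\models\phi(h_n(\bar x),\bar a)$. For (i): let $\bar b$ satisfy $\F\models\phi(\bar b,\bar a)$ and apply Theorem~\ref{GradedEnvelope}(ii) with $\bar c_0$ the empty tuple; this yields some $i\le k$ and a morphism $h\colon G_i\to\F$ extending the coefficient map $\bar a\mapsto\bar a$, factoring through $\mathcal{T}(G_i,\F)$, with $h(\bar x)=\bar b$. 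Since $h$ is a homomorphism and $\Sigma_i$ is the relator set of $G_i$, putting $\bar p_i:=h(\bar u_i)$ gives $\Sigma_i(\bar p_i,\bar b,\bar a)=1$, and choosing $\bar p_j$ arbitrarily for $j\ne i$ produces a witness for $\exists\bar u_1,\ldots,\bar u_k(\bigvee_{i=1}^k\Sigma_i(\bar u_i,\bar x,\bar a)=1)$ at $\bar b$; this is clause (i).

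The step carrying genuine content — and the one I would be most careful about — is the degeneration in the first paragraph: one must verify that feeding the graded envelope an empty parameter tuple really does return honest towers over $\F$ together with honest test sequences, i.e. that each $Sld_i$ has trivial relative $JSJ$, equivalently that the rigid vertex group $\hat H\supseteq\F$ of $\hat{JSJ}(Sld_i)$ can in the ungraded situation be taken to be $\F$ itself, equivalently that $Comp(Sld_i,\id)$ stacks as a tower over $\F$. Once this is granted, the notion of graded test sequence reduces to that of test sequence and the rest is the routine dictionary between a homomorphism $G_i\to\F$ fixing $\bar a$ and sending $\bar x$ to $\bar b$, on the one hand, and membership of $\bar b$ in the Diophantine set $\{\,\bar x\;:\;\exists\bar u_i(\Sigma_i(\bar u_i,\bar x,\bar a)=1)\,\}$, on the other.
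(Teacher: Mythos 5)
Your proposal is correct and follows the paper's intended derivation: Corollary~\ref{Envelope} is obtained from Theorem~\ref{GradedEnvelope} by taking the parameter tuple $\bar y$ to be empty, so this is essentially the same (unwritten) argument the paper has in mind. The step you flag is indeed the crux, and the cleanest way to state it is simply that $Sld_i=\F$: a limit group that is solid with respect to the full coefficient group $\F$, with strictly solid morphisms fixing $\F$, must equal $\F$ (any proper freely-indecomposable extension of $\F$ would, by the shortening argument for morphisms fixing $\F$, have a proper shortening quotient, contradicting solidity), after which $Comp(Sld_i,\id)=\F$, $\mathcal{GT}(G_i,Sld_i)=\mathcal{T}(G_i,\F)$, graded test sequences become test sequences, and the two clauses read off exactly as you describe.
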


We use Theorem \ref{GradedEnvelope} and the definition of a graded test sequence in order to prove.

\begin{theorem}\label{RigidGradedTower}
Let $\{\phi(\bar{x},\bar{y},\bar{a})\}$ be a parametric family (with respect to $\bar{y}$) of first order formulas over $\F$. 
Suppose that $\F\models\forall\bar{y}\exists^{<\infty}\bar{x}\phi(\bar{x},\bar{y},\bar{a})$. 
Let $\{(\mathcal{GT}(G_i,Sld_i))_{i\leq k}\}$ be 
a graded envelope for $\phi(\bar{x},\bar{y},\bar{a})$.

Let, for each $i\leq k$, $H_i:=\langle\bar{y},\bar{a}\rangle_{Sld_i}$. Suppose $\hat{H}_i$ is the vertex group that contains $H_i$ 
in the modification $\hat{JSJ}_{H_i}(Sld_i)$ of the relative $JSJ$ decomposition of $Sld_i$.

Then the tuple of elements $\bar{x}$ of each $G_i$ belongs to $\hat{H}_i$.  
\end{theorem}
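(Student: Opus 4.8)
The plan is to reduce everything to Lemma \ref{RigidVertex}: for each coordinate $x_j$ of $\bar x$ I will check that, for each member $s_n$ of the relevant sequence of strictly solid morphisms, there is a graded test sequence based on $s_n$ along which $x_j$ assumes only finitely many values; Lemma \ref{RigidVertex} then places $x_j$ in $\hat H_i$, hence $\bar x \in \hat H_i$. The finiteness will come for free from the hypothesis $\F \models \forall\bar y\,\exists^{<\infty}\bar x\,\phi(\bar x,\bar y,\bar a)$.

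First I would fix $i\leq k$ and unpack Theorem \ref{GradedEnvelope}(i): it supplies a convergent sequence $(s_n)_{n<\omega}\colon Sld_i\to\F$ of non-degenerate strictly solid morphisms with trivial stable kernel, together with, for every $n$, a graded test sequence $(h^n_m)_{m<\omega}\colon G_i*_{Sld_i}Comp(Sld_i)\to\F$ based on $s_n$ with $\F\models\phi(h^n_m(\bar x),h^n_m(\bar y),\bar a)$ for all $m$. The key feature of ``based on $s_n$'' (see the definition of a graded test sequence) is that $h^n_m$ restricts to the \emph{fixed} morphism $s_n$ on $i_b(Sld_i)=Sld_i$; since $\bar y$ and $\bar a$ are tuples of elements of $H_i\leq Sld_i$, this forces $h^n_m(\bar y)=s_n(\bar y)=:\bar c_n$ and $h^n_m(\bar a)=\bar a$ independently of $m$.

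Next I would combine this with the $\exists^{<\infty}$-hypothesis. For fixed $n$ and all $m$ we then have $\F\models\phi(h^n_m(\bar x),\bar c_n,\bar a)$; since $\phi(\F,\bar c_n,\bar a)$ is finite, the set $\{h^n_m(\bar x)\colon m<\omega\}$ is finite, and in particular each coordinate $x_j$ of $\bar x$ takes finitely many values along $(h^n_m)_{m<\omega}$. Thus the element $g:=x_j\in G_i$ satisfies the hypotheses of Lemma \ref{RigidVertex} with $Sld=Sld_i$, $H=H_i$ and the tower $\mathcal T(G_i,Sld_i)$: the sequence $(s_n)_n$ is convergent, non-degenerate, strictly solid with trivial stable kernel, and for every $n$ there is a graded test sequence based on $s_n$ giving $g$ only finitely many values. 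Lemma \ref{RigidVertex} yields $x_j\in\hat H_i$, and since this holds for every coordinate of $\bar x$, we conclude $\bar x\in\hat H_i$.

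I do not expect a genuine obstacle here; the proof is essentially an assembly of Theorem \ref{GradedEnvelope} and Lemma \ref{RigidVertex}. The one point requiring care is matching hypotheses exactly — that the \emph{same} convergent sequence $(s_n)_n$ furnished by the envelope is the one fed to Lemma \ref{RigidVertex}, and that ``based on $s_n$'' really does pin down the values of $\bar y$ (and $\bar a$) so that satisfaction of $\phi$ along the test sequence becomes membership of $h^n_m(\bar x)$ in a \emph{fixed} finite set. The bulk of the argument is invoking, rather than re-proving, the earlier machinery.
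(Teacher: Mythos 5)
Your proof is correct and takes the same approach as the paper's (very terse) proof, which simply invokes Theorem \ref{GradedEnvelope}(i) and Lemma \ref{RigidVertex}; you supply the details that the paper leaves implicit, namely that basing the graded test sequence on $s_n$ pins down $h^n_m(\bar y)=s_n(\bar y)$, so the $\exists^{<\infty}$-hypothesis forces $h^n_m(\bar x)$ to range over a fixed finite set, delivering the finiteness condition in Lemma \ref{RigidVertex}. One small imprecision: the copy of $Sld_i$ containing $\bar y$ inside the graded-tower group is identified with $i_t(Sld_i)$, not $i_b(Sld_i)$, but since $\bar y,\bar a\in H_i\leq\hat H_i=i_b(Sld_i)\cap i_t(Sld_i)$ (Lemma \ref{SolidInjections}) the conclusion $h^n_m(\bar y)=s_n(\bar y)$ still holds.
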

\begin{proof}
The first property of the graded envelope (Theorem \ref{GradedEnvelope}(i)) brings us to the situation of Lemma \ref{RigidVertex} for 
each graded tower $\mathcal{GT}(G_i,Sld_i)$. Thus the result follows.
\end{proof}

The following theorem is a consequence of the quantifier elimination procedure (see \cite{Sel5bis}). It has been used in the proof of the 
stability of the first order theory of non abelian free groups as well as in showing that this theory is not equational, but also 
in the (weak) elimination of imaginaries.

\begin{theorem}[Sela]\label{FactSela}
Let $\mathcal{T}(G,\F)$ where $G:=\langle \bar{u},\bar{x},\bar{a}\ | \ \Sigma\rangle$ be a tower over $\F$. Let 
$\phi(\bar{x},\bar{a})$ be a first order formula over $\F$. 
Suppose there exists a test sequence, $(h_n)_{n<\omega}:G\rightarrow\F$ for $\mathcal{T}(G,\F)$, such that $\F\models\phi(h_n(\bar{x}),\bar{a})$. 

Then there exist: 

\begin{itemize}
 \item a closure, $\mathcal{R}:=cl(\mathcal{T}(G,\F))$, of $\mathcal{T}(G,\F)$;
 \item finitely many closures, $\mathcal{R}_1:=cl_1(\mathcal{R}), \ldots, \mathcal{R}_k:=cl_k(\mathcal{R})$, of $\mathcal{R}$;
 \item for each $i\leq k$, finitely many closures, $cl_1(\mathcal{R}_i),\ldots, cl_{m_i}(\mathcal{R}_i)$. 
\end{itemize}
So that a subsequence of $(h_n)_{n<\omega}$ extends to a test sequence of $\mathcal{R}$ and either: 
   \begin{itemize}
   \item it cannot be extended to a test sequence for any of the closures $\mathcal{R}_i$; or 
   \item for each $i\leq k$, that can be extended to a test sequence for $\mathcal{R}_i$, then 
   there exists $1\leq j\leq m_i$ so that it extends to a test sequence for $cl_j(\mathcal{R}_i)$, of $\mathcal{R}_i$.  
   \end{itemize}
Finally, for any test sequence, $(h'_n)_{n<\omega}:G^{cl}\rightarrow\F$, for $\mathcal{R}$ that one of the above conditions hold, 
there exists $n_0<\omega$ such that $\F\models\phi(h'_n(\bar{x}),\bar{a})$ for all $n>n_0$.
\end{theorem}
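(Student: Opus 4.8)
The plan is to follow the quantifier-elimination procedure of \cite{Sel5bis} applied to the formula $\phi(\bar x,\bar a)$ along the given test sequence, keeping careful track of the closures it produces. First I would feed the test sequence $(h_n)_{n<\omega}$ for $\mathcal{T}(G,\F)$ into the machinery that analyzes the truth value of $\phi$: each $h_n$ gives a tuple $h_n(\bar x)\in\F$ together with the auxiliary witnesses $h_n(\bar u)$, so the sequence of points $h_n(\bar x)$ is a sequence in $\phi(\F,\bar a)$. The quantifier-elimination procedure associates to $\phi$ a finite ``diophantine condition'' over towers (this is the content of Corollary \ref{Envelope} and its refinements), and the point is that passing to a subsequence the test sequence converges to a limit action that records how $h_n(\bar x)$ sits inside one of these towers. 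The first step is to extract such a subsequence and identify a closure $\mathcal{R}:=cl(\mathcal{T}(G,\F))$ to which this subsequence extends as a test sequence — here one uses that test sequences can always be refined so that the extra abelian-flat coordinates introduced by a closure also grow in the prescribed controlled way, which is possible because the closure embeddings only impose congruence conditions (Remark \ref{MorphismExtensions}) that a generic choice of exponents can be made to satisfy along a subsequence.

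Next I would run the procedure one level deeper. The quantifier-elimination analysis tells us that whether $\F\models\phi(h'_n(\bar x),\bar a)$ holds for a test sequence of $\mathcal{R}$ is governed by whether that test sequence ``lifts'' further to one of finitely many closures $\mathcal{R}_1,\dots,\mathcal{R}_k$ of $\mathcal{R}$, and then to one of finitely many closures $cl_1(\mathcal{R}_i),\dots,cl_{m_i}(\mathcal{R}_i)$ of each $\mathcal{R}_i$. The key dichotomy — either the subsequence does not extend to any $\mathcal{R}_i$, or for each $\mathcal{R}_i$ to which it extends it extends all the way to some $cl_j(\mathcal{R}_i)$ — is exactly the statement that the formal-solution/validation alternation in Sela's algorithm terminates at this depth for our particular test sequence. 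To establish this I would invoke the analysis of \cite{Sel2}, \cite{Sel3}: the formula $\phi$, being first order, is handled by finitely many steps of the ``formal solutions'' construction (building on Merzlyakov-type theorems), and the finiteness of the relevant families of closures comes from the descending chain condition for closures of a fixed tower together with the noetherianity of limit groups.

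Finally, for the uniform conclusion — that for \emph{any} test sequence $(h'_n)_{n<\omega}:G^{cl}\to\F$ for $\mathcal{R}$ satisfying one of the two displayed conditions there is $n_0$ with $\F\models\phi(h'_n(\bar x),\bar a)$ for all $n>n_0$ — I would argue that the two conditions are precisely the combinatorial certificate the algorithm uses to decide $\phi$, and that this certificate is ``generic'' in the sense that it forces truth of $\phi$ on all but finitely many terms of any test sequence realizing it. This is where Theorem \ref{FactSela} has real content: one must show that the validation step of the procedure, once it has succeeded for one test sequence through the given pattern of closures, succeeds for all of them eventually, because the finitely many exceptional equations/inequations that could fail are themselves diophantine conditions ruled out along the tower by the defining properties of a test sequence (the domination conditions in the Facts \ref{GroundFloorFact}, \ref{SurfaceFact}, \ref{AbelianFact}). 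The main obstacle, and the part I would have to cite rather than reprove, is precisely this termination-and-genericity statement for the quantifier-elimination procedure itself; translating Sela's ``the procedure stops'' into the clean closure-theoretic statement above is the technical heart, and it rests entirely on \cite{Sel5bis} together with the test-sequence technology of \cite{Sel2}.
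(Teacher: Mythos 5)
The paper does not actually prove this statement: it is labelled a theorem of Sela, introduced with ``The following theorem is a consequence of the quantifier elimination procedure (see \cite{Sel5bis})'', and no proof is supplied --- the paper simply uses it as a black box for the corollaries that follow. Your proposal correctly identifies this provenance, sketches the right architecture of Sela's argument (the alternation of formal solutions and validation over closures, Merzlyakov-type theorems, finiteness coming from the descending chain condition for closures of a fixed tower and from noetherianity of limit groups, and the genericity of test sequences giving the eventual-truth conclusion), and is candid that the termination-and-genericity heart must be cited rather than reproved. Since the paper and your proposal both defer the substance to \cite{Sel5bis} and \cite{Sel2}, there is no divergence of approach to compare; just be aware that what you have written is an annotated citation, not a proof, and that this is exactly what the paper does as well. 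The only refinement worth making is in your first step: the closure $\mathcal{R}$ and the families $\mathcal{R}_i$, $cl_j(\mathcal{R}_i)$ are produced by the quantifier-elimination analysis of $\phi$ itself (they are part of the data the theorem asserts to exist, uniform in $\phi$ and the tower), not extracted from the particular test sequence, which only needs to pass to a subsequence to fit one of them; phrasing it as ``extract a subsequence and identify a closure'' makes it sound as though the closure structure depends on the sequence, when it should be the other way around.
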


The above fact has some strong consequences regarding definability. We record some corollaries that resolve some long standing questions. 

\begin{corollary}
A first order formula $\phi(x)$ over $\F$ is generic if and only if it contains a test sequence for the tower 
$\mathcal{T}(\langle x\rangle*\F,\F)$.
\end{corollary}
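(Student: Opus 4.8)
The plan is to identify the collection $p_0$ of formulas over $\F$ that contain a test sequence for the free–product tower $\mathcal{T}:=\mathcal{T}(\langle x\rangle*\F,\F)$ with the unique generic $1$-type of $\F$, and then to quote the standard description of generic definable sets in a connected stable group. First I would record the set-up: the theory $T$ of $\F$ is stable, and by the first theorem of the introduction the only proper definable subgroups of $\F$ are cyclic, hence of infinite index, so $\F$ is a connected stable group. Next I would observe that $\mathcal{T}$ consists of a single free–product floor, so no abelian flat occurs and every family of closure embeddings attached to it (and, recursively, to its closures) is empty; thus each closure occurring in Theorem \ref{FactSela} applied to $G:=\langle x\rangle*\F$ is $\mathcal{T}$ itself and the auxiliary conditions there are vacuous. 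Hence Theorem \ref{FactSela} reduces to: if some test sequence $(h_n)_n$ for $\mathcal{T}$ satisfies $\F\models\phi(h_n(x))$ for all $n$, then every test sequence $(h'_n)_n$ for $\mathcal{T}$ satisfies $\F\models\phi(h'_n(x))$ for all large $n$. Since a re-indexed subsequence of a test sequence for $\mathcal{T}$ is again one (re-indexing only strengthens the small-cancellation requirements and the growth conditions are asymptotic), this shows that ``$\phi$ contains a test sequence for $\mathcal{T}$'' is equivalent to ``every test sequence for $\mathcal{T}$ has a tail contained in $\phi$''.

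Then I would analyse $p_0$. Let $p_0$ be the set of formulas $\phi(x)$ over $\F$ that contain a test sequence for $\mathcal{T}$. Using the equivalence above, $p_0$ is closed under conjunction and consistent (the formula $x\ne x$ has no test sequence); moreover, applying the pigeonhole principle to an arbitrary test sequence yields a subsequence along which the truth value of $\phi$ is constant, so exactly one of $\phi,\neg\phi$ lies in $p_0$. Hence $p_0\in S_1(\F)$. For fixed $g\in\F$ and any test sequence $(w_n)_n$ for $\mathcal{T}$, the reduced forms of $(g^{-1}w_n)_n$ (re-indexed if necessary) again constitute a test sequence for $\mathcal{T}$: left multiplication by a fixed element changes only a bounded prefix and fixes $\F$ pointwise, so the asymptotic small-cancellation and growth conditions persist. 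Therefore $\phi(gx)$ contains a test sequence for $\mathcal{T}$ iff $\phi(x)$ does, i.e. $g\cdot p_0=p_0$ for every $g\in\F$; thus $\F=\F^{0}\subseteq\Stab(p_0)$, so $p_0$ is a generic type, and it is the unique one because $\F$ is connected.

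To finish, let $\phi(x)$ be any formula over $\F$. If $\phi$ is generic then it belongs to some generic $1$-type, which must be $p_0$, so $\phi\in p_0$. Conversely, if $\phi\in p_0$ then $\neg\phi\notin p_0$, so $\neg\phi(\F)$ is not generic; since $\F=\phi(\F)\cup\neg\phi(\F)$ is generic and $\F$ is connected, $\phi(\F)$ is generic. As $\phi\in p_0$ means exactly that $\phi$ contains a test sequence for $\mathcal{T}(\langle x\rangle*\F,\F)$, the equivalence follows. The only non-routine inputs are the verification that the empty-closure instance of Theorem \ref{FactSela} yields the sharp dichotomy used above, and the stability of test sequences under left multiplication by a fixed element; granting these, the argument stays entirely inside the theory of connected stable groups, where I freely use that a definable set is generic iff it lies in some generic type and that in a connected stable group $A\cup B$ is generic iff $A$ or $B$ is. The main obstacle I expect is making the translation-invariance of test sequences precise from Sela's combinatorial conditions $(i)$–$(xiv)$.
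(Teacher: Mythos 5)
The paper states this corollary without proof, so there is no official argument to compare against; your derivation from Theorem~\ref{FactSela} is, however, exactly the natural one and all the model-theoretic scaffolding is in order. The reduction of Theorem~\ref{FactSela} for the free-product tower $\mathcal{T}(\langle x\rangle*\F,\F)$ to the sharp dichotomy (``some test sequence is contained in $\phi$'' $\Leftrightarrow$ ``every test sequence is eventually in $\phi$'') is correct: the tower has a single floor which is a free product, so no abelian flat occurs, every closure is the tower itself, and Theorem~\ref{FactSela} collapses to the version already recorded in the introduction. The observation that a re-indexed subsequence of a test sequence for this tower is again a test sequence is right: the small-cancellation requirement $C'(1/n_k)$ implies $C'(1/k)$ for $n_k\geq k$, and the growth conditions are asymptotic. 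From there, pigeonhole gives completeness of $p_0$, the dichotomy gives closure under conjunction, and uniqueness of the generic type over a model of a connected stable group (which follows from the definable-subgroups theorem you cite) finishes the equivalence with genericity once $p_0$ is shown to be translation-invariant.

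The one place where the argument is not yet a proof is the one you flag yourself: the claim that for a fixed $g\in\F$, the sequence of reduced forms of $(g^{-1}w_n)_n$ is again (after re-indexing) a test sequence. This is the crux, and you are right that it is where the work lies. The moral reason it holds is the one you give — the fixed prefix $g^{-1}$ contributes a bounded quantity to any piece of $g^{-1}w_n$, so $g^{-1}w_n$ satisfies $C'(\lambda_n)$ with $\lambda_n\to 0$, and the divergence of $|h_n(x)|_\F$ is unaffected — but this needs to be checked against the full list of combinatorial conditions $(i)$–$(xiv)$ in \cite[p.222]{Sel2}, not just Definition~\ref{FreeTest}. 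Two things to keep in mind when you do this: first, the test-sequence conditions apply to the images of a fixed generating tuple, so you should verify that perturbing $h_n(x)$ by $g$ does not violate the ``similar growth'' requirement or the cyclically-reduced normal form assumptions that Sela's conditions implicitly use; second, you should say a word about right translation as well (or cite the fact that in a stable group a type invariant under left translation is generic, hence also right-invariant), since a priori the two stabilizers could differ. An alternative route that sidesteps the combinatorial verification is to identify $p_0$ with $\tp(x/\F)$ computed in the elementary extension $\F\prec \F*\langle x\rangle$: Theorem~\ref{FactSela} together with the diophantine-envelope machinery shows that $\F*\langle x\rangle\models\phi(x)$ iff $\phi$ contains a test sequence, and it is known that this type is the generic one; but this trades a combinatorial lemma for deeper quantifier-elimination input, so your route is the more self-contained of the two.
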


We note that the following theorem has been proved by C. Perin using different methods.

\begin{corollary}\label{PureCentra}
Let $c\in\F\setminus\{1\}$. Then the induced structure on $(C_{F}(c),\cdot)$ (be seen as a subgroup of $(\F,\cdot)$) is the structure of a pure group, i.e. every definable set 
in the induced structure can be defined by multiplication alone. 
\end{corollary}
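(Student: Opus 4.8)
The plan is to reduce the statement to a combinatorial fact about subsets of $\Z^{n}$ and then to extract that fact from the diophantine envelope together with Theorem \ref{FactSela}. Write $C:=C_{\F}(c)$. As $C$ is infinite cyclic, fix a generator $d$ with $C=\langle d\rangle$ and identify $(C,\cdot)$ with $(\Z,+)$ via $d^{k}\leftrightarrow k$; under this identification $C^{n}$ is identified with $\Z^{n}$. Recall two standard facts about the pure group $(\Z^{n},+)$: its definable sets are exactly the finite Boolean combinations of cosets of subgroups of $\Z^{n}$, and every subgroup of $\Z^{n}$ is itself definable in $(\Z^{n},+)$. Since a subset of $C^{ind}$ with $n$ free variables is definable precisely when it is of the form $Y\cap C^{n}$ for some $\F$-definable $Y\subseteq\F^{n}$, it suffices to prove: for every formula $\phi(\bar x,\bar a)$ over $\F$ with $\bar x$ an $n$-tuple, the trace $\phi(\F^{n},\bar a)\cap C^{n}$, viewed inside $\Z^{n}$, is a finite Boolean combination of cosets of subgroups of $\Z^{n}$. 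Replacing $\phi$ by $\phi(\bar x,\bar a)\wedge\bigwedge_{j\le n}[x_{j},c]=1$ we may assume $\phi(\F^{n},\bar a)\subseteq C^{n}$, and we argue by induction on $n$.

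First I would apply Corollary \ref{Envelope} to $\phi$: it yields finitely many towers $\mathcal{T}(G_{i},\F)$, $i\le k$, with $G_{i}=\langle\bar u_{i},\bar x,\bar a\mid\Sigma_{i}\rangle$, whose associated diophantine sets cover $\phi(\F^{n},\bar a)$, and each $\mathcal{T}(G_{i},\F)$ carries a test sequence landing in $\phi$. Such a test sequence $(h_{m})_{m}$ fixes $\F$ pointwise and satisfies $[h_{m}(x_{j}),c]=1$, so $h_{m}(x_{j})\in\langle d\rangle$ for large $m$; analysing the limit actions attached to the restrictions of $(h_{m})_{m}$ to the successive flats (Facts \ref{GroundFloorFact}, \ref{SurfaceFact}, \ref{AbelianFact}) shows that, apart from finitely many possible values of $\bar x$, the flats that move $\bar x$ are abelian flats whose pegs are conjugates of $\langle d\rangle$, and that each $h_{m}(\bar x)$ is recorded by a tuple of integer exponents. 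Next I would run Theorem \ref{FactSela} on $\phi$ and each $\mathcal{T}(G_{i},\F)$: it produces a closure $\mathcal{R}$ of $\mathcal{T}(G_{i},\F)$, finitely many closures $\mathcal{R}_{j}$ of $\mathcal{R}$, and finitely many closures of each $\mathcal{R}_{j}$, so that any test sequence of $\mathcal{R}$ satisfying the stated extension conditions eventually satisfies $\phi$. By Remark \ref{MorphismExtensions}, the assertion that a test sequence of $\mathcal{R}$ extends to a test sequence of a prescribed closure is, in terms of the exponents recording the images of the relevant abelian-flat generators, exactly a congruence condition of the shape ``the exponent tuple lies in a fixed coset of a finite-index subgroup''; hence the Boolean pattern of such conditions imposed by Theorem \ref{FactSela} transports into a finite Boolean combination $B_{i}$ of cosets of finite-index subgroups of $\Z^{n}$. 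This gives $\bigl(\bigcup_{i}B_{i}\bigr)\setminus E\subseteq\phi(\F^{n},\bar a)$ inside $\Z^{n}$, where $E$ collects the finitely many ``bounded'' values of $\bar x$ together with the non-generic exponent tuples excluded by the growth requirements on test sequences, and $E$ is contained in a finite union of cosets of subgroups of $\Z^{n}$ of rank $<n$.

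Running the same construction for $\neg\phi(\bar x,\bar a)\wedge\bigwedge_{j\le n}[x_{j},c]=1$ produces a finite Boolean combination $B'$ of cosets of finite-index subgroups of $\Z^{n}$ with $B'\setminus E'\subseteq\Z^{n}\setminus\phi(\F^{n},\bar a)$, the exceptional set $E'$ again being contained in a finite union of cosets of subgroups of rank $<n$. Since $\phi(\F^{n},\bar a)$ and its complement in $\Z^{n}$ partition $\Z^{n}$, and the covering clauses of the two envelopes together with the exhaustive case analysis of Theorem \ref{FactSela} also yield the reverse inclusions $\phi(\F^{n},\bar a)\subseteq\bigl(\bigcup_{i}B_{i}\bigr)\cup E$ and $\Z^{n}\setminus\phi(\F^{n},\bar a)\subseteq B'\cup E'$, we obtain $\bigl(\bigcup_{i}B_{i}\bigr)\cup B'=\Z^{n}$ (the complement of a finite Boolean combination of cosets of finite-index subgroups is again one, so if it is contained in finitely many cosets of proper rank it is empty) and $\bigl(\bigcup_{i}B_{i}\bigr)\cap B'\subseteq E\cup E'$; consequently $\phi(\F^{n},\bar a)$ differs from $\bigcup_{i}B_{i}$ only on $E\cup E'$, a finite union of cosets $\bar p+H$ with $\mathrm{rank}(H)<n$. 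On each such coset, parametrising $\bar p+H$ by $\Z^{m}$ with $m=\mathrm{rank}(H)<n$ turns the trace of $\phi$ into the trace on $C^{m}$ of a formula over $\F$ in $m$ variables, obtained by substituting the appropriate words in the $d^{j_{\ell}}$'s for the $x_{j}$'s; by the induction hypothesis this trace is definable in $(\Z^{m},+)$, and transporting the answer back along the parametrisation and patching over the finitely many cosets exhibits $\phi(\F^{n},\bar a)$ as a finite Boolean combination of cosets of subgroups of $\Z^{n}$. The base case $n=0$ is trivial (and for $n=1$ the cosets of rank $<1$ are just finite sets).

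The hard part will be the second paragraph: turning the output of Theorem \ref{FactSela} for a tower that confines $\bar x$ to $\langle d\rangle^{n}$ into an explicit Boolean combination of congruences on the recording exponents. This hinges on using the limit-action facts to see that the abelian flats carrying $\bar x$ are pegged at conjugates of $\langle d\rangle$ and that the closures of those flats act on the exponents exactly as described in Remark \ref{MorphismExtensions}, and also on verifying that the exceptional sets $E$ and $E'$ genuinely have rank $<n$, so that the induction on $n$ is well-founded. A secondary technical point, again settled by the structure of test sequences on abelian flats (Fact \ref{AbelianFact}), is that passing from $\mathcal{T}(G_{i},\F)$ to its closures preserves the confinement of $\bar x$ to $C^{n}$.
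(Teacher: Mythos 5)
The paper's proof is short and takes a completely different route from yours: it invokes results from \cite{ThesisSklinos} and \cite{FCP} to reduce the statement to showing that every \emph{infinite definable subset of $C_\F(c)$ in a single variable is generic}, and then establishes this genericity by extracting a one-variable test sequence for a single abelian-flat tower $\F*_{z=\gamma}(\langle z\rangle\oplus\langle x\rangle)$ and applying Theorem \ref{FactSela} to produce arithmetic progressions $(\gamma^{kn+l})$ and $(\gamma^{-kn+l})$ eventually lying in $X$. There is no multi-variable analysis, no induction on $n$, and no need to identify definable subsets of $C^n$ with Boolean combinations of cosets of $\Z^n$. Your attempt instead tries to reprove the one-basedness/pure-group structure directly from the envelope machinery in all arities, which is a heavier approach.

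There are two genuine gaps in your argument. First, the claim that the exceptional set $E$ (the exponent tuples not realised by test sequences of any closure of any $\mathcal{T}(G_i,\F)$) is contained in finitely many cosets of subgroups of $\Z^n$ of rank $<n$ is not justified, and I believe it is false. Test sequences for abelian flats impose \emph{growth} conditions $\frac{m_{i_{j+1}}(m)}{m_{i_j}(m)}\to 0$, so for each ordering (and each choice of signs) the realised exponent tuples live in a thin ``funnel'' region of $\Z^n$; no finite union of these funnels, ranging over the finitely many closures and orderings, exhausts $\Z^n$ outside a lower-rank set. Furthermore, Theorem \ref{FactSela} only asserts that a test sequence satisfying the closure pattern eventually lands in $\phi$, with the threshold $n_0$ depending on the sequence, so one cannot choose an arbitrary exponent tuple in the coset $B_i$ (with the right growth) and realise it as the large-index value of such a test sequence. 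Second, the reverse inclusion $\phi(\F^n,\bar a)\cap C^n\subseteq(\bigcup_i B_i)\cup E$ does not follow from the covering clause of Corollary \ref{Envelope}: that clause gives $\phi\subseteq D$ where $D=\exists\bar u\bigvee\Sigma_i$ is the diophantine set, and $D\cap C^n$ is typically strictly larger than $\bigcup_i B_i$ (the $B_i$ encode closure constraints from Theorem \ref{FactSela}, which restrict which test sequences satisfy $\phi$, not which elements lie in $D$). Without both inclusions your patching argument on $E\cup E'$ does not get off the ground, and the induction on $n$ is not well-founded. The paper sidesteps all of this by working in a single variable (where a test sequence covers a cofinal arithmetic progression in both directions) and delegating the passage from one-variable genericity to full pure-group structure to the cited earlier results.
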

\begin{proof}
By \cite{ThesisSklinos} and \cite{FCP} it is enough to prove that every infinite definable subset of $C_{F}(a)$ is generic. Let $X$ be an infinite definable 
subset of $C_{\F}(c)=\langle\gamma\rangle$. Then we can extract from $X$ a test sequence $(b_n,\bar{a})_{n<\omega}$ for the tower, 
$\mathcal{T}(\langle x,z\ | \ [x,z]\rangle*_{z=\gamma}\F,\F)$, obtained by gluing 
an infinite cyclic group along the centralizer of $a$ in $\F$.

According to the definition of a test sequence and Theorem \ref{FactSela} there exist test sequences $(\gamma^{kn+l})_{n<\omega}$ and $(\gamma^{-kn+l})_{n<\omega}$  
for some natural numbers $k>0$ and $l\geq 0$ such that all but finitely many elements of them belong to the definable set $X$. Thus, 
$X$ is generic in $C_{\F}(a)$.
\end{proof}

\subsection{Merzlyakov-type theorems}\label{Merzlyakov}
We fix a non abelian free group $\F:=\F(\bar{a})$. 
In this section we record some generalizations of Merzlyakov's theorem. Merzlyakov's original theorem stated:

\begin{theorem}[Merzlyakov]
Let $\Sigma(\bar{x},\bar{y},\bar{a})\subset_{\textrm{finite}}\langle \bar{x},\bar{y}\rangle*\F$ be a finite set of words. Suppose 
$\F\models \forall\bar{x}\exists\bar{y}(\Sigma(\bar{x},\bar{y},\bar{a})=1)$. Then there exists 
a ``formal solution'' $\bar{w}(\bar{x},\bar{a})\subset\langle\bar{x}\rangle*\F$ such that $\Sigma(\bar{x},\bar{w}(\bar{x},\bar{a}),\bar{a})$ 
is trivial in $\langle\bar{x}\rangle*\F$.
\end{theorem}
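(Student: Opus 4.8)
The plan is to prove Merzlyakov's theorem via the Bestvina--Paulin construction together with the shortening argument, following the now-classical route. Suppose $\F\models\forall\bar{x}\exists\bar{y}(\Sigma(\bar{x},\bar{y},\bar{a})=1)$. Consider the group $G:=\langle\bar{x}\rangle*\F$ and for each tuple $\bar{b}$ in $\F$ a chosen witness $\bar{c}(\bar{b})$ with $\Sigma(\bar{b},\bar{c}(\bar{b}),\bar{a})=1$. The idea is to feed a test sequence $(\bar{b}_n)_{n<\omega}$ for the trivial tower $\mathcal{T}(\langle\bar{x}\rangle*\F,\F)$ (i.e.\ the $\bar b_n$ satisfy small cancellation $C'(1/n)$ over the basis) into these witnesses, obtaining morphisms $h_n:\langle\bar{x},\bar{y}\rangle*\F\to\F$ defined by $h_n(\bar{x})=\bar{b}_n$, $h_n(\bar{y})=\bar{c}(\bar{b}_n)$, $h_n\restriction\F=\mathrm{Id}$. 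Since $\Sigma$ is a \emph{finite} set of words, each $h_n$ kills all of $\Sigma$; hence the stable kernel of $(h_n)$ contains the normal closure of $\Sigma$, and the quotient $M$ of $\langle\bar{x},\bar{y}\rangle*\F$ by this stable kernel (a ``Merzlyakov group'') is a limit group that still surjects onto $\langle\bar{x}\rangle*\F$ (because the restriction to the $\bar x,\F$-part is injective, as the $\bar b_n$ grow independently and generate a free group in the limit).

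First I would analyze the limit $\R$-tree $T$ obtained from $(h_n)$ via Lemma~\ref{LimitAction}. By the small cancellation hypothesis on the $\bar b_n$, the subgroup $\langle\bar{x}\rangle*\F$ acts on $T$ as on its Cayley graph, i.e.\ freely and simplicially with the ``right'' combinatorics, exhibiting the free product structure; in particular $M$ is freely indecomposable relative to $\F$ only in the directions coming from $\bar y$, and the action of $M$ on $T$ decomposes along a graph of actions whose relevant pieces are simplicial over the free-group skeleton. Next I would run the shortening argument: precomposing each $h_n$ with modular automorphisms in $\mathrm{Mod}(M)$ to make it short, and observing that any nontrivial modular automorphism would strictly shorten the generators, contradicting that the $h_n\restriction\langle\bar x\rangle*\F$ are already as short as possible (they are a test sequence and cannot be shortened). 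Therefore $\mathrm{Mod}_{\langle\bar x\rangle*\F}(M)$ is trivial, which by the standard structure theory forces the canonical epimorphism $M\twoheadrightarrow\langle\bar x\rangle*\F$ to \emph{split}: there is a section $\iota:\langle\bar x\rangle*\F\to M$. Composing $\iota$ with the inclusion $\langle\bar y\rangle\hookrightarrow M$ applied coordinatewise to the $\bar y$-generators — more precisely, taking the images in $M$ of the $\bar y$-variables and pulling them back along the splitting into $\langle\bar x\rangle*\F$ — produces words $\bar w(\bar x,\bar a)\in\langle\bar x\rangle*\F$. Since $\Sigma(\bar x,\bar y,\bar a)$ dies in $M$ and the splitting is compatible with the presentation, $\Sigma(\bar x,\bar w(\bar x,\bar a),\bar a)=1$ in $\langle\bar x\rangle*\F$, which is the desired formal solution.

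The main obstacle is justifying the crucial splitting step: that triviality of the modular group relative to $\langle\bar x\rangle*\F$ yields an actual retraction $M\to\langle\bar x\rangle*\F$ rather than merely some weaker statement. This requires understanding the JSJ/abelian decomposition of the limit group $M$ relative to $\F$, checking that all surface and abelian pieces carry nontrivial modular automorphisms (so they must be absent or collapse), and that the rigid part is exactly $\langle\bar x\rangle*\F$, so that $M$ is in fact (after collapsing) an extension of $\langle\bar x\rangle*\F$ by groups that the relative modular group would move — whence the map must split. A secondary technical point is the careful verification that a test sequence $(\bar b_n)$ over the trivial tower is genuinely unshortenable in the relevant relative sense, which is where the $C'(1/n)$ condition does its work by preventing any cancellation in $h_n(\sigma(\bar b_n))$ for $\sigma$ a Dehn twist or surface homeomorphism.

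I would then remark that the same scheme, with ``trivial tower'' replaced by an arbitrary hyperbolic tower and ``finite set of words'' replaced by a $\forall\exists$-formula, gives the generalized Merzlyakov theorems of Sela stated in \cite{Sel2}; the degradation of the conclusion (``formal solution over a closure of the tower, valid for all but finitely many terms of any test sequence'') in the second introductory theorem of this paper is precisely what is forced when $\Sigma$ is replaced by a genuine definable condition and the splitting only exists after passing to a closure, as captured by Theorem~\ref{FactSela}.
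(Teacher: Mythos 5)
The paper does not prove this theorem: Merzlyakov's theorem is stated as background (and cited implicitly via \cite{Sel2}, where the tower-theoretic generalizations are proved), so there is no proof in the paper to compare against. Your proposal follows the standard modern route (Bestvina--Paulin limit, Rips analysis, shortening argument), which is indeed how Sela and Kharlampovich--Myasnikov prove it, so the overall strategy is the right one.

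That said, the argument as written has a genuine gap in the step you yourself flag as the crux, and the gap is not merely a matter of unpacking. First, there is a retraction/section confusion: there is no ``canonical epimorphism $M\twoheadrightarrow\langle\bar x\rangle*\F$'' to split -- what exists a priori is the \emph{embedding} $\langle\bar x\rangle*\F\hookrightarrow M$ (injective because the restriction of $(h_n)$ to $\langle\bar x\rangle*\F$ is a test sequence with trivial stable kernel), and what the argument must produce is a \emph{retraction} $r:M\to\langle\bar x\rangle*\F$. The ``formal solution'' is then $\bar w:=r(q(\bar y))$ where $q$ is the quotient map to $M$, and one concludes $\Sigma(\bar x,\bar w,\bar a)=r(q(\Sigma))=1$. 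Your ``compose the section with the inclusion of $\langle\bar y\rangle$'' does not typecheck and would need to be replaced by this.

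Second, and more seriously, the claim that $\mathrm{Mod}_{\langle\bar x\rangle*\F}(M)$ is trivial, and that this ``forces'' a splitting, is both too strong and not the right mechanism. The shortening argument does not show the relative modular group vanishes; what it shows (after first replacing each $h_n$ by a shortest representative in its $\mathrm{Mod}_{\langle\bar x\rangle*\F}(M)$-orbit, a step your outline omits and which may change the limit group) is that the abelian JSJ decomposition of $M$ relative to $\langle\bar x\rangle*\F$ has a specific shape: surface-type and abelian-type flats glued onto a rigid part, i.e.\ $M$ carries the structure of a \emph{tower} (in the paper's terminology, a closure of $\mathcal{T}(\langle\bar x\rangle*\F,\F)$). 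Abelian pieces need not collapse -- over the free group they simply retract onto their pegs, and surface pieces retract because the quadratic equation has a non-abelian solution -- so the retraction exists by the defining property of a tower/floor, not by triviality of $\mathrm{Mod}$. This is exactly the content of Sela's Theorem 1.18 in \cite{Sel2}, which the paper quotes as Theorem~\ref{FactSela}; the blind proposal is, in effect, trying to reprove that theorem in the special case at hand, and the missing input is precisely the Rips-machine analysis of the graph of actions on the limit tree combined with the small-cancellation property of the test sequence, which rules out any non-retractable rigid pieces outside $\langle\bar x\rangle*\F$. Making this precise is the real work, and it is not a routine consequence of what you wrote.
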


The above theorem is the conceptual basis of the positive solution to Tarski's problem. The first step 
towards the solution consists of generalizations of Merzlyakov's theorem to varieties corresponding to limit groups that have the structure 
of a tower. 

\begin{theorem}
Let $\Sigma(\bar{x},\bar{y},\bar{a})\subset_{\textrm{finite}}\langle \bar{x},\bar{y}\rangle*\F$ be a finite set of words. 
Let $\mathcal{T}(G,\F)$ with $G:=\langle \bar{x},\bar{a} \ | \ T(\bar{x},\bar{a})\rangle$ be a tower over $\F$. 
Let $\F\models\forall\bar{x}(T(\bar{x},\bar{a})=1\to\exists\bar{y}(\Sigma(\bar{x},\bar{y},\bar{a})=1))$. 

Then there exist finitely many closures, $cl_1(\mathcal{T}(G,\F)),\ldots, cl_k(\mathcal{T}(G,\F))$ with 
$G^{cl_i}:=\langle \bar{u}_i,\bar{x},\bar{a}\rangle$, and for each $i\leq k$ a ``formal solution'' $\bar{w}_i(\bar{u}_i,\bar{x},\bar{a})$ 
such that $\Sigma(\bar{x},\bar{w}_i(\bar{u}_i,\bar{x},\bar{a}),\bar{a})$ 
is trivial in $G^{cl_i}$. 

Moreover, for any morphism $h:G\rightarrow\F$ that factors through $\mathcal{T}(G,\F)$, there exists some $i\leq k$ 
such that $h$ extends to a morphism from $G^{cl_i}$ to $\F$ that factors through $cl_i(\mathcal{T}(G,\F))$. 

\end{theorem}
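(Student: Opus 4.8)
The plan is to imitate the classical Merzlyakov argument: pick a test sequence $(h_n)_{n<\omega}:G\to\F$ for $\mathcal{T}(G,\F)$ witnessing the tower structure, and use the hypothesis $\F\models\forall\bar{x}(T(\bar{x},\bar{a})=1\to\exists\bar{y}(\Sigma(\bar{x},\bar{y},\bar{a})=1))$ to obtain, for each $n$, a tuple $\bar{d}_n\in\F$ with $\F\models\Sigma(h_n(\bar{x}),\bar{d}_n,\bar{a})=1$. The sequence of morphisms $\hat{h}_n:\langle\bar{x},\bar{y}\rangle*\F\to\F$ sending $\bar{x}\mapsto h_n(\bar{x})$, $\bar{y}\mapsto\bar{d}_n$, $\bar{a}\mapsto\bar{a}$ all kill $\Sigma$, so after passing to a convergent subsequence the stable kernel of $(\hat h_n)$ contains $\langle\!\langle\Sigma\rangle\!\rangle$; the quotient $\hat G := (\langle\bar{x},\bar{y}\rangle*\F)/\underrightarrow{\ker}\hat h_n$ is a limit group into which $G$ maps (the $\bar{x}$-part retains the relations $T$, and since $(h_n)$ is a test sequence its stable kernel on $G$ is trivial, so $G$ embeds). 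The core step is then to analyze the induced action of $\hat G$ on the limit $\R$-tree: the $\bar{x}$-coordinates witness the tower structure of $G$, so by the facts on limit actions of test sequences (Facts \ref{GroundFloorFact}, \ref{SurfaceFact}, \ref{AbelianFact}) the tree decomposes as a graph of actions reflecting the floors of $\mathcal{T}(G,\F)$. One runs Sela's shortening/formal-solution machinery (as in \cite{Sel2}) to show $\hat G$ retracts onto a closure of the tower: the new abelian flats arising in $\hat G$ over the pegs of $\mathcal{T}(G,\F)$ are finite extensions of the original flats, i.e. they assemble into closures $cl_i(\mathcal{T}(G,\F))$, and the image of $\bar y$ under the retraction gives the formal solution $\bar w_i(\bar u_i,\bar x,\bar a)$ with $\Sigma(\bar{x},\bar{w}_i,\bar{a})=1$ in $G^{cl_i}$.

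More precisely, for the finiteness and "for any morphism $h$ that factors through $\mathcal{T}(G,\F)$ some $cl_i$ works" clause, I would invoke Theorem \ref{FactSela} directly: apply it to the formula $\phi(\bar{x},\bar{a}):\exists\bar{y}(\Sigma(\bar{x},\bar{y},\bar{a})=1)$, which indeed contains a test sequence for $\mathcal{T}(G,\F)$ by construction above. Theorem \ref{FactSela} then produces the closure $\mathcal{R}=cl(\mathcal{T}(G,\F))$ and the finite list of further closures such that \emph{every} test sequence extending to one of them eventually satisfies $\phi$. The point is that a test sequence for a closure $cl_i$, when fed into $\phi$, must eventually have an actual solution $\bar{y}$, and by quantifier-free rigidity along a test sequence (the values of any fixed word become long and the equations $\Sigma=1$ force the combinatorics) this solution is given by a fixed formal word $\bar{w}_i(\bar{u}_i,\bar{x},\bar{a})\in G^{cl_i}$ — this is the standard Merzlyakov extraction: equality of two words of unbounded length along a test sequence forces equality as elements of $\langle\bar{x}\rangle*\F$ after passing to the closure. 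Finally, any $h:G\to\F$ factoring through $\mathcal{T}(G,\F)$ sits inside some test sequence (or is approximated by one compatibly with the floors), and Theorem \ref{FactSela} guarantees it extends to a test sequence of some $cl_j(\mathcal{R}_i)$; relabel these finitely many closures $cl_1(\mathcal{T}(G,\F)),\ldots,cl_k(\mathcal{T}(G,\F))$.

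The main obstacle I expect is the passage from "the limit $\R$-tree decomposes as a graph of actions along the floors" to "the relevant quotient is exactly a closure of $\mathcal{T}(G,\F)$, and the formal solution lives in the closure group". This requires controlling what happens at the abelian flats: a priori the limit group $\hat G$ could glue a free abelian group of the wrong rank or along the wrong subgroup over a peg, but one must check (using maximality of the pegs inside the relevant vertex groups, Lemmata \ref{MaxAb1}, \ref{MaxAb2}, and the system-of-equations description in Remark \ref{MorphismExtensions}) that the new abelian vertex group is a finite-index overgroup of $E_j\oplus\Z^{m_j}$ along a coset of a finite-index subgroup of $\mathbb{A}^{m_j}$, so that it genuinely fits the definition of an abelian floor closure. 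Surface flats are comparatively harmless (the surface-type limit action is forced to come from the surface itself), and the free-product floors are immediate. Assembling the floor-by-floor closures into a global tower closure and checking that the retraction on $G^{cl_i}$ restricting to $\mathrm{Id}$ on $\F$ carries $\bar y$ to a formal word — i.e., that the formal solution is genuinely an element of $\langle\bar u_i,\bar x\rangle*\F$ and not just of some abstract quotient — is the last bookkeeping step, and it follows from Lemma \ref{CompletionEmbedding}-style embedding results together with the normal-form computations already carried out in the excerpt.
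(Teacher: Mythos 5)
The paper does not actually prove this theorem: it is recorded as background, namely Sela's generalized Merzlyakov theorem from \cite{Sel2} (Theorem 1.18 and its variants), which the paper cites and later builds on in the proof of Theorem \ref{PrepareExtFormalSolutions}. So there is no ``paper's own proof'' to compare against, and what matters is whether your argument stands on its own.

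Your first paragraph does sketch the correct opening move of Sela's proof: pick a test sequence $(h_n)$, use the $\forall\exists$ hypothesis to choose witnesses $\bar d_n$, form the limit group $\hat G$ of the combined maps $\hat h_n\colon\langle\bar x,\bar y\rangle*\F\to\F$, analyse the limit $\R$-tree floor by floor, and read off a closure and a formal solution as the image of $\bar y$. But this produces one closure and one formal word, valid for that one test sequence. The statement asks for \emph{finitely many} closures together with the uniform ``moreover'' clause: \emph{every} morphism $h\colon G\to\F$ factoring through $\mathcal T(G,\F)$ extends to some $cl_i$. Your remark that such an $h$ ``sits inside some test sequence (or is approximated by one)'' does not bridge this gap: even if one arranges $h$ to be the first term of a test sequence, the conclusions you appeal to are only about the tail, and more to the point, extendability of $h$ to $G^{cl_i}$ is an arithmetic condition on the images of the pegs (Remark \ref{MorphismExtensions}), which is not controlled by placing $h$ in a test sequence. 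What is actually needed is Sela's iteration: if some tower morphism is not covered, extract a convergent sequence of such uncovered morphisms, take the limit, and use a descending-chain/Noetherian argument on limit groups to show the process terminates after finitely many new closures. That argument is absent from your proposal.

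The appeal to Theorem \ref{FactSela} in your second paragraph is circular. The paper states explicitly that Theorem \ref{FactSela} is ``a consequence of the quantifier elimination procedure,'' which is itself built on top of the Merzlyakov theorem over towers that you are trying to prove; you cannot invoke a corollary of quantifier elimination to prove its own prerequisite. And even setting the circularity aside, Theorem \ref{FactSela} only says that $\phi$ eventually holds along test sequences of closures; it produces no formal word $\bar w_i$, and extracting one from that statement is precisely the Merzlyakov-type work you are trying to avoid, so the shortcut buys nothing. Your third paragraph, by contrast, correctly identifies the genuine technical core, namely controlling the abelian flats in the limit tree (verifying that the new abelian vertex groups are finite-index over-groups of the old ones along cosets as in Remark \ref{MorphismExtensions}); that part of the outline is in good shape.
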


One can generalize Merzlyakov's theorem after strengthening the assumptions in the following way.

\begin{theorem}\label{ExtMerzlyakovFree}
Let $\F\models \forall\bar{x}\exists^{<\infty}\bar{y}\phi(\bar{x},\bar{y},\bar{a})$ and assume there 
exists a test sequence $(h_n)_{n<\omega}:G\rightarrow\F$ with respect to the hyperbolic tower $\mathcal{T}(G,\F)$ (for  
some ordering $(\mathcal{T}(G,\F),<)$) 
and a sequence of tuples $(\bar{c}_n)_{n<\omega}$ in $\F$ such that $\F\models\phi(h_n(\bar{x}),\bar{c}_n,\bar{a})$. Then 
there exists a tuple of words $\bar{w}(\bar{x},\bar{a})\subset G$  
such that for any test sequence $(h'_n)_{n<\omega}:G\rightarrow\F$ for the tower $\mathcal{T}(G,\F)$, 
there exists $n_0$ (that depends on the test sequence) with $\F\models\phi(h'_n(\bar{x}),h'_n(\bar{w}(\bar{x},\bar{a})),\bar{a})$  
for all $n>n_0$.
\end{theorem}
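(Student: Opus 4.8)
\textbf{Proof sketch for Theorem \ref{ExtMerzlyakovFree}.}
The plan is to run the standard Merzlyakov machinery — build a limit action from the sequence of witnesses $(\bar c_n)$, extract a ``formal solution'' living over a tower, and then close up — but carefully in the graded/definable setting so that the exceptional locus where the formal solution might fail is absorbed into a finite union of proper closures, which a generic test sequence eventually avoids. First I would apply the graded diophantine envelope (Theorem \ref{GradedEnvelope}), or rather its ungraded shadow together with Theorem \ref{FactSela}, to the formula $\psi(\bar x,\bar a):=\exists \bar y\,\phi(\bar x,\bar y,\bar a)$ restricted along the given test sequence: since $\F\models\phi(h_n(\bar x),\bar c_n,\bar a)$, the group $G$ with its test sequence $(h_n)$ is one of the towers realizing $\psi$, so we obtain a closure $\mathcal R:=cl(\mathcal T(G,\F))$ to which a subsequence of $(h_n)$ extends as a test sequence, and a formal solution, i.e. a tuple $\bar w(\bar x,\bar a)\subset G^{\mathcal R}$ (after possibly enlarging variables to the $\bar u$'s of the closure) such that $\phi(\bar x,\bar w(\bar x,\bar a),\bar a)$ holds ``formally'' in $G^{\mathcal R}$ in the sense that for test sequences of $\mathcal R$ it is eventually satisfied.

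The key new input, and the place where the hypothesis $\forall\bar x\,\exists^{<\infty}\bar y\,\phi$ is used, is to get rid of the closure: I claim that under the finiteness-of-fibers assumption the closure $\mathcal R$ can be taken to be $\mathcal T(G,\F)$ itself, so the formal solution already lives in $G$. Here is the mechanism. By Theorem \ref{RigidGradedTower} (applied to the parametric family $\phi(\bar x,\bar y,\bar a)$ viewed with $\bar y$ the parameter tuple), in each graded tower of the graded envelope the tuple $\bar x$ lies in the rigid vertex group $\hat H_i$ containing the parameters. Translating this back: the extra abelian-flat directions that a closure would introduce give elements whose growth dominates the parameter growth along a graded test sequence, hence the corresponding coordinate of $\bar x$ would take infinitely many values over a fixed solid morphism — contradicting $\exists^{<\infty}\bar y$ once one observes (as in the proof of Lemma \ref{RigidVertex}) that $\bar x$ must take only finitely many values. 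Concretely, I would argue that if $\bar w$ genuinely required a proper closure $\mathcal R_1$ of $\mathcal R=\mathcal T(G,\F)$, then restricting a test sequence of $\mathcal R_1$ to $G$ and using Fact \ref{AbelianFact} to read off the limit action, some coordinate of $\bar w$ (and then, pulling back through $\phi$, the ambient definable set) would be forced to be infinite in a fiber; so by Theorem \ref{FactSela} the ``cannot be extended to a test sequence for any $\mathcal R_i$'' alternative must hold, which is exactly the statement that the formal solution over $\mathcal R$ already works for all test sequences of $\mathcal R$, and the construction of $\mathcal R$ as a closure of $\mathcal T(G,\F)$ contributes only finite-index (abelian flat) data that is vacuous because the tower is \emph{hyperbolic} — no abelian flats occur, so $cl(\mathcal T(G,\F))=\mathcal T(G,\F)$.

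Having pinned the formal solution $\bar w(\bar x,\bar a)\subset G$ down, the conclusion is the last clause of Theorem \ref{FactSela}: for any test sequence $(h'_n)_{n<\omega}:G\to\F$ for $\mathcal T(G,\F)$, since the only closure in play is $\mathcal T(G,\F)$ itself and $(h'_n)$ trivially extends to a test sequence for it, there is $n_0$ with $\F\models\phi(h'_n(\bar x),h'_n(\bar w(\bar x,\bar a)),\bar a)$ for all $n>n_0$; the dependence of $n_0$ on the test sequence is exactly as in that theorem. I would then remark that the value $h'_n(\bar w(\bar x,\bar a))$ is what plays the role of Merzlyakov's formal solution evaluated at $\bar x=h'_n(\bar x)$, which matches the statement. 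The main obstacle I anticipate is the second paragraph: carefully justifying that the hyperbolicity of $\mathcal T(G,\F)$ together with $\exists^{<\infty}\bar y$ kills the closures — one must check that passing from $\mathcal T(G,\F)$ to the closure $\mathcal R$ supplied by Theorem \ref{FactSela} really introduces no new generators (so $G^{\mathcal R}=G$ up to the bookkeeping of $\bar u$'s which in the hyperbolic case is trivial), and that the finiteness hypothesis is correctly propagated from $\bar y$ to the closure coordinates via the graded-envelope rigidity of Theorem \ref{RigidGradedTower}. Everything else is a bookkeeping application of Facts \ref{GroundFloorFact}, \ref{SurfaceFact}, \ref{AbelianFact} and the cited results of Sela.
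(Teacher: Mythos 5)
Your overall plan is in the right neighbourhood (build a formal solution over the given test sequence, observe that the closure data is trivial because the tower is hyperbolic, and finish with the genericity statement for test sequences), but the middle of the argument is muddled in a way that matters, and the key reductions are misattributed.

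First, Theorem~\ref{FactSela} does not produce formal solutions. It is a genericity statement: it says which extensions of a test sequence continue to satisfy a given formula, after passing to closures. The result that actually manufactures the formal solution is Theorem~\ref{PrepareExtFormalSolutions} (packaged as Theorem~\ref{ExtFormalSolutions}): starting from a test sequence $(h_n)$ of $\mathcal{T}(G,\F)$ which extends to a sequence of strictly solid morphisms on $G*_H Sld$, one obtains a closure $cl(\mathcal{T}(G,\F))$ and a morphism $r:Sld\to G^{cl}$, and the formal solution is $r(\bar{x})$. Applying the envelope to $\psi(\bar{x},\bar{a}):=\exists\bar{y}\,\phi$, as you propose, forgets exactly the graded (parametric) structure needed to produce this $r$; the envelope to use is the graded envelope of the parametric family $\phi(\,\cdot\,,\bar{x},\bar{a})$ from Theorem~\ref{GradedEnvelope}, with $\bar{x}$ (the tower tuple) playing the role of the parameter $\bar{y}$ in Theorem~\ref{ExtFormalSolutions}.

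Second, you conflate the two hypotheses. The assumption $\exists^{<\infty}\bar{y}$ is used once, via Theorem~\ref{RigidGradedTower}, to conclude that the witness tuple lies in the rigid vertex group $\hat{H}_i$ of the modified relative JSJ of each solid limit group $Sld_i$ in the graded envelope; this is what makes a formal solution possible at all for an arbitrary (non-positive) $\phi$. It has nothing to do with eliminating closures. The triviality of the closure is entirely a consequence of hyperbolicity: a hyperbolic tower has no abelian flats, so the family of closure embeddings is empty and $cl(\mathcal{T}(G,\F))=\mathcal{T}(G,\F)$ by definition. Your paragraph ``concretely, I would argue that if $\bar{w}$ genuinely required a proper closure $\mathcal{R}_1$ \dots'' is an attempt to prove by contradiction something that holds trivially; it is unnecessary, and the mechanism you sketch (growth-domination via Fact~\ref{AbelianFact}, absorbing the failure into the `cannot be extended' alternative of Theorem~\ref{FactSela}) is not a valid substitute for the definitional observation.

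The clean argument the paper has in mind is: apply Theorem~\ref{ExtFormalSolutions} (which combines Theorem~\ref{RigidGradedTower} and Theorem~\ref{PrepareExtFormalSolutions}) with the roles of $\bar{x}$ and $\bar{y}$ swapped to match the hypotheses of Theorem~\ref{ExtMerzlyakovFree}; observe that since $\mathcal{T}(G,\F)$ is hyperbolic all the closures $cl_i(\mathcal{T}(G,\F))$ equal $\mathcal{T}(G,\F)$ and the corresponding formal solutions $\bar{x}_i$ live in $G$; select the formal solution $\bar{w}:=\bar{x}_{i_0}$ that works for a subsequence of the given test sequence $(h_n)$; and then invoke Theorem~\ref{FactSela} for the formula $\phi(\bar{x},\bar{w},\bar{a})$, viewed as a formula in the generators of $G$, to pass from eventual satisfaction along one test sequence to eventual satisfaction along every test sequence of $\mathcal{T}(G,\F)$. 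You do arrive at this last step correctly, but the argument getting there needs to be tightened along the lines above.
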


We prove:

\begin{theorem}\label{PrepareExtFormalSolutions}
Let $\mathcal{T}(G,\F)$ be a tower over $\F:=\F(\bar{a})$ with $G:=\langle\bar{u},\bar{y},\bar{a}\rangle$ and let 
$Sld:=\langle\bar{v},\bar{y},\bar{a} \ | \ \Sigma(\bar{v},\bar{y},\bar{a})\rangle$ be a solid limit group 
with respect to $H:=\langle\bar{y},\bar{a}\rangle$. 

Assume that for some ordering of the tower $(\mathcal{T}(G,\F),<)$ there exists a test sequence $(h_n)_{n<\omega}:G\rightarrow\F$ 
that extends to a sequence $(H_n)_{n<\omega}:G*_HSld\rightarrow\F$ so that, for each $n<\omega$,   
$(H_n\upharpoonright Sld)_{n<\omega}$ is a strictly solid morphism. 

Then: 
\begin{itemize}
 \item(existence for a single test sequence) there exist a closure $cl(\mathcal{T}(G,\F))$ of $\mathcal{T}(G,\F)$ with $G^{cl}:=\langle \bar{w},\bar{u},\bar{y},\bar{a}\rangle$ 
and a morphism $r:Sld\rightarrow G^{cl}$ with $r(\bar{y},\bar{a})=(\bar{y},\bar{a})$ such that a subsequence of $(h_n)_{n<\omega}$ (still denoted 
$(h_n)_{n<\omega}$) extends to a test sequence $(g_n)_{n<\omega}:G^{cl}\rightarrow\F$ of $cl(\mathcal{T}(G,\F))$ for the inherited ordering and 
for which, for each $n$, $g_n\circ r$ is in the same strictly solid family as $H_n\upharpoonright Sld$; 
 \item(universal property for all test sequences) there exist finitely many closures $cl_1(\mathcal{T}(G,\F)),$ $\ldots, 
 cl_k(\mathcal{T}(G,\F))$ with 
 $G^{cl_i}:=\langle \bar{w}_i,\bar{u},\bar{y},\bar{a}\rangle$ and for each $i\leq k$ there is a morphism $r_i:Sld\rightarrow G^{cl_i}$ 
 with $r_i(\bar{y},\bar{a})=(\bar{y},\bar{a})$ such that for every test sequence with respect to $(\mathcal{T}(G,\F)<)$, $(h_n)_{n<\omega}:G\rightarrow\F$, 
 that extends to a sequence $(H_n):G*_HSld\rightarrow\F$ so that, for each $n<\omega$, $H_n\upharpoonright Sld$ is 
 a strictly solid morphism, there exists $i\leq k$ such that a subsequence of $(h_n)_{n<\omega}$ extends to a test sequence $(g_n)_{n<\omega}:G^{cl_i}\rightarrow\F$ 
 for the inherited ordering and, for each $n<\omega$, $g_n\circ r_i$ is in the same strictly solid family as $H_n\upharpoonright Sld$. 
\end{itemize}

\end{theorem}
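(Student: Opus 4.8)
The plan is to adapt Sela's Merzlyakov-type machinery (Theorem \ref{FactSela} together with the construction of completions and the analysis of strictly solid morphisms in section \ref{Completions}) to the present graded situation, where on top of the formal-solution statement for a hyperbolic-type tower we must also keep track of the restriction to the solid limit group $Sld$. First I would set up the auxiliary group $G*_HSld$ and observe that the sequence $(H_n)_{n<\omega}:G*_HSld\rightarrow\F$ is, in a suitable sense, a test sequence for a tower over a solid base: the floors of $\mathcal{T}(G,\F)$ sit on top of (a completion of) $Sld$, exactly as in Definition \ref{GradedTower}, so that Facts \ref{GroundSolidFloorFact}, \ref{SurfaceSolidFact} and \ref{AbelianSolidFact} apply to the limit action obtained from a convergent subsequence. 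Passing to such a subsequence, the limit quotient of $G*_HSld$ is a limit group $L$ receiving a canonical epimorphism $\eta:G*_HSld\twoheadrightarrow L$; the heart of the argument is to show that $L$ itself carries the structure of a tower over $\F$ whose underlying ``hyperbolic part'' is a closure of $\mathcal{T}(G,\F)$, together with a distinguished map from $Sld$ into it.

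The key steps, in order, are: (1) extract a convergent subsequence of $(H_n)$ and form the limit tree and limit group $L=(G*_HSld)/\underrightarrow{\ker}H_n$; (2) use the Bestvina--Paulin/Rips analysis (Lemma \ref{LimitAction} and the graph-of-actions decompositions in Facts \ref{GroundSolidFloorFact}--\ref{AbelianSolidFact}) to read off, floor by floor, that $L$ inherits a tower structure, where each surface flat of $\mathcal{T}(G,\F)$ survives (so $L$ retracts onto the corresponding floors) and each abelian flat of $\mathcal{T}(G,\F)$ gives rise, via the linear-algebra bookkeeping of Remark \ref{MorphismExtensions}, to an abelian flat of a \emph{closure} of that floor — this is precisely where the closure $cl(\mathcal{T}(G,\F))$ with its new variables $\bar{w}$ appears; (3) identify the restriction $\eta\upharpoonright Sld$: since each $H_n\upharpoonright Sld$ is strictly solid, and strict solidity is governed by tower-equivalence with respect to $Comp(Sld,Id)$ (Definition of strictly solid morphism, Lemma \ref{StrictlySolidRel}) together with the definable non-degeneracy condition (Lemma \ref{DefDeg}), the limit map $\eta\upharpoonright Sld$ is injective on the rigid vertices and on $\hat{H}$, so it factors as a composition of modular automorphisms followed by a genuine embedding $r:Sld\hookrightarrow L=G^{cl}$ fixing $(\bar{y},\bar{a})$; (4) verify that the test sequence $(g_n)_{n<\omega}$ obtained from $(H_n)$ by restricting to $G^{cl}\subseteq L$ is a test sequence for $cl(\mathcal{T}(G,\F))$ in the inherited ordering and that $g_n\circ r$ lies in the strictly solid family of $H_n\upharpoonright Sld$ — the latter because $g_n\circ r$ and $H_n\upharpoonright Sld$ agree up to modular automorphisms and are both non-degenerate, hence tower-equivalent by Lemma \ref{StrictlySolidRel}. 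This proves the first bullet. For the second bullet (the universal property), I would run the compactness/Noetherianity argument: over all test sequences $(h_n)$ with the stated lifting property, the associated limit groups $L$ form a set with finitely many maximal elements (by the standard argument that limit groups are Noetherian, cf. the proof that there are finitely many maximal shortening/flexible quotients, Theorem of Sela in section on solid limit groups), and each maximal $L$ is one of finitely many closures $cl_i(\mathcal{T}(G,\F))$ equipped with its map $r_i$; any given test sequence factors (a subsequence does) through one of these maximal closures, which yields the conclusion.

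The main obstacle I expect is step (2)–(3) done \emph{simultaneously and compatibly}: one must run the shortening/Rips analysis on the limit action of $G*_HSld$ while ensuring that the solid part $Sld$ is not ``shortened away'' — i.e. that the limit map stays injective on the rigid vertices and on $\hat{H}$ of the relative $JSJ$ of $Sld$ — and at the same time that the abelian flats of $\mathcal{T}(G,\F)$ degenerate only to \emph{closures} (finite-index overgroups of the pegs), never collapsing or merging in ways not accounted for by the closure construction. Controlling this requires the strictly-solid hypothesis on $H_n\upharpoonright Sld$ in an essential way, via the boundedness results for strictly solid morphisms (Sela, \cite[Theorem 2.9]{Sel3}) and the fact that the relevant conditions are first-order (Lemma \ref{DefDeg}), so that the limit map cannot fall into a flexible quotient or a degenerate family. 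Once this compatibility is secured, assembling the closure $G^{cl}$ and the map $r$, and upgrading to finitely many closures via Noetherianity, is routine and parallels the proof of Theorem \ref{FactSela}.
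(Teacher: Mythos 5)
Your proposal follows essentially the same route as the paper: extend the test sequence to $G*_{H}Sld$, pass to the limit quotient, read off a closure structure on the limit group, and then upgrade from one closure to finitely many by a Noetherianity/compactness argument. The paper explicitly frames its proof as an adaptation of Sela's Theorem~1.18 in \cite{Sel2}, and you correctly identify both the skeleton of that argument and the crucial obstacle that the graded setting introduces: when the shortening/Rips analysis is applied to the combined group $G*_{H}Sld$, one must ensure the restriction to $Sld$ is not shortened out of its strictly solid family, so that the limiting map from $Sld$ into the closure lands in the right place and $g_n\circ r$ stays tower-equivalent to $H_n\upharpoonright Sld$.

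Where you fall short of the paper's proof is in the precise mechanism that resolves this obstacle. You appeal to "boundedness results for strictly solid morphisms (Sela, Theorem~2.9 of \cite{Sel3})" and to the definability of degeneracy (Lemma~\ref{DefDeg}); neither is the decisive tool. The paper instead makes two specific moves. First, for each $n$ it replaces $H_n\upharpoonright Sld$ by the \emph{shortest representative within its strictly solid family}, so that any further shortening would, if anything, leave the family. Second, and crucially, it invokes Proposition~\ref{ShortEncore} (Sela's Proposition~1.9 of \cite{Sel3}): for a converging sequence of strictly solid morphisms, the images under the limit quotient map of the edge groups and rigid non-abelian vertex groups of $JSJ_{H}(Sld)$ must lie either in edge/rigid vertex groups or in the peripheral subgroup of an abelian vertex of any $GAD$ of the limit quotient. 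Combined with Lemma~\ref{StrictlySolidRel}, which characterizes tower equivalence exactly in terms of agreement up to conjugation on the rigid vertices and equality on the vertex group containing $H$, this rules out the shortening argument ever leaving the strictly solid family. Your write-up gestures at the need for such a statement ("the limit map stays injective on the rigid vertices and on $\hat{H}$") but does not supply the proposition that actually delivers it, so as stated step~(3) of your plan has a genuine gap: the injectivity/stability claim about $\eta\upharpoonright Sld$ is asserted rather than proved. Filling it requires citing and using Proposition~\ref{ShortEncore} in the way just described; once that is done, the remainder of your argument, including the Noetherianity step for the universal property, is sound and matches the paper.
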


We first record a result of Sela (see \cite[Proposition 1.9]{Sel3}) which is essential in proving Theorem \ref{PrepareExtFormalSolutions}. 

\begin{proposition}[Sela]\label{ShortEncore}
Let $Sld$ be a solid limit group with respect to a finitely generated subgroup $H$. Let $(s_n)_{n<\omega}:Sld\rightarrow\F$ 
be a converging sequence of strictly solid morphisms. Let $q:Sld\twoheadrightarrow Q:=Sld/\Ker s_n$ and $\Delta$ be a $GAD$ 
of $Q$. 

Then for each edge group and rigid (non abelian) vertex group of $JSJ_H(Sld)$, its image by $q$ can either:
\begin{itemize}
 \item be conjugated into an edge group or rigid (non abelian) vertex group of $\Delta$; or 
 \item if it is conjugated into an abelian vertex group, then it is conjugated it the peripheral subgroup 
 of this group.
\end{itemize}

\end{proposition}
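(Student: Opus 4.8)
The plan is to argue by contradiction, using the shortening argument applied to the converging sequence $(s_n)$ but read through the given splitting $\Delta$; this is essentially the route of \cite[Proposition 1.9]{Sel3}, which I would reproduce. First I would recall the structural facts to be used: the edge groups and the rigid non-abelian vertex groups of $JSJ_H(Sld)$ are the \emph{universally elliptic} pieces of the relative $JSJ$; the group $Mod_H(Sld)$ is generated by Dehn twists along edges, mapping-class automorphisms of surface vertex groups, and unimodular automorphisms of abelian vertex groups fixing their peripheral subgroups; and — since the $s_n$ are strictly solid, hence in particular solid and non-degenerate — after replacing each $s_n$ by a short representative of its strictly solid family we may assume $(s_n)$ is short with respect to $Mod_H(Sld)$, so that $q\colon Sld\twoheadrightarrow Q=Sld/\Ker s_n$ realizes $Q$ as a shortening quotient which, by strict solidity, is not (a quotient of) one of the finitely many maximal flexible quotients of $Sld$. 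Now fix an edge group or rigid non-abelian vertex group $A$ of $JSJ_H(Sld)$ and suppose, towards a contradiction, that $q(A)$ is neither conjugate into an edge group or a rigid non-abelian vertex group of $\Delta$, nor — when conjugate into an abelian vertex group $M$ of $\Delta$ — contained in $\bar{P}(M)$.

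Next I would extract from $\Delta$ modular automorphisms of $Q$ that move $q(A)$ off its conjugacy class. Looking at the action of $Q$ on the Bass--Serre tree $T_\Delta$, the failure of the conclusion for $A$ forces one of: $q(A)$ has no global fixed point on $T_\Delta$; $q(A)$ fixes a vertex carried by an abelian vertex group $M$ but has nontrivial image in $M/\bar{P}(M)$; or $q(A)$ fixes a vertex carried by a surface vertex group $\pi_1(\Sigma)$ without being conjugate into a boundary subgroup. In the surface case one refines $\Delta$ along $\Sigma$ by a suitable system of arcs and curves, which either displays $q(A)$ inside an edge group or reduces us to one of the first two cases; so assume one of those. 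Collapsing $\Delta$ to the one-edge splitting responsible for the non-elliptic behaviour of $q(A)$ and applying Dehn twists valued in the centralizer of the edge group (respectively, applying unimodular automorphisms of $M$ fixing $\bar{P}(M)$), one obtains a one-parameter family of automorphisms $\tau\in Mod(\Delta)\le \mathrm{Aut}(Q)$ such that $\tau(q(A))$ is not conjugate to $q(A)$ and whose parameter controls the translation lengths of any prescribed finite subset of $Q$.

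Finally I would turn this family into a shortening of $(s_n)$, producing the contradiction. The crucial point is that $\Delta$, being a $GAD$ of the quotient $Q=Sld/\Ker s_n$, lifts through $q$ to an abelian-edge graph-of-groups decomposition of $Sld$ compatible with $JSJ_H(Sld)$ — using that $A$ and the other universally elliptic pieces remain elliptic in the lifted action, and absorbing the loss of injectivity of $q$ into the ground floor of $Comp(Sld,Id)$; so the family $\tau$ lifts to a family in $Mod_H(Sld)$, realizable by maps out of $Comp(Sld,Id)$ as in Lemma \ref{StrictlySolidRel}. Precomposing $s_n$ with the member chosen to decrease the length on a suitable ball contradicts the shortness of $(s_n)$; equivalently, the modification $s_n\circ\tau$ factors through a proper quotient of $Sld$ witnessing flexible behaviour of $A$, so $s_n$ is tower equivalent with respect to $Comp(Sld,Id)$ to a flexible morphism, contradicting strict solidity. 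Either way we get a contradiction, establishing the two alternatives of the statement for every edge group and every rigid non-abelian vertex group of $JSJ_H(Sld)$. The \textbf{main obstacle} is exactly this last lifting step: passing from the modular group of the quotient $Q$ — which need not be relative to $q(H)$, and where $q$ may be far from injective — back to genuine modular automorphisms of $Sld$ relative to $H$, which is precisely where strict solidity (rather than mere solidity) and the architecture of the completion $Comp(Sld,Id)$ are used in an essential way.
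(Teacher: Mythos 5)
The paper offers no proof of Proposition~\ref{ShortEncore}: it is recorded as a result of Sela and cited as \cite[Proposition 1.9]{Sel3}, so there is no in-paper argument to compare yours against. Judged on its own, your proposal has the right overall shape (contradiction via flexibility/shortening, which is indeed the route in Sela's work), but two steps are genuinely gapped.

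First, the reduction ``after replacing each $s_n$ by a short representative of its strictly solid family we may assume $(s_n)$ is short with respect to $Mod_H(Sld)$'' is not legitimate as stated. Being shortest within a strictly solid family and being short with respect to $Mod_H(Sld)$ are different conditions; more importantly, members of the same strictly solid family only agree up to conjugation on rigid vertices and on the vertex containing $H$ (Lemma~\ref{StrictlySolidRel}), so replacing the $s_n$ can change the individual kernels and hence the stable kernel. The quotient $Q=Sld/\Ker s_n$ in the conclusion is the one attached to the \emph{given} sequence, so you cannot swap representatives without arguing that $Q$ (and the meaning of $\Delta$) is preserved.

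Second, the step you yourself flag as the main obstacle is not actually carried out. Automorphisms of the quotient $Q$ do not lift through a non-injective $q$ to automorphisms of $Sld$, and ``absorbing the loss of injectivity of $q$ into the ground floor of $Comp(Sld,Id)$'' is not an argument: $\Ker q$ need not be elliptic in $\Delta$, so $\Delta$ need not pull back to a graph-of-groups decomposition of $Sld$ at all. The standard way around this is to avoid lifting entirely: $Q$ is a limit group, hence finitely presented, so for all large $n$ the morphisms $s_n$ factor as $\bar s_n\circ q$; one then precomposes the $\bar s_n$ with the modular automorphisms of $Q$ coming from $\Delta$ and derives the contradiction with strict solidity (tower equivalence to a flexible morphism) at the level of $Sld$ via $q$. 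Without this (or an equivalent device), your argument does not close.
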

\noindent
{\em Proof( of Theorem \ref{PrepareExtFormalSolutions}).} 
The proof follows the arguments in the proof of Theorem 1.18 in \cite{Sel2}. Thus we only point out the parts that differ. 

We note that by Lemma \ref{DefDeg} there exists 
a system of equations $\Psi=1$ over $\F$, that collects all morphisms $h:Sld\rightarrow\F$ that are degenerate. 

We start with 
a test sequence $(h_n)_{n<\omega}:G\rightarrow\F$ that by the hypothesis we can extend it to a sequence of 
morphisms $(H_n)_{n<\omega}: G*_HSld\rightarrow\F$ such that each morphism restricts to a strictly solid morphism on $Sld$. 
For each $n$ we choose the shortest possible morphism (with respect to 
a fixed basis of $\F$) that belongs to the same strictly solid family as $H_n\upharpoonright Sld$. The problem in applying directly 
the arguments of proof Theorem 1.18 in \cite{Sel2} would be that the shortening argument could shorten our morphisms in a way that 
they do not belong in the same strictly solid family. According to Proposition \ref{ShortEncore} and Lemma \ref{StrictlySolidRel} 
this is not possible. Thus, the quotient group $G*_HSld/Ker(H_n)$ under the stable kernel of $(H_n)_{n<\omega}$ would have the structure 
of a closure of $\mathcal{T}(G,\F)$ as we wanted. 

The universal property follows exactly as in Theorem 1.18 in \cite{Sel2}. \qed
\ \\ \\
Theorem \ref{ExtMerzlyakovFree} is an easy corollary of our next result. 

\begin{theorem}\label{ExtFormalSolutions}
Let $\mathcal{T}(G,\F)$ with $G:=\langle\bar{u},\bar{y},\bar{a} \ | \ T(\bar{u},\bar{y},\bar{a})\rangle$ be a tower over $\F$. 
Let $\{\phi(\bar{x},\bar{y},\bar{a})\}$ be a parametric family of first order formulas with respect to $\bar{y}$, 
such that $\F\models\forall \bar{y}\exists^{<\infty}\bar{x}\phi(\bar{x},\bar{y},\bar{a})$. 

Suppose there exists a test sequence $(h_n)_{n<\omega}:G\rightarrow\F$ for $\mathcal{T}(G,\F)$ and a sequence $\{(\bar{b}_n)_{n<\omega}\}$ of 
tuples in $\F$ such that $\F\models \phi(\bar{b}_n,h_n(\bar{y}),\bar{a})$. 

Then there exist finitely many closures $cl_1(\mathcal{T}(G,\F)),\ldots, cl_k(\mathcal{T}(G,\F))$ with $G^{cl_i}:=\langle \bar{v}_i,\bar{u},\bar{y},$ 
$\bar{a}\ | \ cl_i(T)(\bar{v}_i,\bar{u},\bar{y},\bar{a})\rangle$, 
and for each $i\leq k$ a tuple of words 
$\bar{x}_i:=\bar{x}_i(\bar{v}_i,\bar{u},\bar{y},\bar{a})$ such that for any test sequence $(h_n)_{n<\omega}:G\rightarrow\F$ for $\mathcal{T}(G,\F)$ 
for which there exists $\{(\bar{b}_n)_{n<\omega}\}$ with $\F\models\phi(\bar{b}_n,h_n(\bar{y}),\bar{a})$, 
there exists $i\leq k$ and a subsequence of $(h_n)_{n<\omega}$ extends to 
a test sequence $(h'_n)_{n<\omega}:G^{cl_i}\rightarrow\F$ for 
$cl_i(\mathcal{T}(G,\F))$ with  $\F\models\phi(h'_n(\bar{x}_i),h'_n(\bar{y}),\bar{a})$.
\end{theorem}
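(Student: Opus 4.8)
The plan is to run the proof along the same lines as Sela's proof of \cite[Theorem 1.18]{Sel2}, replacing the role played there by a bare Merzlyakov argument with the graded diophantine envelope (Theorem~\ref{GradedEnvelope}) together with Theorem~\ref{PrepareExtFormalSolutions} and Theorem~\ref{RigidGradedTower}. First I would feed the parametric family $\phi(\bar x,\bar y,\bar a)$, regarded as parametrized by $\bar y$, into Theorem~\ref{GradedEnvelope}: this produces finitely many graded towers $\mathcal{GT}(G_i,Sld_i)$ with $Sld_i=\langle\bar v_i,\bar y,\bar a\rangle$ solid over $H_i=\langle\bar y,\bar a\rangle$, so that every witnessed instance $\F\models\phi(\bar b,\bar c,\bar a)$ comes from a non-degenerate strictly solid $s:Sld_i\to\F$ with $s(\bar y)=\bar c$ and a morphism $h:G_i\to\F$ extending $s$, factoring through $\mathcal{T}(G_i,Sld_i)$, with $h(\bar x)=\bar b$; and conversely graded test sequences for each $\mathcal{GT}(G_i,Sld_i)$ land in $\phi$. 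Using the hypothesis $\F\models\forall\bar y\,\exists^{<\infty}\bar x\,\phi(\bar x,\bar y,\bar a)$, Theorem~\ref{RigidGradedTower} applies and tells us that the tuple $\bar x$ of each $G_i$ actually lies in $\hat H_i$, the vertex group of $\hat{JSJ}_{H_i}(Sld_i)$ containing $H_i$; equivalently, along any graded test sequence $\bar x$ takes only finitely many values given the value of $s$, and its value is determined (up to the rigidity/solidity ambiguities) by $s\!\upharpoonright\!\hat H_i$.

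Next I would bring in the input data of the theorem. We are given a test sequence $(h_n)_{n<\omega}:G\to\F$ for $\mathcal{T}(G,\F)$ and tuples $(\bar b_n)_{n<\omega}$ with $\F\models\phi(\bar b_n,h_n(\bar y),\bar a)$. Applying the converse direction of the envelope to each $n$ (and passing to a subsequence via a pigeonhole on the finitely many indices $i\le k$), we may assume a single index $i$ works for all $n$: there are non-degenerate strictly solid $s_n:Sld_i\to\F$ with $s_n(\bar y)=h_n(\bar y)$ and extensions of $h_n$ to morphisms $G_i\to\F$ realizing $\bar x\mapsto\bar b_n$. Since $\bar x\in\hat H_i$, after precomposing with modular automorphisms of $Sld_i$ and conjugating, $\bar b_n$ is essentially $s_n$ applied to a fixed tuple of words $\bar x_i(\bar v_i,\bar y,\bar a)\in Sld_i$ (more precisely, lying in $\hat H_i$). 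Now I would use the hypothesis of Theorem~\ref{PrepareExtFormalSolutions}: we need the test sequence $(h_n)$ on $G$ to extend to $(H_n):G*_H Sld_i\to\F$ with $H_n\!\upharpoonright\!Sld_i$ strictly solid. The point is that $H=\langle\bar y,\bar a\rangle$ maps into $Sld_i$ via $\bar y,\bar a\mapsto\bar y,\bar a$, and $h_n$ together with $s_n$ (which agrees with $h_n$ on $\bar y,\bar a$) glue to such an $H_n$; one must check compatibility on the amalgamated subgroup $H$, which is immediate since both restrict to $h_n$ there. Then Theorem~\ref{PrepareExtFormalSolutions} yields finitely many closures $cl_j(\mathcal{T}(G,\F))$ with $G^{cl_j}=\langle\bar w_j,\bar u,\bar y,\bar a\rangle$ and morphisms $r_j:Sld_i\to G^{cl_j}$ fixing $(\bar y,\bar a)$, so that a subsequence of any such $(h_n)$ extends to a test sequence $(h'_n):G^{cl_j}\to\F$ with $h'_n\circ r_j$ in the same strictly solid family as $H_n\!\upharpoonright\!Sld_i$.

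Composing, the desired tuple of words over $G^{cl_j}$ is $\bar x_j := r_j(\bar x_i(\bar v_i,\bar y,\bar a))$, a word in $\bar w_j,\bar u,\bar y,\bar a$. Along the extended test sequence $(h'_n)$ we have $h'_n(\bar x_j)=(h'_n\circ r_j)(\bar x_i)$, which — because $h'_n\circ r_j$ lies in the same strictly solid family as $H_n\!\upharpoonright\!Sld_i$ and $\bar x_i$ lies in the rigid vertex $\hat H_i$ on which all members of a strictly solid family agree (up to the identity on the $H_i$-vertex, by Lemma~\ref{StrictlySolidRel} and Lemma~\ref{SolidInjections}) — equals the corresponding value of $\bar b_n$ up to an ambiguity that, by Theorem~\ref{RigidGradedTower}, is irrelevant to the truth of $\phi$. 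Hence $\F\models\phi(h'_n(\bar x_j),h'_n(\bar y),\bar a)$ for all large $n$; this is where one also invokes the ``eventual'' part of Theorem~\ref{FactSela} to pass from ``a subsequence satisfies $\phi$'' to ``all but finitely many members of the extended test sequence satisfy $\phi$''. The finitely many $cl_j(\mathcal{T}(G,\F))$ and words $\bar x_j$, collected over the finitely many envelope indices $i$, are the required data, and the universal property over all test sequences is exactly the universal clause of Theorem~\ref{PrepareExtFormalSolutions} combined with the finite pigeonhole on envelope indices.

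I expect the main obstacle to be the bookkeeping around strictly solid families and the rigid vertex $\hat H_i$: one must verify carefully that the ambiguity in the value of the formal solution $\bar x_j$ coming from (a) the choice of representative in a strictly solid family, (b) precomposition by $\sigma\in Mod_{H_i}(Sld_i)$, and (c) conjugation, does not change whether $\phi$ holds — and that is precisely the content secured by Theorem~\ref{RigidGradedTower} (which forces $\bar x$ into $\hat H_i$) together with Lemma~\ref{StrictlySolidRel} (which pins down how members of a strictly solid family differ on rigid vertices). A secondary technical point, inherited from \cite[Theorem 1.18]{Sel2} via Theorem~\ref{PrepareExtFormalSolutions}, is that the shortening argument used to build the closures must be performed \emph{within} a fixed strictly solid family, which is legitimate by Proposition~\ref{ShortEncore}; I would cite this rather than reprove it.
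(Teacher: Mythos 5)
Your proposal follows the same route as the paper's proof: graded diophantine envelope, Theorem~\ref{RigidGradedTower} to place $\bar x$ in $\hat H_i$, gluing the test sequence with the strictly solid morphisms supplied by the envelope, and then Theorem~\ref{PrepareExtFormalSolutions}. The only inessential extra is the appeal to Theorem~\ref{FactSela} at the end: by Lemma~\ref{StrictlySolidRel} members of a strictly solid family agree exactly on $\hat H_i$, so $h'_n(\bar x_j)$ coincides with the corresponding solution for every index of the extended subsequence, and no separate ``eventual'' argument is required.
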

\begin{proof}
We consider the solid limit groups on which the graded towers of the envelope of $\{\phi(\bar{x},\bar{y},\bar{a})\}$ are based on. By Theorem \ref{RigidGradedTower} 
the tuple $\bar{x}$ of each graded tower belongs to $\hat{H}_i$ of each $\hat{JSJ(Sld_i)}$. By the properties of the graded envelope every 
test sequence for $\mathcal{T}(G,\F)$ that can be extended to a sequence of solutions for $\phi$, has a subsequence that 
extends to a sequence of morphisms of $G*_{H_i}Sld_i$, thus we can now apply Theorem \ref{PrepareExtFormalSolutions} in order to conclude.
\end{proof}

\section{Main proof}
In this final section we bring everything together in order to prove the main result of this paper: no infinite field is definable in a non abelian free group. 
We have split the proof in two parts. Assuming that $X$ is an infinite definable set, in the first subsection \ref{AbCase} we tackle the case where  
$X$ is co-ordinated by a finite set of centralizers. In this case we have already proved that $X$ is one-based thus we cannot give it a definable field structure. 
In the third subsection \ref{GenCas} we prove that in any other case $X$ cannot be given definably the structure of an abelian group. 

We have inserted a subsection between subsections \ref{AbCase} and \ref{GenCas} that is free of certain technical problems in order to 
make the ideas of our proof more transparent.

\subsection{Abelian case}\label{AbCase}

In this subsection we tackle the special case where the diophantine envelope of a definable set $X$ consists essentially of the ground floor $\F$ (the coefficient group) 
and only abelian floors are glued over centralizers of elements in $\F$. The prototypical case being that $X$ is a product of centralizers. Since centralizers 
are pure groups, their theory is one-based, thus $X$ cannot be given definably a field structure. In the general case we can show that $X$ is internal to 
a product of centralizers, thus still one-based. 

\begin{theorem}\label{AbelianCase}
Let $X:=\phi(\bar{x},\bar{a})$ be a first order formula over $\F$ with $\abs{X}=\infty$. Let  
$\{(\mathcal{T}(G_i,$ $\F))_{i\leq k}\}$, where $G_i:=\langle \bar{u}_i,\bar{x},\bar{a} \ | \ \Sigma_i(\bar{u}_i,\bar{x},\bar{a})\rangle$ 
be a diophantine envelope for $X$. Suppose, for each $i\leq k$,  the tuple $\bar{x}$ belongs to the sub-tower of $\mathcal{T}_i$ consisting 
of the ground floor $\F$ and the abelian flats glued over centralizers of elements in $\F$. 

Then $X$ cannot be given definably the structure of a field.
\end{theorem}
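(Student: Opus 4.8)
The plan is to show that $X$ is internal to a finite family of centralizers, deduce from Wagner's theorem (Section 2) that $X$ is one-based, and then invoke Pillay's theorem to exclude a definable field structure. Since $X$ is defined over $\bar{a}$, work with base set $A:=\bar{a}$. Let $\mathcal{P}$ be the family of all definable sets $C_{\mathbb{M}}(d):=\{x\ :\ [x,d]=1\}$ with $d\neq 1$; this family is visibly $\Aut(\mathbb{M})$-invariant. For $c\in\F\setminus\{1\}$ the centralizer $C_{\F}(c)$ is infinite cyclic, so by Corollary \ref{PureCentra} its induced structure is that of a pure group, necessarily the infinite cyclic group, whose theory is one-based (being abelian). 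Thus $\mathcal{P}$ is an $\emptyset$-invariant family of one-based sets, and by Wagner's theorem it suffices to show that $X$ is $\mathcal{P}$-internal.

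First I would turn a membership $\bar{b}\in X$ into a group word. Fix $\bar{b}$ with $\F\models\phi(\bar{b},\bar{a})$. By the diophantine envelope (Corollary \ref{Envelope}) there are $i\leq k$ and a tuple $\bar{d}$ in $\F$ with $\F\models\Sigma_i(\bar{d},\bar{b},\bar{a})=1$, i.e. the assignment $\bar{u}_i\mapsto\bar{d}$, $\bar{x}\mapsto\bar{b}$, $\bar{a}\mapsto\bar{a}$ defines a homomorphism $h\colon G_i\to\F$ that restricts to the identity on the ground floor $\F=\langle\bar{a}\rangle\leq G_i$. By hypothesis $\bar{x}$ lies in the subgroup $G_i^{\mathrm{ab}}\leq G_i$ generated by $\F$ together with finitely many abelian flats $\langle c_j\rangle\oplus\Z^{m_j}$, with $c_j\in\F\setminus\{1\}$; writing $\bar{z}^j$ for the chosen basis of the $j$-th flat, we may fix a tuple of words $\bar{w}_i$ with $\bar{x}=\bar{w}_i(\bar{a},\bar{z}^1,\ldots,\bar{z}^{N_i})$ in $G_i^{\mathrm{ab}}$. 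Applying $h$ and using $h\upharpoonright\F=\id$,
\[
\bar{b}=h(\bar{x})=\bar{w}_i\bigl(\bar{a},\,h(\bar{z}^1),\ldots,h(\bar{z}^{N_i})\bigr),
\]
and since $\bar{z}^j$ commutes with $c_j$ in $G_i$, every entry of $h(\bar{z}^j)$ lies in $C_{\F}(h(c_j))=C_{\F}(c_j)$.

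Next I would check the definition of $\mathcal{P}$-internality with $B:=A=\bar{a}$ and with $\bar{b}_1,\ldots,\bar{b}_\ell$ the entries of $h(\bar{z}^1),\ldots,h(\bar{z}^{N_i})$. The non-forking clause $\bar{b}\underset{A}{\forkindep}B$ is automatic because $B=A$. Each $c_j$ lies in $\F=\F(\bar{a})\subseteq dcl(A)$, so $C_{\F}(c_j)\in\mathcal{P}$ is $A$-definable and contains the corresponding $\bar{b}_l$, whence $\bar{b}_l\models\mathcal{P}\upharpoonright B$. Finally the displayed identity presents $\bar{b}$ as a group word over $B\cup\{\bar{b}_1,\ldots,\bar{b}_\ell\}$, so $\bar{b}\in dcl(B,\bar{b}_1,\ldots,\bar{b}_\ell)$; and since the envelope consists of finitely many towers, the words $\bar{w}_i$ range over a finite list, so one finite bound on $\ell$ serves for every $\bar{b}\in X$. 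Hence $X$ is $\mathcal{P}$-internal, so one-based by Wagner's theorem, so $X^{ind}$ is one-based. A definable field structure on $X$ would make an infinite field interpretable in $X^{ind}$, contradicting Pillay's theorem that no infinite field is interpretable in a one-based theory; therefore $X$ admits no such structure.

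The only delicate point is the passage in the last two paragraphs from the geometric statement ``every point of $X$ is a group word in $\bar{a}$ and in elements of finitely many centralizers'' to the precise form of $\mathcal{P}$-internality: choosing the base $B$, checking the independence clause, and, crucially, ensuring that one fixed finite family of centralizers works uniformly for all of $X$, which is exactly where finiteness of the diophantine envelope enters. The remaining ingredients (one-basedness of centralizers, Wagner's internality criterion, Pillay's theorem) are quoted directly from Section 2 and Subsection \ref{GrdDio}.
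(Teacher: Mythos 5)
Your argument is correct and follows the same route as the paper: restrict to the sub-tower of abelian flats glued over $\F$, use the tower structure and identity restriction on $\F$ to write each $\bar{b}\in X$ as a word in $\bar{a}$ and centralizer elements, invoke Corollary \ref{PureCentra} and Wagner's theorem to get one-basedness, then Pillay's theorem. The paper's proof is terser (it works with the diophantine set $D\supseteq X$ rather than $X$ itself, and does not spell out the internality check), but you supply exactly the details the paper leaves implicit, including the correct choice of the $\emptyset$-invariant family $\mathcal{P}$ of all centralizers and the base set $B=A=\bar{a}$.
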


\begin{proof}
By Corollary \ref{Envelope} (i), we have that $X$ is a subset of the diophantine set $D(\bar{x}):=\exists\bar{u}(\bigvee_{i=1}^{k}\Sigma_i(\bar{u},\bar{x},\bar{a})=1)$. 
But then by the hypothesis we have that there are finitely many words $\bar{w}_i(\bar{z},\bar{a})$ so that for each element $\bar{b}$ of $D$ there exist some $i$ 
and some elements, $\bar{c}_1,\ldots,\bar{c}_l$ from centralizers of elements in $\F$, such that $\bar{b}={w}_i(\bar{c}_1,\ldots,\bar{c}_l,\bar{a})$. 

Thus, $D$ is internal to a set of centralizers and by Corollary \ref{PureCentra} this centralizers are one-based sets. Thus $D$ is one-based, in particular no 
infinite definable subset of $D$ can be given definably a field structure. 
\end{proof}

\subsection{Hyperbolic case}\label{HypCase}
In this subsection we tackle a case, in which a diophantine envelope for a definable set $X$ contains a hyperbolic tower. 
In contrast to the abelian case, this case is not strictly needed but it will serve us as a toy example for the general case. 

\begin{theorem}
Let $X:=\phi(\bar{x},\bar{a})$ be a first order formula over $\F$ with $\abs{X}=\infty$. Let $\{(\mathcal{T}_i(G_i,$ $\F))_{i\leq k}\}$, 
where $G_i:=\langle \bar{u},\bar{x},\bar{a} \ | \ \Sigma_i\rangle$,  
be a diophantine envelope for $X$. Suppose, for some $i\leq k$, $\mathcal{T}_i$ is hyperbolic.

Then $X$ cannot be given definably an abelian group structure.
\end{theorem}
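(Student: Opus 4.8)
The plan is to exploit the rigidity phenomenon encapsulated in Theorem~\ref{ExtMerzlyakovFree} (and its parent Theorem~\ref{ExtFormalSolutions}) to contradict the uniqueness of inverses in a putative abelian group operation. Suppose toward a contradiction that $Y\subset X\times X\times X$ is a definable set which is the graph of an abelian group operation $+$ on $X$, with identity $0\in X$ and inverse map $x\mapsto -x$. The first step is to extract, from the test sequence for the hyperbolic tower $\mathcal{T}_i$ witnessing that $\mathcal{T}_i$ lies in the diophantine envelope of $X$, a test sequence $(h_n)_{n<\omega}:G_i\to\F$ with $\F\models\phi(h_n(\bar x),\bar a)$; by Corollary~\ref{Envelope}(ii) such a sequence exists. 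Write $b_n:=h_n(\bar x)\in X$. The group operation gives, for each $n$, a unique $c_n\in X$ with $b_n+c_n=0$, i.e. $\F\models Y(b_n,c_n,0)$. Since $X$ is infinite and the $b_n$ are obtained from a genuine test sequence, the $b_n$ run over infinitely many values, hence so do the $c_n$.

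Next I would package the existence of inverses as a formula with a finiteness constraint so that Theorem~\ref{ExtMerzlyakovFree} applies. Because $+$ is a \emph{group} operation, for each $\bar x\in X$ there is exactly one $\bar y$ with $Y(\bar x,\bar y,0)$; set $\psi(\bar x,\bar y,\bar a) := Y(\bar x,\bar y,0)\wedge \phi(\bar x,\bar a)\wedge\phi(\bar y,\bar a)$. Then $\F\models\forall\bar x\,\exists^{<\infty}\bar y\,\psi(\bar x,\bar y,\bar a)$ (indeed $\exists^{\le 1}$), and the sequences $(b_n)$, $(c_n)$ above witness the hypothesis of Theorem~\ref{ExtMerzlyakovFree} for the hyperbolic tower $\mathcal{T}_i=\mathcal{T}(G_i,\F)$ (after fixing a legitimate ordering). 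The theorem then produces a tuple of words $\bar w(\bar x,\bar a)\subset G_i$ — a \emph{formal inverse} — such that for \emph{every} test sequence $(h'_n)$ for $\mathcal{T}_i$ we have $\F\models\psi(h'_n(\bar x),h'_n(\bar w),\bar a)$ for all large $n$; in particular $h'_n(\bar w)$ is the $+$-inverse of $h'_n(\bar x)$ eventually.

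The contradiction now comes from the geometry of a hyperbolic tower: a hyperbolic tower contains at least one surface flat, so by Fact~\ref{SurfaceFact} the limit action associated to a test sequence $(h'_n)$ has a surface-type vertex, and one has a large supply of test sequences obtained by precomposing a fixed test sequence with modular automorphisms (Dehn twists along the boundary of the surface, and homeomorphisms of the surface). The formal inverse $\bar w(\bar x,\bar a)$ is a \emph{fixed} word in $G_i$, so $h'_n(\bar w)$ depends on $h'_n$ only through the values $h'_n$ takes on the generators of $G_i$; but composing with a surface modular automorphism $\sigma$ changes $h'_n$ on $\bar x$ while, crucially, the image $h'_n(\bar w)$ transforms in a controlled, \emph{non-constant} way that cannot simultaneously be the $+$-inverse of $h'_n\circ\sigma(\bar x)$ for all $\sigma$ unless $\bar w$ were already expressible inside the base $\F$. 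More precisely, I would argue as in Theorem~\ref{RigidGradedTower}/Lemma~\ref{RigidVertex}: the uniqueness of inverses forces the word $\bar w(\bar x,\bar a)$, evaluated along test sequences, to take only boundedly many values relative to the surface-flat coordinates — equivalently, the surface flat of $\mathcal{T}_i$ must be ``rigid'', which contradicts the defining property of a surface flat (the retraction $r:G_i\to \F$ sending $\Stab(v)$ to a non-abelian image, hence genuinely many distinct modular images). This rigidity clash is the heart of the matter.

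The main obstacle I anticipate is the last step: making precise the sense in which ``the formal inverse cannot track all modular images of $\bar x$'' and deriving the contradiction with the non-abelian-image property of the surface flat. Two routes seem available. The cleaner one is to feed $\psi$ directly into Theorem~\ref{ExtFormalSolutions} and then invoke Theorem~\ref{RigidGradedTower}: the graded diophantine envelope of $\{\psi(\bar x,\bar y,\bar a)\}$ has the property that the tuple $\bar x$ of each $G_i$ lies in $\hat H_i$, the rigid vertex group containing the parameters; pulling this back to our situation says the would-be ``inverse coordinate'' $\bar y$ lives in a rigid part, which is incompatible with $\bar y$ ranging over the surface coordinates that the group operation forces it to track. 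The second, more hands-on route is to pick two test sequences $(h'_n)$ and $(h''_n)=(h'_n\circ\sigma_n)$ for a sequence of Dehn twists $\sigma_n$ of growing complexity, observe that $h'_n(\bar w)$ and $h''_n(\bar w)$ must both equal the (unique) inverse computed from the respective $\bar x$-values, and extract a relation in $\F$ that forces the surface group to have abelian image — the desired contradiction. I would develop the first route in the write-up and keep the second as motivation.
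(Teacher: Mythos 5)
Your proposal correctly identifies the role of Theorem \ref{ExtMerzlyakovFree}/\ref{ExtFormalSolutions} and correctly sets up the application (the formula $\psi(\bar x,\bar y,\bar a)$ does satisfy the $\exists^{<\infty}$ hypothesis, and a test sequence witnessing the hyperbolic tower in the envelope does provide the needed sequence of solution pairs). But the final step --- extracting a contradiction from the ``formal inverse'' $\bar w(\bar u,\bar x,\bar a)$ via a rigidity argument --- is a genuine gap, and I don't think it can be closed along the lines you sketch. The existence of a formal inverse as a word in the tower group $G_i$ is not in tension with the surface flat: as $h'_n$ varies over test sequences (or is precomposed with modular automorphisms), $h'_n(\bar w)$ varies accordingly, and nothing forces it to be bounded or ``rigid.'' Concretely, Theorem \ref{RigidGradedTower} (with the roles of $\bar x$ and $\bar y$ swapped to match your quantifier pattern) only tells you that the inverse coordinate $\bar y$ lives in the rigid vertex group $\hat H_i$ of the \emph{graded} tower over $Sld_i$, which contains the parameter tuple $\bar x$; that says the inverse is determined by $\bar x$ together with some bounded data --- which is exactly what having a formal inverse word means, not a contradiction. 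Your second route (comparing $h'_n(\bar w)$ with $h'_n\circ\sigma_n(\bar w)$ for Dehn twists $\sigma_n$) also does not produce a relation forcing the surface group to have abelian image: both sides track their respective inverses correctly because $\bar w$ is applied to the twisted generators.

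The deeper reason the proposal cannot work as written is that it never uses commutativity. Everything you invoke --- the identity $0$, the uniqueness of inverses, the formal inverse word --- holds for an arbitrary definable group operation, abelian or not. The paper's proof of this theorem, by contrast, hinges on commutativity, and in a way that is structurally different from yours: one builds the \emph{twin tower} $\mathcal{T}\#\mathcal{T}(G,\F)$ with group $G*_\F G'$, applies Theorem \ref{ExtMerzlyakovFree} to the operation $\oplus(\bar x,\bar x',\bar z)$ itself (not to the inverse map) to get a formal sum $\bar w(\bar u,\bar x,\bar u',\bar x',\bar a)\in G*_\F G'$, shows $\bar w\notin\F$ by the standard subsequence trick, and then observes that swapping the two copies $G$ and $G'$ gives another test sequence $(f_n)$ for a different legitimate ordering. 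Commutativity of $\oplus$ then forces the formal identity $\bar w(\bar u,\bar x,\bar u',\bar x',\bar a)=\bar w(\bar u',\bar x',\bar u,\bar x,\bar a)$ in the limit group $G*_\F G'$; but a nontrivial element of $G*_\F G'$ outside $\F$ cannot equal its swap because of normal forms for the amalgamated free product. That normal-form clash is where the contradiction actually lives. If you want to salvage your line of attack you would need to replace the single tower $G_i$ by the doubled object $G*_\F G'$ and encode commutativity, at which point you would be reproducing the paper's argument.
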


\begin{proof}
Suppose for the sake of contradiction that $(\phi(\bar{x},\bar{a}),\oplus(\bar{x},\bar{y},\bar{z},\bar{a}))$ is an abelian group. 
Let $\mathcal{T}(G,\F)$, where $G:=\langle\bar{u},\bar{x},\bar{a} \ |\ \Sigma(\bar{u},\bar{x},\bar{a})\rangle$ be the hyperbolic tower contained in a 
diophantine envelope of $\phi(\bar{x},\bar{a})$.
 
We consider the twin tower $\mathcal{T}\#\mathcal{T}(G,\F)$, its corresponding group $G*_{\F}G':=\langle\bar{u},\bar{x},\bar{u}',\bar{x}',\bar{a}\ | \ $ $
\Sigma(\bar{u},\bar{x},\bar{a}), \Sigma(\bar{u}',\bar{x}',\bar{a})\rangle$ and a 
test sequence $(h_n)_{n<\omega}:G*_{\F}G'\rightarrow\F$ for $\mathcal{T}\#\mathcal{T}$ with respect to the twin tower and its natural order.

We note that by the properties of test sequences the restrictions $(h_n)_{n<\omega}\upharpoonright G$, $(h_n)_{n<\omega}\upharpoonright G'$ 
are both test sequences for $\mathcal{T}(G,\F)$ (for the natural order), thus by Theorem \ref{FactSela} 
the sequence of couples $(h_n(\bar{x}),h_n(\bar{x}'))$ belongs to $X\times X$. 
Therefore, there exists a sequence of tuples $(\bar{c}_n)_{n<\omega}$ of $\F$ such that $\F\models\oplus(h_n(\bar{x}),h_n(\bar{x}'), \bar{c}_n)$. 
We can now use Theorem \ref{ExtMerzlyakovFree} in order to obtain a formal solution 
$\bar{w}(\bar{u},\bar{x},\bar{u}',\bar{x}',\bar{a})$, i.e. a tuple of words in $G*_{\F}G'$, such that 
$\F\models\oplus(h_n(\bar{x}),h_n(\bar{x}'),h_n(\bar{w}(\bar{u},\bar{x},\bar{u}',\bar{x}',$ $\bar{a})),\bar{a})$. 

We claim that there is an element in the tuple $\bar{w}(\bar{u},\bar{x},\bar{u}',\bar{x}',\bar{a})$ that 
does not belong to $\F$. If, for a contradiction, $\bar{w}(\bar{u},\bar{x},\bar{u}',\bar{x}',\bar{a})=\bar{b}$ for some 
tuple $\bar{b}$ of $\F$, then we can replace $(h_n)\upharpoonright G'$
with a proper subsequence, this will still be a test sequence for $\mathcal{T}\#\mathcal{T}$, call it $(g_n)_{n<\omega}$. 
By Theorem \ref{FactSela} we get $\F\models\oplus(g_n(\bar{x}),g_n(\bar{x}'),\bar{b},\bar{a})$, but $g_n(\bar{x})=h_n(\bar{x})$ 
while for $n$ large enough $g_n(\bar{x}')\neq h_n(\bar{x}')$, thus $h_n(\bar{x})\oplus h_n(\bar{x}')\neq h_n(\bar{x})\oplus g_n(\bar{x}')$, 
a contradiction since both should be equal to $\bar{b}$.

We next consider exchanging the role of $(h_n)_{n<\omega}\upharpoonright G$ with $(h_n)_{n<\omega}\upharpoonright G'$ 
since $G'$ is a copy of $G$ this is again a sequence of morphisms from $G*_{\F}G'$ to $\F$ which we call $(f_n)_{n<\omega}$. 
The sequence $(f_n)_{n<\omega}$ is a test sequence for $\mathcal{T}\#\mathcal{T}$ but for a different order, namely the floors that 
consist the group $G'$ come before the floors that consist the group $G$. Thus,  we can still 
apply Theorem \ref{FactSela} so that $\F\models \oplus(f_n(\bar{x}),f_n(\bar{x}'),f_n(\bar{w}(\bar{u},\bar{x},\bar{u}',\bar{x}',$ $\bar{a})),\bar{a})$. 
By the definition of $(f_n)_{n<\omega}$ it follows that $h_n(\bar{x})=f_n(\bar{x}')$ and $h_n(\bar{x}')=f_n(\bar{x})$. In particular, 
since the group operation $\oplus$ is abelian we observe that 

$$h_n(\bar{w}(\bar{u},\bar{x},\bar{u}',\bar{x}',\bar{a}))=f_n(\bar{w}(\bar{u},\bar{x},\bar{u}',\bar{x}',\bar{a}))=h_n(\bar{w}(\bar{u}',\bar{x}',\bar{u},\bar{x},\bar{a}))$$ 

In particular the equation $\bar{w}(\bar{u},\bar{x},\bar{u}',\bar{x}',\bar{a}))=\bar{w}(\bar{u}',\bar{x}',\bar{u},\bar{x},\bar{a}))$ 
holds for a test sequence with respect to $\mathcal{T}\#\mathcal{T}$. But a test sequence witnesses that $G*_{\F}G'$ is a limit group, 
thus the above equation is a formal relation in the corresponding group $G*_{\F}G'$. Now since $\bar{w}(\bar{u},\bar{x},\bar{u}',\bar{x}',\bar{a}))$ 
does not live in $\F$ some element in $\bar{w}(\bar{u},\bar{x},\bar{u}',\bar{x}',\bar{a}))$ and the corresponding 
element in $\bar{w}(\bar{u}',\bar{x}',\bar{u},\bar{x},\bar{a}))$ have different normal forms with respect to the amalgamated free product 
$G*_{\F}G'$, a contradiction.
\end{proof}

\subsection{General case}\label{GenCas}

\begin{theorem}
Let $X:=\phi(\bar{x},\bar{a})$ be a first order formula over $\F$ with $\abs{X}=\infty$. 
Then $X$ cannot be given definably a field structure.
\end{theorem}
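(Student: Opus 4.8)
The plan is to split on the diophantine envelope and reduce to the two subsections already developed. Suppose, towards a contradiction, that $\phi(\F,\bar a)$ carries a definable infinite field structure; in particular $(X,\oplus)$ is a definable infinite abelian group for the addition formula $\oplus(\bar x,\bar y,\bar z,\bar a)$, and since $\oplus$ is the graph of a function on $X\times X$ it satisfies the $\forall\exists^{<\infty}$ condition needed for the Merzlyakov-type theorems. Fix, by Corollary \ref{Envelope}, a diophantine envelope $\{\mathcal{T}_i(G_i,\F)\}_{i\le k}$ for $\phi(\bar x,\bar a)$, with $G_i=\langle\bar u_i,\bar x,\bar a\mid\Sigma_i\rangle$. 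If for every $i\le k$ the tuple $\bar x$ lies in the sub-tower of $\mathcal{T}_i$ made of the ground floor $\F$ together with the abelian flats glued over centralizers of elements of $\F$, then Theorem \ref{AbelianCase} applies verbatim and already forbids a definable field structure on $X$, a contradiction. So we may assume that for some $i_0$ the tuple $\bar x$ of $G:=G_{i_0}$ escapes this sub-tower inside $\mathcal{T}:=\mathcal{T}_{i_0}$, i.e. (after reshuffling to a legitimate ordering respecting the Convention) some floor strictly above the possibly empty bottom abelian floor over $\F$ --- a surface flat, or an abelian flat whose peg is a centralizer not conjugate into $\F$ --- genuinely involves $\bar x$.

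In this case I would run the twin-tower argument of subsection \ref{HypCase} in its general form. Form the twin tower $\mathcal{T}\#\mathcal{T}(G,\F)$, using the non-abelian construction when the bottom floor of $\mathcal{T}$ is not an abelian floor over $\F$ and Proposition \ref{TwinAbel} otherwise, with underlying limit group $\widehat G$, two copies $G,G'$ of $G$, common ``base'' subgroup $B$ (equal to $\F$, or to $\F$ together with the doubled bottom abelian floor), and ``swap'' automorphism $s$ of $\widehat G$ exchanging $G$ with $G'$ and fixing $B$ pointwise. Pick a test sequence $(h_n)_{n<\omega}:\widehat G\to\F$ for $\mathcal{T}\#\mathcal{T}$ and its natural ordering. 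Each of $(h_n\upharpoonright G)_{n<\omega}$ and $(h_n\upharpoonright G')_{n<\omega}$ is a test sequence for $\mathcal{T}(G,\F)$, so by Theorem \ref{FactSela} the pairs $(h_n(\bar x),h_n(\bar x'))$ lie in $X\times X$ for all large $n$, whence $\F\models\oplus(h_n(\bar x),h_n(\bar x'),\bar c_n,\bar a)$ for suitable tuples $\bar c_n$. Applying the Merzlyakov-type Theorem \ref{ExtFormalSolutions} (Theorem \ref{ExtMerzlyakovFree} in the hyperbolic sub-case), possibly after passing to a symmetric closure of the twin tower as in Section \ref{Towers}, yields a closure $\mathcal{R}=cl(\mathcal{T}\#\mathcal{T})$ with group $\widehat G^{cl}$ and a tuple of words $\bar w=\bar w(\bar u,\bar x,\bar u',\bar x',\bar a)$ in $\widehat G^{cl}$ such that for every test sequence $(h'_n)$ of $\mathcal{R}$ one has $\F\models\oplus(h'_n(\bar x),h'_n(\bar x'),h'_n(\bar w),\bar a)$ for all large $n$.

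The contradiction then comes in three steps. First, $\bar w$ does not lie in the closure $B_{cl}$ of the common base $B$: otherwise, replacing $(h_n\upharpoonright G')$ by a proper subsequence while keeping $(h_n\upharpoonright G)$ fixed gives a test sequence $(g_n)$ of $\mathcal{R}$ with $g_n(\bar x)=h_n(\bar x)$ but $g_n(\bar x')\ne h_n(\bar x')$ for large $n$ (test sequences grow without bound, and here we use that $\bar x$ genuinely escapes the $\F$-abelian sub-tower), and then Theorem \ref{FactSela} forces $h_n(\bar x)\oplus h_n(\bar x')=h_n(\bar w)=g_n(\bar w)=h_n(\bar x)\oplus g_n(\bar x')$, hence $h_n(\bar x')=g_n(\bar x')$, absurd. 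Second, $(h_n\circ s)_{n<\omega}$ is again a test sequence for $\mathcal{T}\#\mathcal{T}$ for a different legitimate ordering, so Theorem \ref{FactSela} together with commutativity of $\oplus$ and uniqueness of the sum gives $h_n(\bar w)=h_n(s(\bar w))$ for large $n$; since a test sequence witnesses that $\widehat G^{cl}$ is a limit group, $\bar w=s(\bar w)$ is a formal identity in $\widehat G^{cl}$. Third, $s$ is the swap automorphism of an amalgamated free product $G_{cl}*_{B_{cl}}G'_{cl}$, whose fixed subgroup is exactly $B_{cl}$, so a normal-form computation shows $s$ moves every element outside $B_{cl}$, contradicting the first step. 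I expect the main difficulty to be the bookkeeping that keeps the two halves of the twin tower symmetric all the way up and through the closure --- this is what the symmetric-closure construction of Section \ref{Towers} is for --- together with verifying that, in the presence of abelian floors over $\F$, the formal solution's relevant coordinate really does land outside the swap-fixed subgroup $B_{cl}$; the use of the graded envelope (via Theorems \ref{RigidGradedTower} and \ref{ExtFormalSolutions}) rather than the plain Merzlyakov theorem is the price paid for working with an arbitrary, not necessarily hyperbolic, tower.
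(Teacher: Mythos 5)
Your proof is correct and follows the paper's argument: after the abelian case is disposed of via Theorem \ref{AbelianCase}, you run the twin-tower argument with Theorem \ref{ExtFormalSolutions}, the symmetric closure, and the swap to derive a contradiction with commutativity, which is exactly the paper's route (splitting into the ``first floor non-abelian'' and ``first floor abelian'' sub-cases of the twin-tower construction). One small slip to note: when the bottom floor of $\mathcal{T}$ is abelian the swap automorphism $s$ does not fix the doubled base $B=G^1_{Db}$ pointwise (it applies the involution of $G^1_{Db}$ exchanging the two copies of each abelian flat, fixing only $\F$ and the pegs), but this does not affect your argument since all you actually use is that $s$ moves every element lying outside $B_{cl}$, which the normal-form computation in the amalgam $G_{cl}*_{B_{cl}}G'_{cl}$ still provides.
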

\begin{proof}
Let $\{(\mathcal{T}(G_i,\F))_{i\leq k}\}$  
be a diophantine envelope of $\phi(\bar{x},\bar{a})$. By Theorem \ref{AbelianCase} we may assume that there exists a tower $\mathcal{T}(G,\F)$ (where 
$G:=\langle\bar{u},\bar{x},\bar{a} \ |\ \Sigma(\bar{u},\bar{x},\bar{a}\rangle$) in the diophantine envelope such that either 
the first floor is not an abelian floor or the tuple of elements $\bar{x}$ does not live in the group 
that corresponds to the first floor. We next prove that in either case $X$ cannot be given definably an abelian group structure.  
Assume for a contradiction that $(X,\oplus(\bar{x},\bar{x}',\bar{z}))$ for some first order formula $\oplus(\bar{x},\bar{x}',\bar{z})$ is an abelian group.

We first consider the case where the first floor, $\mathcal{G}(G^1,\F)$, of the tower $\mathcal{T}(G,\F)$ is not abelian.  
Let $G':=\langle \bar{u}',\bar{x}',\bar{a} \ |\ \Sigma(\bar{u}',\bar{x}',\bar{a})\rangle$ be a copy of $G$. 
We consider the twin tower $\mathcal{T}\#\mathcal{T}(G*_{\F}G',\F)$, where $G*_{\F}G':=\langle\bar{u},\bar{x},\bar{u}',\bar{x}',\bar{a}\ | \ 
\Sigma(\bar{u},\bar{x},\bar{a}), \Sigma(\bar{u}',\bar{x}',\bar{a})\rangle$ and the finite set of closures 
$cl_1(\mathcal{T}\#\mathcal{T}(G*_{\F}G',\F), \ldots, cl_k(\mathcal{T}\#\mathcal{T}(G*_{\F}G',\F)$ of the twin tower provided by 
applying Theorem \ref{ExtFormalSolutions} to the twin tower $\mathcal{T}\#\mathcal{T}(G*_{\F}G',\F)$ and the first order formula 
$\oplus(\bar{x},\bar{x}',\bar{z})$ be seen as a parametric family with respect to $\bar{x},\bar{x}'$. 

For each abelian flat $E\oplus\Z^m$ that appears in the tower $\mathcal{T}(G,\F)$, and each closure $cl_i(\mathcal{T}\#\mathcal{T}(G*_{\F}G',\F)$ of the twin tower 
$\mathcal{T}\#\mathcal{T}(G*_{\F}G',\F)$ (after passing to its symmetric closure still denoted $cl_i(\mathcal{T}\#\mathcal{T}(G*_{\F}G',\F)$) 
there exist two corresponding closure embeddings $f^i_1,f^i_2$. 
To each of these embeddings we may assign a subgroup $U^i_1, U^i_2$ of $\mathbb{A}^m$ according to Remark \ref{MorphismExtensions}. Now 
we choose a test sequence for the twin tower $\mathcal{T}\#\mathcal{T}(G*_{\F}G',\F)$ such that $(h_n(\bar{x}),h_n(\bar{x}'))$ 
belongs to $X\times X$ and moreover the values of the powers of the generators for each abelian flat belong to the same coset 
modulo all $U^i_1,U^i_2$ for $i\leq k$. 

Such a test sequence must extend to a test sequence $(h'_n)_{n<\omega}$ of some of the closures say $cl_1(\mathcal{T}\#\mathcal{T}(G*_{\F}G',\F)$, 
moreover reversing the role of $(h'_n)_{n<\omega}\upharpoonright G^{cl_1}$ 
with $(h_n)_{n<\omega}\upharpoonright G'^{cl_1}$ is still a test sequence for the closure. We can now apply the same argument 
as in the case of the hyperbolic tower.

Finally, we are left with the case where the first floor $\mathcal{G}(G^1,\F)$ is abelian. In this case the twin tower has a slightly different 
form (see Proposition \ref{TwinAbel}), as a group it is the amalgamated free product of $G_{Db}$ with $_fG_{Db}$ over the double $G^1_{Db}$ of $G^1$ with respect to the 
floor $\mathcal{G}(G^1,\F)$. In the first step of our previous argument we applied Theorem \ref{ExtFormalSolutions} in order to obtain 
finitely many closures and formal solutions in these closures that their images under a test sequence gives us the product operation. 
In order to conclude as in the previous case it is enough to show that these formal solutions do not live in the closure of $G^1_{Db}$ for each closure 
of the twin tower. Indeed, the values of such a formal solution would depend only on the values a test sequence 
$(h_n)_{n<\omega}:G_{Db}*_{G^1_{Db}}(_fG_{Db}\rightarrow\F$ of the twin tower gives to the elements of $G^1_{Db}$. But as we have assumed that 
$\bar{x}$ does not live in $G^1$, we can refine the test sequence $(h_n)_{n<\omega}$ as follows: 
$(h'_n)_{n<\omega}\upharpoonright G_{Db}=(h_n)_{n<\omega}\upharpoonright G_{Db}$ and $(h'_n)_{n<\omega}\upharpoonright\ _fG_{Db}$ 
is a sequence that eventually gives to $\bar{x}'$ different values than $(h_n)_{n<\omega}$. Now, as in the hyperbolic case, since the 
product of $h_n(\bar{x}),h_n(\bar{x}')$ only depends on the values $h_n$ gives to the elements of the group $G^1_{Db}$, there is a contradiction. 
\end{proof}

\bibliography{biblio}
\end{document}